\documentclass[10pt]{article}

\usepackage{amsmath,amsfonts,amsthm,amssymb,mathrsfs}
\usepackage[T1]{fontenc}
\usepackage[utf8]{inputenc}
\usepackage{natbib} 
\usepackage[colorlinks,backref]{hyperref}
\usepackage{xcolor}
\usepackage{fullpage}
\usepackage{mathtools}
\usepackage{subcaption}

\usepackage{authblk}
\RequirePackage{tikz}
\usetikzlibrary{fit,positioning,arrows,automata,calc}
\usetikzlibrary{decorations.pathreplacing, calc}
\usepackage{multirow}

\tikzset{
   main/.style={circle, minimum size = 10mm, thick, 
        draw =black!80, node distance = 10mm},
   box/.style={rectangle, draw=black!100}
}
\usetikzlibrary{positioning}
\usepackage{enumitem}
\setlist{  
  listparindent=\parindent,
  parsep=0pt,
  leftmargin = 5pt,
}
\usepackage{thmtools}
\usepackage{thm-restate}
\usepackage[capitalize]{cleveref}
\crefname{assumption}{Assumption}{Assumptions}
\usepackage{titlesec}
\usepackage{algorithmicx}
\usepackage{algorithm}
\usepackage{algcompatible}
\algnewcommand\algorithmicinput{\textbf{Input:}}
\algnewcommand\Input{\item[\algorithmicinput]}
\algnewcommand\algorithmiciterate{\textbf{Iterate:}}
\algnewcommand\Iterate{\item[\algorithmiciterate]}
\algnewcommand\algorithmicinitialize{\textbf{Initialize:}}
\algnewcommand\Initialize{\item[\algorithmicinitialize]}
\algnewcommand\algorithmicoutput{\textbf{Output:}}
\algnewcommand\Output{\item[\algorithmicoutput]}
\algnewcommand\RETURN{\State \algorithmicreturn}%

\titleformat*{\section}{\large\bfseries}

\titleformat*{\subsection}{\normalfont\bfseries}

 \newcommand\numberthis{\addtocounter{equation}{1}\tag{\theequation}}
\renewcommand{\O}{\mathcal{O}}
\DeclareMathOperator*{\argmin}{arg\,min}

\DeclareMathOperator*{\argsup}{arg\,sup}

 \newcommand{\assertiontwo}{\sigma^2 nr \sqrt{L}}
\newcommand{\rhatinv}{\big(\bm{\mathcal{\hat R}} \bm{\mathcal{\hat R}}\t \big)\inv}

\newcommand{\eps}{\ensuremath{\varepsilon}}
 \newcommand{\Z}{\mathbb{Z}}  \newcommand{\R}{\mathbb{R}}
\newcommand{\s}{^*}
\renewcommand{\t}{^{\top}}

\renewcommand{\l}{^{(l)}}
\newcommand{\inv}{^{-1}}

\newcommand{\p}{\mathbb{P}}

\newcommand{\per}{\U_{\perp}\U_{\perp}\t}

\renewcommand{\hat}{\widehat}

\renewcommand{\tilde}{\widetilde}
\newcommand{\err}{ \frac{\sigma \kappa \sqrt{nr}}{\lambda \sqrt{L}}}

\newtheorem{theorem}{Theorem}

\newtheorem{lemma}{Lemma}
\newtheorem{conjecture}{Conjecture}
\theoremstyle{definition}

\newtheorem{remark}{Remark}

\setcitestyle{round}
\hypersetup{%
  colorlinks=true,
  linkcolor=blue,
  citecolor = blue,
  urlcolor = blue
}

\usepackage{mathtools}
\usepackage{bm}

\newcommand{\tr}{{ \sf Tr}}

\newcommand{\al}{\bm{A}^{(l)}}

\newcommand{\U}{\bm{U}}
\newcommand{\uhat}{\bm{\hat U}}

  \newcommand{\one}{^{(1)}}
  \newcommand{\two}{^{(2)}}

\renewcommand{\sl}{\bm{S}^{(l)}}

\renewcommand{\per}{\bm{U}_{\perp}}
\renewcommand{\O}{\mathcal{O}} 
\newcommand{\rl}{\bm{R}^{(l)}}

\newcommand{\nl}{\bm{N}^{(l)}}
\renewcommand{\R}{(\R)\inv}
\renewcommand{\uhat}{\bm{\hat U}}
\newcommand{\rhatl}{\bm{\hat R}^{(l)}}
\renewcommand{\R}{(\bm{\mathcal{R}\mathcal{R}}\t)\inv}
\newcommand{\pert}{\bm{U}_{\perp}\t}
\newcommand{\maxterm}{\max\{ \sqrt{nLr},nr\}}
\newcommand{\X}{\bm{\Xi}}
\newcommand{\W}{\bm{W}}
\usepackage{multicol,caption}
\usepackage{natbib} 
\bibpunct{(}{)}{;}{a}{,}{,}
\title{Statistically and Computationally Optimal Estimation and  Inference of Common Subspaces}
\author{Joshua Agterberg}
\date{\today}
\begin{document}

\maketitle

\begin{abstract}
Given multiple data matrices, many problems in statistics and data science rely on estimating a common subspace that captures certain structure shared by all the data matrices. 
In this paper we investigate the statistical and computational limits for the common subspace model in which one observes a collection of symmetric low-rank matrices perturbed by noise, where each low-rank matrix shares the same common subspace.  Our main results identify several regimes of the signal-to-noise ratio (SNR) such that estimation and inference are statistically or computationally  optimal, and we refer to these regimes as weak SNR, moderate SNR, strong estimation SNR, and strong inference SNR.  First, we propose an estimator based on projected gradient descent initialized via spectral sum of squares and show that it achieves the optimal $\sin\Theta$ error rate under strong estimation SNR.  These results are complemented by both statistical and computational lower bounds identifying the weak and moderate estimation SNR regimes.   Next, we turn to statistical inference for the $\sin\Theta$ distance itself, and we show that our estimator has an asymptotically Gaussian distribution in the strong inference SNR regime.  Based on this limiting result we propose confidence intervals and show that they are adaptively minimax optimal in the strong inference SNR regime, where adaptivity is measured in terms of the SNR. Finally, we show that adaptive confidence intervals are information-theoretically impossible below the strong inference SNR regime. Consequently, our results unveil a novel phenomenon: despite the SNR being ``above'' the computational limit for estimation, adaptive statistical inference may still be information-theoretically impossible.  
\end{abstract}

\tableofcontents

\section{Introduction}
In many statistical problems of interest, one is provided with multiple  datasets that are posited to share some common underlying structure, and the goal is to perform estimation and inference on this shared structure. Examples include neuroscience \citep{semedo_cortical_2019}, single-cell RNA sequencing \citep{ma_optimal_2026},  and multilayer network analysis \citep{paul_spectral_2020,macdonald_latent_2022,loyal_eigenmodel_2023,lyu_latent_2023}. In many such settings, this shared structure manifests through linear-algebraic relationships between datasets, often in the form of subspaces or matrix factorizations. For instance, in statistical network analysis, it is natural to posit shared latent structure, such as common community memberships associated with the same set of vertices, across multiple observed networks \citep{paul_spectral_2020,lei_bias-adjusted_2023}. Motivated by these considerations, in this paper we study how to optimally extract such common structure from multiple symmetric matrices.

To be more precise, in this paper we consider the \emph{common subspace model}, first introduced in \citet{arroyo_inference_2021} for network data as the \emph{Common Subspace Independent Edge} (COSIE) model. Suppose one observes $L$ matrices $\{\bm{A}^{(l)}\}_{l=1}^{L}$ with each $\bm{A}^{(l)} \in \mathbb{R}^{n\times n}$ such that
\begin{align}
    \bm{A}^{(l)} &= \bm{S}^{(l)} + \bm{N}^{(l)}, \label{gosie}
\end{align}
where $\bm{S}^{(l)}$ (the signal matrix) is a symmetric rank $r$ matrix, and $\bm{N}^{(l)}$ is a symmetric noise matrix. We assume that $\bm{N}^{(l)}$ has independent subgaussian entries, with variance $\sigma^2$ on the diagonal and $\sigma^2/2$ on the off-diagonal; that is, each $\bm{N}^{(l)}$ is a Wigner matrix.  In the case that each entry is Gaussian, we say it is a ${\sf GOE}$ matrix (i.e., a member of the \emph{Gaussian orthogonal ensemble}) with variance $\sigma^2$, otherwise we say it is a Wigner matrix with variance  $\sigma^2$.

We further assume that  each $\bm{S}^{(l)}$ can be factorized via
\begin{align*}
    \bm{S}^{(l)} &= \bm{U} \bm{R}^{(l)} \bm{U}\t,
\end{align*}
where $\bm{U} \in \mathbb{R}^{n\times r}$ is an orthonormal matrix common to all signal matrices (the \emph{common subspace}), and $\bm{R}^{(l)} \in \mathbb{R}^{r \times r}$ is a symmetric full-rank matrix associated to each $\bm{S}^{(l)}$.  The matrices $\bm{R}^{(l)}$ need not be diagonal. In general, $\bm{U}$ and ${\bm{R}^{(l)}}$ are identifiable only up to a simultaneous $r\times r$ orthogonal transformation. For simplicity, we assume throughout that the rank $r$ is known.   The matrix $\bm{U} \in \mathbb{R}^{n\times r}$ can be viewed as a matrix whose rows are $r$-dimensional latent Euclidean vectors associated to all the matrices \emph{simultaneously}, and thus is an object of central importance when seeking to aggregate information from multiple datasets.

The common subspace model encompasses several widely studied network models as special cases, including the multilayer stochastic blockmodel, the multilayer mixed-membership blockmodel, and the multilayer degree-corrected mixed-membership blockmodel with common degree corrections. We refer the reader to \citet{arroyo_inference_2021} for more details on this model and its relation to other network models.   However, the common subspace model need not be restricted to network data.  Indeed, our formulation differs slightly from that work in that we assume independent subgaussian Wigner noise with common variance $\sigma^2$, which implicitly allows the signal matrices $\bm{S}^{(l)}$ to be arbitrary symmetric low-rank matrices. By contrast, when $\bm{N}^{(l)}$ consists of mean-zero Bernoulli noise, as is standard in network data applications, the entries of $\bm{S}^{(l)}$ must lie in $[0,1]$, and hence the signal matrices are necessarily constrained.

The subgaussian noise assumption serves as a stylized statistical model for studying common subspace estimation in many settings. In the special case $L=1$, the model reduces to the classical matrix denoising problem. It is well known that the information-theoretical limits of estimation in such problems are governed by the signal strength of the underlying low-rank matrix, typically characterized by the magnitude of its smallest nonzero eigenvalue. In this paper, we investigate the information-theoretical limits of estimation and inference in the common subspace model in the regime where $L \gg 1$.

\subsection{Main Contributions}
Our main contributions in this paper are multifold.  From a pragmatic perspective, one primary contribution of this paper is to develop and analyze an end-to-end procedure, projected gradient descent with spectral initialization, that aims to recover the shared subspace $\U$.  While variants of our algorithm have been analyzed previously in the literature \citep{paul_spectral_2020}, a more comprehensive statistical analysis under the common subspace model is still lacking. Furthermore, existing results often require additional assumptions on either the initialization or the structure of the signal matrices $\bm{S}^{(l)}$.  In this work we only impose minimal signal-strength assumptions, and our results are complemented by the requisite lower bounds.

In this work we study the error $\| \sin\Theta(\bm{\hat U}_t,\bm{U})\|_F$, where $\bm{\hat U}_t$ is the output of projected gradient descent after $t$ iterations;  this error is defined in \cref{sec:notation}.  This loss function computes errors modulo orthogonal transformation, thus eliminating the intrinsic nonidentifiability inherent in the model.  First, we show that $\|\sin\Theta(\bm{\hat U}_t,\bm{U})\|_F$ converges to the optimal statistical error after logarithmically many iterations under a certain signal-to-noise (SNR) condition.  Next, turning to statistical inference, we provide a novel limit theorem and propose confidence intervals for the error $\|\sin\Theta(\uhat_t,\bm{U})\|_F^2$ under a slightly stronger SNR condition.  Remarkably, we demonstrate that we are able to provide confidence intervals for the loss  without knowledge of $\bm{U}$, akin to the out-of-sample error in linear regression. 

The other main contribution of this paper is to provide a comprehensive picture of the statistical and computational limits of estimation and inference.  To the best of our knowledge, a minimax study for statistical \emph{inference} of subspaces has not been studied previously.   Our analysis reveals a novel phenomenon, which we can summarize in the following theorem.    For simplicity of presentation we assume that $L \lesssim n$, that $r = O(1)$, and that each $\bm{N}^{(l)}$ is Gaussian noise.  In order to state our result, we define the following signal-strength parameter $\lambda$ defined through the equation
\begin{align*}
    \lambda^2 &= \frac{1}{L} \lambda_{\min}\bigg( \sum_{l=1}^{L}  (\rl )^2 \bigg).
\end{align*}
Evidently, when $L = 1$, the parameter $\lambda$ is the magnitude of the smallest nonzero eigenvalue of the low-rank matrix.  
We also define the \emph{generalized condition number} via
 $  \kappa := \max_{l} \frac{\|\rl \|}{\lambda}.$
We are now prepared to state our main informal result.

\begin{theorem}[Informal Statement of Main Results] \label{thm:informal}
Suppose we observe $\{\bm{A}^{(l)}\}_{l=1}^{L}$ as in \eqref{gosie}, and assume that $\kappa$ and $r$ are bounded, and each $\nl$ is a ${\sf GOE}$ matrix with variance $\sigma^2$. Then we have the following:
\begin{itemize}
    \item (Weak Estimation SNR) When $\lambda/\sigma \ll \sqrt{\frac{n}{L}}$, estimation of $\bm{U}$ is information-theoretically impossible.
    \item (Moderate Estimation SNR) When $\frac{\sqrt{n}}{L^{1/4}} \gg \lambda/\sigma \gtrsim \sqrt{\frac{n}{L}}$, no polynomial-time estimator of $\U$ exists based on predictions from the low-degree likelihood ratio.
    \item (Strong Estimation SNR) When $\lambda/\sigma \gtrsim \frac{\sqrt{n}}{L^{1/4}}$, projected gradient descent with spectral initialization achieves the optimal error rate after logarithmically many iterations.
    \item (Weak Inference SNR) When $\frac{n}{\sqrt{L}} \gg \lambda/\sigma \gg \sqrt{\frac{n}{L}}$, minimax-optimal adaptive inference  is information-theoretically impossible.  
    \item (Strong Inference SNR) When $\lambda/\sigma \gg \frac{n}{\sqrt{L}}$, adaptive statistical inference is information-theoretically possible, and our proposed confidence intervals adaptively achieve the optimal rate.
\end{itemize}

\end{theorem}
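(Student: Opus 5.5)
Since \cref{thm:informal} collects five separate claims, I will prove each one on its own, and I expect each to be a specialization of a formal theorem stated later. Throughout I take $r,\kappa = O(1)$, $L \lesssim n$ and ${\sf GOE}$ noise. The target estimation rate is $\|\sin\Theta(\uhat,\U)\|_F \asymp \sigma\sqrt{nr}/(\lambda\sqrt{L})$, i.e.\ the quantity \texttt{\textbackslash err} with $\kappa$ bounded.

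\textbf{Upper bounds.}
\begin{enumerate}
\item For the strong estimation regime, I initialize with the top-$r$ eigenvectors of the spectral sum of squares $\sum_l (\al)^2$ with its diagonal removed (or bias-corrected). The matrix $\sum_l \al \al$ has signal $\U(\sum_l (\rl)^2)\U\t$, whose smallest eigenvalue is $L\lambda^2$.
\item After removing the diagonal bias $L n\sigma^2 \bm{I}$, the noise-times-noise term has operator norm of order $\sigma^2\sqrt{Ln}+\sigma^2 n$, and the cross terms are of order $\sigma\lambda\kappa\sqrt{Ln}$. By Davis--Kahan, the initialization error is $o(1)$ once $L\lambda^2 \gg \sigma^2\sqrt{L}\,n$, i.e.\ $\lambda/\sigma \gg \sqrt{n}/L^{1/4}$.
\item Next I analyze projected gradient descent on $\sum_l\|\al-\U\rl\U\t\|_F^2$, where $\rl$ is profiled out as $\U\t\al\U$. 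Each iteration contracts the error as $\|\sin\Theta_{t+1}\|\le \tfrac12\|\sin\Theta_t\|+C\cdot$(statistical error). The noise term $\sum_l \nl\U\rl$, suitably normalized, has Frobenius norm about $\sigma\sqrt{nrL}\,\lambda\kappa$, which gives the rate after $\log$ many iterations.
\item For inference I write $\|\sin\Theta(\uhat_t,\U)\|_F^2$ as a first-order term plus a remainder. The first-order term is $\|\pert\sum_l \nl\U\rl(\sum(\bm{R}^{(l)})^2)\inv\|_F^2$, a Gaussian chaos with mean $\asymp \sigma^2 nr/(L\lambda^2)$ and fluctuations of order $\sigma^2\sqrt{nr}/(L\lambda^2)$.
\item The remainder is of order the squared error times the error, roughly $(\sigma^2 n/(L\lambda^2))^{3/2}$ plus the leave-one-out cross terms. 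It is negligible relative to the fluctuation exactly when $\lambda/\sigma\gg n/\sqrt L$.
\item A CLT for the quadratic form then yields asymptotic normality. Plugging in estimates of $\sigma^2$ and $\rl$ gives the confidence interval, and its width matches the fluctuation scale, which is the optimal adaptive length.
\end{enumerate}

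\textbf{Lower bounds.}
\begin{enumerate}
\item \emph{Weak estimation SNR.} I use Fano with a local packing of Grassmannian points $\U_i$ at $\sin\Theta$ distance $\delta$, with $\rl=\lambda \bm{I}$. The KL divergence between the models is at most $L\lambda^2\delta^2/\sigma^2$, the packing has size $e^{c nr}$, so $\delta\asymp \sigma\sqrt{nr}/(\lambda\sqrt L)$. Consistent estimation is therefore impossible when $\lambda/\sigma\ll\sqrt{n/L}$.
\item \emph{Moderate estimation SNR.} I take a spiked prior with $\U$ a random Rademacher vector divided by $\sqrt n$ and $\rl$ i.i.d.\ $\pm\lambda$. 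Random signs remove the first moment, making this analogous to tensor PCA or spiked Wishart with $L$ samples. I then bound the degree-$D$ low-degree norm of the likelihood ratio: it equals $\mathbb{E}\exp$-type sums of $\langle u,u'\rangle^{2k}$ weighted by $(L\lambda^4/\sigma^4)^k$-like terms, and it stays bounded when $\lambda^4 L/(\sigma^4 n^2)\ll 1$ for $D=\mathrm{polylog}(n)$. A reduction from estimation to detection, via the low-degree correlation bound, then finishes this case.
\item \emph{Weak inference SNR.} I use a two-point argument in the style of adaptive-CI lower bounds. I construct two composite hypotheses, one with SNR $\lambda$ and one with SNR $\lambda'\gg\lambda$, whose mixtures over $\U$ are indistinguishable in TV: the chi-square between the mixtures is controlled by $\mathbb{E}\exp(L\lambda^2\langle \U,\U'\rangle^2/\sigma^2)$-type terms. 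This indistinguishability holds up to $\lambda/\sigma\ll n/\sqrt L$. Since the loss differs in scale across the two hypotheses, any CI must be as wide as the worse case and so cannot adapt.
\end{enumerate}

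\textbf{Main obstacles.} The hardest steps are the inference remainder control in step~5, which needs leave-one-out decoupling across the PGD iterates, and the chi-square mixture computation for the weak inference lower bound.
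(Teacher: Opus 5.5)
The estimation lower bounds (Fano with $\bm R^{(l)}=\lambda\bm I_r$; low-degree with a Rademacher $\bm u$ and random signs) and the overall shape of your upper bounds mirror the paper. The weak-inference lower bound, however, has a genuine gap.

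You propose two hypotheses with signal strengths $\lambda$ and $\lambda'\gg\lambda$, mixed over $\bm U$, and argue that the loss lives on different scales under them. Such hypotheses cannot be made indistinguishable in the relevant regime. The stronger one must sit at $\lambda'/\sigma\gtrsim n/\sqrt L\gg\sqrt n/L^{1/4}$, since that is where the benchmark length is defined. With $\kappa$ bounded, the statistic $\sum_l\|\bm A^{(l)}\|_F^2$ has a mean gap of order $Lr\lambda'^2$ between the two hypotheses. Its fluctuations are only of order $\sigma^2 n\sqrt L+\sigma\kappa\lambda'\sqrt{Lr}$, so it separates the hypotheses with vanishing error. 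Under ${\sf GOE}$ noise its law depends only on the spectra of the signals, so no mixing over $\bm U$ can hide the difference. Your $n/\sqrt L$ threshold is therefore asserted, not derived.

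The paper instead makes the two hypotheses spectrally identical and perturbs only the subspace. The null is the single point $\bm S_*=\lambda'\bm W\bm W^\top+\lambda_2\bm V^*\bm V^{*\top}$. The alternative is the mixture over $\bm S_{\bm Z}=\lambda'\bm W\bm W^\top+\lambda_2\bm V_{\bm Z}\bm V_{\bm Z}^\top$, where $\bm V_{\bm Z}$ is $\bm V^*$ tilted by $\rho\bm Z$ into the orthogonal complement, with $\bm Z$ uniform on the unit Frobenius sphere. The null counts as strong only because $\tilde{\mathcal P}(\lambda_1)$ constrains the average Frobenius norm, via $r_0\lambda'^2+r'\lambda_2^2=r\lambda_1^2$. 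It therefore carries a weak block at level $\lambda_2$ and is far from bounded condition number when $\lambda_1\gg\lambda_2$. The target shifts by $\mu\asymp\rho^2$, and one has $\chi^2+1\le\exp\{CL\lambda_2^2\rho^4/\sigma^2+CL^2\lambda_2^4\rho^4/(\sigma^4nr)\}$. Lemma 1 of \citet{cai_confidence_2017} then gives expected length $\gtrsim\rho^2$. Choosing $\rho^2\asymp\frac{\sigma^2nr}{L\lambda_1^2}\cdot\frac{\sigma\sqrt{nr}}{\lambda_2\sqrt L}$ and comparing with $\frac{\sigma^2\sqrt{nr}}{L\lambda_1^2}$ gives the ratio $\frac{\sigma nr}{\lambda_2\sqrt L}$, which is the real source of the $n/\sqrt L$ boundary.

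Several further pieces are missing.

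\textbf{Optimality of the interval.} You call the plug-in width optimal but prove no lower bound on the length of honest intervals. The paper's \cref{thm:ci_minimaxlowerbound} comes from the same construction with $\lambda_1=\lambda_2$ and $\rho^2\asymp\sigma^2\sqrt{nr}/(L\lambda^2)$.

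\textbf{The quadratic noise term in PGD.} Your sketch tracks only the linear noise term. The gradient also contains $\frac1L(\bm I-\bm{\hat U}_t\bm{\hat U}_t^\top)\sum_l\bm N^{(l)}\bm{\hat U}_t\bm{\hat U}_t^\top\bm N^{(l)}\bm{\hat U}_t$, with a data-dependent $\bm{\hat U}_t$. This term must be controlled uniformly over rank-$2r$ matrices, which the paper does through \eqref{assertion2} (decoupling plus $\varepsilon$-nets) and \cref{lem:Qt}. Its contribution $\sigma^2nr^{3/2}/(\sqrt L\lambda^2)$ to the contraction factor is why $\lambda/\sigma\gtrsim\sqrt n/L^{1/4}$ is needed at every iteration, not only at initialization.

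\textbf{Initialization.} The noise--noise part of $\sum_l(\bm A^{(l)})^2$ has norm $\asymp\sigma^2 n\sqrt L$, not $\sigma^2\sqrt{Ln}+\sigma^2n$, although your threshold implicitly uses the correct value. Do not delete the diagonal. The homoskedastic bias is a multiple of $\bm I_n$ and needs no correction, whereas diagonal deletion erases the signal when $\bm U$ is coherent, for example when it is spanned by coordinate vectors.

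\textbf{Near-stationarity for the CLT.} Contraction to the statistical radius does not suffice. You need $\bm U_\perp^\top\bm{\hat U}_t\mathcal{O}=\bm G^{(1)}+O(\delta^2)$, where $\bm G^{(1)}=\sum_l\bm U_\perp^\top\bm N^{(l)}\bm U\bm R^{(l)}(\bm{\mathcal R}\bm{\mathcal R}^\top)^{-1}$ and $\delta=\sigma\sqrt{nr}/(\lambda\sqrt L)$. That requires $\bm{\hat U}_t$ to be near-stationary. The paper obtains this from geodesic strong convexity (\cref{lem:geodesicconvexity}), which makes the optimization term $T_4$ decay geometrically. It uses uniform low-rank concentration rather than leave-one-out.

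\textbf{One point in your favour.} Under ${\sf GOE}$ noise, $\bm U_\perp^\top\bm N^{(l)}\bm U$ has i.i.d.\ $\mathcal N(0,\sigma^2/2)$ entries by rotation invariance. Hence $\|\bm G^{(1)}\|_F^2$ is a sum of $n-r$ i.i.d.\ quadratic forms, and its CLT is immediate without the incoherence conditions the paper's martingale argument needs.
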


One immediate takeaway from this result is that  the computational barrier to estimation occurs at a smaller SNR value than the information-theoretical barrier to adaptive statistical inference.  Consequently, even if optimal \emph{estimation} is feasible in polynomial time, adaptive \emph{inference} may not be possible.  However, whenever  estimation or adaptive inference is both statistically and computationally feasible, our estimators achieve the optimal rates.   These phase transition phenomena can be displayed concisely in the following diagram.

\begin{tikzpicture}[x=13cm, y=1.5cm]

  \def\xzero{0}
  \def\xone{0.15}
  \def\xtwo{0.55}  
  \def\xthree{0.75}
  \def\xend{1.0}
  \def\y{0}

  \draw[->] (0, \y) -- (1.05, \y) node[right] {SNR $=\lambda/\sigma$};

  \draw[thick] (\xzero, \y-0.05) -- (\xzero, \y+0.05);
  \draw[thick] (\xone, \y-0.05) -- (\xone, \y+0.05);
  \draw[thick] (\xtwo, \y-0.05) -- (\xtwo, \y+0.05);
  \draw[thick] (\xthree, \y-0.05) -- (\xthree, \y+0.05);

  \node[below] at (\xzero, \y) {$0$};
  \node[below] at (\xone, \y) {$\frac{\sqrt{n}}{\sqrt{L}}$};
  \node[below] at (\xtwo, \y) {$\frac{\sqrt{n}}{L^{1/4}}$};
  \node[below] at (\xthree, \y) {$\frac{n}{\sqrt{L}}$};

  \draw[decorate,decoration={brace, amplitude=6pt}] 
    ($( \xzero, \y ) + (0, 0.5)$) -- 
    ($( \xone, \y ) + (0, 0.5)$) 
    node[midway, yshift=10pt] {\scriptsize Weak Estimation SNR};

  \draw[decorate,decoration={brace, amplitude=6pt}] 
    ($( \xone, \y ) + (0, 0.5)$) -- 
    ($( \xtwo, \y ) + (0, 0.5)$) 
    node[midway, yshift=10pt] {\scriptsize Moderate Estimation SNR};

  \draw[decorate,decoration={brace, amplitude=6pt}] 
    ($( \xtwo, \y ) + (0, 0.5)$) -- 
    ($( \xend, \y ) + (0, 0.5)$) 
    node[midway, yshift=10pt] {\scriptsize Strong Estimation SNR};

  \draw[decorate,decoration={brace, amplitude=6pt, mirror}] 
    ($( \xone, \y ) + (0, -0.5)$) -- 
    ($( \xthree, \y ) + (0, -0.5)$) 
    node[midway, yshift=-10pt] {\scriptsize Weak Inference SNR};

  \draw[decorate,decoration={brace, amplitude=6pt, mirror}] 
    ($( \xthree, \y ) + (0, -0.5)$) -- 
    ($( \xend, \y ) + (0, -0.5)$) 
    node[midway, yshift=-10pt] {\scriptsize Strong Inference SNR};

\end{tikzpicture}

To the best of our knowledge, this is the first time that a problem of this form has exhibited both a ``statistical-computational'' estimation gap as well as a ``computational-statistical'' inferential gap (i.e., where the computational threshold for estimation lies below the statistical threshold for adaptive inference).  In contrast, many existing models often have the property that being ``above'' the computational threshold for estimation results in statistical inference ``for free,'' in the sense that optimal inference comes with no additional SNR requirements, such as in tensor data analysis \citep{xia_inference_2022}.

\subsection{Paper Organization}
The rest of this paper is organized as follows.  In the subsequent section we introduce our main methodology.  In \cref{sec:estimation} we study the estimation error of our procedure, including an upper bound for each iteration and minimax and computational lower bounds.  In \cref{sec:inference} we study the information-theoretical limits for (adaptive) statistical inference, and we provide minimax-optimal confidence intervals as well as information-theoretical lower bounds.  In \cref{sec:relatedwork} we discuss related work, and in \cref{sec:numerical} we consider applications to real and simulated data. We finish with a discussion.  Proofs of main results can be found in the appendices.

\subsection{Notation}
\label{sec:notation}
For two sequences $a_n$ and $b_n$, we write $a_n \ll b_n$ if $a_n/b_n \to 0$ as $n \to \infty$, and we write $a_n \lesssim b_n$ if there is a constant $C > 0$ such that $a_n \leq C b_n$.  We will also write $a_n = o(b_n)$ if $a_n \ll b_n$ and $a_n = O(b_n)$ if $a_n \lesssim b_n$. We write $a_n \asymp b_n$ if both $a_n \lesssim b_n$ and $b_n \lesssim a_n$.  For a function $f: \mathbb{R}^{n \times r} \to \mathbb{R}$, we let $\nabla f$ denote its gradient viewed as an $n \times r$ matrix, and we let $\nabla^2 f$ denote its Hessian, viewed as either an operator on $n \times r$ matrices or a $nr \times nr$ matrix, where the distinction is clear from context.  

For a general matrix $\bm{M}$ we let $\|\bm{M}\|$ and $\| \bm{M} \|_F$ denote its spectral and Frobenius norms respectively, and we let ${\sf Tr}(\bm{M})$ denote its trace.  We let ${\sf SVD}_r(\bm{M})$ denote the leading $r$ left singular vectors of the matrix $\bm{M}$.  For two orthonormal matrices $\bm{U}_1$ and $\bm{U}_2$ of dimension $n \times r$ with $r \leq n$, we let $\sin\Theta( \bm{U}_1, \bm{U}_2)$ denote the matrix of singular values of $(\bm{U}_1)_{\perp}\t \bm{U}_2$, where $(\bm{U}_1)_{\perp}$ denotes any matrix with orthonormal columns such that $(\bm{U}_1)_{\perp}\t\bm{U}_1 = 0$. In particular, we have that
\begin{align}
    \| \sin\Theta(\bm{U}_1,\bm{U}_2) \|_F = \| (\bm{U}_1)_{\perp}\t \bm{U}_2 \|_F.
\end{align}
We let $\mathcal{O}_{\bm{U}_1,\bm{U}_2}$ denote the Frobenius-optimal orthogonal matrix aligning $\bm{U}_1$ and $\bm{U}_2$; that is,
\begin{align*}
    \mathcal{O}_{\bm{U}_1,\bm{U}_2} := \argmin_{\mathcal{O} : \mathcal{OO}\t = \bm{I}_r} \| \bm{U}_1 \mathcal{O} - \bm{U}_2 \|_F.
\end{align*}
For orthonormal $\bm{U}_1$ and $\bm{U}_2$, it is well-known that $\mathcal{O}_{\bm{U}_1,\bm{U}_2}$ can be computed from the product of the left and right singular vectors of $\bm{U}_1\t \bm{U}_2$.  

For a random variable $X$, we let $\|X\|_{\psi_\alpha}$ denote its Orlicz $\psi_{\alpha}$ norm.  See \citet{vershynin_high-dimensional_2018} for more details.  For a sequence of random variables $\{X_n\}_{n=1}^{\infty}$, we say $X_n \xrightarrow{d} X$ if $X_n$ converges to $X$ in distribution.  Similarly, we write $X_n \xrightarrow{p} X$ if $X_n$ converges in probability to $X$.  For two random variables $X$ and $Y$, we write $\mathbb{E} X$ as the expected value of $X$ and $\mathbb{E} X| Y$ as the conditional expectation of $X$ given $Y$. 

\section{Projected Gradient Descent with Spectral Initialization}
In this section we introduce our main algorithm to estimate the matrix $\bm{U}$.  Define the loss function
\begin{align*}
    h\big( \bm{\hat U}; \{ \bm{\hat R}^{(l)}\}_{l=1}^{L} \big) := \frac{1}{4L} \sum_{l=1}^{L} \| \bm{A}^{(l)} - \bm{\hat U} \bm{\hat R}^{(l)} \bm{\hat U}\t \|_F^2.
\end{align*}
Up to possible rescaling, $h$ can be viewed as the objective function whose minimizer is the maximum likelihood estimator under Gaussian noise.  However, due to the constraint that $\bm{U}\t \U = \bm{I}_r$, globally optimizing this function can be NP-hard in general.  

We therefore propose to optimize the loss function $h$ by projected gradient descent which constrains the current iterate $\bm{\hat U}_t$ to be orthonormal at every iteration.  Given an initialization $\bm{\hat U}_0$ and stepsize $\eta$, we update $\bm{\hat U}_t$ and $\bm{\hat R}_t$ via
\begin{align*}
\bm{\hat R}_{t}^{(l)} &:= \bm{\hat U}_{t}\t \bm{A}^{(l)} \bm{\hat U}_t;\qquad l = 1 ,\dots, L; \\
    \bm{\hat U}_{t+.5} &= \bm{\hat U}_t - \eta \nabla_{\bm{\hat U}_t}    h\big( \bm{\hat U}_t, \{ \bm{\hat R}^{(l)}_t\}_{l=1}^{L} \big); \\
    \bm{\hat U}_{t+1} &= {\sf SVD}_r( \bm{\hat U}_{t+.5} ).
\end{align*}
The gradient of $h$ keeping $\bm{\hat R}_t^{(l)}$ fixed is given by 
\begin{align}
    \nabla_{\bm{\hat U}_t} h(\bm{\hat U}_t,\{\bm{\hat R}^{(l)}_t\} ) &= \frac{1}{L} \sum_{l=1}^{L} \big( \bm{\hat U}_t \bm{\hat R}_t^{(l)} \bm{\hat U}_t\t - \bm{A}^{(l)} \big) \bm{\hat U}_t \bm{\hat R}_t^{(l)}.
\end{align}
For a given orthonormal $\bm{\hat U}_t$, the update $\bm{\hat R}^{(l)}_t$ is the closed-form minimizer of the objective function.  Thus, the algorithm can be viewed as updating $\bm{\hat U}_t$ via projected gradient descent, and then directly minimizing the error for each $\bm{\hat R}_t^{(l)}$. Alternatively, this algorithm can be viewed as a Riemannian gradient descent algorithm on $\bm{\hat U}$ directly, where the SVD step acts as the retraction.

\subsection{Warm Initialization via Spectral Sum of Squares}
In order to initialize the algorithm, we propose using the sum of squares.  Define
\begin{align*}
    \bm{\hat U}_{0} := {\sf SVD}_r\big( \sum_{l=1}^{L}  (\bm{A}^{(l)})^2 \big).
\end{align*}
To understand this initialization, we note that squaring is in general necessary when the matrices $\bm{R}^{(l)}$ are permitted to have negative eigenvalues.  For example, in the simple case that
\begin{align*}
    \bm{R}^{(1)} &= \begin{pmatrix} \lambda & 0 \\ 0 & \lambda \end{pmatrix}; \qquad \bm{R}^{(2)} = \begin{pmatrix}
        -\lambda & 0 \\ 0 & - \lambda
    \end{pmatrix},
\end{align*}
it is straightforward to observe that simply averaging $\bm{A}^{(l)}$ will result in a ``canceling out'' effect, resulting in pure noise, which is uninformative. More generally, if one has
\begin{align*}
    \bm{R}^{(l)} &= \begin{pmatrix}
        a^{(l)} & b^{(l)} & b^{(l)} \\ b^{(l)} & a^{(l)} & b^{(l)} \\ b^{(l)} & b^{(l)} & a^{(l)} 
    \end{pmatrix}
\end{align*}
with some $a^{(l)} > b^{(l)}$ and some $a^{(l)} < b^{(l)}$, then the average will exhibit a similar ``canceling out'' effect.  Similar examples have appeared in previous works; notably \citet{lei_bias-adjusted_2023} explicitly consider such settings in the context of multilayer stochastic blockmodels, which is a special case of the common subspace model.  

When using the sum of squares, we have
\begin{align*}
    \mathbb{E}\bigg( \sum_l \big(\bm{A}^{(l)} \big)^2 \bigg)&= \sum_l \big( \bm{S}^{(l)} \big)^2 + \sigma^2 L \frac{n+1}{2} \bm{I}_n,
\end{align*}
which has the property that its leading eigenspace is equal to $\bm{U}$ (up to orthogonal transformation), since the addition of $\sigma^2L \frac{n+1}{2} \bm{I}_n$ does not affect the eigenspace.  The full algorithm is summarized in \cref{alg}.

	\begin{algorithm} [t]
 		\caption{Projected Gradient Descent Initialized via Spectral Sum of Squares}
 		\begin{algorithmic} 
 			\Input Collection of matrices $\{\bm{A}^{(l)} \}_{l=1}^{L}$; rank $r$, stepsize $\eta$, number of iterations $T$.
 			\begin{enumerate}
 				\item Define
                \begin{align}
                    \bm{\hat U}_0 := {\sf SVD}_r\bigg( \sum_{l=1}^{L} (\bm{A}^{(l)})^2 \bigg).
                \end{align}
                \item While $t < T$:
                \begin{enumerate}
                    \item Set $\bm{\hat R}^{(l)}_t := \bm{\hat U}_t\t \bm{A}^{(l)} \bm{\hat U}_t$.
                    \item Gradient step:
                    \begin{align}
                        \bm{\hat U}_{t+.5} := \bm{\hat U}_t - \frac{\eta}{L} \sum_{l=1}^{L} \big( \bm{\hat U}_{t} \bm{\hat R}^{(l)}_t \bm{\hat U}_{t}\t - \bm{A}^{(l)} \big) \bm{\hat U}_{t} \bm{\hat R}_t^{(l)}.
                    \end{align}
                    \item Projection step:
                    \begin{align}
                        \bm{\hat U}_{t+1} := {\sf SVD}_r(\bm{\hat U}_{t+.5}).
                    \end{align}
                \end{enumerate}
                                
                        \end{enumerate}
 			\Output $\bm{\hat U}_T$.
 		\end{algorithmic}
 		\label{alg}
 	\end{algorithm}



\section{Estimation Upper and Lower Bounds} \label{sec:estimation}
In this section we consider the statistical and computational limits for estimation. 
We first have the following result.

\begin{theorem}[Upper Bounds for Iterates] \label{thm:mainthm}
Suppose that there exists some sufficiently large constant $C_0$ such that
    \begin{align}
          \lambda/\sigma \geq  C_0 \max \bigg\{  \kappa \sqrt{r} \sqrt{\frac{nr}{L}}, \kappa^3  \sqrt{\frac{nr}{L}}, \kappa r^{1/4} \frac{\sqrt{n}}{L^{1/4}}, r \frac{\sqrt{n}}{L^{1/4}}\bigg\}. \label{maincondition}
    \end{align}
    Suppose each matrix $\bm{N}^{(l)}$ is a subgaussian Wigner matrix with variance $\sigma^2$, and suppose also that $\max_{i,j,l} \| \bm{N}^{(l)}_{ij} \|_{\psi_2} \lesssim \sigma$ and that $r \leq \min\{L,n\}$ with $\log(L) \leq c n r$ for some sufficiently small constant $c$, and suppose that $L \leq n^2$.  Let $\bm{\hat U}_t$ denote the output of \cref{alg} after $t$ iterations, and suppose the stepsize $\eta$ satisfies $\eta \in (\frac{c_{\eta}'}{\lambda_{\max}^2}, \frac{c_{\eta}}{\lambda_{\max}^2})$ where $c_{\eta}' < c_{\eta} < 1$ are any fixed constants.  Then with probability at least $1 -  \exp( - c n)$, for all iterations $t$ it holds that
    \begin{align*}
         \|\sin\Theta(\bm{\hat U}_{t},\bm U)\|_F
    \leq
    C_1\frac{\sigma\kappa\sqrt{nr}}{\lambda\sqrt L}
    +
    \left(1-\frac{c_\eta'}{8\kappa^2}\right)^t
    C_2\frac{\sigma^2 n\sqrt r}{\sqrt L\lambda^2}.
    \end{align*}
    Consequently, after $O \bigg( \log( \frac{\lambda/\sigma \sqrt{L}}{C_1 \kappa\sqrt{nr}}) \bigg)$ iterations, it holds that
    \begin{align*}
                \| \sin\Theta(\bm{\hat U}_t,\bm{U}) \|_F &\leq \frac{2C_1 \sigma \kappa \sqrt{nr}}{\lambda \sqrt{L}}.
    \end{align*}
\end{theorem}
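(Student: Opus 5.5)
The proof has two parts: a bound on the spectral initialization, and a one-step contraction for the projected gradient iteration, which we then iterate.

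\emph{Step 1 (initialization).} Write
\begin{align*}
\sum_l (\al)^2 = \sum_l (\sl)^2 + \sum_l(\sl\nl+\nl\sl) + \sum_l (\nl)^2 .
\end{align*}
Centering the last term by $\sigma^2 L\frac{n+1}{2}\bm{I}_n$ leaves the eigenspace unchanged. The signal part $\sum_l(\sl)^2 = \U(\sum_l (\rl)^2)\U\t$ has eigengap $L\lambda^2$. We bound the two perturbations as follows.
\begin{itemize}
\item The cross term: its projection onto $\U$ is controlled by $\sigma\sqrt{n}\,(\sum_l\|\rl\|^2)^{1/2}\lesssim \sigma\kappa\lambda\sqrt{nLr}$, using a Gaussian-chaos / matrix Bernstein argument over the $L$ independent layers.
\item The centered quadratic term: by a decoupling plus matrix concentration argument (as for sum-of-squares estimators in tensor PCA), $\|\sum_l((\nl)^2-\mathbb{E}(\nl)^2)\|\lesssim \sigma^2(\sqrt{L}\,n + n)$.
\end{itemize}
A Wedin/Davis--Kahan bound in Frobenius norm (with the factor $\sqrt{r}$) then gives
\begin{align*}
\|\sin\Theta(\uhat_0,\U)\|_F\lesssim \frac{\sigma\kappa\sqrt{nr}}{\lambda\sqrt L}+\frac{\sigma^2 n\sqrt r}{\sqrt L\lambda^2}.
\end{align*}
Under \eqref{maincondition}, in particular $\lambda/\sigma\gtrsim r^{1/4}\sqrt n/L^{1/4}$, this is at most a small constant $c/\kappa^2$. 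It therefore serves as the $t=0$ case, with the second summand giving the $C_2$ term.

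\emph{Step 2 (one-step contraction).} Fix the alignment $\mathcal{O}_t=\mathcal{O}_{\uhat_t,\U}$ and expand $\uhat_{t+.5}$. Substituting $\rhatl_t=\uhat_t\t\al\uhat_t$, the gradient splits into three pieces.
\begin{itemize}
\item A population piece $\frac1L\sum_l(\uhat_t\uhat_t\t-\bm I)\sl\uhat_t\rhatl_t$. Projected onto $\per\t$, it equals $-\frac1L\sum_l\pert \U\rl\U\t\uhat_t\rhatl_t$.
\item A noise piece $-\frac1L\sum_l(\bm I-\uhat_t\uhat_t\t)\nl\uhat_t\rhatl_t$.
\item Higher-order terms.
\end{itemize}
Writing $\pert\uhat_{t}$ and using $\rhatl_t\approx \mathcal{O}_t\t\rl\mathcal{O}_t$ plus errors, we get
\begin{align*}
\pert\uhat_{t+.5}\approx \pert\uhat_t\Big(\bm I-\tfrac{\eta}{L}\textstyle\sum_l (\rhatl_t)^2\Big)+\text{noise}.
\end{align*}
Since $\lambda_{\min}(\frac1L\sum_l(\rhatl_t)^2)\ge \lambda^2/2$ and $\eta\lambda_{\max}^2<1$, the first factor contracts by $1-c_\eta'/(2\kappa^2)$. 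The SVD retraction costs at most a factor $\sigma_{\min}(\uhat_{t+.5})^{-1}=1+O(\eta\cdot\text{small})$. Together these give
\begin{align*}
\|\sin\Theta(\uhat_{t+1},\U)\|_F\le (1-\tfrac{c_\eta'}{8\kappa^2})\|\sin\Theta(\uhat_t,\U)\|_F + \eta\cdot\mathcal{E}_t .
\end{align*}

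\emph{Step 3 (the noise term, the main obstacle).} The error $\mathcal{E}_t$ must satisfy
\begin{align*}
\eta\,\mathcal{E}_t\lesssim \frac{c_\eta'}{\kappa^2}\cdot\frac{\sigma\kappa\sqrt{nr}}{\lambda\sqrt L}+o(1)\,\|\sin\Theta(\uhat_t,\U)\|_F .
\end{align*}
The difficulty is that $\uhat_t$ depends on all the $\nl$, so we cannot condition on it.

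We handle this by a uniform (epsilon-net) bound over orthonormal $\bm V$ in a $\sin\Theta$-neighborhood of $\U$. We decompose $\bm V=\U\mathcal{O}+\bm\Delta$ and bound the terms separately.
\begin{itemize}
\item The leading term $\frac1L\sum_l\pert\nl\U\rl$ is a fixed Gaussian matrix. Its Frobenius norm is $\lesssim\sigma\kappa\lambda\sqrt{nr/L}$, which produces the $C_1$ rate after multiplication by $\eta\asymp1/(\kappa^2\lambda^2)$ and division by the contraction $1/\kappa^2$.
\item Terms linear in $\bm\Delta$, such as $\frac1L\sum_l\nl\bm\Delta\rl$ and $\frac1L\sum_l\nl\U\,\U\t\nl\bm\Delta$, are bounded by $\|\bm\Delta\|_F$ times operator norms of the form $\|\frac1L\sum_l\nl\otimes\rl\|$ and $\|\frac1L\sum_l\nl\U\U\t\nl\|$. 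These give $\sigma\kappa\lambda\sqrt{n/L}$ and $\sigma^2(n/\sqrt L+\dots)$ respectively. By \eqref{maincondition} (the $\kappa^3\sqrt{nr/L}$ and $\sqrt n/L^{1/4}$ terms), both are $\le c\lambda^2/\kappa^2$, i.e.\ $o(1)$ after scaling by $\eta$.
\end{itemize}
The net argument costs $\exp(O(nr))$, which the tail $\exp(-cn\cdot\text{const})$ absorbs; this is where $\log L\le cnr$ and $L\le n^2$ enter. Finally, the quadratic-in-noise dependence of $\rhatl_t$ on $\bm V$ gives the $r\sqrt n/L^{1/4}$ requirement.

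\emph{Conclusion.} Inducting with the invariant $\|\sin\Theta(\uhat_t,\U)\|_F\le c/\kappa^2$ and unrolling the recursion yields the displayed bound for all $t$ on a single high-probability event. The iteration count then follows by solving $(1-c_\eta'/8\kappa^2)^t C_2\frac{\sigma^2n\sqrt r}{\sqrt L\lambda^2}\le C_1\frac{\sigma\kappa\sqrt{nr}}{\lambda\sqrt L}$.
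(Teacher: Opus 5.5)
Your outline is correct and follows the paper's proof: Davis--Kahan for the sum-of-squares initialization, then an induction in which $\pert\uhat_{t+.5}$ contracts through the smallest eigenvalue of an aggregated $r\times r$ Gram matrix, the noise is controlled uniformly over the $\sin\Theta$-neighborhood by $\eps$-net bounds for low-rank perturbations (the paper's event $\mathcal{E}_{\sf good}$), and the ${\sf SVD}_r$ retraction costs only a factor $1+O(\eta\lambda^2)$. The differences are bookkeeping: you keep the full $\frac{1}{L}\sum_l(\rhatl_t)^2$ in the contraction factor, and the identity $\pert\uhat_{t+.5}=\pert\uhat_t\big(\bm{I}-\frac{\eta}{L}\sum_l(\rhatl_t)^2\big)+\frac{\eta}{L}\sum_l\pert\nl\uhat_t\rhatl_t$ is in fact exact, whereas the paper contracts with the signal-only $\frac{1}{L}\sum_l(\uhat_t\t\sl\uhat_t)^2$ and moves the rest into ${\sf L}_t$ and ${\sf Q}_t$; note also that your displayed population piece $-\frac1L\sum_l\pert\U\rl\U\t\uhat_t\rhatl_t$ is identically zero and should be $\frac1L\sum_l\pert\uhat_t\uhat_t\t\sl\uhat_t\rhatl_t$.
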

\begin{proof}
    See \cref{sec:mainproof}.
\end{proof}

The key assumption in \cref{thm:mainthm} is \eqref{maincondition}. When $\kappa, r \asymp 1$, its dominant requirement is that $\lambda/\sigma \gtrsim \sqrt{n}/L^{1/4}$. In \cref{sec:minimaxlowerbound}, we argue that this condition is unavoidable for all polynomial-time estimators of $\bm{U}$.
A particularly interesting regime is when $\lambda/\sigma \ll \sqrt{n}$ but $\lambda/\sigma \gtrsim \sqrt{n}/L^{1/4}$, since $\lambda/\sigma \ll \sqrt{n}$ implies that individual subspace estimation cannot be consistent \citep{cai_rate-optimal_2018}. In this setting, the sum of squares spectral initialization (i.e., $t=0$) incurs a “quadratic” error of order $\tfrac{\sigma^2 n}{\lambda^2 \sqrt{L}}$. This behavior mirrors subspace perturbation bounds for low-rank rectangular matrices of dimension $p_1 \times p_2$ with $p_2 \gg p_1$, where a quadratic term dominates in the high-noise regime \citep{cai_rate-optimal_2018,cai_subspace_2021}.
This analogy is reinforced by the observation that $\bm{\hat U}_0$ can be viewed as the matrix of left singular vectors of $[\bm{A}^{(1)},\ldots,\bm{A}^{(L)}]$, a rectangular matrix with $p_1=n$ and $p_2=Ln$. However, unlike these other settings, the additional structure of the common subspace model allows projected gradient descent to reduce the effect of this quadratic error.

Several recent works have studied variants of the sum of squares spectral approach \citep{lei_bias-adjusted_2023,xie_bias-corrected_2024,zhou_deflated_2025,zhang_heteroskedastic_2022,agterberg_entrywise_2022}. These papers focus on the heteroskedastic setting, where debiasing is necessary for improved estimation. In contrast, our model assumes homoskedastic noise, so no additional debiasing is required. Extending our analysis to heteroskedastic noise would be of both practical and theoretical interest, and we leave this direction for future work.

\subsection{Statistical and Computational Lower Bounds} \label{sec:minimaxlowerbound}
The purpose of this section is to demonstrate that the error and assumptions in \cref{thm:mainthm} are optimal.  First, we provide a minimax lower bound for estimation with respect to the $\sin\Theta$ loss.  Define the parameter space
\begin{align}
    \mathcal{P}(\lambda, n,L,\sigma) := \bigg\{ \{\bm{S}^{(l)}\}_{l=1}^{L}: \sl = \bm{U} \rl \U\t; \U \in \mathbb{R}^{n\times r}, \U\t\U = \bm{I}_r, \lambda^2 \leq  \lambda_{\min} \bigg( \frac{1}{L} \sum_{l} (\rl)^2 \bigg) \bigg\}. \label{parameterspace}
\end{align}
We have the following minimax lower bound.

\begin{theorem}[Minimax Lower Bound] \label{thm:minimax}
Assume that $r \leq c_0 \sqrt{n}$ for some sufficiently small constant $c_0$.  Then it holds that 
    \begin{align*}
        \inf_{\bm{\hat U}} \sup_{\mathcal{P}(\lambda,n,L,\sigma)} \mathbb{E} \|\sin\Theta(\bm{\hat U}, \bm{U}) \|_F^2\gtrsim \frac{\sigma^2 nr}{\lambda^2 L} \wedge r, 
    \end{align*}
    where the infimum is taken over all estimators $\bm{\hat U}$ on the basis of the observation $\{\bm{A}^{(l)}\}_{l=1}^{L}$ under {\sf GOE} noise.
\end{theorem}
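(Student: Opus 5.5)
We will prove the bound with the standard Fano-type argument over a packing of the Grassmannian, reducing to a simple signal configuration. Fix $\bm{R}^{(l)} = \lambda \bm{I}_r$ for every $l$, so that $\frac{1}{L}\sum_l (\rl)^2 = \lambda^2 \bm{I}_r$ and every $\U$ gives an element of $\mathcal{P}(\lambda,n,L,\sigma)$. Then the observations are $L$ i.i.d.\ copies of $\lambda \U\U\t + \bm{N}$ with $\bm{N}$ a {\sf GOE} matrix. Only the projection $\U\U\t$ is identifiable, so the problem becomes the $L$-sample version of rank-$r$ spiked Wigner subspace estimation.

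Next we need a packing. By the Grassmannian packing results (e.g.\ Lemma~1 of \citet{cai_rate-optimal_2018} or Candès--Pajor metric entropy estimates), for any $\epsilon\in(0,1]$ there exist $\U_1,\dots,\U_M$ inside a ball $\{\U : \|\sin\Theta(\U,\U_0)\|_F \le \epsilon\}$ with
\[
\|\sin\Theta(\U_i,\U_j)\|_F \ge c\epsilon\sqrt{r}\cdot\tfrac{1}{\sqrt r}\cdot \sqrt{r}
\]
(i.e.\ of order $\epsilon$ times the appropriate scaling) and $\log M \gtrsim r(n-r)$. The cleanest route is the local packing $\U_i = \begin{pmatrix}\sqrt{1-\delta^2}\,\bm{I}_r \\ \delta \bm{W}_i\end{pmatrix}$ after orthonormalization, with $\bm{W}_i\in\mathbb{R}^{(n-r)\times r}$ from a Varshamov--Gilbert packing of $\{\pm 1/\sqrt{n-r}\}^{(n-r)\times r}$. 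This gives pairwise separation $\|\sin\Theta(\U_i,\U_j)\|_F\gtrsim \delta\sqrt r$ and diameter $\lesssim \delta\sqrt r$. Here we use $r\le c_0\sqrt n$, together with $n-r\gtrsim n$, to control the orthonormalization correction so that separations are preserved up to constants.

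With the packing in hand, we bound the KL divergences. Between the laws under $\U_i$ and $\U_j$, the KL divergence of $L$ i.i.d.\ {\sf GOE}-perturbed observations equals
\[
L\cdot \frac{\lambda^2}{2\sigma^2}\cdot c\,\|\U_i\U_i\t-\U_j\U_j\t\|_F^2 \asymp \frac{L\lambda^2}{\sigma^2}\,\delta^2 r.
\]
The {\sf GOE} variance normalization only changes constants, because the off-diagonal entries carry variance $\sigma^2/2$ and the density is proportional to $\exp(-\|\cdot\|_F^2/(2\sigma^2))$. Fano's inequality then requires $\frac{L\lambda^2}{\sigma^2}\delta^2 r \le c\, nr$. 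We choose $\delta^2 = c\,\frac{\sigma^2 n}{\lambda^2 L}\wedge c'$, which yields
\[
\inf\sup \mathbb{E}\|\sin\Theta\|_F^2 \gtrsim \delta^2 r \asymp \frac{\sigma^2 nr}{\lambda^2 L}\wedge r.
\]
Estimators $\bm{\hat U}$ need not lie in the packing, but the triangle inequality for $\|\sin\Theta\|_F$ (equivalently, for the projection distance) handles this in the usual reduction.

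The main obstacle is constructing the packing with $\log M\gtrsim nr$ while keeping the diameter and the separation both of order $\delta\sqrt r$. Both are needed: the diameter controls the KL divergence and the separation controls the loss. This is exactly where the condition $r\le c_0\sqrt n$ enters, since it guarantees that $\bm{W}_i\t\bm{W}_i\approx \frac{r}{n-r}\cdot$ stays close to a multiple of $\bm{I}_r$ entrywise, so orthonormalization perturbs things only at lower order. If this direct construction proves delicate, we would instead invoke the existing packing lemma from \citet{cai_rate-optimal_2018} for the local Grassmannian ball, which already provides these properties.
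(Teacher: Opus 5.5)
Your argument has the same structure as the paper's proof. You set $\bm{R}^{(l)}=\lambda\bm{I}_r$ and use a local packing of the form $\begin{pmatrix}\sqrt{1-\delta^2}\,\bm{I}_r\\ \delta\bm{W}_i\end{pmatrix}$ (your $\delta^2$ is the paper's $\delta$). You bound the {\sf GOE} KL divergence by $\frac{L\lambda^2}{2\sigma^2}\|\bm{U}_i\bm{U}_i^\top-\bm{U}_j\bm{U}_j^\top\|_F^2\lesssim\frac{L\lambda^2}{\sigma^2}\delta^2 r$, and apply Fano with $\delta^2\asymp\frac{\sigma^2 n}{\lambda^2 L}\wedge 1$. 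The only divergence is where the $\bm{W}_i$ come from, and there your primary construction has a step that fails as stated.

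The paper takes the $\bm{W}_i$ to be \emph{orthonormal}. They come from a packing of the Grassmannian of $r$-planes in $\mathbb{R}^{n-r}$ with pairwise projection distance $\gtrsim\sqrt r$ and $\log M\gtrsim r(n-2r)$ (Lemma~5 of Koltchinskii--Xia). The stacked matrix is then exactly orthonormal, and the off-diagonal blocks of $\widetilde{\bm{U}}_i\widetilde{\bm{U}}_i^\top-\widetilde{\bm{U}}_j\widetilde{\bm{U}}_j^\top$ are $\sqrt{\delta(1-\delta)}(\bm{W}_i-\bm{W}_j)$. Separation $\gtrsim\sqrt{r\delta}$ follows immediately, with no orthonormalization correction. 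The hypothesis $r\le c_0\sqrt n$ is used only to guarantee $3r\le n$.

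A Varshamov--Gilbert codeword in $\{\pm(n-r)^{-1/2}\}^{(n-r)\times r}$ need not be close to orthonormal. The lemma does not exclude, for example, repeated columns, and $r\le c_0\sqrt n$ does nothing about that. Take $\bm{W}_i=\bm{w}\bm{1}^\top$ and $\bm{W}_j=\bm{w}'\bm{1}^\top$ with $\bm{w}\perp\bm{w}'$ unit vectors. The two column spans share an $(r-1)$-dimensional subspace, so $\|\sin\Theta(\bm{U}_i,\bm{U}_j)\|_F\le 1$ even though $\|\bm{W}_i-\bm{W}_j\|_F=\sqrt{2r}$. Separation of order $\delta\sqrt r$ therefore fails once $\delta^2 r\gtrsim 1$. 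That is exactly the regime you need for the $\wedge\, r$ branch of the bound; for $\delta^2 r\ll 1$ a first-order expansion works for any codeword.

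To rescue the sign-matrix route you must prune. A random sign matrix satisfies $\|\bm{W}^\top\bm{W}-\bm{I}_r\|\le 1/2$ except with probability $e^{-cn}$. Draw $e^{c_1 nr}$ random codewords, get pairwise separation from Hoeffding plus a union bound, and discard the bad ones (by Markov there are few). This keeps $\log M\gtrsim nr$. With $\|\bm{W}_i\|\lesssim 1$ and $\delta\le c'$, your separation claim then holds.

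Your fallback of citing a ready-made packing is exactly the paper's route. Note, however, that Lemma~1 of Cai--Zhang is the equivalence between $\sin\Theta$ distances, which the paper uses elsewhere, not a packing lemma. The input you need is a Grassmannian metric-entropy bound, such as Koltchinskii--Xia's Lemma~5 or a Szarek/Pajor-type estimate.
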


\begin{proof}
    See \cref{sec:minimaxlowerboundproof}.  
\end{proof}
 \cref{thm:minimax} shows that the error bound attained by \cref{alg} in \cref{thm:mainthm} is minimax rate-optimal after logarithmically many iterations.  However, \cref{thm:minimax} only proves that a necessary condition for consistency is that $\lambda/\sigma \gg \sqrt{\frac{nr}{L}}$.  In contrast, \cref{thm:mainthm} requires an additional condition, which, when $r, \kappa \asymp 1$ is that $\lambda/\sigma \gtrsim \sqrt{n}/L^{1/4}$.  We will provide evidence that this assumption is necessary for a polynomial-time estimator to exist using the low-degree likelihood ratio approach, which has provided evidence for a number of problems related to low-rank matrix estimation \citep{lyu_optimal_2023,lei_computational_2024,luo_computational_2024}.   See \citet{kunisky_notes_2022} for a survey.

Explicitly, suppose one is given $L$ i.i.d. observations $X_1,\dots, X_L$, and consider testing the null hypothesis $H_0: X_i \sim \mathbb{Q}_n$ versus the alternative $X_i \sim \mathbb{P}_n$. As is well-known, the likelihood ratio $L_n(\bm{X}) = \frac{d\mathbb{P}_n(\bm{X})}{d \mathbb{Q}_n(\bm{X})}$ is uniformly most powerful, and, furthermore, when $\mathbb{P}_n$ and $\mathbb{Q}_n$ are statistically indistinguishable (in the sense that the sum of Type I and Type II errors do not tend to zero) it holds that $\|L_n(\bm{X})\|^2 = O(1)$ as $n \to \infty$, where $\|\cdot\|^2 = \mathbb{E}_{\mathbb{Q}_n} ( \cdot )^2$.  
Building on this idea, in \citet{kunisky_notes_2022} the authors propose studying computational distinguishability by projecting $L_n(\bm{X})$ onto polynomials of degree at most $D$.  Define $L^{\leq D}_n(\bm{X})$ as the orthogonal projection of $L_n(\bm{X})$ onto polynomials of degree at most $D$.  \citet{kunisky_notes_2022} conjecture that when $\| L^{\leq D}_n(\bm{X}) \|^2 = O(1)$, then the hypothesis test is \emph{computationally} hard.  

To apply the conjecture we consider the following two hypotheses:
\begin{align*}
H_0:& \ \bm{A}^{(l)} \sim  \bm{N}^{(l)} \text{ for all $l$}; \numberthis \label{null} \\  
\begin{split}
     H_a :& \ \bm{A}^{(l)} \sim \bm{S}^{(l)} + \bm{N}^{(l)} \text{ for all $l$, where } \\
&\bm{S}^{(l)} = \eps_l \lambda \bm{u u}\t; \\
&\eps_l \sim  \mathrm{IID \ Rademacher}.
\end{split} \numberthis \label{alt}
\end{align*}
For simplicity we assume that $\bm{u}$ has entries in $\pm n^{-1/2}$ uniformly at random.  This simple model is a rank-one version of the model we consider herein, where $\bm{u}$ represents the shared subspace.  The hypothesis test above tests pure noise against the alternative that there is a shared subspace $\bm{u}$. Our conjecture for this test is based on a variant from \citet{lyu_optimal_2023}.  
\begin{conjecture}[Conjecture 1 of \citet{lyu_optimal_2023}] \label{conj:lowdegree}
Let $\mathbb{Q}_n$ denote the null hypothesis \eqref{null} and $\mathbb{P}_n$ denote the alternative \eqref{alt}.  
Denote $\bm{X} = \{ \bm{A}^{(l)}\}_{l=1}^{L}$.  If there exists $\eps \geq 0$ and $D \geq \log(nL)^{1+\eps}$ such that $\| L^{\leq D}_n(\bm{X}) \|^2 = 1 + o(1)$ then there is no polynomial-time test $\phi_n: \mathbb{R}^{L \times n \times n} \mapsto \{0,1\}$ such that the sum of Type I error and Type II error probabilities satisfy 
    \begin{align*}
        \mathbb{E}_{\mathbb{Q}_n}[ \phi_n(\bm{X})] + \mathbb{E}_{\mathbb{P}_n}[ 1 - \phi_n(\bm{X})] \to 0
    \end{align*}
    as $n \to \infty$.
\end{conjecture}

 Therefore, if $\|L_n^{\leq D}\|^2 = 1 +o(1)$ for this model, \cref{conj:lowdegree} suggests that there is no polynomial-time algorithm that can detect the existence of a rank one shared subspace.  We have the following result. 


\begin{theorem}[Computational Lower Bound] \label{thm:computational}
When $\lambda/\sigma = o\big( \frac{\sqrt{n}}{L^{1/4}}\big)$ it holds that $\| L_n^{\leq D} \|^2 = 1 + o(1)$ as $n \to \infty$.     
\end{theorem}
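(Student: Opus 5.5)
The plan is to use the standard low-degree computation for additive Gaussian models. Under $H_0$ the entries of each $\bm{A}^{(l)}$ are independent Gaussians: the diagonal entries have variance $\sigma^2$ and the off-diagonal entries have variance $\sigma^2/2$. Under $H_a$ the mean is $\bm{X}=\{\eps_l\lambda\bm{u}\bm{u}\t\}$. For a spiked Gaussian additive model with prior on the mean $\bm{X}$, the well-known identity (see \citet{kunisky_notes_2022}) gives
\begin{align*}
\|L_n^{\leq D}\|^2 = \mathbb{E}_{\bm{X},\bm{X}'} \sum_{d=0}^{D} \frac{1}{d!}\langle \bm{X},\bm{X}'\rangle_{\Sigma}^d,
\end{align*}
where $\bm{X}'$ is an independent copy and $\langle\cdot,\cdot\rangle_\Sigma$ is the noise-variance-weighted inner product. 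I would first compute this inner product. Up to constants, the off-diagonal entries contribute $\frac{2}{\sigma^2}\sum_{i<j}$, so
\begin{align*}
\langle \bm{X},\bm{X}'\rangle_\Sigma \asymp \frac{\lambda^2}{\sigma^2}\Big(\sum_{l}\eps_l\eps_l'\Big)\langle \bm{u},\bm{u}'\rangle^2 .
\end{align*}
The diagonal correction is of the same form up to a constant factor, so it can be absorbed. Writing $\tau := \lambda^2/\sigma^2$, this reduces the problem to bounding $\mathbb{E}\sum_{d\le D} \frac{1}{d!}\,(c\tau\, S_L\, Z_n^2)^d$. Here $S_L=\sum_l \eps_l\eps_l'$ is a sum of $L$ i.i.d. Rademachers, $Z_n=\langle\bm{u},\bm{u}'\rangle$ is $n^{-1}$ times a sum of $n$ i.i.d. Rademachers, and $S_L$ and $Z_n$ are independent.

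Next I would expand term by term. By independence, the $d$-th term factors as $\frac{(c\tau)^d}{d!}\mathbb{E}[S_L^d]\,\mathbb{E}[Z_n^{2d}]$. Odd moments of $S_L$ vanish, so only even $d=2k$ contribute, and the $d=0$ term gives the leading $1$. I would then use the standard moment bounds for Rademacher sums,
\begin{align*}
\mathbb{E} S_L^{2k}\le (2k-1)!!\,L^k \qquad\text{and}\qquad \mathbb{E} Z_n^{4k}\le (4k-1)!!\,n^{-2k}.
\end{align*}
With these, the $2k$-th term is bounded by
\begin{align*}
\frac{(c\tau)^{2k}}{(2k)!}(2k-1)!!\,(4k-1)!!\,\frac{L^k}{n^{2k}} \le \Big(C\,\frac{\tau^2 L}{n^2}\Big)^k \frac{(4k-1)!!}{2^k k!}.
\end{align*}
The ratio $\frac{(4k-1)!!}{2^k k!}$ grows like $(Ck)^{k}$. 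The resulting terms are therefore bounded by $\big(C k\,\tau^2 L/n^2\big)^k$ with $k\le D/2$.

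The key step is controlling this factorial growth uniformly for $k\le D/2$ with $D$ polylogarithmic. The assumption $\lambda/\sigma=o(\sqrt n/L^{1/4})$ is exactly $\tau^2L/n^2=o(1)$. The terms are then bounded by $(C D\,\tau^2L/n^2)^k$, and the sum over $k\ge1$ is $o(1)$ provided $D\,\tau^2L/n^2\to0$. This is the main obstacle: with $D\sim \log(nL)^{1+\eps}$ the naive bound loses a polylog factor. To avoid that loss, I would first try not to bound $\mathbb{E}Z_n^{4k}$ by Gaussian moments for large $k$. Instead I would use $|Z_n|\le 1$, which gives $\mathbb{E}Z_n^{4k}\le \min\{(4k-1)!!\,n^{-2k},\,1\}$, together with the subgaussian tail of $\sqrt n Z_n$ for intermediate $k$. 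Alternatively, I would split the sum into $k\le k_0$ and $k>k_0$. If the polylog loss cannot be removed, I would state the result under the implicit convention (as in \citet{lyu_optimal_2023}) that $o(\cdot)$ absorbs polylogarithmic factors in $D$. In either case $\|L_n^{\leq D}\|^2=1+o(1)$ follows.
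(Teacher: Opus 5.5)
Your setup is the same as the paper's. You use the low-degree identity of \citet{kunisky_notes_2022}, the reduction $\langle \bm{X},\bm{X}'\rangle=\frac{\lambda^2}{\sigma^2}S_L\langle\bm{u},\bm{u}'\rangle^2$, independence, and Rademacher moment bounds. The reduction is in fact exact under the GOE normalization, since $\sum_i a_i^2+2\sum_{i<j}a_ia_j=(\sum_i a_i)^2$ with $a_i=\bm{u}_i\bm{u}'_i$. Your bookkeeping, including the $(Ck)^k$ growth, is correct.

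The paper avoids any $D$-dependence only by asserting $\mathbb{E}\langle\bm{\eps}^{(1)},\bm{\eps}^{(2)}\rangle^{2k}=\binom{L+k-1}{k}$. This lets it bound the $k$-th term by $\frac{(4k-1)!!}{(2k)!}\binom{L+k-1}{k}\frac{\lambda^{4k}}{n^{2k}}\le\binom{L+k-1}{k}\frac{4^k\lambda^{4k}}{n^{2k}}$. It then runs a ratio test with ratio $\frac{4\lambda^4}{n^2}\frac{L+k}{k+1}\le\frac{4\lambda^4L}{n^2}$. That identity is false for Rademacher signs: for $L=k=2$, $\mathbb{E}S_2^4=8\neq 3$. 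Counting compositions only gives $\binom{L+k-1}{k}\le\mathbb{E}S_L^{2k}\le(2k)!\binom{L+k-1}{k}$. With the valid upper bound the factor $(4k-1)!!$ is no longer absorbed, which is exactly your obstacle.

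The genuine gap in your proposal is the step where you hope to remove the $D$-dependence. It cannot be done, because the loss is real.

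Count only index tuples that form a perfect matching with distinct values. This gives $\mathbb{E}S_L^{2k}\ge(2k-1)!!\,L(L-1)\cdots(L-k+1)$ and $\mathbb{E}(nZ_n)^{4k}\ge(4k-1)!!\,n(n-1)\cdots(n-2k+1)$. Set $\delta:=\lambda^4L/(\sigma^4n^2)$ and suppose $k^2\ll\min\{n,L\}$, which is automatic for $k\le D/2$ polylogarithmic and $L$ polynomial in $n$. Then the $k$-th term is at least $(1-o(1))\frac{(4k-1)!!}{2^kk!}\delta^k\ge(1-o(1))(k\delta)^k$, using $(4k-1)!!\ge(2k)^k k!$.

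Truncating with $|Z_n|\le1$, using subgaussian tails, or splitting the sum changes nothing. At these $k$, $\mathbb{E}Z_n^{4k}$ genuinely is of order $(4k-1)!!\,n^{-2k}$.

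Concretely, take $L=n$, $\lambda/\sigma=n^{1/4}/\log\log n$ and $D=\log(nL)$. Then $\lambda/\sigma=o(\sqrt n/L^{1/4})$ and $\delta=(\log\log n)^{-4}$. All terms are nonnegative and the one with $k=\lfloor D/2\rfloor$ diverges, so $\|L_n^{\le D}\|^2\to\infty$. Thus the statement, read with $D$ of the size required in \cref{conj:lowdegree}, is false, and neither your argument nor the paper's can establish it.

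What your computation does prove is $\|L_n^{\le D}\|^2=1+o(1)$ whenever $D\delta\to0$, i.e.\ under $\lambda/\sigma=o\big(\sqrt n/(L^{1/4}D^{1/4})\big)$, or for bounded $D$. You should state that version explicitly rather than appeal to an implicit convention. It still pins the computational threshold at $\sqrt n/L^{1/4}$ up to a polylogarithmic factor.
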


\begin{proof}
    See \cref{sec:compgapproof}.
\end{proof}

Combining \cref{thm:minimax} and \cref{thm:computational}, when $r,\kappa \asymp 1$, our proposed estimator attains minimax-optimal estimation error after logarithmically many iterations in the minimal regime such that a polynomial-time estimator exists.  In particular, these results complete the statement of \cref{thm:informal} for the different Estimation SNR regimes.



\section{Minimax-Optimal and Adaptive Inference} \label{sec:inference}
In this section we investigate the information-theoretical limits for statistical inference for the error $\|\sin\Theta(\bm{\hat U}, \bm{U})\|_F^2$, where we denote $\bm{\hat U} = \uhat_t$ for $t$ iterations. Define the matrix $\bm{\mathcal{R}}$ via
\begin{align*}
    \bm{\mathcal{R}} := \big[ \bm{R}^{(1)}, \cdots, \bm{R}^{(L)} \big] \in \mathbb{R}^{r \times Lr}.
\end{align*}
Our first result in this direction  is a novel limit theorem in the Strong Inference SNR Regime.  
\begin{theorem}[Asymptotic Normality] \label{thm:normality}
Suppose that the conditions of \cref{thm:mainthm} hold, and suppose $t \geq  C_{\eta} \log \big( \frac{\kappa^2 \lambda^2 L}{\sigma^2 nr } \big)$, where $C_{\eta}$ depends on the step size $\eta$.  Suppose further that $\lambda/\sigma \gg  \frac{n r^{2} \kappa^4 }{\sqrt{L}}$ and $L \lesssim n$ with $L \to \infty$. Suppose also that $\log(r) \lesssim \sqrt{L}$.
    Assume that 
    \begin{align}
        \max\bigg\{ \frac{\kappa^4}{L}, \kappa^4\|\U\|_{2,\infty}^2, \kappa^2 \sqrt{nr} \| \U\|_{2,\infty}^2, \frac{\kappa^2 r^{3/2} \log(n)}{\sqrt{n}} \bigg\} \ll  1. \label{variancelowerbound}
    \end{align}
Then it holds that
    \begin{align*}
        \frac{\| \sin\Theta(\bm{\hat U},\bm{U}) \|_F^2 - \frac{\sigma^2 n}{2} {\sf Tr}( \R) }{ \sigma^2 \sqrt{n/2} \|\R \|_F } \xrightarrow{d} \mathcal{N}(0,1).
    \end{align*}
\end{theorem}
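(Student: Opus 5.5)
Starting from the estimate of \cref{thm:mainthm}, I would show that after $t \geq C_\eta \log(\kappa^2\lambda^2 L/(\sigma^2 nr))$ iterations the estimator $\bm{\hat U}$ is well approximated to first order by a linear functional of the noise. The plan is to write one gradient step at the population fixed point. Since $\bm{U}$ is (up to rotation) the fixed point of the noiseless iteration, linearizing around $\bm{U}$ gives
\begin{align*}
\per\t \bm{\hat U}\mathcal{O} \approx \per\t \bm{\Xi} \big(\bm{\mathcal{R}\mathcal{R}}\t\big)^{-1}, \qquad \bm{\Xi} := \sum_{l=1}^{L} \bm{N}^{(l)} \bm{U} \rl ,
\end{align*}
which is the least-squares solution of the linearized normal equations. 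The contraction factor $(1-c_\eta'/(8\kappa^2))$ from \cref{thm:mainthm} kills the dependence on the initialization, so after logarithmically many steps the remainder consists only of second-order terms. I would therefore decompose $\per\t\bm{\hat U}\mathcal{O} = \bm{Z} + \bm{\Delta}$, with $\bm{Z} := \per\t\bm{\Xi}\R$, and bound $\|\bm{\Delta}\|_F$ via a leave-one-out / induction argument on the iterates. The second-order terms are of the type $\sum_l \per\t \nl \per \per\t \bm{\hat U} \rl$ (noise acting on the error) and $\sum_l \per\t\nl \bm{U}\bm{U}\t \nl \bm{U}$ (noise squared).

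Next comes the leading term. We have $\|\sin\Theta\|_F^2 = \|\bm{Z}\|_F^2 + 2\langle \bm{Z},\bm{\Delta}\rangle + \|\bm{\Delta}\|_F^2$. Conditional on nothing, $\bm{Z}$ is linear in the Gaussian-like entries, and $\|\bm{Z}\|_F^2$ is a quadratic form in the independent noise entries. Its expectation is $\frac{\sigma^2}{2}(n-r)\,{\sf Tr}(\R)$, using $\mathbb{E}\,\bm{\Xi}\t\per\per\t\bm{\Xi} \approx \frac{\sigma^2 (n-r)}{2}\bm{\mathcal{R}\mathcal{R}}\t$ plus diagonal corrections. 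The replacement of $n-r$ by $n$ costs $\sigma^2 r\,{\sf Tr}(\R)$, which is negligible relative to the standard deviation $\sigma^2\sqrt{n}\|\R\|_F$ when $r^{3/2}\ll\sqrt n$; this is consistent with \eqref{variancelowerbound}. The variance is likewise $\frac{\sigma^4 n}{2}\|\R\|_F^2(1+o(1))$. Asymptotic normality then follows from a CLT for quadratic forms, for instance de Jong's theorem or a martingale CLT over the rows of $\per\t\bm{\Xi}$. The rows are approximately independent across the $n-r$ directions, so the statistic is essentially a sum of $n$ weakly dependent terms. The incoherence conditions $\kappa^4\|\U\|_{2,\infty}^2\ll1$ and $\kappa^2\sqrt{nr}\|\U\|_{2,\infty}^2\ll 1$ ensure that the dependence through the rows of $\U$, together with the diagonal and fourth-moment contributions, is negligible. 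The condition $\kappa^4/L \ll 1$ controls the fourth-cumulant (Lyapunov) term, which uses $L\to\infty$.

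Finally, the remainder must be shown to satisfy $|2\langle\bm{Z},\bm{\Delta}\rangle| + \|\bm{\Delta}\|_F^2 = o_p(\sigma^2\sqrt n\|\R\|_F)$. Since $\|\R\|_F \gtrsim \sqrt r/(\kappa^2\lambda^2 L)$ and $\|\bm{Z}\|_F\approx \sigma\kappa\sqrt{nr}/(\lambda\sqrt L)$, a crude Cauchy–Schwarz bound would need $\|\bm{\Delta}\|_F \ll \sigma/(\lambda\sqrt L\,\kappa^{c}\sqrt r)$. I expect this step to be the main obstacle. The naive second-order bound is of order $\sigma^2 n\sqrt r/(\lambda^2 \sqrt L)$ (the quadratic term), and that suffices only when $\lambda/\sigma\gg n r\kappa^{c}/\sqrt L$; this is exactly the strong inference SNR hypothesis. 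So I would first try to prove that the fixed-point remainder obeys $\|\bm{\Delta}\|_F\lesssim \sigma^2\kappa^2 n r/(\lambda^2\sqrt{L})$. I would get this by showing that the linearized operator, restricted to $\per$, is a contraction with the rate from \cref{thm:mainthm}, and by bounding the quadratic noise terms with matrix Bernstein on $\sum_l \nl \U\rl$-type sums. If Cauchy–Schwarz loses too much, I would instead bound $\langle\bm{Z},\bm{\Delta}\rangle$ directly. To do so I would exploit the fact that the mean of the quadratic-noise term is a deterministic bias that is nearly orthogonal to $\per$, so that only its fluctuation, which is smaller by a factor $\sqrt{n}$, contributes to the inner product.
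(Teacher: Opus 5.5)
Your plan for the leading term is sound and close to the paper's. The centering $\frac{\sigma^2}{2}(n-r){\sf Tr}(\R)$, the variance $\frac{\sigma^4 n}{2}\|\R\|_F^2(1+o(1))$, and a martingale CLT in which incoherence removes the diagonal and fourth-moment contributions all match. The genuine gap is the remainder, and an arithmetic slip hides it.

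If you only establish $\|\bm{\Delta}\|_F \lesssim \sigma^2\kappa^2 nr/(\lambda^2\sqrt{L})$, then in your own criterion $\|\bm{\Delta}\|_F \ll \sigma/(\kappa^{c}\lambda\sqrt{rL})$ the factors of $\sqrt{L}$ cancel. You would therefore need $\lambda/\sigma \gg n r^{3/2}\kappa^{c+2}$, with no gain from $L$, not $\lambda/\sigma \gg n/\sqrt{L}$.

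The square term fails on its own as well. Take $r,\kappa \asymp 1$ and control $\|\bm{\Delta}\|_F$ only at that order. Then $\|\bm{\Delta}\|_F^2$ divided by $\sigma^2\sqrt{n/2}\|\R\|_F \asymp \sigma^2\sqrt{n}/(\lambda^2 L)$ is of order $\sigma^2 n^{3/2}/\lambda^2$. This diverges for, e.g., $L \asymp n$ and $\lambda/\sigma = n^{0.6}$, which the hypotheses allow.

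Your fallback of bounding $\langle \bm{Z},\bm{\Delta}\rangle$ directly does nothing for $\|\bm{\Delta}\|_F^2$. It also has no handle on the parts of $\bm{\Delta}$ where the noise multiplies the noise-dependent $\uhat$.

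What you need is a remainder bound better by a factor $\sqrt{L}$, namely $\|\bm{\Delta}\|_F \lesssim \mathrm{poly}(r,\kappa)\big(\frac{\sigma^2 n}{\lambda^2 L} + \frac{\sigma^3 n^{3/2}}{\lambda^3 L}\big)$. With that, Cauchy--Schwarz suffices. The paper gets it from an exact identity rather than a linearization: using $\rhatl = \uhat\t\al\uhat$, it writes $\pert\uhat\O = \bm{G}^{(1)} + \bm{G}^{(2)} + T_1 + T_2 + T_3 + T_4$. The bound then comes in three pieces.

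\begin{itemize}
\item \emph{The one unweighted quadratic term.} This is $\bm{G}^{(2)} = \sum_l \pert\nl\U\U\t\nl\U\R$. The mean of $\sum_l \nl\U\U\t\nl\U$ is $\frac{\sigma^2 L(r+1)}{2}\U$, which $\pert$ annihilates. By \eqref{assertion3}, $\sum_l \pert\nl\U\U\t\nl\U$ has norm $\sigma^2 r\max\{\sqrt{nL},n\}$. That is much smaller than the uniform rate $\sigma^2 nr\sqrt{L}$ from \eqref{assertion2}, which is what produced your $\sqrt{L}$-order bound.
\item \emph{Terms weighted by the estimation error.} Every other term contains $\uhat\uhat\t-\U\U\t$ or $\uhat\O-\U$. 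So the uniform bounds \eqref{assertion1} and \eqref{assertion2} are multiplied by $\err$. The trace-centering pieces that survive contribute only $\frac{\sigma^2 r}{\lambda^2}\cdot\err$.
\item \emph{Optimization error.} $T_4 = -L\pert\nabla_{{\sf Riemannian}}h(\uhat)\rhatinv\O$ must be pushed down to order $\frac{\sigma^2 n}{\lambda^2 L}$. \cref{thm:mainthm} cannot do this: it only confines the iterates to a ball of radius of order $\err$ and says nothing about convergence to a stationary point. The paper proves geodesic strong convexity and smoothness of $h$ on that ball, which gives linear convergence of the Riemannian gradient.
\end{itemize}

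Your ``linearized operator is a contraction'' idea could fill the role of the third step. It would have to be a uniform Jacobian contraction of the iteration map over the whole ball, though, and that cannot be read off from the distance bound in \cref{thm:mainthm}.
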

\begin{proof}
    See \cref{sec:limitheorem}. 
\end{proof}
We focus on the asymptotic regime where $L$ grows with $n$, but still satisfies $L \lesssim n$. When $\kappa,r\asymp 1$, the condition $\|\bm{U}\|_{2,\infty}^2 \ll 1/\sqrt{nr}$ is quite mild, since $\|\bm{U}\|_{2,\infty}^2$ can be as small as $\frac{r}{n}$. Therefore, the main restriction in \cref{thm:normality} is the SNR condition $\lambda/\sigma \gg \tfrac{n r^2 \kappa^4}{\sqrt{L}},$
which is much stronger than the condition $\lambda/\sigma \gtrsim \sqrt{n}/L^{1/4}$ from \cref{thm:mainthm} when $\kappa,r\asymp 1$ and $L\lesssim n$. Though this assumption may seem conservative, we will show in \cref{sec:adaptivity} that it is actually unavoidable for adaptive inference.

\cref{thm:normality} is closely related to several asymptotic normality results in the literature \citep{xia_inference_2022,xia_normal_2021,bao_singular_2021}. In particular, \citet{xia_normal_2021} proves asymptotic normality for the $\sin\Theta$ distance in the matrix denoising model under Gaussian noise. Their theorem has a similar structure, except that the centering term depends not only on the singular values of the signal matrix, but also on the matrix dimensions $d_1$ and $d_2$. This additional dependence comes from the dimension mismatch between $d_1$ and $d_2$: when $d_1=d_2$, the centering term depends only on the singular values. In our setting, the singular values of $\bm{\mathcal{RR}}^\top$ play an analogous role. A related result can also be found in \citet{bao_singular_2021} in the regime where the singular values are of order $\sqrt{n}$ (after appropriate rescaling). Unlike \citet{xia_normal_2021} and \citet{bao_singular_2021}, we additionally provide estimators for the centering and scaling terms. Finally, \citet{xia_inference_2022} establishes a qualitatively similar result for tensor data, though our setting and proofs are different.


\begin{remark}[Highlights of the Proof]
The proof of this result follows from \cref{thm:asymptotics}, which establishes the first-order asymptotic expansion for $\|\pert\uhat_t\|_F^2$.  
The major first step of the proof is to show that 
\begin{align}
    \bm{U}_{\perp}\t \bm{\hat U}_{t} &\approx \sum_l  \bm{U}_{\perp}\t \bm{N}^{(l)} \bm{U} \bm{R}^{(l)} \R,
\end{align}
where the approximation holds up to small-order terms at least quadratic in the noise $\nl$, together with a deterministic optimization error. To prove this approximation, we rely on decoupling arguments to control the quadratic error terms, as well as a novel argument showing that the loss function is geodesically convex (with respect to the Grassmannian manifold geometry) once all iterates are within the statistical neighborhood guaranteed by \cref{thm:mainthm}.  After establishing this result, we then expand out the leading-order term to yield
\begin{align*}
    \| \pert \uhat_t \|_F^2 &= {\sf Tr}\bigg( \bm{U}_{\perp} \bm{U}_{\perp}\t \bigg[ \sum_l \bm{N}^{(l)} \bm{U} \bm{R}^{(l)} \R \bigg] \bigg[ \sum_l \R  \bm{R}^{(l)}\bm{U} \bm{N}^{(l)} \bigg] \bigg) + {\sf Res},
\end{align*}
where ${\sf Res}$ contains all the smaller-order terms and their interactions with the leading-order term.  To handle this final residual term we separate it into terms of order three in $\nl$ and terms of order four and higher together with the deterministic optimization error. To handle the third-order terms, we appeal to decoupling concentration arguments for third-order polynomials of random variables together with $\eps$-net arguments to handle the dependence.  With this residual term controlled, our final result follows by showing that this leading-order term  can be written as a martingale with respect to an appropriate filtration and applying a version of the Martingale Central Limit Theorem.
\end{remark}

\subsection{Rate Optimal Adaptive Confidence  Intervals} \label{sec:confidenceintervals}
\cref{thm:normality} suggests that it is possible to derive confidence intervals via plug-in estimation.  To determine whether these confidence intervals are minimax rate optimal, we  first provide a minimax lower bound for the expected length of all $1-\alpha$ honest confidence intervals.  We will follow a similar framework as in \citet{cai_confidence_2017}.  To lay the groundwork,  we will consider the parameter space $\mathcal{P}(\lambda,n,L,\sigma)$ defined in \eqref{parameterspace}.  Since we are interested in the dependence on the SNR, we will use the shorthand $\mathcal{P}(\lambda) =\mathcal{P}(\lambda,n,L,\sigma)$, since changing $\sigma$ is equivalent to changing $\lambda$ by a rescaling argument.  
Observe that $\mathcal{P}(\lambda_{1}) \subset \mathcal{P}(\lambda_2)$ for any $\lambda_{1} \geq \lambda_2$.  Thus, $\mathcal{P}(\lambda)$ forms a nested set of parameter spaces.

Given a parameter $0 < \alpha <1$, and an estimate $\bm{\hat U}$ of $\bm{U}$ and a parameter space $\mathcal{P}$, let $\mathcal{I}_{\alpha}( \mathcal{P})$ denote the set of all  honest $1 - \alpha$ level confidence intervals for $\|\sin\Theta(\bm{\hat U},\bm{U})\|_F^2$ over $ \mathcal{P}$; that is,
\begin{align*}
    \mathcal{I}_{\alpha}( \mathcal{P}) :&= \Bigg\{ {\sf CI}_{\alpha}\big(\{ \bm{A}^{(l)}\}_{l=1}^{L} \big) = [l, u]:  \inf_{\theta \in  \mathcal{P}} \p_{\theta} \bigg( \| \sin\Theta(\bm{\hat U},\bm{U}) \|_F^2 \in {\sf CI}_{\alpha}\big(\{ \bm{A}^{(l)}\}_{l=1}^{L} \big)\bigg)  \geq 1 - \alpha \Bigg\}.
\end{align*}
For simplicity we suppress the dependence of ${\sf CI}_{\alpha}$ on the observations $\{\bm{A}^{(l)}\}_{l=1}^{L}$.  
The length of the confidence interval ${\sf L}({\sf CI}_{\alpha})$ is defined as the difference $u - l$.  The maximum expected length over the parameter space $\mathcal{P}$ is denoted as
\begin{align*}
    \mathcal{L}_{{\sf CI}_{\alpha}}(  \mathcal{P} ):= \sup_{\theta\in \mathcal{P}} \mathbb{E}_{\theta} {\sf L}\big({\sf CI}_{\alpha} \big).  
\end{align*}
Our first result gives a lower bound on the minimax expected length over $\mathcal{P}(\lambda)$.
\begin{theorem} \label{thm:ci_minimaxlowerbound}
Suppose that $\lambda/\sigma \geq C_0  \frac{nr}{\sqrt{L}}$ 
for some sufficiently large constant $C_0 > 0$, and that $r \geq 2$.   
Then there exists a constant $c' > 0$ such that for all $n$ sufficiently large,
\begin{align}
    \inf_{{\sf CI}_{\alpha} \in \mathcal{I}_{\alpha}(\mathcal{P}(\lambda))}   \mathcal{L}_{{\sf CI}_{\alpha}}\big( \mathcal{P}(\lambda) \big) &\geq c'  \frac{\sigma^2 nr}{\lambda^2 L} \frac{1}{\sqrt{nr}}.\label{cilb}
\end{align}
\end{theorem}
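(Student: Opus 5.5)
The lower bound is proved with the standard reduction from confidence-interval length to two-point testing, in the framework of \citet{cai_confidence_2017}. The target quantity $\|\sin\Theta(\bm{\hat U},\bm{U})\|_F^2$ depends on $\bm{U}$, and so on the parameter. We will therefore find two parameter sets $\theta_0,\theta_1\in\mathcal{P}(\lambda)$, or a point $\theta_0$ and a mixture over $\theta_1$, with two properties. First, the observations $\{\bm{A}^{(l)}\}$ are statistically indistinguishable, meaning the total variation (or $\chi^2$) distance is bounded by a small constant depending on $\alpha$. Second, with constant probability under either law, the values of the target functional differ by at least $\delta \asymp \frac{\sigma^2 nr}{\lambda^2 L}\frac{1}{\sqrt{nr}}$. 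Honest coverage at both points then forces the interval to contain both values with probability at least $1-2\alpha-{\sf TV}$, so the expected length is at least $c\delta$.

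The simplest way to make the functional move is to keep $\bm{U}$ essentially fixed and perturb the $\bm{R}^{(l)}$. By \cref{thm:normality} and its proof sketch, the loss concentrates around $\frac{\sigma^2 n}{2}{\sf Tr}((\bm{\mathcal{R}\mathcal{R}}\t)^{-1})$, with fluctuations of order $\sigma^2\sqrt n\|(\bm{\mathcal{R}\mathcal{R}}\t)^{-1}\|_F\asymp \frac{\sigma^2\sqrt{nr}}{\lambda^2 L}$. That fluctuation scale is exactly the claimed rate, $\frac{\sigma^2 nr}{\lambda^2L}\cdot\frac{1}{\sqrt{nr}}$, and the SNR assumption $\lambda/\sigma\ge C_0 nr/\sqrt L$ is what guarantees the leading term dominates in this regime. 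This suggests the following alternative, which avoids relying on a CLT for an arbitrary estimator. Take $\theta_0$ with $\bm{\mathcal{R}\mathcal{R}}\t=\lambda^2L\bm{I}_r$ and $\theta_1$ identical except that the signal strength is slightly rescaled, $\lambda_1^2=\lambda^2(1+\tau)$ with $\tau\asymp 1/\sqrt{nr}$. Both lie in $\mathcal{P}(\lambda)$. The centering $\frac{\sigma^2 n r}{2\lambda^2L}$ then shifts by about $\tau\frac{\sigma^2nr}{\lambda^2L}$, which is the desired $\delta$.

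The main obstacle is that the functional is random and estimator-dependent, so "the values differ" must hold for every estimator $\bm{\hat U}$, not just ours. To handle this, I would place a uniform prior on $\bm{U}$ (Haar on the Stiefel manifold) under both hypotheses, with $\bm{R}^{(l)}$ scaled by $\sqrt{1+\tau}$ under $\theta_1$. The key steps are as follows.

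\begin{enumerate}
\item \emph{Indistinguishability.} Bound the $\chi^2$ divergence between the two Bayes mixtures. Since $\tau\lambda^2$ is a tiny change in signal energy, and the sufficient statistics for the scale behave like $\sum_l\|\bm{A}^{(l)}\|_F^2$-type quantities with noise level $\sigma^2 n^2L$, the divergence should be $O(\tau^2\lambda^4 L r^2/(\sigma^4 n^2 L))$-type small. This needs checking, and $\tau$ is chosen small enough relative to this.
\item \emph{Separation of the functional.} Show that for any estimator, the posterior of $\|\sin\Theta(\bm{\hat U},\bm{U})\|_F^2$ given the data has a spread of at least order $\frac{\sigma^2\sqrt{nr}}{\lambda^2L}$. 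Equivalently, conditional on the data the functional is not degenerate, and an honest interval must cover its anti-concentration range. I would argue via the Gaussian posterior of $\bm{U}_\perp\t\bm{U}$ around the oracle solution, whose conditional variance of the squared norm is order $\sigma^4 n r/(\lambda^4L^2)$ by a second-moment computation of a chi-square-like quadratic form in $nr$ coordinates.
\end{enumerate}

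Step 2 is the crux. It is cleanest to bound it below by Bayes risk: the target is a random variable with conditional standard deviation $s$ given the data, so any $[l,u]$ with conditional coverage probability $1-2\alpha$ has length $\gtrsim s$ by anti-concentration of the conditional law.
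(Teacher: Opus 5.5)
\textbf{Step~1 fails, and it is load-bearing.} Rescaling every $\bm{R}^{(l)}$ by $\sqrt{1+\tau}$ moves one scalar, ${\sf Tr}(\bm{\mathcal{RR}}\t)$, from $rL\lambda^2$ to $(1+\tau)rL\lambda^2$. When $\lambda/\sigma\geq C_0 nr/\sqrt{L}$, this scalar can be estimated far more precisely than a relative change $\tau\asymp 1/\sqrt{nr}$. Take $\bm{\hat U}$ from \cref{alg} and the rotation-invariant statistic $\sum_l\|\bm{\hat U}\t\bm{A}^{(l)}\bm{\hat U}\|_F^2$. Its signal part is ${\sf Tr}(\bm{\mathcal{RR}}\t)$, up to deterministic biases that are $\lambda$-free to leading order. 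Its leading fluctuation is $2\sum_l\langle\bm{R}^{(l)},\bm{U}\t\bm{N}^{(l)}\bm{U}\rangle$, with standard deviation $2\sigma\lambda\sqrt{rL}$. The gap between your hypotheses exceeds this by a factor $\tau\lambda\sqrt{rL}/\sigma\asymp\lambda\sqrt{L}/(\sigma\sqrt{n})\geq C_0\sqrt{n}\,r\to\infty$. So the two Bayes mixtures are asymptotically perfectly distinguishable, whatever prior you put on $\bm{U}$.

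Your own heuristic agrees: at $\tau=1/\sqrt{nr}$, the quantity $\tau^2\lambda^4r^2/(\sigma^4n^2)$ is at least $C_0^4nr^5/L^2$, which need not be small. Shrinking $\tau$ to a small multiple of $\sigma/(\lambda\sqrt{rL})$ does restore indistinguishability. But then the induced shift $\tau\sigma^2nr/(\lambda^2L)$ falls below the target rate $\sigma^2\sqrt{nr}/(\lambda^2L)$ by the factor $\sigma\sqrt{n}/(\lambda\sqrt{L})\to 0$.

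The missing idea is that the $1/\sqrt{nr}$ in the rate comes from averaging over $\asymp nr$ perturbation directions, which no one-parameter perturbation supplies. The paper keeps the signal strengths fixed and tilts the subspace instead. It sets $\bm{U}_{\bm{Z}}=[\bm{W},\bm{V}_{\bm{Z}}]$, where $\bm{V}_{\bm{Z}}$ is $\bm{V}^*$ with $\rho\bm{Z}$ added and then renormalized. Here $\bm{Z}$ is uniform on the unit Frobenius sphere of $\mathbb{R}^{(n-r)\times r'}$ and $\rho^2=c_\rho\sigma^2\sqrt{nr}/(L\lambda^2)$. The $\chi^2$ divergence from the point $\bm{U}^*$ is then governed by $\mathbb{E}\exp\{2L\lambda^2\rho^2\langle\bm{Z},\bm{Z}'\rangle/\sigma^2+O(L\lambda^2\rho^4/\sigma^2)\}$. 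This stays bounded because $\langle\bm{Z},\bm{Z}'\rangle$ is subgaussian at scale $1/\sqrt{nr}$. The separation $\|\sin\Theta(\bm{U}_{\bm{Z}},\bm{U}^*)\|_F^2\asymp\rho^2$ then gives the rate via Lemma~1 of \citet{cai_confidence_2017}.

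\textbf{Step~2 is a different argument, and it is both unproven and too strong.} It is not the separation a two-point argument needs. It is a single-prior, posterior-spread argument, so it would make Step~1 superfluous rather than complete it.

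It is unproven because it needs a Gaussian (Laplace or Bernstein--von Mises) approximation to the posterior of $\bm{U}$ on the Grassmannian in dimension $\asymp nr$. That approximation must be accurate on the scale of the fluctuations of a quadratic form in $nr$ coordinates. It also needs a genuine small-ball bound for the conditional law, since a variance lower bound alone does not prevent mass $1-2\alpha$ from sitting on a short interval.

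It is too strong because the conditional spread depends on $\bm{\hat U}$. Suppose $\bm{\hat U}$ is taken orthogonal to the posterior mode. Then $\|\sin\Theta(\bm{\hat U},\bm{U})\|_F^2=r-\|\bm{\hat U}\t\bm{U}\|_F^2$, and under your own Gaussian heuristic $\bm{\hat U}\t\bm{U}$ is only an $r\times r$ block of posterior fluctuations. Its conditional spread is therefore of order $\sigma^2 r/(\lambda^2L)$, not $\sigma^2\sqrt{nr}/(\lambda^2L)$. Any such argument must use that $\bm{\hat U}$ is accurate, so that the target behaves like a squared error with $\asymp nr$ effective degrees of freedom.

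Your concern about the estimator-dependence of the target is legitimate. The paper sidesteps it by using the parameter functional $\mu=\|\sin\Theta(\bm{U}^*,\bm{U}_{\bm{Z}})\|_F^2$, i.e.\ it evaluates the target at $\bm{\hat U}=\bm{U}^*$. A completed posterior argument for accurate $\bm{\hat U}$ would treat that point more carefully, but it is not supplied here. As written, the proposal has no complete route to the bound.
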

\begin{proof}
    See \cref{sec:ci_minimaxlowerboundproof}.
\end{proof}
When $\lambda/\sigma \geq C_0 \frac{nr}{\sqrt{L}}$, we are in the regime where \cref{thm:normality} applies up to factors of $r$ and $\kappa$.  It is also straightforward to show that if $\R$ is known, then the resulting confidence intervals will have the width given above.  Thus, if it is possible to estimate the centering and scaling terms to appropriate fidelity, the resulting confidence intervals will have minimax optimal length.  


We now discuss the construction of the optimal confidence interval that achieves the rate in \cref{thm:ci_minimaxlowerbound}. Define
\begin{align*}
    \bm{\mathcal{\hat R}} := \begin{pmatrix} \bm{\hat U}\t \bm{A}^{(1)} \bm{\hat U} , \cdots , \uhat\t \bm{A}^{(L)} \uhat \end{pmatrix} \in \mathbb{R}^{r \times rL}.
\end{align*}
Let $z_{\alpha/2}$ denote the $1 - \alpha/2$ quantile of the standard Gaussian distribution.  
We define
\begin{align*}
    \hat{{\sf CI}_{\alpha}} :=\frac{  \sigma^2n}{2}{\sf Tr}\big[ \rhatinv \big] \pm z_{\alpha/2} \sigma^2 \sqrt{n/2} \| \rhatinv \|_F.
\end{align*}
This quantity can be understood as simple plug-in estimation of the centering and scaling terms in \cref{thm:normality}.  Nonetheless, the following result shows that this confidence interval is optimal when $\bm{\hat U}$ is computed via \cref{alg}.  

\begin{theorem}[Confidence Interval Upper Bound] \label{thm:minimaxoptimalconfidenceinterval}
Suppose the conditions of \cref{thm:normality} hold.  
Then it holds that 
    \begin{align*}
       \liminf_{n\to\infty} \inf_{\theta \in \mathcal{P}(\lambda)} \mathbb{P} \bigg( \| \sin\Theta(\uhat,\U) \|_F^2 \in \hat{{\sf CI}_{\alpha}} \bigg) \geq 1 - \alpha,
    \end{align*}
    and for all $n$ sufficiently large, with probability at least $1 - \exp(- c n)$,
    \begin{align*}
      {\sf L} (\hat{{\sf CI}_{\alpha}}) \lesssim \frac{\sigma^2  nr}{\lambda^2 L}  \frac{1}{\sqrt{nr}}.
    \end{align*}
\end{theorem}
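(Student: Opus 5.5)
The plan is to reduce both claims to \cref{thm:normality} via a plug-in consistency argument. Write $\bm{M} := (\bm{\mathcal{R}}\bm{\mathcal{R}}\t)^{-1}$ and $\bm{\hat M} := (\bm{\mathcal{\hat R}}\bm{\mathcal{\hat R}}\t)^{-1}$. Coverage will follow from \cref{thm:normality} and Slutsky's lemma once we establish two facts:
\begin{align*}
\frac{\sigma^2 n}{2}\big|{\sf Tr}(\bm{\hat M}) - {\sf Tr}(\bm{M})\big| = o_p\big(\sigma^2\sqrt{n}\|\bm{M}\|_F\big), \qquad \frac{\|\bm{\hat M}\|_F}{\|\bm{M}\|_F} \xrightarrow{p} 1 .
\end{align*}
To make the coverage statement uniform over $\mathcal{P}(\lambda)$, I will check that every bound used depends on $\theta$ only through $\lambda,\kappa,r,\|\U\|_{2,\infty}$. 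These are controlled uniformly under the hypotheses of \cref{thm:normality}, so applying the argument along any sequence $\theta_n$ gives the $\liminf\inf$ statement.

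\textbf{Step 1 (perturbation of $\bm{\mathcal{\hat R}}$).} Let $\mathcal{O} = \mathcal{O}_{\uhat,\U}$. Decompose
\begin{align*}
\mathcal{O}\t\bm{\hat R}^{(l)}\mathcal{O} - \bm{R}^{(l)} = \big(\mathcal{O}\t\uhat\t\U\bm{R}^{(l)}\U\t\uhat\mathcal{O} - \bm{R}^{(l)}\big) + \mathcal{O}\t\uhat\t\bm{N}^{(l)}\uhat\mathcal{O}.
\end{align*}
I will bound the first term by $\kappa\lambda\|\sin\Theta\|^2$, using the polar-alignment property $\|\uhat\t\U - \mathcal{O}\| \lesssim \|\sin\Theta\|^2$. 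For the second term, write $\uhat = \U\U\t\uhat + \per\per\t\uhat$. The piece $\U\t\nl\U$ summed against $\rl$ concentrates like $\sigma\sqrt{rL}\,\kappa\lambda$. The cross pieces are bounded using \cref{thm:mainthm} and the leading-order expansion behind \cref{thm:normality}.

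These bounds combine into
\begin{align*}
\big\|\mathcal{O}\t\bm{\mathcal{\hat R}}\bm{\mathcal{\hat R}}\t\mathcal{O} - \bm{\mathcal{R}}\bm{\mathcal{R}}\t\big\| \lesssim L\lambda^2\Big(\frac{\sigma\kappa\sqrt{r}}{\lambda\sqrt{L}} + \kappa^2\frac{\sigma^2 nr}{\lambda^2 L} + \cdots\Big) =: L\lambda^2\delta .
\end{align*}
The diagonal $\sum_l (\U\t\nl\U)^2$ term needs separate treatment. It has mean of order $L r\sigma^2$, which is negligible relative to $L\lambda^2$ times the required precision. Since $\lambda_{\min}(\bm{\mathcal{R}}\bm{\mathcal{R}}\t) = L\lambda^2$, standard inverse perturbation then gives $\|\mathcal{O}\t\bm{\hat M}\mathcal{O} - \bm{M}\| \lesssim \delta/(L\lambda^2)$ once $\delta \ll 1$.

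\textbf{Step 2 (sufficiency and the obstacle).} From Step 1,
\begin{align*}
n\,|{\sf Tr}(\bm{\hat M}) - {\sf Tr}(\bm{M})| \lesssim \frac{n r\,\delta}{L\lambda^2},
\end{align*}
and this must be $o(\sqrt{n}\|\bm{M}\|_F)$, where $\|\bm{M}\|_F \gtrsim \sqrt{r}/(\kappa^2 L\lambda^2)$. The requirement is therefore $\delta \ll \kappa^{-2}/\sqrt{nr}$. The quadratic part of $\delta$ gives $\sigma^2 n r\kappa^4/(\lambda^2 L) \ll 1/\sqrt{nr}$, which is implied by $\lambda/\sigma \gg n r^2\kappa^4/\sqrt{L}$.

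The main obstacle is the linear term $\sigma\kappa\sqrt{r}/(\lambda\sqrt L)$, which needs $\lambda/\sigma \gg \kappa^3 r\sqrt{n}/\sqrt{L}$. This is again implied by the SNR assumption, but only barely in $\kappa$. So I expect to need a sharper, mean-zero treatment of the linear piece $\sum_l(\U\t\nl\U\rl + \rl\U\t\nl\U)$. Its contribution to ${\sf Tr}(\bm{M}\,\cdot\,\bm{M})$ is a Gaussian fluctuation of size $\sigma\sqrt{rL}\,\kappa\lambda/(L\lambda^2)^2$ rather than the crude operator-norm bound. The cross terms $\U\t\nl\per\per\t\uhat$ also need care: I will substitute the first-order expansion $\per\t\uhat \approx \sum_{l'}\per\t\bm{N}^{(l')}\U\bm{R}^{(l')}\bm{M}$ and use decoupling, as in the proof of \cref{thm:normality}, to show these terms have mean of order $\sigma^2 n r/(\lambda^2\ldots)$ and are negligible. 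The Frobenius ratio then follows immediately from Step 1, since it needs only $\delta \ll 1$.

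\textbf{Step 3 (length).} The length is $2z_{\alpha/2}\sigma^2\sqrt{n/2}\,\|\bm{\hat M}\|_F$. Two bounds combine:
\begin{align*}
\|\bm{\hat M}\|_F \le \sqrt{r}\,\|\bm{\hat M}\| \le \frac{2\sqrt{r}}{L\lambda^2},
\end{align*}
the second inequality holding on the event of \cref{thm:mainthm}, where $\lambda_{\min}(\bm{\mathcal{\hat R}}\bm{\mathcal{\hat R}}\t) \ge L\lambda^2/2$. Together they give a length of order $\sigma^2\sqrt{nr}/(\lambda^2 L) = \frac{\sigma^2 nr}{\lambda^2 L}\frac{1}{\sqrt{nr}}$. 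Step 1 requires only $\delta \le 1/2$ for this, so the bound holds with probability $1 - \exp(-cn)$.
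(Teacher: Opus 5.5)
Your proposal is correct and is essentially the paper's argument. The paper also reduces coverage to \cref{thm:normality} plus Slutsky through a plug-in consistency lemma (\cref{lem:parameterconsistency}), built on a perturbation bound for $\sum_l(\bm{\hat R}^{(l)})^2$ and inverse perturbation (\cref{lem:rlsquared}), and reads off the length from $\|\rhatinv\|\lesssim 1/(L\lambda^2)$.

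Your polar-alignment bound $\|\uhat\t\U-\mathcal{O}\|\le\|\sin\Theta(\uhat,\U)\|^2$ makes the signal error second order. This is sharper than the paper's first-order term $L\lambda_{\max}^2\frac{\sigma\kappa\sqrt{nr}}{\lambda\sqrt{L}}$. It is also what lets your correct $\kappa^{-2}$ factor in the lower bound on $\|\R\|_F$ (which the paper omits) go through without restricting $\kappa$. By contrast, the Step 2 ``obstacle'' is not one. The crude bounds on the linear and cross terms already close under $\lambda/\sigma\gg nr^2\kappa^4/\sqrt{L}$ and $L\lesssim n$, with a margin of $\sqrt{n}\,r\kappa$ rather than ``barely'', so you can drop the mean-zero and decoupling refinements.
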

\begin{proof}
    See \cref{sec:confidenceintervalproof}.
\end{proof}
Note that \cref{thm:minimaxoptimalconfidenceinterval} does not require sample splitting to achieve optimal inference, unlike other similar results in the literature on linear regression \citep{cai_accuracy_2018,cai_confidence_2017} and matrix completion \citep{carpentier_adaptive_2018}.  Furthermore, \cref{thm:minimaxoptimalconfidenceinterval} demonstrates that when $\lambda/\sigma \gg \tfrac{n}{\sqrt{L}}$ and $r,\kappa = O(1)$, our proposed confidence intervals are minimax-optimal.  Since the construction of $\hat{ {\sf CI}_{\alpha}}$ does not rely on knowledge of $\lambda$ \emph{a priori}, the resulting confidence intervals are  adaptive to signal strength.  

\subsection{Adaptivity Lower Bounds} \label{sec:adaptivity}
In this section we determine whether adaptive confidence intervals can be constructed in the regime $\lambda/\sigma \lesssim \frac{n}{\sqrt{L}}$.  
To study adaptivity, we consider a similar framework as in \citet{cai_confidence_2017}. Given two nested parameter spaces $\mathcal{P}_1 \subset \mathcal{P}_2$, we define the adaptation benchmark
\begin{align*}
    \mathcal{L}^*_{\alpha}( \mathcal{P}_1,\mathcal{P}_2) := \inf_{{\sf CI}_{\alpha} \in \mathcal{I}_{\alpha}(\mathcal{P}_2)} \mathcal{L}_{{\sf CI}_{\alpha}}( \mathcal{P}_1 ),
\end{align*}
which is the infimum of the maximum expected length over $\mathcal{P}_1$  among all $1 - \alpha$ confidence intervals over $\mathcal{P}_2$.  Note that \cref{thm:ci_minimaxlowerbound} gives a lower bound on $\mathcal{L}^{*}_{\alpha}(\mathcal{P}(\lambda),\mathcal{P}(\lambda))$, and, moreover, the confidence intervals in \cref{thm:minimaxoptimalconfidenceinterval} attain this rate.  

If a confidence interval is rate optimally adaptive over $\mathcal{P}_1$ and $\mathcal{P}_2$, it should have optimal expected length simultaneously over both parameter spaces while still maintaining coverage over $\mathcal{P}_2$.  Then this confidence interval must satisfy $\mathcal{L}_{{\sf CI}_{\alpha}}(\mathcal{P}_1) \geq  \mathcal{L}^*_{\alpha}(\mathcal{P}_1,\mathcal{P}_2)$.  Hence, if $\mathcal{L}^*_{\alpha}(\mathcal{P}_1,\mathcal{P}_2)$ is significantly larger than the minimax rate over $\mathcal{P}_1$, then rate-optimal adaptation is impossible.

For technical reasons in the proof, we use the following slightly modified parameter space $\mathcal{\tilde P}(\lambda)$ defined via
\begin{align*}
    \mathcal{\tilde P}(\lambda) := \bigg\{ \{\bm{S}^{(l)}\}_{l=1}^{L} : \sl = \U \rl \U\t; \U \in \mathbb{R}^{n\times r}, \U\t\U = \bm{I}_r, r \lambda^2 \leq \frac{1}{L} \sum_{l=1}^{L} \| \rl \|_F^2 \bigg\}.
\end{align*}
If each $\rl$ is well-conditioned with condition number $O(1)$, then $\mathcal{\tilde P}(\lambda)$ coincides with $\mathcal{P}(\lambda)$ up to constants.  The following result shows the (lack of) adaptivity of confidence intervals over the two parameter spaces $\mathcal{\tilde P}(\lambda_1), \mathcal{\tilde P}(\lambda_2)$. 
\begin{theorem} \label{thm:ci_minimax2} Take $\lambda_{1} \geq \lambda_2$, where $\lambda_1/\sigma \geq \frac{nr}{\sqrt{L}}$.  Suppose that $r \geq 2$.  Then there exists some constant $c' > 0$ such that for all $n$ sufficiently large it holds that 
   \begin{align*}
      \mathcal{L}^{*}_{\alpha}\big( \mathcal{\tilde P}(\lambda_1), \mathcal{\tilde P}(\lambda_2) \big) \geq c' \frac{\sigma^2 nr}{\lambda_{1}^2 L} \bigg( \frac{\sigma \sqrt{nr}}{\lambda_2 \sqrt{L}} + \frac{1}{\sqrt{nr}} \bigg).
   \end{align*}
\end{theorem}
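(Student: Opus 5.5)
The lower bound has two parts: the term $\frac{\sigma^2 nr}{\lambda_1^2 L}\cdot\frac{1}{\sqrt{nr}}$ and the adaptivity term $\frac{\sigma^2 nr}{\lambda_1^2 L}\cdot\frac{\sigma\sqrt{nr}}{\lambda_2\sqrt{L}}$. For the first term, note that $\mathcal{I}_\alpha(\mathcal{\tilde P}(\lambda_2))\subset\mathcal{I}_\alpha(\mathcal{\tilde P}(\lambda_1))$, since coverage over the larger space implies coverage over the smaller one. Hence $\mathcal{L}^*_\alpha(\mathcal{\tilde P}(\lambda_1),\mathcal{\tilde P}(\lambda_2))\ge \mathcal{L}^*_\alpha(\mathcal{\tilde P}(\lambda_1),\mathcal{\tilde P}(\lambda_1))$. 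I would then rerun the argument of \cref{thm:ci_minimaxlowerbound} with the Frobenius-type constraint defining $\mathcal{\tilde P}$. That proof presumably uses configurations with $\bm{R}^{(l)}$ well conditioned, so these lie in $\mathcal{\tilde P}(\lambda_1)$ as well.

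For the adaptivity term I will follow the two-point/mixture framework of \citet{cai_confidence_2017}. The key lemma there says the following. Let $\theta_0\in\mathcal{\tilde P}(\lambda_1)$ and let $\pi$ be a prior supported on $\mathcal{\tilde P}(\lambda_2)$. Suppose that under $\theta_0$ the loss $\|\sin\Theta(\uhat,\U)\|_F^2$ is (with high probability) below some $a$, and under $\pi$ it exceeds $b$. Suppose also that $\chi^2(\mathbb{P}_\pi,\mathbb{P}_{\theta_0})$ is bounded by a small constant. Then any CI with coverage $1-\alpha$ over $\mathcal{\tilde P}(\lambda_2)$ has expected length under $\theta_0$ at least $c(b-a)$. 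The construction is a rank-preserving version of the alternative \eqref{alt}. Take $\theta_0$ with $\U_0=[\bm{I}_r;0]$ and $\bm{R}^{(l)}=\lambda_1\bm{I}_r$ (or alternating signs). Under $\pi$, keep $r-1$ directions with strength $\lambda_1$ and replace the last coordinate so that its signal comes from layers $\eps_l\lambda_2\,\bm{u}\bm{u}\t$, with $\bm{u}$ a random sign vector orthogonal to the span of the others and $\eps_l$ Rademacher. We need $\frac1L\sum\|\bm{R}^{(l)}\|_F^2\ge r\lambda_2^2$, which holds by adjusting constants; this is exactly why the Frobenius parameter space is used, since the minimal eigenvalue need not be $\lambda_2$. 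The loss under the mixture picks up an extra contribution from the poorly estimated weak direction. Since $\uhat$ must be a deterministic function of the data, I instead aim to lower bound the separation in the \emph{functional}. A simpler route is to perturb the signal: under $\theta_0$ take $\lambda_1$ strength but let $\U$ itself be drawn from a prior on a small neighborhood. Then the loss $\|\sin\Theta(\uhat,\U)\|_F^2$ differs between hypotheses by order $\frac{\sigma^2 nr}{\lambda_1^2L}\cdot\frac{\sigma\sqrt{nr}}{\lambda_2\sqrt{L}}$, through the centering ${\sf Tr}((\bm{\mathcal{RR}}\t)^{-1})$. Concretely, mixing the $\rl$'s over signs with an extra weak component changes $\frac{\sigma^2n}{2}{\sf Tr}((\bm{\mathcal{RR}}\t)^{-1})$ by that amount. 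The change can be hidden because detecting a rank-one sign-mixture component of strength $\delta$ across $L$ layers requires $\delta\gtrsim \sigma\sqrt{n}/L^{1/4}$, by a second-moment computation like \cref{thm:computational}. More precisely, I would choose a perturbation of $\bm{R}^{(l)}$ of size $\delta\asymp\sigma\sqrt{nr}/\sqrt{L}\cdot(\lambda_1/\lambda_2)^{\cdot}$, tuned so that the chi-square stays bounded while the centering moves by the target amount.

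Key steps, in order:
(i) Reduce to the \citet{cai_confidence_2017} lemma.
(ii) Build $\theta_0$ and the prior $\pi$ on $\mathcal{\tilde P}(\lambda_2)$ via random Rademacher signs $\eps_l$ and a random direction.
(iii) Compute $\chi^2$ by the standard Gaussian identity $\mathbb{E}_{\pi\otimes\pi}\exp(\sum_l\langle \bm{S}^{(l)}-\bm{S}_0^{(l)},\bm{S}'^{(l)}-\bm{S}_0^{(l)}\rangle/\sigma^2)$. Averaging over the signs turns this into a product of $\cosh$ terms, bounded as in \cref{thm:computational}.
(iv) Show the loss separation using the first-order expansion underlying \cref{thm:normality}. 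Under $\lambda_1/\sigma\ge nr/\sqrt{L}$ the loss concentrates around its centering to within the $1/\sqrt{nr}$ fluctuation. Then show that the centering differs by the claimed gap between $\theta_0$ and typical draws from $\pi$.

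The main obstacle is step (iv) combined with (iii). The parameters must balance so that $\chi^2=O(1)$ forces the perturbation to be at the $\lambda_2$-level while the resulting loss difference is exactly $\frac{\sigma^2nr}{\lambda_1^2L}\frac{\sigma\sqrt{nr}}{\lambda_2\sqrt L}$. Moreover, the loss is of an arbitrary estimator $\uhat$, not just \cref{alg}. For that I would try to pick the prior so that the loss difference is forced for \emph{every} $\uhat$. The natural mechanism is that the true $\U$ differs between hypotheses by an angle of that squared size, which any estimator that cannot distinguish them must incur on one side; the triangle inequality for $\sin\Theta$ then gives the gap.
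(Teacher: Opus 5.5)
Your reduction for the $\frac{1}{\sqrt{nr}}$ term is fine. Honesty over $\mathcal{\tilde P}(\lambda_2)$ implies honesty over $\mathcal{\tilde P}(\lambda_1)$, and the paper obtains this term from the construction below with $\rho^2=c\,\sigma^2\sqrt{nr}/(L\lambda_1^2)$. Lemma 1 of \citet{cai_confidence_2017} with a Gaussian $\chi^2$ bound is also the right framework. The gap is the adaptive term: none of the constructions you float produces it.

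\begin{itemize}
\item In the sign-mixture construction the null has strength $\lambda_1$ on $e_r$ in every layer and the alternative removes it. A statistic such as $\frac{1}{L}\sum_l (e_r^\top\bm{A}^{(l)}e_r)^2$ then separates the hypotheses: its mean is about $\lambda_1^2+\sigma^2$ versus $\sigma^2$, with fluctuations $O((\sigma\lambda_1+\sigma^2)/\sqrt{L})$.
\item The ``centering'' mechanism cannot work at all. If $\bm{U}$ is common to both hypotheses, the length-zero interval $\{\|\sin\Theta(\bm{\hat U},\bm{U})\|_F^2\}$ is honest over the pair, so no length bound can come from it, whatever the $\bm{R}^{(l)}$ do. Besides, $\frac{\sigma^2 n}{2}{\sf Tr}((\bm{\mathcal{R}\mathcal{R}}^\top)^{-1})$ is the centering only for \cref{alg}. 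Your $\delta$ also has an undetermined exponent, so the balancing in (iii)--(iv) is never carried out.
\item \cref{thm:computational} bounds $\|L_n^{\leq D}\|^2$, which controls neither $\chi^2$ nor total variation. The full $\chi^2$ of a Rademacher rank-one mixture picks up roughly $2^{-n}\cosh(c\lambda^2/\sigma^2)^L$ from the atoms $\bm{u}'=\pm\bm{u}$, which is unbounded for $\lambda\asymp\lambda_1$.
\item Tilting $\bm{U}$ in a small neighborhood while every direction keeps strength $\lambda_1$ (your ``simpler route'') makes the $\chi^2$ scale with $L\lambda_1^2\rho^2$. It only recovers the non-adaptive rate $\sigma^2\sqrt{nr}/(L\lambda_1^2)$.
\end{itemize}

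The missing idea is to put the weak block into the null itself and tilt only that block, with $\bm{R}^{(l)}$ identical across layers and no sign mixing. The paper splits $r=r_0+r'$ with $r'=\lfloor r/2\rfloor$ and takes $\bm{S}_*=\lambda'\bm{W}\bm{W}^\top+\lambda_2\bm{V}^*\bm{V}^{*\top}$, with $r_0\lambda'^2+r'\lambda_2^2=r\lambda_1^2$. This, not the alternative, is where the Frobenius-type constraint is needed: it places a null with a $\lambda_2$-block inside $\mathcal{\tilde P}(\lambda_1)$. Your alternative already lies in $\mathcal{P}(\lambda_2)$.

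The alternatives are $\bm{S}_{\bm{Z}}=\lambda'\bm{W}\bm{W}^\top+\lambda_2\bm{V}_{\bm{Z}}\bm{V}_{\bm{Z}}^\top$. Here $\bm{V}_{\bm{Z}}$ is $\bm{V}^*$ tilted by an angle of about $\rho$ toward a uniformly random $\bm{Z}$ on the unit Frobenius sphere of $\mathbb{R}^{(n-r)\times r'}$. Then $\|\sin\Theta(\bm{U}^*,\bm{U}_{\bm{Z}})\|_F^2\in[\rho^2/(1+\rho^2),\rho^2]$ for every $\bm{Z}$ and equals $0$ at the null. The paper applies Lemma 1 to this functional, not to an expansion of \cref{alg}. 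Your triangle-inequality remark would not give the separation anyway: it lower bounds $\|\sin\Theta(\bm{\hat U},\bm{U}^*)\|_F+\|\sin\Theta(\bm{\hat U},\bm{U}_{\bm{Z}})\|_F$, not a difference of squared losses.

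The Gaussian identity gives $\chi^2+1=\mathbb{E}_{\bm{Z},\bm{Z}'}\exp\{\frac{cL\lambda_2^2}{\sigma^2}\langle\bm{V}_{\bm{Z}}\bm{V}_{\bm{Z}}^\top-\bm{V}^*\bm{V}^{*\top},\bm{V}_{\bm{Z}'}\bm{V}_{\bm{Z}'}^\top-\bm{V}^*\bm{V}^{*\top}\rangle\}$. A Neumann expansion and the $(nr)^{-1/2}$-subgaussianity of $\langle\bm{Z},\bm{Z}'\rangle$ bound this by $\exp\{CL\lambda_2^2\rho^4/\sigma^2+CL^2\lambda_2^4\rho^4/(\sigma^4 nr)\}$. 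Taking $\rho^2=c_\rho\frac{\sigma^2 nr}{L\lambda_1^2}\cdot\frac{\sigma\sqrt{nr}}{\lambda_2\sqrt{L}}$ keeps both exponents $O(1)$, using $\lambda_2\le\lambda_1$ and $\lambda_1/\sigma\ge nr/\sqrt{L}$. This requires $\rho\ll1$; indeed the bound can never exceed $r$, since $[0,r]$ is always an honest interval.
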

\begin{proof}
    See \cref{sec:ci_minimaxlowerboundproof}.
\end{proof}

Observe that when $r,\kappa \asymp 1$, and $\lambda_1 \gg \lambda_2$, with $\lambda_2/\sigma \ll \frac{nr}{\sqrt{L}}$, then rate-optimal adaptation is impossible. 
However, if $\lambda_1,\lambda_2 \gg \sigma \frac{n r}{\sqrt{L}}$, rate-optimal adaptation is possible, and this threshold matches the assumption in  \cref{thm:minimaxoptimalconfidenceinterval} up to the factor of $r$.
Thus, considering \cref{thm:ci_minimaxlowerbound,thm:ci_minimax2,thm:minimaxoptimalconfidenceinterval}, we have now established the existence of the Weak and Strong Inference SNR Regimes, which completes the proof of \cref{thm:informal}.

\section{Related Work} \label{sec:relatedwork}
Our work is closely connected to the literature on multilayer network analysis.  A number of authors have considered estimation in the multilayer stochastic blockmodel with arbitrary connection probabilities, which can be viewed as a special case of the common subspace model \citep{lei_bias-adjusted_2023,lei_consistent_2020,paul_spectral_2020,jing_community_2021,agterberg_joint_2025,chen_global_2022,huang_spectral_2023}.   Our proposed estimation procedure has previously been considered in \citet{paul_spectral_2020} as ``orthogonal-linked matrix factorization,'' though statistical guarantees are only provided for global minima under a multilayer stochastic blockmodel.  Our procedure does not actually require us to find a global minimum, and we show that our algorithm (with our initialization) is both statistically and computationally optimal.

The work \citet{lei_computational_2024} gives a computational lower bound for estimation in the multilayer stochastic blockmodel, but their results are sharp only up to logarithmic factors.  In contrast, in this work we are able to explicitly characterize the computational limits in terms of the SNR. The works   \citet{lyu_optimal_2023,lyu_optimal_2023-1} are also closely related to this work, where they study the statistical and computational limits for a closely related model for mixtures of low-rank matrices.  Our model is significantly more general than theirs, though their model can be viewed as a special case of the model we consider herein.  In addition, none of the aforementioned works consider inference.

Our work is also related to the work \citet{arroyo_inference_2021} which proposes the common subspace model.  They consider estimating $\bm{U}$ by first setting $\bm{\hat U}^{(l)} := {\sf SVD}_r(\bm{A}^{(l)})$ and $\bm{\hat U} = {\sf SVD}_r( [\bm{\hat U}^{(1)}, \bm{\hat U}^{(2)}, \cdots ,\bm{\hat U}^{(L)} ])$.  This procedure has been further studied and extended in several follow-on works \citep{agterberg_joint_2025,zheng_limit_2024,paul_spectral_2020}.  However, a major deficiency of this procedure and its variants is that the first step requires each matrix $\bm{A}^{(l)}$ to have sufficient signal strength.  Under our statistical model, this is equivalent to requiring that $\lambda/\sigma \gg \sqrt{n}$, which does not benefit from the ``aggregation effect'' of having larger $L$. 

In order to leverage the ``aggregation'' from  multiple networks, the work \citet{lei_bias-adjusted_2023} proposed to estimate $\bm{U}$ (under a multilayer stochastic blockmodel) via a bias-adjusted sum of squares.  Their procedure can essentially be viewed as a modified version of our initialization $\bm{\hat U}_0$, where the modification consists of diagonal deletion due to the heteroskedasticity.  
They prove an upper bound under a certain signal-strength condition (for Bernoulli noise), and this signal-strength condition was later shown to be computationally optimal in \citet{lei_computational_2024}. Subsequently, \citet{xie_bias-corrected_2024} proposed to further modify the procedure of \citet{lei_bias-adjusted_2023} to ``impute'' the diagonal.  The algorithm considered therein can be viewed as a Bernoulli-noise specific extension of the ${\sf HeteroPCA}$ algorithm proposed in \citet{zhang_heteroskedastic_2022} and further studied in both \citet{agterberg_entrywise_2022,yan_inference_2024}.  However, in our work the focus is on optimal error rates and statistical inference under homoskedastic noise where bias adjustment is not necessary, and therefore the focus is significantly different.

Our work is also related to the literature on tensor data analysis, which exhibits similar statistical-computational gaps \citep{zhang_tensor_2018,luo_tensor_2022,auddy_tensors_2025,luo_tensor--tensor_2024,arous_landscape_2019}, though  our model is significantly different.  
 Our model is also related to several models considered in the data integration literature (e.g., \citet{lock_joint_2013}).  The works \citet{ma_optimal_2026,yang_estimating_2025,li_heterojive_2025} are perhaps most similar, though only \citet{ma_optimal_2026} consider the ``weak-signal'' regime we consider herein, though their focus is difrerent.  

Turning to statistical inference, there have been a number of works on distributional theory and statistical inference for single networks or matrices \citep{agterberg_statistical_2023,chen_asymmetry_2021,chen_spectral_2021,yan_inference_2024,fan_asymptotic_2022,xie_entrywise_2024,xie_higher-order_2025,cheng_tackling_2021,pu_asymptotic_2026,liu_asymptotic_2025,bao_singular_2021,ding_high_2020}; see \citet{agterberg_overview_2026} for a survey in the stochastic blockmodel setting.  However, inferential techniques for multilayer networks or collections of matrices are comparatively lacking, though there have been some efforts.  
Our work is most closely related to the works \citet{zheng_limit_2024,xie_bias-corrected_2024,agterberg_statistical_nodate,xia_inference_2022}.  
The work \citet{zheng_limit_2024}
considers the model herein and derives CLTs for the \emph{rows} of the estimated subspace as well as the estimated score matrices.  Similarly, in the work \citet{xie_bias-corrected_2024} the author develops entrywise limit theorems for the rows of the matrix $\bm{\hat U}$ obtained via bias-adjusted spectral sum of squares.  Finally, the works \citet{agterberg_statistical_nodate,xia_inference_2022} consider statistical inference for tensors.  Our analyses are different from these prior works, and we primarily focus on the $\sin\Theta$ distance.  Of these, only \citet{xia_inference_2022,xia_normal_2021} consider this distance, but neither of these papers consider the model herein, nor do they undertake a minimax analysis.  

Our study on adaptive inference is inspired by a rich literature on adaptation theory in various settings.  For example, our minimax result in \cref{thm:ci_minimax2} is reminiscent of several results in high-dimensional sparse linear regression \citep{cai_accuracy_2018,cai_confidence_2017,cai_optimal_2021-1,cai_statistical_2023,guo_optimal_2019,nickl_confidence_2013}  and functional data analysis \citep{cai_adaptation_2004,cai_adaptive_2013,cai_adaptive_2005,robins_adaptive_2006,hoffmann_adaptive_2011,cai_optimal_2006,lepskii_asymptotically_1992,lepskii_asymptotically_1993,cai_adaptive_2014}.  In these prior works the authors also consider adaptive confidence sets or confidence intervals, and they consider adaptation to an unknown parameter.  However, the only result we know of considering adaptive inference in a matrix context is \citet{carpentier_adaptive_2018}, but the focus of that work is in adaptation with respect to the rank $r$, and they do not consider estimating subspaces.  To the best of our knowledge, our study is the first to consider adaptive inference, where adaptation is considered with respect to the SNR.  

\section{Numerical Applications} \label{sec:numerical}
In this section we apply \cref{alg} to both real and simulated data.  In the subsequent section we consider simulated data, and in \cref{sec:flightdata} we consider an application to global trade data.  

\subsection{Simulations}
\begin{figure}[ht]
    \centering
    \begin{minipage}{0.5\textwidth}
        \centering
        \includegraphics[width=\textwidth]{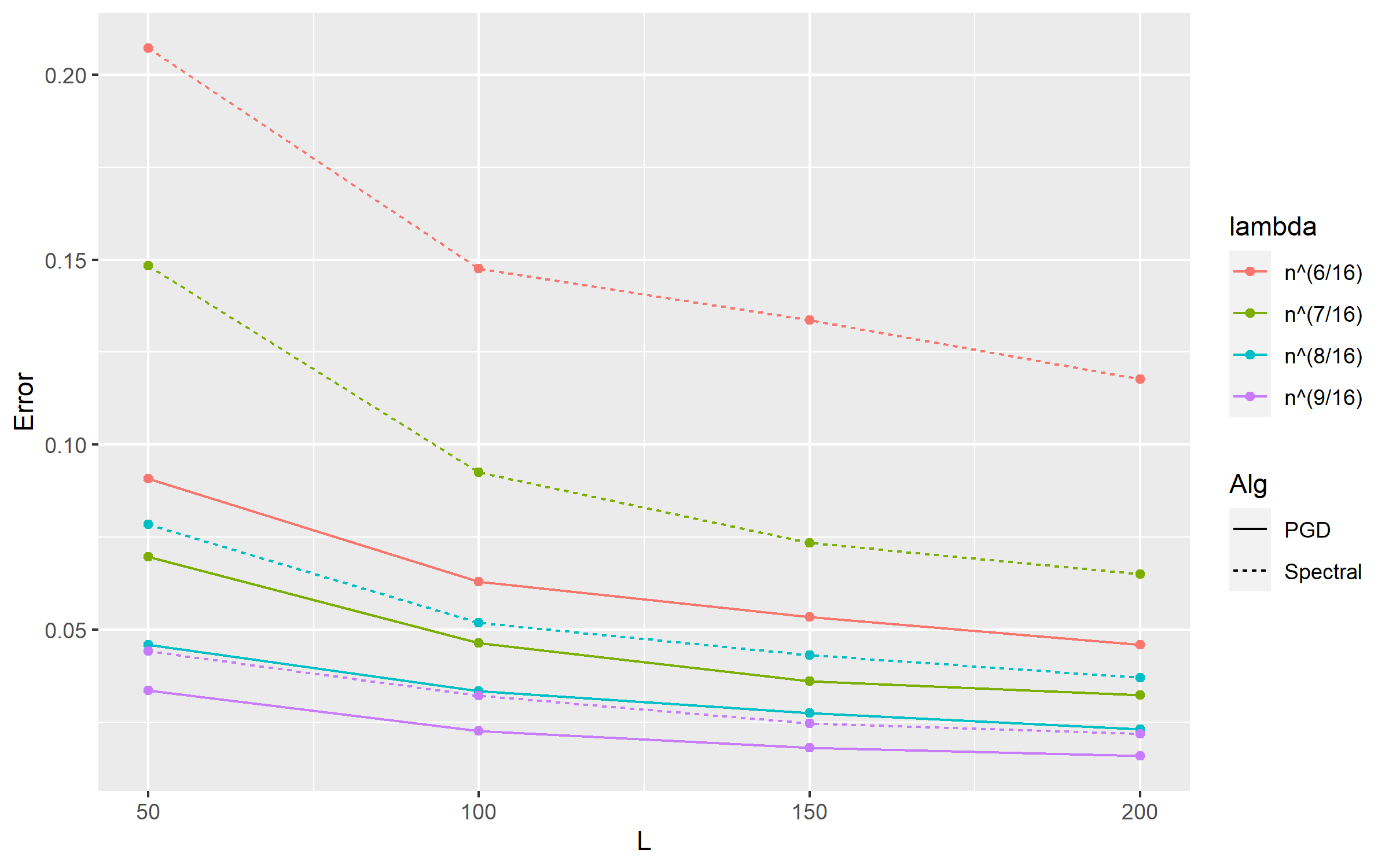} 
        \caption*{$n = 200$}
    \end{minipage}\hfill
    \begin{minipage}{0.5\textwidth}
        \centering
        \includegraphics[width=\textwidth]{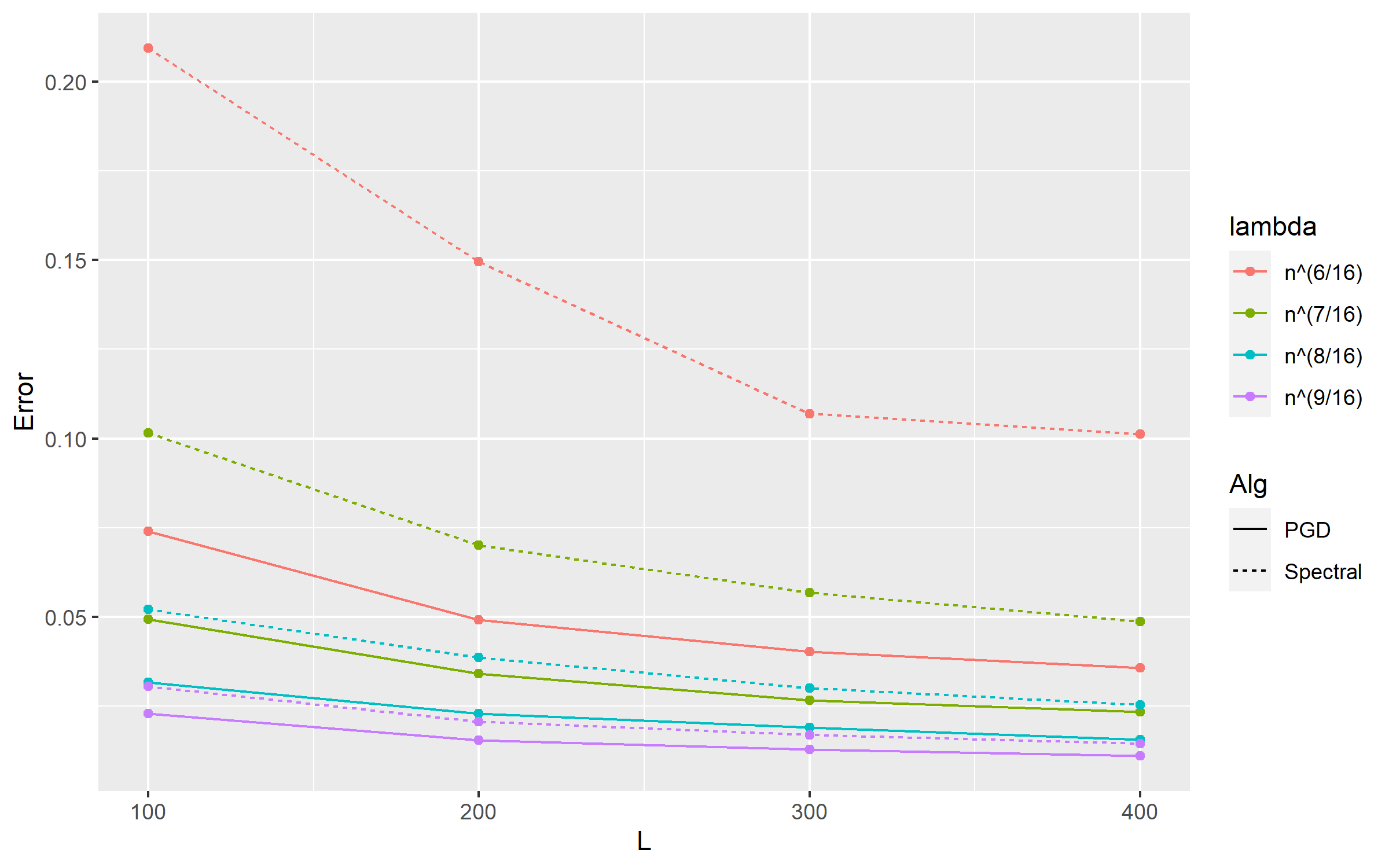} 
        \caption*{$n=400$}
    \end{minipage}
    \caption{Simulation of proposed projected gradient descent algorithm (solid lines) versus the spectral initialization (dotted lines) with $\sigma = 1$. Each color is a different value of $\lambda = n^{c/16}$ with $c \in \{6,7,8,9\}$.  The $x$-axis denotes the number of layers, and the $y$-axis denotes the error averaged over 10 Monte Carlo repetitions.} \label{fig:errorsim}
\end{figure}

We first consider the following setup.  For each figure, we consider $n \in \{200,400\}$. For a given $L,\lambda$, and $n$, we first generate a subspace $\U$ randomly by drawing a Gaussian random matrix of dimension $n \times r$, and then computing its left singular vectors. We then generate $\rl$ by drawing a random symmetric matrix of dimension $r \times r$ with standard Gaussian entries, and then normalizing it so its smallest singular value is $\lambda$.  Each $\bm{A}^{(l)}$ is then drawn with $\sigma = 1$ according to the model \eqref{gosie}.  
In \cref{fig:errorsim} we plot the errors for $L \in \{n/4,n/2,3n/4,n\}$ and $\lambda = n^{c/16}$ for $c \in \{6,7,8,9\}$.  Each different color is associated to a different value of $\lambda$.  The dotted line corresponds to the spectral initialization (akin to the algorithm proposed in prior works), and the solid line denotes the error after 100 iterations of projected gradient descent.  Notably, the error for a given value of $\lambda$ is always smaller for projected gradient descent than the spectral initialization, but for sufficiently large $\lambda$, the two errors are much closer, and get smaller for larger $L$, which mirrors our theory directly.

\begin{table}[ht]
\centering

\begin{subtable}[t]{0.48\textwidth}
\centering
\begin{tabular}{c|cccc}
\hline
 & \multicolumn{4}{c}{$L$} \\
\cline{2-5}
$\lambda$ & 50 & 100 & 150 & 200 \\
\hline
$n^{8/16}$  & 0.44 & 0.68 & 0.71 & 0.71 \\
$n^{9/16}$  & 0.75 & 0.85 & 0.95 & 0.92 \\
$n^{10/16}$ & 0.95 & 0.94 & 0.97 & 0.99 \\
$n^{11/16}$ & 0.93 & 0.90 & 0.93 & 0.97 \\
\hline
\end{tabular}
\caption*{$n = 200$}
\end{subtable}
\hfill
\begin{subtable}[t]{0.48\textwidth}
\centering
\begin{tabular}{c|cccc}
\hline
 & \multicolumn{4}{c}{$L$} \\
\cline{2-5}
$\lambda$ & 100 & 200 & 300 & 400 \\
\hline
$n^{8/16}$  & 0.68 & 0.80 & 0.83 & 0.82 \\
$n^{9/16}$  & 0.89 & 0.90 & 0.93 & 0.94 \\
$n^{10/16}$ & 0.92 & 0.94 & 0.96 & 0.95 \\
$n^{11/16}$ & 0.94 & 0.96 & 0.96 & 0.96 \\
\hline
\end{tabular}
\caption*{$n = 400$}
\end{subtable}

\caption{Empirical probability of $\|\sin\Theta(\bm{\hat U},\U)\|_F^2 \in \hat{{\sf CI}_{\alpha}}$ ($\alpha = .05)$ averaged over 100 Monte Carlo repetitions.}
\label{table:cis}
\end{table}

Next, we consider confidence intervals for the error $\|\sin\Theta(\bm{\hat U}_t,\U)\|_F^2$.  We consider a similar setup as before, only now we consider $\lambda = n^{c/16}$ with $c \in \{8,9,10,11\}$, since \cref{thm:minimaxoptimalconfidenceinterval} requires stronger signal strength.  Table \ref{table:cis} shows the empirical probability that the true $\|\sin\Theta(\bm{\hat U},\U)\|_F^2$ lies in the computed confidence interval with $\alpha = .05$, averaged over 100 samples for each $L$, with the left table associated to $n = 200$ and the right for $n = 400$.  According to our theory, as we increase $\lambda$ and increase $L$, we expect the confidence intervals to be valid.  Indeed, at $\lambda = n^{8/16}$ we do not see validity for either $n$ value, but for $\lambda = n^{9/16}$ we see that the empirical probability is closer to .95 for larger $L$.  For the row associated to $\lambda = n^{10/16}$, we again see that our resulting confidence intervals are approximately valid for all $L$.  This phenomenon is reflected by both $n = 200$ and $n = 400$.

\subsection{Application to Trade Data}

\label{sec:flightdata}

\begin{figure}[ht]
    \centering
    \begin{minipage}{0.45\textwidth}
        \centering
        \includegraphics[width=.8\textwidth]{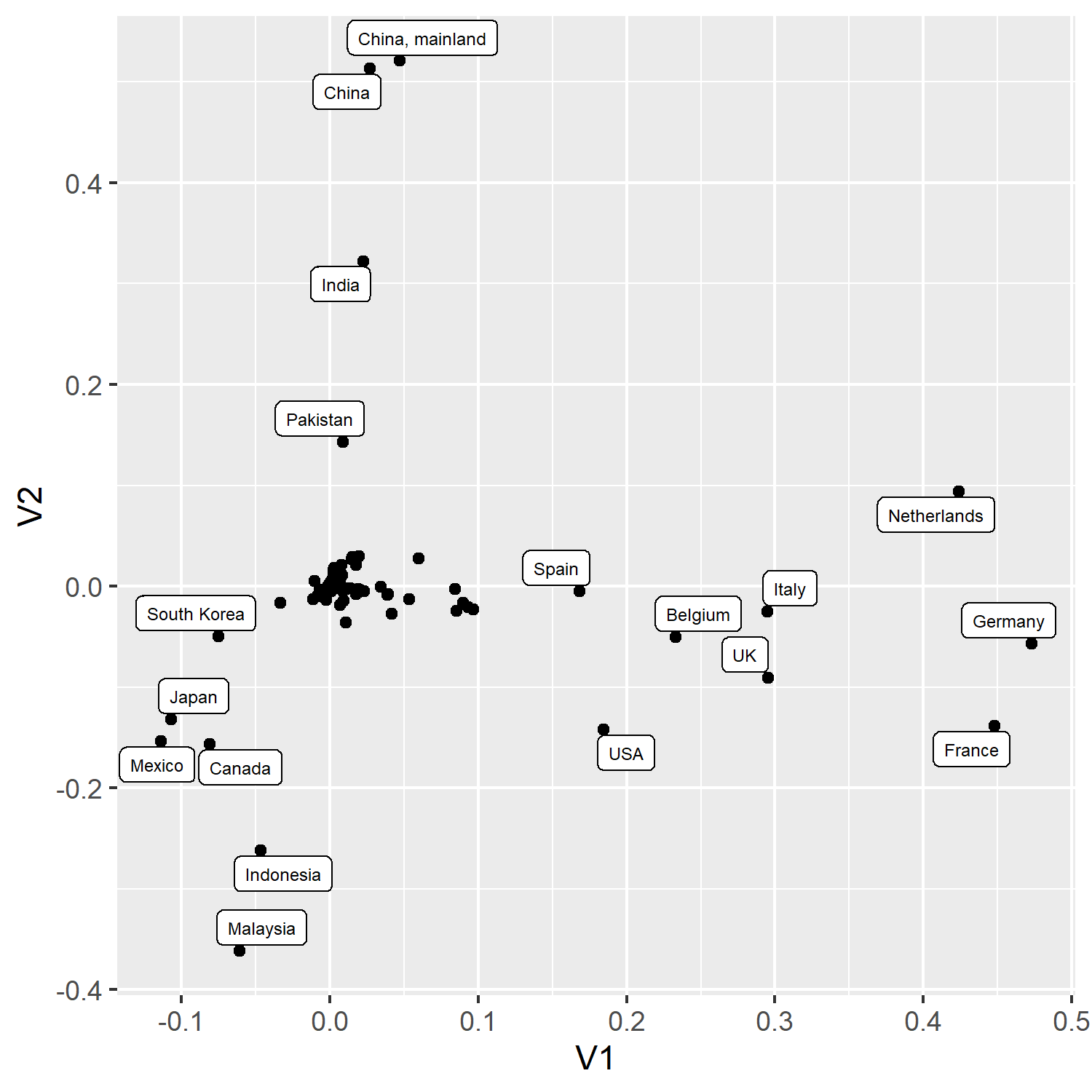} 
        \caption*{Dimensions 1 and 2.}
    \end{minipage}\hfill
    \begin{minipage}{0.45\textwidth}
        \centering
        \includegraphics[width=.8\textwidth]{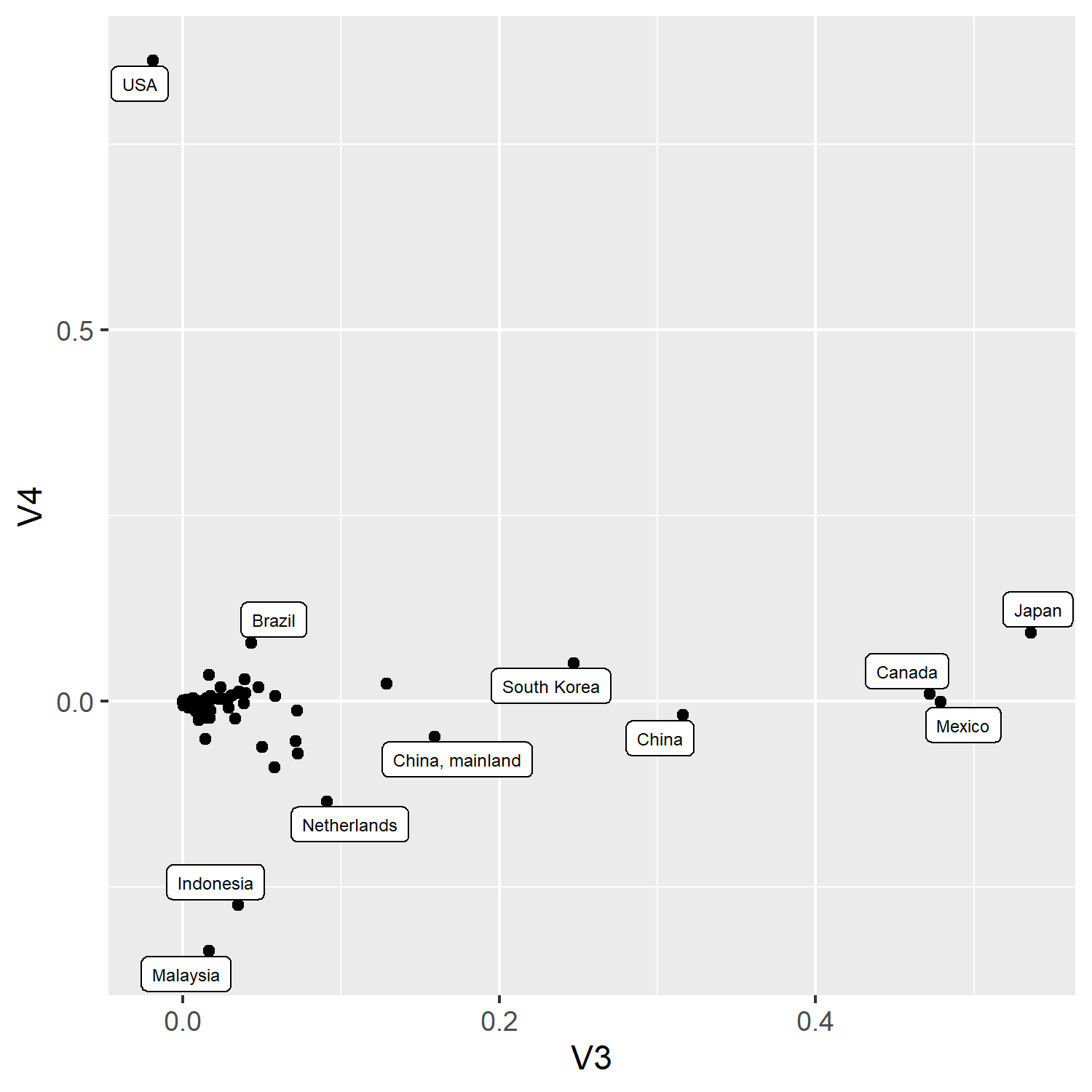} 
        \caption*{Dimensions 3 and 4.}
    \end{minipage}
    \caption{Resulting embedding of trade data} \label{fig:tradedata}
\end{figure}

In this section we apply projected gradient descent with the spectral initialization to the trade data analyzed in several previous works \citep{jing_community_2021,lyu_optimal_2023,agterberg_estimating_2025} and collected by \citet{de_domenico_structural_2015}.  We preprocess the data as in \citet{agterberg_estimating_2025}, resulting in $L = 59$ matrices of dimension $214 \times 214$, where each matrix $\bm{A}^{(l)}$ is a measure of the trading volume of commodity $l$ between different countries.  We run projected gradient descent with $r = 5$ as in \citet{agterberg_estimating_2025}.  

In \cref{fig:tradedata} we plot dimensions 1 and 2 (left) and dimensions 3 and 4 (right) of $\bm{\hat U}_t$.  Observe that dimensions 1 and 2 have a clear distinction between Europe and Asia, and dimensions 3 and 4 have a clear separation of the USA from other countries.  To further study the resulting embeddings, in \cref{fig:five_side_by_side} we plot the percentile of each dimension, with largest values in red and smallest values in blue.  In dimension 2 we see that most of Eastern Europe, Asia, and Africa are warmer colors, whereas Western Europe and North and South America are largely cooler colors.  Dimension 4 contains similar behavior, except with the roles of Europe and the Americas reversed.  This figure suggests that \cref{alg} is uncovering underlying shared structure.

\begin{figure}[h!]
\centering

\begin{subfigure}{0.3\textwidth}
\centering
\includegraphics[width=\linewidth]{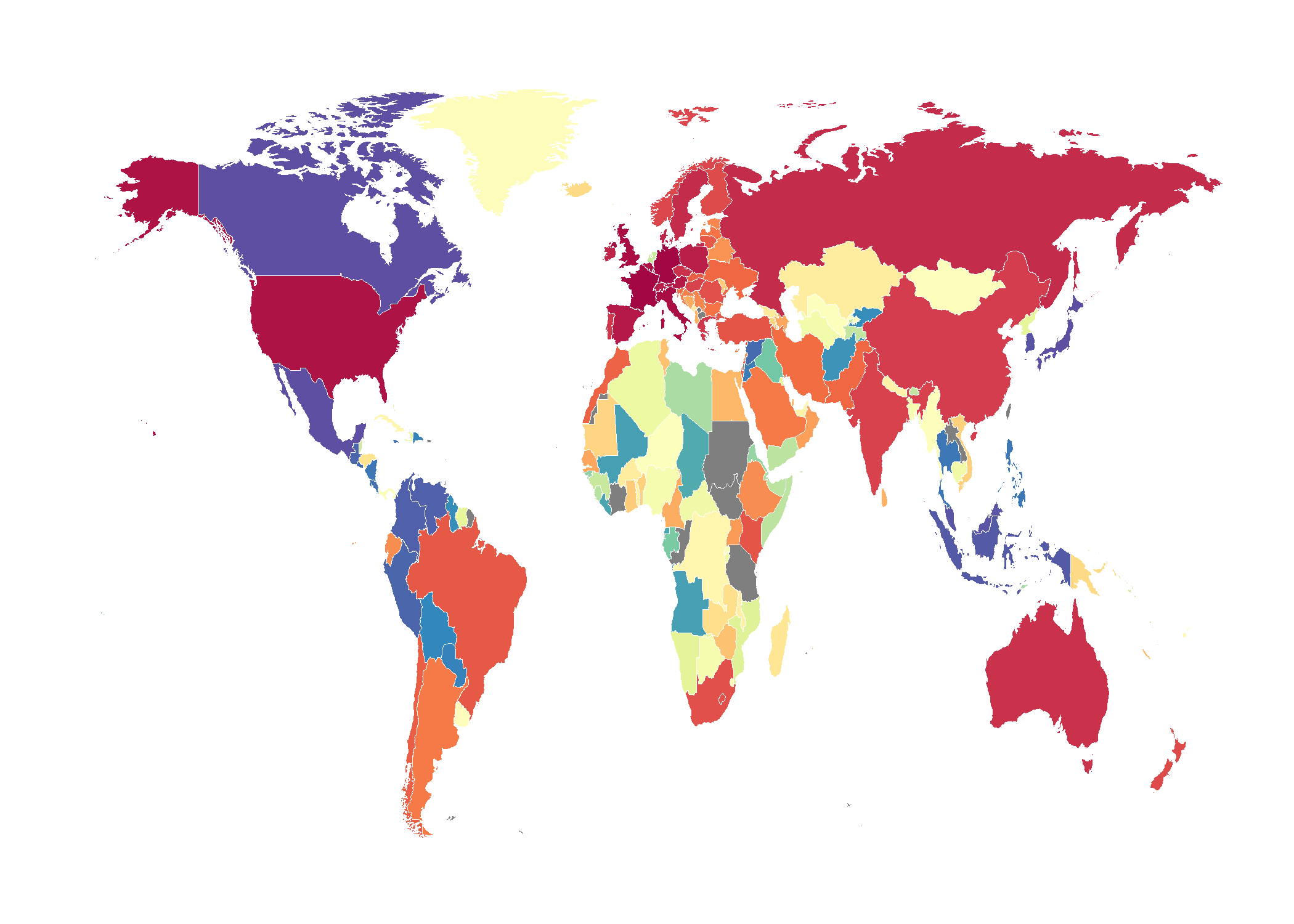}
\caption*{Dimension 1}
\end{subfigure}
\hspace{0.03\textwidth}
\begin{subfigure}{0.3\textwidth}
\centering
\includegraphics[width=\linewidth]{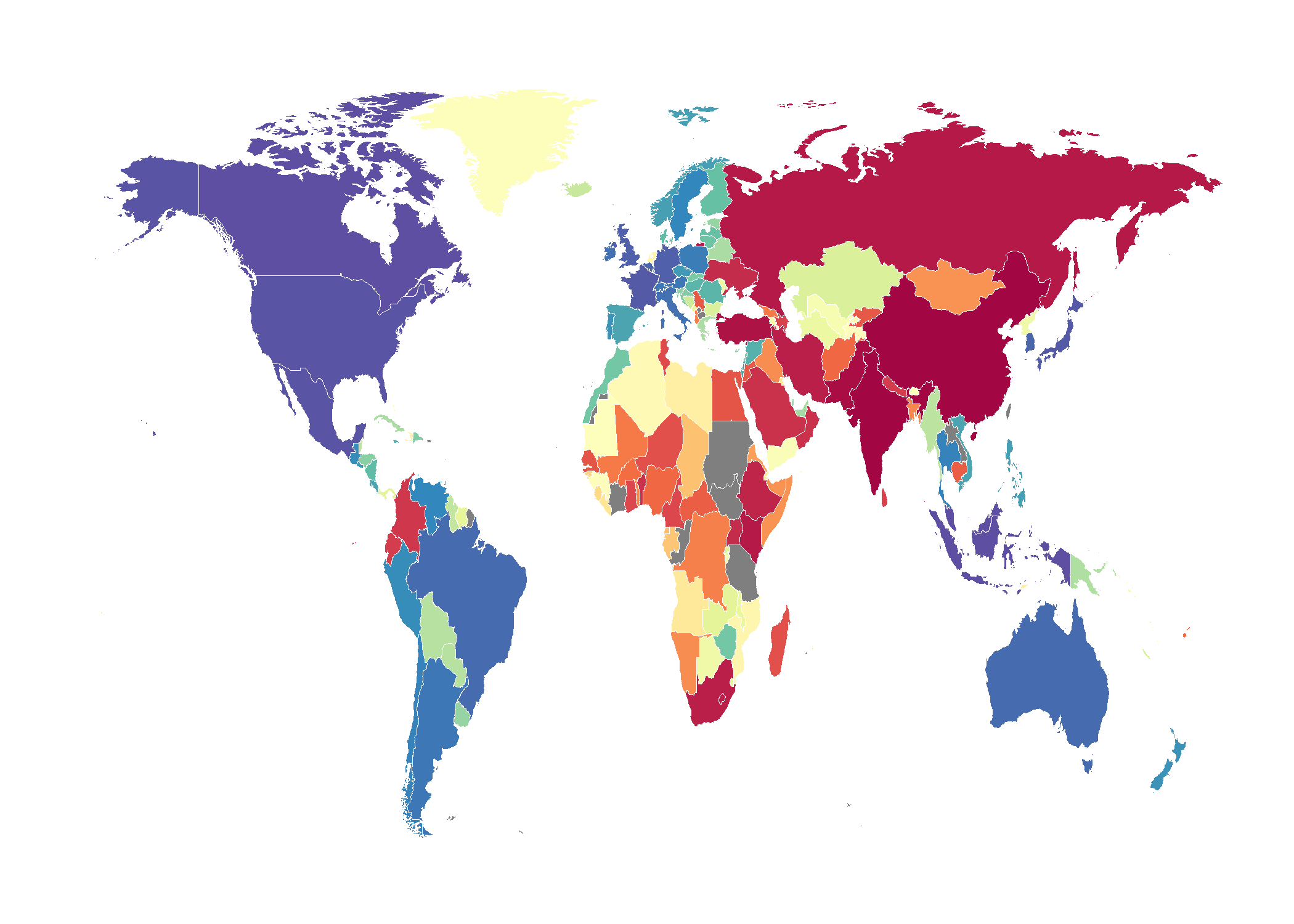}
\caption*{Dimension 2}
\end{subfigure}
\hspace{0.03\textwidth}
\begin{subfigure}{0.3\textwidth}
\centering
\includegraphics[width=\linewidth]{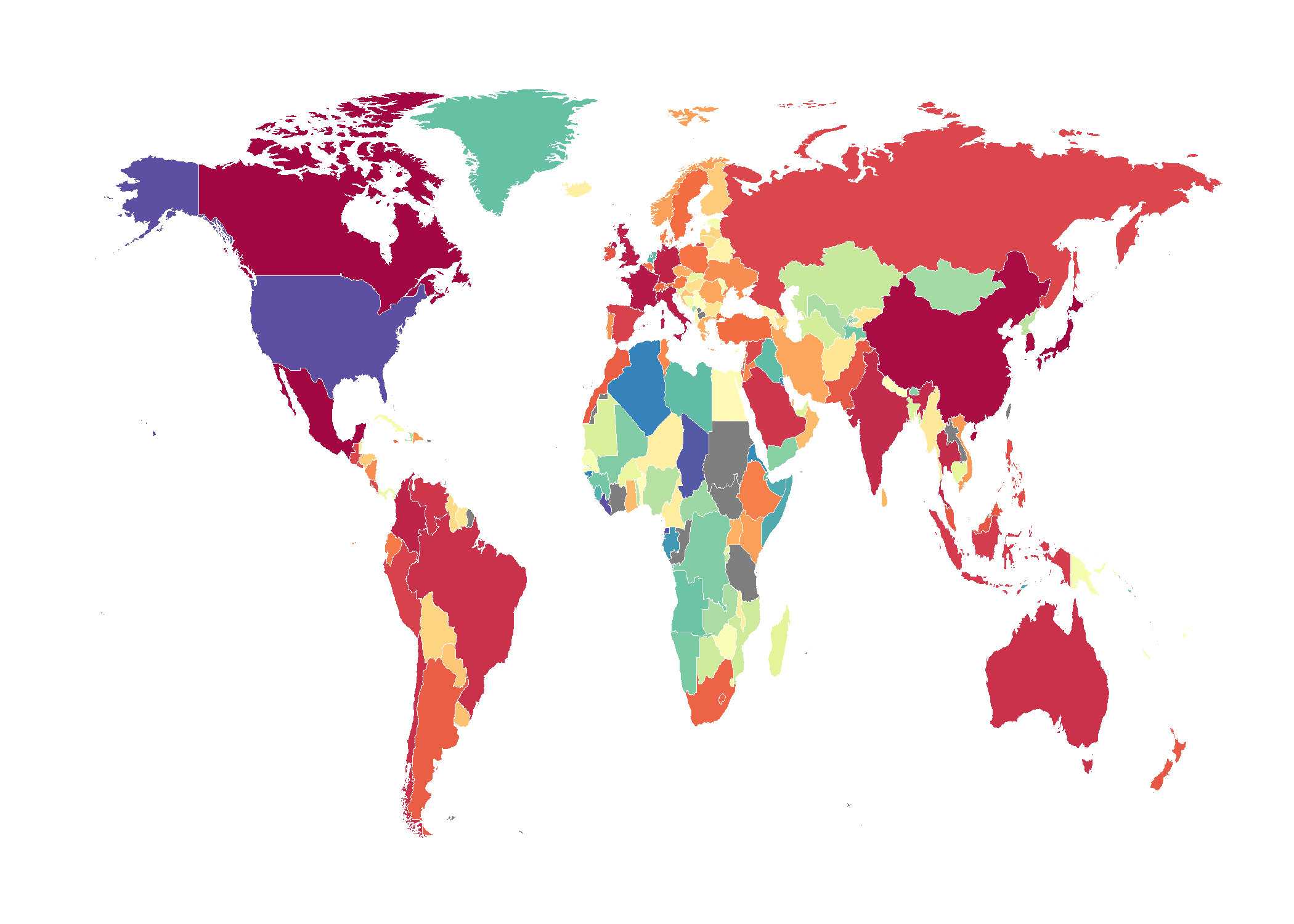}
\caption*{Dimension 3}
\end{subfigure}
\vspace{0.6em}
\begin{subfigure}{0.3\textwidth}
\centering
\includegraphics[width=\linewidth]{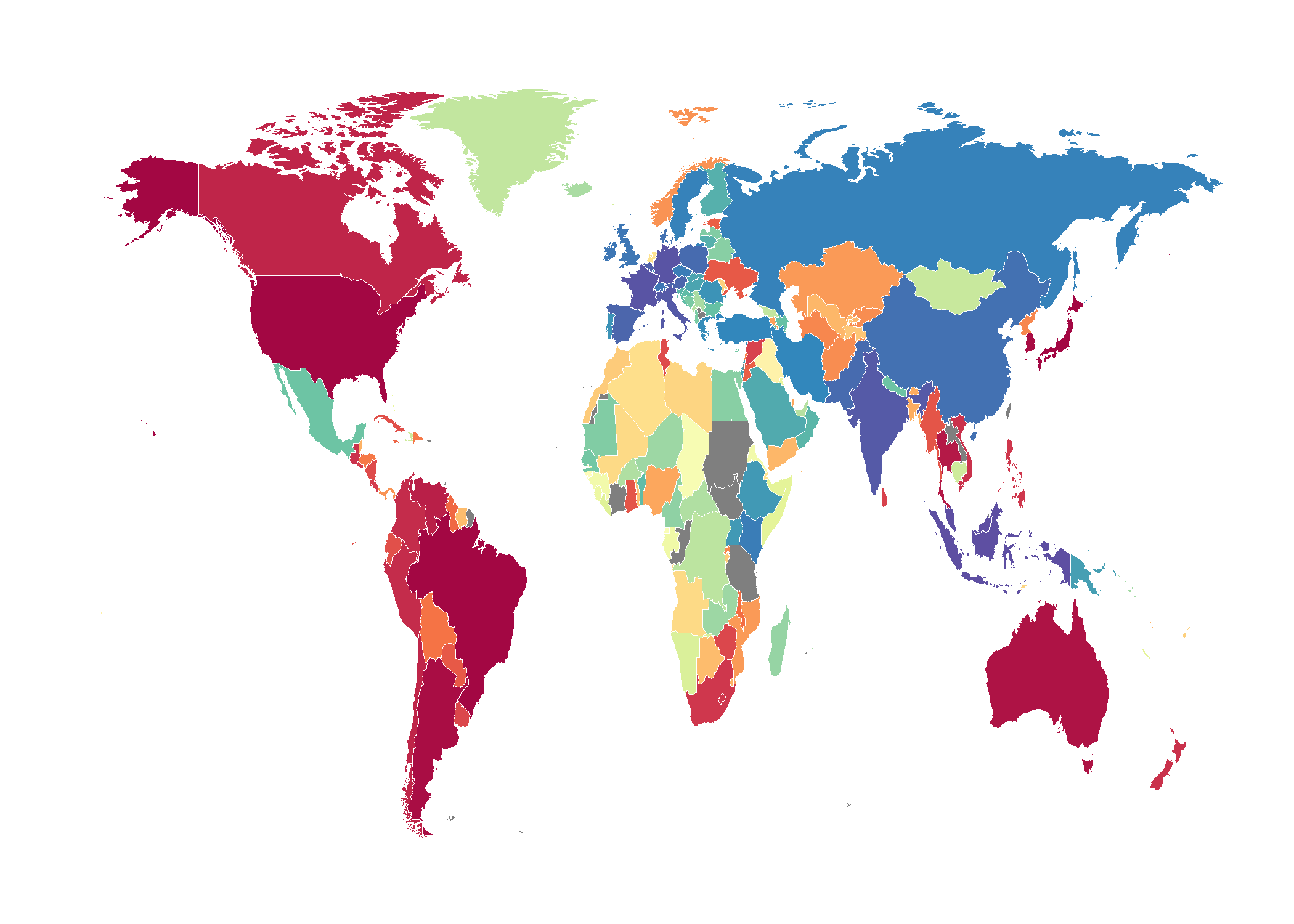}
\caption*{Dimension 4}

\end{subfigure}
\hspace{0.03\textwidth}
\begin{subfigure}{0.3\textwidth}
\centering
\includegraphics[width=\linewidth]{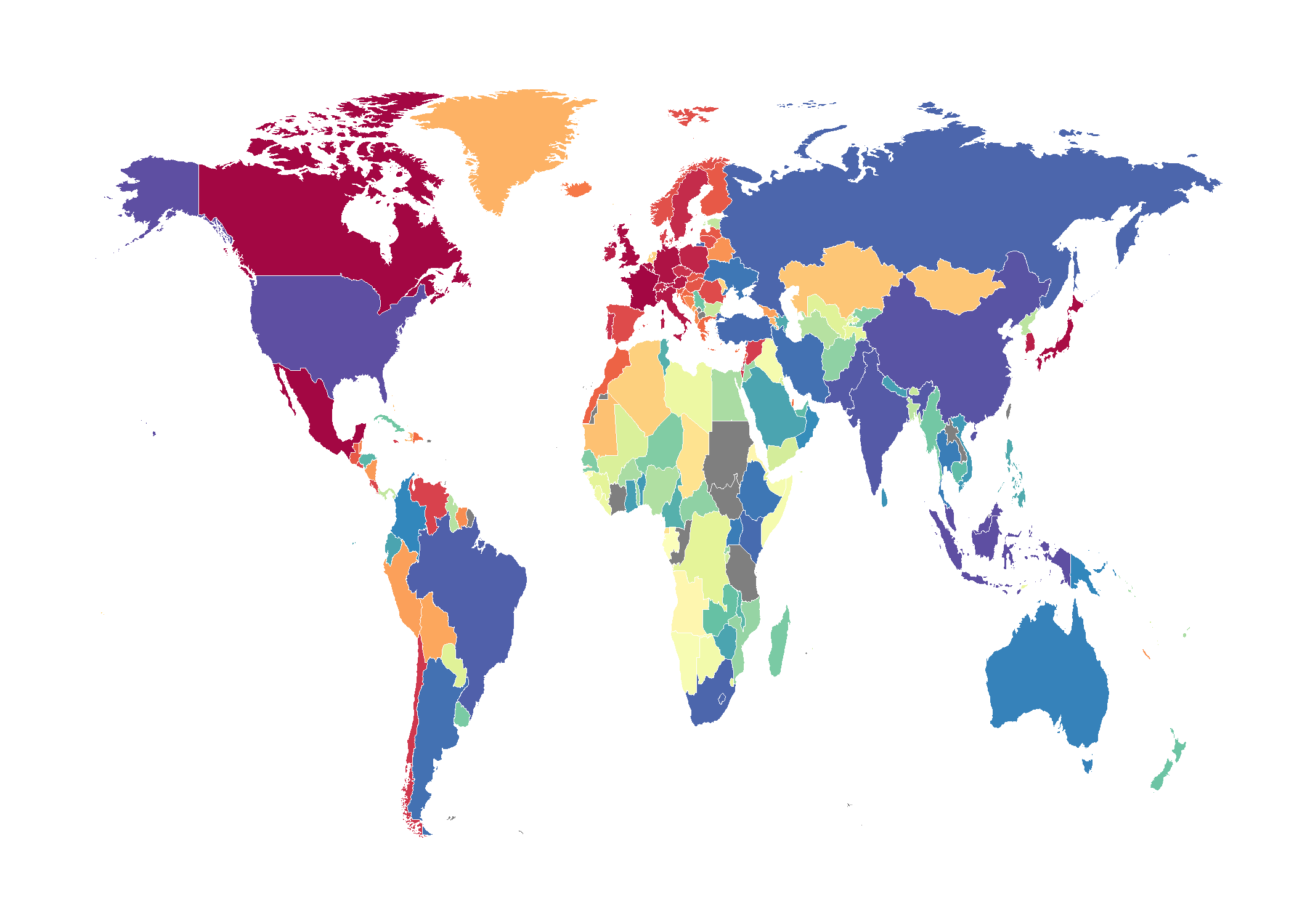}
\caption*{Dimension 5}

\end{subfigure} 

\caption{Percentile of each column of $\bm{\hat U}_t$ for each country, with warmer colors associated to larger values and cooler colors associated to smaller values.}
\label{fig:five_side_by_side}
\end{figure}

\section{Discussion}
We have considered the statistical and computational limits for estimation of $\bm{U}$ with respect to the Frobenius $\sin\Theta$ distance, and the statistical limits for adaptive inference for the error $\|\sin\Theta(\bm{\hat U},\bm{U})\|_F^2$.  In practice, these results imply that statistical inference may be difficult without \emph{a priori} knowledge of the signal strength, even when optimal estimation is feasible.  There are several possible extensions, which we detail below.
\begin{itemize}
    \item \textbf{Optimal dependence on $r$ and $\kappa$.} In this work we have focused on the statistical and computational limits without optimizing for $r$ and $\kappa$, and our results are somewhat loose with regard to these quantities.  It would be of interest to determine the precise dependence on these quantities, and whether there are algorithms that are robust to overspecifying $r$, or that perform well with large values of $\kappa$.
    \item \textbf{Extension to heteroskedastic noise}.  Our main results require, at the very least, that each $\bm{N}^{(l)}$ is a Wigner matrix.  However, in practice, the noise matrices may have heteroskedasticity, as is the case for binary network data.  Extending our analysis to this setting may require additional modifications that explicitly account for heteroskedasticity.
    \item \textbf{Sharp phase transitions.} Our results demonstrate different phase transitions based on the order-wise behavior of $\lambda/\sigma$.  It is of interest to determine the sharp constants for the error in all possible regimes of interest.
\end{itemize}

\section*{Acknowledgments}
Part of this research was performed while the author was visiting the Institute for Mathematical and Statistical Innovation (IMSI), which is supported by the National Science Foundation (Grant No. DMS-1929348). 

\appendix

\section{Proof of \cref{thm:mainthm}} \label{sec:mainproof}
In this section we prove \cref{thm:mainthm}.  Throughout our proofs we note that if an event holds with probability at least $1 - C \exp ( - c n)$ then it holds with probability $1 - \exp( - c n)$ for some other constant $c$, and we do this replacement subsequently without further justification.  

We will prove by induction.  Our first result controls the spectral norm of the initialization error. Throughout our proofs, we let $\lambda_{\max} := \max_l \| \rl \|$.  

\begin{lemma}
    \label{lem:initialconcentration}
    Under the conditions of \cref{thm:mainthm}, with probability at least $1 -  \exp ( - c n) $ it holds that
\begin{align*}
        \bigg\| \sum_{l} \bigg( (\bm{A}^{(l)})^2 - (\bm{S}^{(l)})^2 - \frac{\sigma^2}{2}  (n+1) \bm{I}_n \bigg) \bigg\| \lesssim  \lambda_{\max} \sigma \sqrt{nL} + \sigma^2 n \sqrt{L}.
\end{align*}
\end{lemma}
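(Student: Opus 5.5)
Expanding $(\bm{A}^{(l)})^2 = (\sl)^2 + \sl\nl + \nl\sl + (\nl)^2$, the quantity to control splits into a linear (cross) term and a quadratic term:
\begin{align*}
\sum_l \big((\bm{A}^{(l)})^2 - (\sl)^2 - \tfrac{\sigma^2}{2}(n+1)\bm{I}_n\big) = \underbrace{\sum_l (\sl\nl + \nl\sl)}_{\bm{T}_1} + \underbrace{\sum_l \big((\nl)^2 - \mathbb{E}(\nl)^2\big)}_{\bm{T}_2}.
\end{align*}
Here I use $\mathbb{E}(\nl)^2 = \frac{\sigma^2}{2}(n+1)\bm{I}_n$, which follows from the variance profile: $\sigma^2$ on the diagonal and $\sigma^2/2$ off it. I will show $\|\bm{T}_1\| \lesssim \lambda_{\max}\sigma\sqrt{nL}$ and $\|\bm{T}_2\| \lesssim \sigma^2 n\sqrt{L}$, each with probability $1-\exp(-cn)$, and conclude by the triangle inequality.

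\textbf{Cross term.} Write $\sl\nl = \U\rl\U\t\nl$. Then $\|\bm{T}_1\| \le 2\|\sum_l \U\rl\U\t\nl\| = 2\|\sum_l \rl\U\t\nl\|$, an $r\times n$ matrix. I take a $1/4$-net of unit vectors $x\in\mathbb{R}^r$ and $y\in\mathbb{R}^n$, of cardinality $9^{r+n}$. For fixed $x,y$, the scalar $\sum_l x\t\rl\U\t\nl y = \sum_l \langle \nl, \U\rl x y\t\rangle$ is a sum of independent subgaussian variables. Its $\psi_2$-norm is at most $\sigma(\sum_l\|\rl x\|^2)^{1/2}\le \sigma\lambda_{\max}\sqrt{L}$; symmetrizing the coefficient matrix only changes constants. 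Hoeffding plus a union bound then gives $\|\bm{T}_1\|\lesssim \sigma\lambda_{\max}\sqrt{L(n+r)}\lesssim\lambda_{\max}\sigma\sqrt{nL}$ with probability $1-\exp(-cn)$, using $r\le n$.

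\textbf{Quadratic term.} This is the main obstacle, since $\bm{T}_2$ is a sum of $L$ independent mean-zero random matrices with heavy (subexponential) entries. First approach: a net argument on the quadratic form. For unit $v$, $v\t\bm{T}_2 v = \sum_l (\|\nl v\|^2 - \mathbb{E}\|\nl v\|^2)$. Each $\|\nl v\|^2$ is a quadratic form in the independent entries of $\nl$, with $\psi_1$-norm $\lesssim\sigma^2 n$. The centered summands are independent across $l$, so Bernstein gives a tail of
\[
\exp\Big(-c\min\big(t^2/(L\sigma^4 n),\ t/(\sigma^2 n)\big)\Big)
\]
(or Hanson--Wright applied to the full Gaussian-like vector). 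Taking $t = C\sigma^2 n\sqrt{L}$ gives the exponent $\min(C^2 nL/n\cdot n,\ C\sqrt{L})$ in the wrong units. Redoing this more carefully: the variance proxy of $\|\nl v\|^2$ is $\sigma^4 n$, not $\sigma^4n^2$, since it is a sum of $n$ roughly independent squares. So the sum has subgaussian part $\sigma^2\sqrt{nL}\sqrt{u}$ and subexponential part $\sigma^2 u$, from Hanson--Wright applied to the vector of all $Ln$ relevant entries. Taking $u\asymp n$ to beat the $9^n$ net yields $\sigma^2(n\sqrt{L}+n)\lesssim\sigma^2 n\sqrt{L}$.

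The subtlety is that $\nl v$ has dependent coordinates because $\nl$ is symmetric. I would handle this by writing $\nl$ as its upper-triangular part plus its transpose, or equivalently by expressing $\|\nl v\|^2$ as a quadratic form $g\t \bm{M}_v g$ in the independent upper-triangular entries $g$ and checking $\|\bm{M}_v\|\lesssim 1$ and $\|\bm{M}_v\|_F\lesssim\sqrt{n}$; block-diagonality across $l$ then gives Frobenius norm $\sqrt{nL}$ and operator norm $O(1)$. Hanson--Wright for subgaussian vectors requires only independent entries, so this suffices. Finally, a symmetric-matrix net argument ($\|\bm{M}\|\le 2\sup_{v\in\mathcal{N}}|v\t\bm{M}v|$ for a $1/4$-net) completes the bound.

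As a fallback, if the Hanson--Wright bookkeeping fails, I would condition on row blocks and use the matrix Bernstein inequality with truncation. This, however, loses logarithmic factors, so the Hanson--Wright route is preferred.
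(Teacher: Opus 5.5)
Your argument is correct, and for the quadratic term it takes a genuinely different route from the paper. The cross term is essentially the paper's argument: fixed-vector Hoeffding with $\psi_2$-norm $\lesssim\sigma\lambda_{\max}\sqrt{L}$, then a net. The paper splits $\nl$ into triangular parts where you symmetrize the coefficients, which is immaterial.

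For $\sum_l((\nl)^2-\mathbb{E}(\nl)^2)$ the paper proceeds in several steps:
\begin{itemize}
\item it separates the hollow and diagonal parts;
\item it decouples the hollow part to pass to $\sum_l\mathcal{H}(\nl\tilde{\bm{N}}^{(l)})$;
\item it bounds $\|\sum_l\nl\tilde{\bm{N}}^{(l)}\|$ by conditioning on the independent copy and rerunning the cross-term net argument;
\item it handles the diagonal pieces with a conditional subgaussian bound and Bernstein.
\end{itemize}
You instead write $v\t\bm{T}_2v$ as a single centered quadratic form in all $Ln(n+1)/2$ independent upper-triangular entries. Its block-diagonal kernel has Frobenius norm $\lesssim\sqrt{nL}$ and operator norm $O(1)$, so one application of Hanson--Wright followed by a $1/4$-net suffices. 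This is shorter, treats diagonal and off-diagonal contributions uniformly, and avoids decoupling a matrix-valued chaos. The paper's route, by contrast, shares its decoupling-plus-conditional-net template with the proofs of \eqref{assertion1}--\eqref{assertion2}. Both give $\sigma^2 n\sqrt{L}$.

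In the write-up, drop the per-layer Bernstein false start. The centered $\|\nl v\|^2$ has $\psi_1$-norm of order $\sigma^2\sqrt{n}$, so Bernstein at $t=C\sigma^2 n\sqrt{L}$ gives exponent only $\min(C^2 n, C\sqrt{nL})$. That is insufficient against a $9^n$ net unless $L\gtrsim n$; it is precisely Hanson--Wright's $O(\sigma^2)$ subexponential scale that makes your argument work for all $L$.

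Also state the kernel computation rather than just ``checking'' it. With $\nl v=\bm{B}_v g_l$, one has $\bm{B}_v\bm{B}_v\t=\bm{I}_n+vv\t-\mathrm{diag}(v_1^2,\dots,v_n^2)$ and $\|\bm{B}_v\|_F^2=n$. Hence $\bm{M}_v=\bm{B}_v\t\bm{B}_v$ satisfies $\|\bm{M}_v\|\le 2$ and $\|\bm{M}_v\|_F\le\sqrt{2n}$.
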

\begin{proof}
    See \cref{sec:initiallemproof}.
\end{proof}

\begin{proof}[Proof of \cref{thm:mainthm}: Initialization]
    We now provide a bound for the spectral initialization.   We will simply apply the Davis-Kahan Theorem.  Observe that 
\begin{align*}
    \sum_{l=1}^{L} \mathbb{E} (\bm{A}^{(l)})^2 &= \sum_l (\bm{S}^{(l)})^2 + \frac{\sigma^2}{2} L (n+1)  \bm{I}.
\end{align*}
This shows that
\begin{align*}
    \lambda_{r}\bigg( \sum_l (\bm{S}^{(l)})^2 + \frac{\sigma^2}{2} (n+1) L  \bm{I} \bigg) \geq L \lambda^2 + \frac{\sigma^2}{2} n L.
\end{align*}
Observe that by \cref{lem:initialconcentration}, with probability at least $1 -  \exp( - c n)$,
\begin{align*}
    \bigg\| \sum_l \bigg( \big( \bm{A}^{(l)} \big)^2 - \big( \bm{S}^{(l)} \big)^2 - \frac{\sigma^2}{2} (n+1) \bm{I}_n \bigg) \bigg\| &\lesssim \lambda_{\max} \sigma \sqrt{nL} +  \sigma^2 n \sqrt{L} \\
    &\leq \frac{L \lambda^2}{4}
\end{align*}
where the final inequality holds whenever $\lambda/\sigma \geq C \max\big\{ \kappa \sqrt{\frac{n}{L}}, \frac{\sqrt{n}}{L^{1/4}}\big\}$ which holds when \eqref{maincondition} is satisfied.  Therefore, by the Davis-Kahan Theorem, we have that with this same probability,
\begin{align*}
    \| \sin\Theta(\bm{\hat U}_0,\bm{U}) \| &\lesssim \frac{\lambda_{\max} \sigma \sqrt{nL} +  \sigma^2 n \sqrt{L}}{L \lambda^2 } \\
    &\lesssim \frac{\kappa \sigma \sqrt{n}}{\sqrt{L}\lambda} + \frac{\sigma^2 n}{\lambda^2 \sqrt{L}}.
\end{align*}   
Using the  deterministic inequality $\| \bm{A} \|_F \leq \sqrt{r} \| \bm{A} \|$ for any matrix $\bm{A}$ of rank at most $r$, we obtain that
\begin{align*}
      \| \sin\Theta(\bm{\hat U}_0,\bm{U}) \|_F &\leq \frac{C\kappa \sigma \sqrt{nr}}{\sqrt{L}\lambda} + \frac{C \sigma^2 n \sqrt{r}}{\lambda^2 \sqrt{L}}
\end{align*}
whenever the event in \cref{lem:initialconcentration} holds.
\end{proof}

We prove \cref{thm:mainthm}  by induction.    For simplicity we only consider the regime where $\sqrt{n} \gtrsim \lambda/\sigma \gtrsim \sqrt{n}/L^{1/4}$ since otherwise no additional iterations are required and the result holds for $t = 0 $.  We also do not consider the updates for $\bm{\hat R}^{(l)}_t$ since we can encode these updates directly into the algorithm, since $\bm{\hat R}^{(l)}_t = \bm{\hat U}_t\t \bm{A}^{(l)} \bm{\hat U}_t$.  Therefore, the gradient updates are equivalently given by
\begin{align*}
    \bm{\hat U}_{t+1/2} &= \bm{\hat U}_t - \frac{\eta}{L} \sum_{l=1}^{L} \big( \bm{\hat U}_t \bm{\hat R}^{(l)}_t \bm{\hat U}_t\t - \bm{A}^{(l)} \big) \bm{\hat U}_t \bm{\hat R}_t^{(l)} \\
    &= \bm{\hat U}_t - \frac{\eta}{L} \sum_{l=1}^{L} \big( \bm{\hat U}_t \bm{\hat U}_t\t \bm{A}^{(l)} \bm{\hat U}_t \bm{\hat U}_t\t - \bm{A}^{(l)} \big) \bm{\hat U}_t \bm{\hat U}_t\t \bm{A}^{(l)} \bm{\hat U}_t; \\
    \bm{\hat U}_{t+1} &= {\sf SVD}_r( \bm{\hat U}_{t+1/2} ).
\end{align*}
This algorithm does not include any updates for $\bm{\hat R}^{(l)}_t$, and it is equivalent to \cref{alg}.

 Define  the event
\begin{subequations}
 \begin{align}
    \mathcal{E}_{{\sf good}} &:= \bigg\{ \bigg\| \sum_l \bm{N}^{(l)} \bm{Q} \bm{S}^{(l)} \bigg\| \leq C \sigma \sqrt{rnL} \lambda_{\max} \| \bm{Q} \|_F \text{ for all matrices $\bm{Q}$ of rank at most $2r$} \bigg\} \label{assertion1}\\
 \begin{split}   &\bigcap   \bigg\{ \bigg\| \sum_{l} \bm{N}^{(l)} \bm{Q} \bm{N}^{(l)} - \frac{\sigma^2}{2} L \big(\bm{Q}\t + {\sf Tr}(\bm{Q}) \bm{I} \big)  \bigg\| \leq C \sigma^2  n r\sqrt{L} 
 \| \bm{Q} \|_F \\
    &\qquad\qquad \qquad \qquad \qquad \qquad\qquad \qquad \text{  for all  matrices $\bm{Q}$ of rank at most $2r$} \bigg\} 
\end{split} \label{assertion2}  \\ 
    &\bigcap   \bigg\{  \bigg\| \sum_l  \bm{U}_{\perp}\t \bm{N}^{(l)} \bm{UU}\t \bm{N}^{(l)} \bm{U}\bigg\| \leq C \sigma^2 r \max\{ \sqrt{nL}, n \} \bigg\} 
    \label{assertion3} \\
    &\bigcap \bigg\{ \bigg\| \sum_l \bm{N}^{(l)} \bm{S}^{(l)} \bigg\| \leq C \sigma \sqrt{nL} \lambda_{\max} \bigg\} \label{assertion4}
        \end{align}
    \end{subequations}
We will prove the result by induction on $t$ on the event $\mathcal{E}_{{\sf good}}$.  The following lemma shows that $\mathcal{E}_{{\sf good}}$ holds with high probability.
\begin{lemma} \label{lem:goodevent}
Suppose that each $\bm{N}^{(l)}$ is a subgaussian Wigner matrix with variance $\sigma^2$ and $\psi_2$ norm bounded by $\sigma$.  Suppose that $\max_l \| \bm{S}^{(l)} \| \leq \lambda_{\max}$ and $\log(L) \leq c n r$ for some sufficiently small constant $c$.  Then the event $\mathcal{E}_{{\sf good}}$ holds with probability at least $1- \exp( -c n)$.
\end{lemma}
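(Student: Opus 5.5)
We bound each of the four events in \eqref{assertion1}--\eqref{assertion4} separately with probability at least $1-\exp(-cn)$ and then take a union bound. The recurring tool is to reduce a supremum over low-rank $\bm{Q}$ (or over the unit sphere) to a finite $\eps$-net, bound each fixed element using independence across $l$, and absorb the net cardinality into the exponent. The condition $\log(L)\le cnr$ lets us union bound over $l$ or over $L$-dependent nets when needed.

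\emph{Linear events \eqref{assertion1} and \eqref{assertion4}.} Fix unit vectors $\bm{x},\bm{y}\in\mathbb{R}^n$ and a matrix $\bm{Q}$. The quantity $\bm{x}\t\sum_l \nl \bm{Q}\sl \bm{y}$ is a linear form in the independent entries $\{\nl_{ij}\}_{i\le j,l}$. Its coefficient vector has squared norm at most $2\sum_l\|\bm{x}\|^2\|\bm{Q}\sl\bm{y}\|^2\le 2L\lambda_{\max}^2\|\bm{Q}\|^2$. Hoeffding's inequality then gives a subgaussian tail with parameter $\sigma\sqrt{L}\lambda_{\max}\|\bm{Q}\|_F$. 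We apply this on $1/4$-nets of the sphere, of size $9^{2n}$, and on a $1/4$-net (in Frobenius norm) of unit-Frobenius matrices of rank at most $2r$, of size $(C)^{4nr}$. Taking deviation $t=C\sigma\lambda_{\max}\sqrt{nrL}$ beats these entropies, which gives \eqref{assertion1} by homogeneity in $\|\bm{Q}\|_F$. Event \eqref{assertion4} is the same argument without $\bm{Q}$, with deviation $C\sigma\lambda_{\max}\sqrt{nL}$.

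\emph{Quadratic events \eqref{assertion2} and \eqref{assertion3}.} For fixed $\bm{x},\bm{y},\bm{Q}$, the quantity $\sum_l \bm{x}\t\nl\bm{Q}\nl\bm{y}$ is a quadratic form $\bm{g}\t\bm{M}\bm{g}$ in the independent entries. Its mean is exactly $\frac{\sigma^2}{2}L\,\bm{x}\t(\bm{Q}\t+{\sf Tr}(\bm{Q})\bm{I})\bm{y}$, up to the diagonal-variance correction, which is of lower order and can be absorbed. We bound the deviation by Hanson--Wright. The matrix $\bm{M}$ is block diagonal over $l$, with $\|\bm{M}\|_F^2\lesssim L\, n\|\bm{Q}\|_F^2$ and $\|\bm{M}\|\lesssim\|\bm{Q}\|$. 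This gives a tail $\exp(-c\min\{t^2/(\sigma^4 Ln\|\bm{Q}\|_F^2),\,t/(\sigma^2\|\bm{Q}\|)\})$. With $t=C\sigma^2 nr\sqrt{L}\|\bm{Q}\|_F$, the first term is of order $nr^2$ and the second of order $nr\sqrt{L}$, both dominating the net entropy $O(nr)$. For \eqref{assertion3}, $\bm{Q}=\bm{U}\bm{U}\t$ is fixed, so only nets over $\bm{x}\in\mathbb{R}^{n-r}$ and $\bm{y}\in\mathbb{R}^r$ are needed. The mean vanishes because $\bm{U}_\perp\t\bm{U}=0$ and $\bm{U}_\perp\t\bm{I}\bm{U}=0$. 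Hanson--Wright with $t=C\sigma^2 r\max\{\sqrt{nL},n\}$ beats an entropy of $O(n)$.

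\emph{Main obstacle.} The hardest step is passing from nets to all rank-$\le 2r$ $\bm{Q}$ in \eqref{assertion2}, since the map is nonlinear in $\bm{Q}$ only through the noise, and the centering term must be handled together with the fluctuation. We plan to use the standard trick that the difference of two rank-$2r$ matrices in the net has rank at most $4r$ and can be split into two rank-$2r$ pieces. This yields a self-bounding inequality $\sup\le \text{net max}+\tfrac12\sup$ for the centered linear-in-$\bm{Q}$ operator. The centering term is linear in $\bm{Q}$, so it passes through the same argument. Finally, we rescale constants so that the total failure probability is $\exp(-cn)$.
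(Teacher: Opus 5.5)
Your route works, but one estimate must be sharpened before \eqref{assertion3} goes through. For \eqref{assertion1} and \eqref{assertion4} you follow the paper: a fixed-$\bm{Q}$ subgaussian bound, nets over $\bm{x},\bm{y}$ and over rank-$\le 2r$ matrices, then a self-bounding step.

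For the quadratic events your route is genuinely different. The paper handles \eqref{assertion2} by hand:
\begin{itemize}
\item it splits $\sum_l \bm{N}^{(l)}\bm{Q}\bm{N}^{(l)}$ into hollow and diagonal parts and treats the squared entries by Bernstein;
\item it decouples the remaining U-statistic and applies a conditional Hoeffding bound on the event $\max_l\|\bm{N}^{(l)}\|\lesssim\sigma\sqrt{nr}$, and that union bound over $l$ is where $\log L\le cnr$ is used.
\end{itemize}
For \eqref{assertion3} the paper applies Bernstein to $\sum_l\langle \bm{z}^{(l)}_1,\bm{z}^{(l)}_2\rangle$. A single Hanson--Wright bound per fixed $(\bm{x},\bm{y},\bm{Q})$ on the stacked independent entries replaces all of this. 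It needs neither decoupling nor the hypothesis $\log L\le cnr$.

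Your rank-splitting self-bounding step is also the right way to pass from the net to all $\bm{Q}$ in \eqref{assertion2}. The paper's device for \eqref{assertion1} replaces $\tilde{\bm{Q}}-\bm{Q}_\varepsilon$ by $(\tilde{\bm{Q}}-\bm{Q}_\varepsilon)\bm{U}\bm{U}^{\top}$, using $\bm{S}^{(l)}=\bm{U}\bm{U}^{\top}\bm{S}^{(l)}$. That device has no analogue for \eqref{assertion2}, and the paper leaves this step implicit.

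\textbf{The estimate to fix.} Viewed as a quadratic form in $\mathrm{vec}(\bm{N}^{(l)})$, $\bm{x}^{\top}\bm{N}^{(l)}\bm{Q}\bm{N}^{(l)}\bm{y}$ has kernel entries $x_i\bm{Q}_{jk}y_m$. Its Frobenius norm is therefore $\|\bm{x}\|\,\|\bm{Q}\|_F\,\|\bm{y}\|$, up to a constant from the symmetry of $\bm{N}^{(l)}$. Hence $\|\bm{M}\|_F^2\lesssim L\|\bm{Q}\|_F^2$, not merely the $Ln\|\bm{Q}\|_F^2$ you wrote.

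Your looser bound is enough for \eqref{assertion2} but not for \eqref{assertion3}. There $\|\bm{Q}\|_F^2=r$ and $t=C\sigma^2 r\max\{\sqrt{nL},n\}$. With the loose bound the subgaussian exponent is only of order $r\max\{1,n/L\}$. That does not beat the $O(n)$ entropy of the $\bm{x}$-net, for example when $L\asymp n$ and $r=O(1)$. With the sharp bound the exponent is of order $r\max\{n,n^2/L\}\ge nr$, and the argument closes.

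A minor point: no diagonal-variance correction is needed in the mean. Under this normalization $\mathbb{E}\,\bm{N}^{(l)}\bm{Q}\bm{N}^{(l)}=\frac{\sigma^2}{2}(\bm{Q}^{\top}+{\sf Tr}(\bm{Q})\bm{I})$ exactly.
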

\begin{proof}
  See \cref{sec:goodeventproof}.
\end{proof}

To give the proof we will require some additional bounds on the event $\mathcal{E}_{{\sf good}}$.  Define
\begin{align}
\begin{split}
    {\sf L}_t &:= - \frac{1}{L} \big(  \bm{\hat U}_t \bm{\hat U}_t\t - \bm{UU}\t \big) \sum_l \bm{N}^{(l)} \bm{\hat U}_t \bm{\hat U}_t\t \bm{S}^{(l)} \bm{\hat U}_t + \frac{1}{L} \bm{U}_{\perp} \bm{U}_{\perp}\t  \sum_l \bm{N}^{(l)} \bm{\hat U}_t \bm{\hat U}_t\t \bm{S}^{(l)} \bm{\hat U}_t \\
            &\quad -  \bm{\hat U}_t \bm{\hat U}_t\t \frac{1}{L} \sum_{l} \bm{S}^{(l)} \bm{\hat U}_t \bm{\hat U}_t\t \bm{N}^{(l)} \bm{\hat U}_t + \frac{1}{L} \sum_l \bm{S}^{(l)} \bm{\hat U}_t \bm{\hat U}_t\t \bm{N}^{(l)} \bm{\hat U}_t 
\end{split}
    \label{lt} \\
            {\sf Q}_t &:= - \frac{1}{L}  \bm{\hat U}_t \bm{\hat U}_t\t \sum_l \bm{N}^{(l)} \bm{\hat U}_t \bm{\hat U}_t\t \bm{N}^{(l)} \bm{\hat U}_t + \frac{1}{L}  \sum_l \bm{N}^{(l)} \bm{\hat U}_t \bm{\hat U}_t\t \bm{N}^{(l)} \bm{\hat U}_t, \label{qt}
            \end{align}
which are the terms linear and quadratic in $\bm{N}^{(l)}$ respectively. 
The following lemma controls the linear error. 
\begin{lemma}\label{lem:Lt}
Suppose the conditions of \cref{thm:mainthm} hold. 
Suppose that 
\begin{align*}
    \|\pert \uhat_t \|_F \leq \frac{1}{2}.
\end{align*}
Then on the event $\mathcal{E}_{{\sf good}}$ it holds that
    \begin{align*}
        \| \bm{U}_{\perp} \bm{U}_{\perp}\t {\sf L}_t \|_F &\leq C \| \bm{U}_{\perp}\t \bm{\hat U}_t \|_F \frac{\sigma\lambda_{\max} \sqrt{nr}}{\sqrt{L}} + C \frac{\sigma \lambda_{\max} \sqrt{nr}}{\sqrt{L}}; \\
          \|  {\sf L}_t \|_F & \leq  C\sqrt{r} \| \pert \uhat_t \|_F  \frac{\sigma \lambda_{\max} \sqrt{nr}}{\sqrt{L}} + C\frac{\sigma \lambda_{\max} \sqrt{nr}}{\sqrt{L}}.
    \end{align*}
\end{lemma}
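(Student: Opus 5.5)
The plan is to expand each of the four summands of ${\sf L}_t$ in \eqref{lt} using $\bm{S}^{(l)} = \bm{U}\bm{R}^{(l)}\bm{U}\t$. Each summand reduces to either $\sum_l \bm{N}^{(l)}\bm{S}^{(l)}$, controlled by \eqref{assertion4}, or $\sum_l \bm{N}^{(l)}\bm{Q}\bm{S}^{(l)}$ with $\bm{Q}$ of rank at most $2r$, controlled by \eqref{assertion1}. The other ingredients are the elementary inequalities $\|\bm{A}\bm{B}\|_F \leq \|\bm{A}\|_F\|\bm{B}\|$ and $\|\bm{A}\bm{B}\|_F\le \|\bm{A}\|\|\bm{B}\|_F$, and $\|\bm{M}\|_F \leq \sqrt{\mathrm{rank}(\bm{M})}\,\|\bm{M}\|$.

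\textbf{Core decomposition.} For the central quantity $\bm{G} := \frac1L\sum_l \bm{N}^{(l)}\uhat_t\uhat_t\t\bm{S}^{(l)}$, write $\uhat_t\uhat_t\t = \bm{U}\bm{U}\t + \bm{\Delta}$ with $\bm{\Delta} := \uhat_t\uhat_t\t-\bm{U}\bm{U}\t$. Then $\bm{\Delta}$ has rank at most $2r$ and $\|\bm{\Delta}\|_F = \sqrt2\|\pert\t\uhat_t\|_F$. Since $\bm{U}\bm{U}\t\bm{S}^{(l)}=\bm{S}^{(l)}$, the first piece is $\frac1L\sum_l\bm{N}^{(l)}\bm{S}^{(l)}$.
\begin{itemize}
\item By \eqref{assertion4} it has spectral norm at most $C\sigma\lambda_{\max}\sqrt{n/L}$.
\item It has rank at most $r$, so its Frobenius norm is at most $C\sigma\lambda_{\max}\sqrt{nr}/\sqrt L$. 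This produces the additive term.
\end{itemize}
The second piece $\frac1L\sum_l \bm{N}^{(l)}\bm{\Delta}\bm{S}^{(l)}$ has spectral norm at most $C\sigma\lambda_{\max}\sqrt{nr}\,\|\pert\t\uhat_t\|_F/\sqrt L$ by \eqref{assertion1}. Its Frobenius norm picks up at most an extra $\sqrt{2r}$. Multiplying on the right by $\uhat_t$ does not increase either norm. Hence
\[
\|\bm{G}\uhat_t\|_F\lesssim \frac{\sigma\lambda_{\max}\sqrt{nr}}{\sqrt L}\big(1+\sqrt r\,\|\pert\t\uhat_t\|_F\big),
\]
and the same bound holds in spectral norm without the $\sqrt r$. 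Transposing gives the same bounds for $\frac1L\sum_l\uhat_t\t\bm{S}^{(l)}\uhat_t\uhat_t\t\bm{N}^{(l)}$, which is what appears in the third and fourth summands. Since $\|\uhat_t\uhat_t\t-\bm{U}\bm{U}\t\|\le 1$ and the projections are contractions, the second bound of the lemma follows directly from the triangle inequality over the four summands.

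\textbf{The projected bound.} For the first bound, apply $\pert\pert\t$ first and exploit cancellations.
\begin{itemize}
\item The fourth summand vanishes, since $\pert\t\bm{S}^{(l)}=0$.
\item The third becomes $-\pert\pert\t\uhat_t\cdot\big(\uhat_t\t\frac1L\sum_l\bm{S}^{(l)}\uhat_t\uhat_t\t\bm{N}^{(l)}\uhat_t\big)$. Its Frobenius norm is at most $\|\pert\t\uhat_t\|_F$ times the spectral norm of an $r\times r$ matrix. That spectral norm is $\lesssim\sigma\lambda_{\max}\sqrt{n/L}(1+\sqrt r\|\pert\t\uhat_t\|_F)$, and $\|\pert\t\uhat_t\|_F\le 1/2$ absorbs the quadratic term.
\item The first two combine to $\pert\pert\t(\bm I-\uhat_t\uhat_t\t)\bm{G}\uhat_t$ (using $\pert\pert\t\bm{U}\bm{U}\t=0$), whose Frobenius norm is at most $\|\bm{G}\uhat_t\|_F$.
\end{itemize}

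\textbf{Main obstacle.} The hard step is avoiding the extra $\sqrt r$ in front of $\|\pert\t\uhat_t\|_F$ in this combined term. A crude application of \eqref{assertion1} followed by the rank inequality loses it. What I would try first is to further split $(\bm I-\uhat_t\uhat_t\t)\bm{G}\uhat_t$ according to $\bm{\Delta}$, and bound the $\bm{\Delta}$-part as a product of a spectral-norm factor with a Frobenius factor $\|\bm{\Delta}\|_F$. Concretely, I would revisit the net argument behind \eqref{assertion1} so that it directly controls $\|\sum_l\bm{N}^{(l)}\bm{Q}\bm{S}^{(l)}\bm{V}\|_F$ for $\bm{V}$ with orthonormal columns by $\sigma\lambda_{\max}\sqrt{nrL}\|\bm{Q}\|_F$. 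If that refinement is unavailable, I would accept the $\sqrt r$ loss, which is harmless under \eqref{maincondition} for the induction.
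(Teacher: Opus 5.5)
Your route is the paper's own: after applying $\bm{U}_{\perp}\bm{U}_{\perp}^{\top}$ you use the same cancellations, the same split $\bm{\hat U}_t\bm{\hat U}_t^{\top}=\bm{U}\bm{U}^{\top}+\bm{\Delta}$, and the same appeal to \eqref{assertion1} and \eqref{assertion4}.

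\textbf{Second inequality.} Your proof of it is complete. For the fourth summand the relevant object is $\bm{G}^{\top}\bm{\hat U}_t$ rather than the transpose of $\bm{G}\bm{\hat U}_t$. Since you bounded $\bm{G}$ itself in both norms, this makes no difference.

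\textbf{First inequality: the gap.} You do not establish it. The $\bm{\Delta}$-part of $\frac{1}{L}\bm{U}_{\perp}\bm{U}_{\perp}^{\top}\sum_l \bm{N}^{(l)}\bm{\hat U}_t\bm{\hat U}_t^{\top}\bm{S}^{(l)}\bm{\hat U}_t$ is left open. Settling for the $\sqrt{r}$ loss proves a weaker statement than the lemma. You are right that this weaker form is absorbed in the induction by the term $\kappa\sqrt{r}\sqrt{nr/L}$ of \eqref{maincondition}, but it is still not the lemma.

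What is needed is $\frac{1}{L}\|\sum_l \bm{N}^{(l)}\bm{\Delta}\bm{S}^{(l)}\|_F\le C\frac{\sigma\lambda_{\max}\sqrt{nr}}{\sqrt{L}}\|\bm{\Delta}\|_F$. The paper's proof asserts exactly this for that term, citing \eqref{assertion1}. But \eqref{assertion1} as stated bounds only the spectral norm, and passing to the Frobenius norm via the rank costs precisely your $\sqrt{r}$. So your diagnosis is sharper than the written argument.

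\textbf{How to close it.} Your proposed refinement is true and costs nothing. Since $\bm{S}^{(l)}=\bm{U}\bm{R}^{(l)}\bm{U}^{\top}$,
\[
\Big\|\sum_l \bm{N}^{(l)}\bm{Q}\bm{S}^{(l)}\Big\|_F=\Big\|\sum_l \bm{N}^{(l)}\bm{Q}\bm{U}\bm{R}^{(l)}\Big\|_F=\sup_{\bm{Y}}\sum_l\big\langle \bm{N}^{(l)},\bm{Q}\bm{U}\bm{R}^{(l)}\bm{Y}^{\top}\big\rangle,
\]
where the supremum is over $\bm{Y}\in\mathbb{R}^{n\times r}$ with $\|\bm{Y}\|_F=1$.

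For fixed $(\bm{Q},\bm{Y})$ this is a sum of independent subgaussians with variance proxy $\lesssim\sigma^2\sum_l\|\bm{Q}\bm{U}\bm{R}^{(l)}\bm{Y}^{\top}\|_F^2\le\sigma^2L\lambda_{\max}^2\|\bm{Q}\|_F^2$. This is the same proxy as for the pair of unit vectors in the paper's argument.

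Replacing the $e^{Cn}$-size net of vector pairs by an $e^{Cnr}$-size net of such $\bm{Y}$ is free. The union bound over the net of rank-$2r$ matrices $\bm{Q}$ already runs at level $e^{Cnr}$.

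Extending off the nets as in the proof of \cref{lem:goodevent} gives $\|\sum_l \bm{N}^{(l)}\bm{Q}\bm{S}^{(l)}\|_F\le C\sigma\lambda_{\max}\sqrt{nrL}\,\|\bm{Q}\|_F$. This holds uniformly over $\bm{Q}$ of rank at most $2r$, with probability $1-e^{-cnr}$. Since it implies \eqref{assertion1}, you may put it in $\mathcal{E}_{{\sf good}}$ instead.

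Your combined term then satisfies
\[
\big\|\bm{U}_{\perp}\bm{U}_{\perp}^{\top}(\bm{I}-\bm{\hat U}_t\bm{\hat U}_t^{\top})\bm{G}\bm{\hat U}_t\big\|_F\le\|\bm{G}\|_F\le\frac{1}{L}\Big\|\sum_l\bm{N}^{(l)}\bm{S}^{(l)}\Big\|_F+\frac{1}{L}\Big\|\sum_l\bm{N}^{(l)}\bm{\Delta}\bm{S}^{(l)}\Big\|_F\le C\frac{\sigma\lambda_{\max}\sqrt{nr}}{\sqrt{L}}\big(1+\sqrt{2}\,\|\bm{U}_{\perp}^{\top}\bm{\hat U}_t\|_F\big),
\]
using $\|\bm{\Delta}\|_F=\sqrt{2}\|\bm{U}_{\perp}^{\top}\bm{\hat U}_t\|_F$. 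Together with your bound on the third summand, this gives the first inequality as stated.
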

\begin{proof}
    See \cref{sec:linearproof}.
\end{proof}
The following lemma bounds the term ${\sf Q}_t$.
\begin{lemma}\label{lem:Qt} 
Suppose the conditions of \cref{thm:mainthm} hold. 
Suppose that 
\begin{align*}
    \|\pert \uhat_t \|_F \leq \frac{1}{2}.
\end{align*}
Then on the event $\mathcal{E}_{{\sf good}}$ it holds that
\begin{align*}
  \| \bm{U}_{\perp} \bm{U}_{\perp}\t   {\sf Q}_t \|_F &\leq \| \pert \uhat_t\|_F \frac{C \sigma^2 n r^{3/2}}{\sqrt{L}} + \frac{C \sigma^2 r^{3/2} \max\{ \sqrt{nL},n\}}{L} \\
 \| {\sf Q}_t \|_F &\leq  \frac{C \sigma^2 n r^2}{\sqrt{L}} 
\end{align*}
    
\end{lemma}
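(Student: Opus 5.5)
The plan is to rewrite ${\sf Q}_t$ as the projection of a single random operator and then read everything off assertion \eqref{assertion2} of $\mathcal{E}_{{\sf good}}$, with \eqref{assertion3} used for one cross term. Write $\bm{P}_t := \uhat_t\uhat_t\t$, which has rank $r$ and $\|\bm{P}_t\|_F=\sqrt r$, and set
\begin{align*}
\bm{M}(\bm{Q}) := \frac{1}{L}\sum_l \nl \bm{Q}\nl, \qquad \bm{E}(\bm{Q}) := \bm{M}(\bm{Q}) - \frac{\sigma^2}{2}\big(\bm{Q}\t + {\sf Tr}(\bm{Q})\bm{I}\big).
\end{align*}
Then ${\sf Q}_t = (\bm{I}-\bm{P}_t)\bm{M}(\bm{P}_t)\uhat_t$. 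The deterministic part of $\bm{M}(\bm{P}_t)\uhat_t$ equals $\frac{\sigma^2}{2}(1+r)\uhat_t$, and this is annihilated by $\bm{I}-\bm{P}_t$. Hence ${\sf Q}_t = (\bm{I}-\bm{P}_t)\bm{E}(\bm{P}_t)\uhat_t$. On $\mathcal{E}_{{\sf good}}$, \eqref{assertion2} gives $\|\bm{E}(\bm{P}_t)\|\le C\sigma^2 n r^{3/2}/\sqrt L$. Since $\uhat_t$ has $r$ orthonormal columns, $\|{\sf Q}_t\|_F\le \sqrt r\,\|\bm{E}(\bm{P}_t)\| \le C\sigma^2 n r^2/\sqrt L$, which is the second claim.

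For the first claim, expand $\pert{\sf Q}_t = \pert\bm{E}(\bm{P}_t)\uhat_t - \pert\uhat_t\,\uhat_t\t\bm{E}(\bm{P}_t)\uhat_t$. The second piece is at most $\|\pert\uhat_t\|_F\,\|\bm{E}(\bm{P}_t)\|$, which is already of the first type in the target bound. In the first piece, insert $\uhat_t = \U\U\t\uhat_t + \per\pert\uhat_t$. The $\per$ part is again bounded by $\|\bm{E}(\bm{P}_t)\|\,\|\pert\uhat_t\|_F$.

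This leaves $\pert\bm{E}(\bm{P}_t)\U\,\U\t\uhat_t$, which is the main obstacle: $\bm{P}_t$ is random and data dependent, so the sharper $\max\{\sqrt{nL},n\}$ rate cannot be applied to it directly. I would split $\bm{P}_t = \U\U\t + \bm{\Delta}$ with $\bm{\Delta}:=\bm{P}_t-\U\U\t$. Here $\bm{\Delta}$ has rank at most $2r$, ${\sf Tr}(\bm{\Delta})=0$, and $\|\bm{\Delta}\|_F=\sqrt2\|\pert\uhat_t\|_F$.

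For the $\U\U\t$ part, the deterministic term $\pert(\U\U\t + r\bm{I})\U$ vanishes. Therefore $\pert\bm{E}(\U\U\t)\U = \frac1L\sum_l\pert\nl\U\U\t\nl\U$, which \eqref{assertion3} bounds in spectral norm by $C\sigma^2 r\max\{\sqrt{nL},n\}/L$. Multiplying by $\|\U\t\uhat_t\|\le1$ and paying $\sqrt r$ to pass to Frobenius norm gives the term $C\sigma^2 r^{3/2}\max\{\sqrt{nL},n\}/L$.

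For the $\bm{\Delta}$ part, $\bm{E}$ is linear in $\bm{Q}$, so \eqref{assertion2} applies to $\bm{\Delta}$ and gives $\|\bm{E}(\bm{\Delta})\|\le C\sigma^2 nr\|\pert\uhat_t\|_F/\sqrt L$. After the $\sqrt r$ Frobenius factor this becomes $C\sigma^2 n r^{3/2}\|\pert\uhat_t\|_F/\sqrt L$. One should also check the bookkeeping: $\bm{E}(\bm{P}_t)$ and $\bm{E}(\U\U\t)+\bm{E}(\bm{\Delta})$ agree exactly by linearity, and the centering of $\bm{\Delta}$ contributes $\frac{\sigma^2}{2}\pert\bm{\Delta}\U = \frac{\sigma^2}{2}\pert\uhat_t\uhat_t\t\U$. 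That contribution has norm at most $\sigma^2\|\pert\uhat_t\|_F$. It is dominated by the first term because $\sqrt L\le n$ (as $L\le n^2$).

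Summing the pieces yields $\|\per\pert{\sf Q}_t\|_F\le \|\pert\uhat_t\|_F\,C\sigma^2nr^{3/2}/\sqrt L + C\sigma^2 r^{3/2}\max\{\sqrt{nL},n\}/L$. The hypothesis $\|\pert\uhat_t\|_F\le 1/2$ is used only to keep constants uniform.
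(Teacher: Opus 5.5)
Your proposal is correct and follows essentially the paper's route: the same add-and-subtract of the Wigner centering $\frac{\sigma^2}{2}(\bm{Q}\t+{\sf Tr}(\bm{Q})\bm{I})$, the uniform bound \eqref{assertion2} for every data-dependent piece, \eqref{assertion3} for the lone cross term $\frac{1}{L}\sum_l\pert\nl\U\U\t\nl\U$ (the paper's $T_5$), and the same argument for $\|{\sf Q}_t\|_F$, namely that $(\bm{I}-\uhat_t\uhat_t\t)$ annihilates the centering. The only difference is grouping: you keep $-\pert\uhat_t\uhat_t\t\bm{E}(\bm{P}_t)\uhat_t$ as one term rather than splitting it into the paper's $T_1+T_2$, which avoids the quadratic $\|\pert\uhat_t\|_F^2$ term and so makes the hypothesis $\|\pert\uhat_t\|_F\le 1/2$ unnecessary (your separate check of the centering of $\bm\Delta$ is also superfluous, since that centering already sits inside $\bm{E}(\bm\Delta)$, which \eqref{assertion2} controls directly).
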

\begin{proof}
    See \cref{sec:quadraticproof}.
\end{proof}
Finally, the following technical lemma is needed to ensure that the iterates contract. 
\begin{lemma}\label{lem:contractionlemma} Suppose the conditions of \cref{thm:mainthm} hold. 
Suppose that 
\begin{align*}
    \|\pert \uhat_t \|_F \leq \frac{1}{2}.
\end{align*}
Then on the event $\mathcal{E}_{{\sf good}}$ it holds that
\begin{align*}
    \lambda_r \bigg( \sum_l \bm{\hat U}_t\t \bm{S}^{(l)} \bm{\hat U}_t \bm{\hat U}_t\t \bm{S}^{(l)} \bm{\hat U}_t \bigg) &\geq \frac{L \lambda^2}{4} ; \\
    \| \sum_l \bm{S}^{(l)} \bm{\hat U}_t \bm{\hat U}_t\t \bm{S}^{(l)} \bm{\hat U}_t \| &\leq  L \lambda_{\max}^2.
\end{align*}
\end{lemma}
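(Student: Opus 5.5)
Both bounds are deterministic consequences of the assumption $\|\pert\uhat_t\|_F \le \tfrac12$; the event $\mathcal{E}_{\sf good}$ is not needed. Write $\bm{W} := \bm{U}\t \uhat_t \in \mathbb{R}^{r\times r}$. Since $\uhat_t$ is orthonormal, $\bm{W}\t\bm{W} = \bm{I}_r - \uhat_t\t\per\per\t\uhat_t$. Hence
\begin{align*}
\sigma_{\min}(\bm{W})^2 \geq 1 - \|\pert\uhat_t\|^2 \geq \tfrac34, \qquad \|\bm{W}\| \leq 1.
\end{align*}
Using $\sl = \U\rl\U\t$, we get $\uhat_t\t\sl\uhat_t = \bm{W}\t\rl\bm{W}$, and therefore
\begin{align*}
\sum_l \uhat_t\t\sl\uhat_t\uhat_t\t\sl\uhat_t = \bm{W}\t\Big(\sum_l \rl \bm{W}\bm{W}\t \rl\Big)\bm{W}.
\end{align*}

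\textbf{Lower bound.} Set $\bm{M} := \bm{W}\bm{W}\t$, which satisfies $\bm{M} \succeq \tfrac34 \bm{I}_r$ because $\bm{W}$ is square with $\sigma_{\min}(\bm{W})^2 \ge \tfrac34$. Then
\begin{align*}
\rl\bm{M}\rl \succeq \tfrac34 (\rl)^2 \quad\text{for every } l,
\end{align*}
since $\rl$ is symmetric. Summing over $l$ and using the definition of $\lambda$,
\begin{align*}
\sum_l \rl\bm{M}\rl \succeq \tfrac34 L\lambda^2 \bm{I}_r.
\end{align*}
Conjugating by $\bm{W}$ gives
\begin{align*}
\lambda_r\Big(\bm{W}\t\big(\textstyle\sum_l \rl\bm{M}\rl\big)\bm{W}\Big) \geq \sigma_{\min}(\bm{W})^2 \cdot \tfrac34 L\lambda^2 \geq \tfrac{9}{16}L\lambda^2 \geq \tfrac{L\lambda^2}{4}.
\end{align*}

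\textbf{Upper bound.} Write
\begin{align*}
\sum_l \sl\uhat_t\uhat_t\t\sl\uhat_t = \U\Big(\sum_l \rl\bm{W}\bm{W}\t\rl\Big)\bm{W}.
\end{align*}
Its norm is at most $\|\sum_l \rl\bm{M}\rl\|\cdot\|\bm{W}\|$. Since $\bm{M} \preceq \bm{I}_r$,
\begin{align*}
0 \preceq \sum_l \rl\bm{M}\rl \preceq \sum_l (\rl)^2,
\end{align*}
and $\|\sum_l (\rl)^2\| \le \sum_l \|\rl\|^2 \le L\lambda_{\max}^2$. Together with $\|\bm{W}\| \le 1$, this gives the claimed bound $L\lambda_{\max}^2$.

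The only step that needs care is the Loewner ordering $\rl\bm{M}\rl \succeq c(\rl)^2$. It relies on $\bm{W}$ being square, so that $\bm{W}\bm{W}\t$ (and not only $\bm{W}\t\bm{W}$) is bounded below. Squareness holds because $\uhat_t$ has exactly $r$ columns.
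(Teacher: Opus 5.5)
Your proof is correct and follows essentially the paper's route: the paper also bounds $\lambda_r(\uhat_t\t\U\U\t\uhat_t)\ge 1-\|\sin\Theta(\uhat_t,\U)\|_F^2\ge\tfrac34$, uses the Loewner ordering $\rl\U\t\uhat_t\uhat_t\t\U\rl\succeq\tfrac34(\rl)^2$, sums over $l$ and conjugates to reach $\tfrac{9}{16}L\lambda^2$, and calls the upper bound immediate. Two steps the paper leaves implicit are made explicit in your write-up: that squareness of $\bm{W}=\U\t\uhat_t$ is what gives $\bm{W}\bm{W}\t\succeq\tfrac34\bm{I}_r$, and that the event $\mathcal{E}_{{\sf good}}$ is never used.
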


\begin{proof}
    See \cref{sec:contractionproof}.
\end{proof}

With these lemmas in place we now have the proof of \cref{thm:mainthm} for $t \geq 1$.

\begin{proof}[Proof of \cref{thm:mainthm}: Induction Step]
Define the event $\mathcal{E}_{\text{\cref{lem:initialconcentration}}}$ as the event that
\begin{align*}
    \| \sin\Theta(\bm{\hat U}^{(0)}, \bm{U} ) \|_F \leq \underbrace{C_1 \frac{\sigma \kappa \sqrt{nr}}{\lambda \sqrt{L}}}_{=: A} + \underbrace{C_2 \frac{\sigma^2 n \sqrt{r}}{\sqrt{L} \lambda^2}}_{=:B}.
\end{align*}
  Throughout we condition on the event $\mathcal{E}_{{\sf good}} \cap \mathcal{E}_{\text{\cref{lem:initialconcentration}}}$. 
By \cref{lem:initialconcentration}, $\mathcal{E}_{\text{\cref{lem:initialconcentration}}}$ holds with probability at least $1 - \exp( - c n)$, and by \cref{lem:goodevent}, $\mathcal{E}_{{\sf good}}$ holds with this same probability. 
Under \eqref{maincondition} and the event $\mathcal{E}_{{\sf good}}\cap \mathcal{E}_{\text{\cref{lem:initialconcentration}}}$, it holds that $\|\pert\uhat_t\|_F\leq \frac12$
provided $C_0$ is sufficiently large.  Therefore, suppose that at time $t$ it holds that 
\begin{align*}
    \|\sin\Theta(\bm{\hat U}_t,\bm U)\|_F \leq A+\big(1-\frac{\eta\lambda^2}{8}\big)^t B.
\end{align*}
Clearly this holds at time $t = 0$.  
The gradient update gives 
\begin{align*}
     \bm{\hat U}_{t+1/2} &= \bm{\hat U}_t -\frac{\eta}{L} \sum_{l=1}^{L} \bigg( \bm{\hat U}_t \uhat_t\t \bm{A}\l \uhat_t \bm{\hat U}_t\t -\bm{A}^{(l)} \bigg) \bm{\hat U}_{t} \bm{\hat U}_t\t \bm{A}^{(l)} \bm{\hat U}_t \\ 
           &= \bm{\hat U}_t - \frac{\eta}{L} \big( \bm{\hat U}_t \bm{\hat U}_t\t - \bm{UU}\t \big) \sum_{l=1}^{L} \bm{S}^{(l)} \bm{\hat U}_t \bm{\hat U}_t\t \big( \bm{S}^{(l)}  \big) \bm{\hat U}_t  \\
         &\qquad - \frac{\eta}{L} \sum_{l=1}^{L} \bigg( \big( \bm{\hat U}_t \bm{\hat U}_t\t - \bm{U} \bm{U}\t \big) \bm{N}^{(l)}  - \bm{U}_{\perp} \bm{U}_{\perp}\t \bm{N}^{(l)} \bigg) \bm{\hat U}_t \bm{\hat U}_t\t \big( \bm{S}^{(l)}  \big) \bm{\hat U}_t  \\
        &\qquad - \frac{\eta}{L} \sum_{l=1}^{L}  \bm{\hat U}_t \bm{\hat U}_t\t \big( \bm{S}^{(l)} + \bm{N}^{(l)} \big) \bm{\hat U}_t \bm{\hat U}_t\t \bm{N}^{(l)} \bm{\hat U}_t \\
        &\qquad + \frac{\eta}{L} \sum_{l=1}^{L} \big( \bm{S}^{(l)} + \bm{N}^{(l)} \big) \bm{\hat U}_t \bm{\hat U}_t\t \big(  \bm{N}^{(l)} \big) \bm{\hat U}_t .
           \end{align*}
           From the definitions of ${\sf L}_t$ and ${\sf Q}_t$ in \cref{lt} and \cref{qt} it holds that 
    \begin{align*}
            \bm{U}_{\perp} \bm{U}_{\perp}\t \bm{\hat U}_{t+1/2} 
            &= \bm{U}_{\perp} \bm{U}_{\perp}\t \bm{\hat U}_t - \frac{\eta}{L} \bm{U}_{\perp} \bm{U}_{\perp}\t \bm{\hat U}_t \bm{\hat U}_t\t \sum_{l} \bm{S}^{(l)} \bm{\hat U}_t \bm{\hat U}_t\t \bm{S}^{(l)} \bm{\hat U}_t 
        + \eta \bm{U}_{\perp} \bm{U}_{\perp}\t \bigg( {\sf L}_t +  {\sf Q}_t \bigg).
    \end{align*}
From \cref{lem:Lt} and \cref{lem:Qt} we have that
\begin{align*}
         \| \bm{U}_{\perp} \bm{U}_{\perp}\t {\sf L}_t \|_F &\leq C \| \bm{U}_{\perp}\t \bm{\hat U}_t \|_F \frac{\sigma \sqrt{nr}\lambda_{\max}}{\sqrt{L}} + C \frac{\sigma \lambda_{\max} \sqrt{nr}}{\sqrt{L}};  \\
   \| \bm{U}_{\perp} \bm{U}_{\perp}\t   {\sf Q}_t \|_F &\leq \| \pert \uhat_t \|_F \frac{C \sigma^2 n r^{3/2}}{\sqrt{L}} + \frac{C\sigma^2 r^{3/2} \max\{\sqrt{nL},n\}}{L}. 
\end{align*}
By \cref{lem:contractionlemma},
\begin{align*}
    \lambda_r\left(\frac{\eta}{L}\sum_l\bm{\hat U}_t^\top\bm S^{(l)}\bm{\hat U}_t\bm{\hat U}_t^\top\bm S^{(l)}\bm{\hat U}_t\right)\geq \frac{\eta\lambda^2}{4}.
\end{align*}
Since $\eta\lambda^2\leq 1$, this yields
\begin{align*}
    \|\bm U_\perp^\top\bm{\hat U}_{t+1/2}\|_F
    &\leq
    \|\bm U_\perp^\top\bm{\hat U}_t\|_F
    \left(1-\frac{\eta\lambda^2}{4}+\eta\frac{C\sigma\sqrt{nr}\lambda_{\max}}{\sqrt L}+\eta\frac{C\sigma^2nr^{3/2}}{\sqrt L}\right)
    +\eta\frac{C\sigma\lambda_{\max}\sqrt{nr}}{\sqrt L}
    +\eta\frac{C\sigma^2r^{3/2}\max\{\sqrt{nL},n\}}{L}.
\end{align*}
Under \eqref{maincondition}, since $C_1$ is fixed, it holds that
\begin{align*}
    \frac{C\sigma\lambda_{\max}\sqrt{nr}}{\sqrt L}+\frac{C\sigma^2r^{3/2}\max\{\sqrt{nL},n\}}{L}\leq \frac{\lambda^2}{64}A.
\end{align*}
Therefore, from the induction hypothesis,
\begin{align*}
    \|\bm U_\perp^\top\bm{\hat U}_{t+1/2}\|_F
    &\leq
    \bigg(1-\frac{7\eta\lambda^2}{32}\bigg)\|\bm U_\perp^\top\bm{\hat U}_t\|_F+\frac{\eta\lambda^2}{64}A\\
    &\leq
    \bigg(1-\frac{7\eta\lambda^2}{32}\bigg)\bigg(A+\big(1-\frac{\eta\lambda^2}{8}\big)^tB\bigg)+\frac{\eta\lambda^2}{64}A\\
    &=
    A\left(1-\frac{13\eta\lambda^2}{64}\right)
    +\left(1-\frac{\eta\lambda^2}{8}\right)^tB\left(1-\frac{7\eta\lambda^2}{32}\right).
\end{align*}
We next bound the projection step.  Write
\begin{align*}
    \bm{\hat U}_{t+1/2}=\bm{\hat U}_{t+1}\bm{\hat\Sigma}_t\bm{\hat V}_t^\top.
\end{align*}
Then
\begin{align*}
    \|\sin\Theta(\bm{\hat U}_{t+1},\bm U)\|_F \leq \|\bm{\hat\Sigma}_t^{-1}\|\,\|\bm U_\perp^\top\bm{\hat U}_{t+1/2}\|_F.
\end{align*}
We claim that
\begin{align*}
    \|\bm{\hat\Sigma}_t^{-1}\|\leq 1+\frac{\eta\lambda^2}{32}.
\end{align*}
Indeed, write
\begin{align*}
    \bm{\hat U}_{t+1/2}=\bm{\hat U}_t+\mathcal I_t,
\end{align*}
where
\begin{align*}
    \mathcal I_t:=
    -\frac{\eta}{L}\big(\bm{\hat U}_t\bm{\hat U}_t^\top-\bm U\bm U^\top\big)\sum_l\bm S^{(l)}\bm{\hat U}_t\bm{\hat U}_t^\top\bm S^{(l)}\bm{\hat U}_t
    +\eta{\sf L}_t+\eta{\sf Q}_t.
\end{align*}
Using $\eta\lambda_{\max}^2\leq c_\eta$ and the induction hypothesis,
\begin{align*}
    \left\|
    \frac{\eta}{L}\big(\bm{\hat U}_t\bm{\hat U}_t^\top-\bm U\bm U^\top\big)\sum_l\bm S^{(l)}\bm{\hat U}_t\bm{\hat U}_t^\top\bm S^{(l)}\bm{\hat U}_t
    \right\|
    &\leq 2\eta\lambda_{\max}^2\|\pert\uhat_t\|_F\\
    &\leq 2c_\eta\left[A+\left(1-\frac{\eta\lambda^2}{8}\right)^t B\right].
\end{align*}
Since $\eta\lambda^2\geq \frac{c_\eta'}{\kappa^2},$
\eqref{maincondition} implies that $A+B \leq \frac{c_\eta'}{1024c_\eta}\frac{1}{\kappa^2}$ and hence 
\begin{align*}
  \big\|
    \frac{\eta}{L}\big(\bm{\hat U}_t\bm{\hat U}_t^\top-\bm U\bm U^\top\big)\sum_l\bm S^{(l)}\bm{\hat U}_t\bm{\hat U}_t^\top\bm S^{(l)}\bm{\hat U}_t
    \big\| &\leq 2 c_{\eta} ( A + B) \\
    &\leq 2 c_{\eta} \frac{c_\eta'}{1024c_\eta}\frac{1}{\kappa^2} \\
    &\leq \frac{\eta\lambda^2}{512}.
    \end{align*}
Similarly, by \cref{lem:Lt,lem:Qt} and \eqref{maincondition},
\begin{align*}
    \|\eta{\sf L}_t\|_F+\|\eta{\sf Q}_t\|_F
    &\leq
    \eta\|\pert\uhat_t\|_F\frac{C\sigma\sqrt n r\lambda_{\max}}{\sqrt L}
    +\eta\frac{C\sigma\lambda_{\max}\sqrt{nr}}{\sqrt L}
    +\eta\frac{C\sigma^2nr^2}{\sqrt L}\\
    &\leq \frac{\eta\lambda^2}{512}.
\end{align*}
Hence, it holds that $\|\mathcal I_t\|\leq \frac{\eta\lambda^2}{256}.$
Therefore,
\begin{align*}
    \left\|\bm{\hat U}_{t+1/2}^\top\bm{\hat U}_{t+1/2}-\bm I_r\right\|\leq 2\|\mathcal I_t\|+\|\mathcal I_t\|^2\leq \frac{\eta\lambda^2}{128}+\frac{\eta^2\lambda^4}{256^2}\leq \frac{\eta\lambda^2}{64}.
\end{align*}
By Weyl's inequality, $\lambda_r^2(\bm{\hat\Sigma}_t)\geq 1-\frac{\eta\lambda^2}{64}.$ 
Thus, since $\eta\lambda^2\leq 1$,
\begin{align*}
    \|\bm{\hat\Sigma}_t^{-1}\|\leq \frac{1}{\sqrt{1-\eta\lambda^2/64}}\leq 1+\frac{\eta\lambda^2}{32}.
\end{align*}
Therefore, we have shown that 
\begin{align*}
    \|\sin\Theta(\bm{\hat U}_{t+1},\bm U)\|_F
    &\leq
    \left(1+\frac{\eta\lambda^2}{32}\right)
    \left[
        A\left(1-\frac{13\eta\lambda^2}{64}\right)
        +\left(1-\frac{\eta\lambda^2}{8}\right)^tB\left(1-\frac{7\eta\lambda^2}{32}\right)
    \right]\\
    &\leq
    A+\left(1-\frac{\eta\lambda^2}{8}\right)^{t+1}B.
\end{align*}
Hence, for every $t\geq0$, on the event $\mathcal{E}_{{\sf good}}$,
\begin{align*}
    \|\sin\Theta(\bm{\hat U}_{t},\bm U)\|_F
    \leq
    C_1\frac{\sigma\kappa\sqrt{nr}}{\lambda\sqrt L}
    +
    \left(1-\frac{\eta\lambda^2}{8}\right)^t
    C_2\frac{\sigma^2 n\sqrt r}{\sqrt L\lambda^2}.
\end{align*}
Since $\eta\lambda^2>c_\eta'/\kappa^2$, we also have
\begin{align*}
    \|\sin\Theta(\bm{\hat U}_{t},\bm U)\|_F
    \leq
    C_1\frac{\sigma\kappa\sqrt{nr}}{\lambda\sqrt L}
    +
    \left(1-\frac{c_\eta'}{8\kappa^2}\right)^t
    C_2\frac{\sigma^2 n\sqrt r}{\sqrt L\lambda^2}.
\end{align*}
This completes the proof.
\end{proof}


\subsection{Proof of \cref{lem:initialconcentration}} \label{sec:initiallemproof}
\begin{proof}
Without loss of generality we let $\sigma^2 = 1$. 
    We define 
    \begin{align*}
    S &:= \bigg\| \sum_{l} \bigg( (\bm{A}^{(l)})^2 - (\bm{S}^{(l)})^2 - \frac{1}{2} (n + 1) \bm{I}_n \bigg) \bigg\|.
    \end{align*}
    It holds that $S \leq S_L + S_Q$ where $S_L$ and $S_Q$ are defined via
    \begin{align*}
        S_L :&= \bigg\| \sum_{l} \bigg( \bm{N}^{(l)} \bm{S}^{(l)} + \bm{S}^{(l)} \bm{N}^{(l)} \bigg) \bigg\|; \\
        S_Q :&=   \bigg\| \sum_{l} \bigg(  (\bm{N}^{(l)} )^2 - \mathbb{E}\big( \bm{N}^{(l)}\big)^2  \bigg) \bigg\|.
\end{align*}
We will bound each term separately.   
\\ \ \\ \noindent
\textbf{Bounding $S_L$:} First, we note that 
\begin{align*}
    \sum_l \bm{N}^{(l)} \bm{S}^{(l)} &= \sum_l \mathcal{P}_{{\sf upper-diag}}( \bm{N}^{(l)}) \bm{S}^{(l)} + \sum_l \mathcal{P}_{{\sf upper-diag}}^c (\bm{N}^{(l)}) \bm{S}^{(l)},
\end{align*}
where $\mathcal{P}_{{\sf upper-diag}}$ is the operator that sets its lower off-diagonal elements to zero.  Without loss of generality we bound the first term; the second is similar.

Let $x$ be any deterministic unit vector in the span of $\bm{UU}\t$, and let $y$ be any deterministic $n$-dimensional unit vector.  Then we can write
\begin{align*}
y\t \sum_l  \mathcal{P}_{{\sf upper-diag}} (\bm{N}^{(l)}) \bm{S}^{(l)} x &= \sum_{1\leq i \leq n, 1 \leq j \leq n} \sum_{k=1}^{n} \sum_l y_ i  \mathcal{P}_{{\sf upper-diag}} (\bm{N}^{(l)})_{ij} \bm{S}^{(l)}_{jk}  x_k.
\end{align*}
Note that for fixed $x$ and $y$ this is a sum of independent subgaussian random variables.  We will note that its $\psi_2$ norm can be bounded by
\begin{align*}
\sum_{1 \leq i \leq n,1\leq j\leq n}\sum_l y_i^2  \bigg( \sum_k \bm{S}^{(l)}_{jk }x_k \bigg)^2  &= \sum_l \sum_j \| e_j\t \bm{S}^{(l)} x \|^2 = \sum_l \| \bm{S}^{(l)} x \|^2 \leq L \lambda_{\max}^2,
\end{align*}
since $\sup_{\|x\|=1} \| \bm{S}^{(l)} x \|^2 \leq \lambda_{\max}^2$.  By taking a union bound via $\eps$-net, it holds that 
\begin{align*}
    \| \sum_l \bm{N}^{(l)} \bm{S}^{(l)} \| \leq C t
\end{align*}
with probability at least $1 - \exp\bigg( cn - \frac{t^2}{L \lambda_{\max}^2} \bigg).$ 
Consequently, letting $ \frac{t^2}{L \lambda_{\max}^2}= s^2 $, we obtain that
\begin{align*}
     \| \sum_l \bm{N}^{(l)} \bm{S}^{(l)} \|  \leq C s \sqrt{L} \lambda_{\max}
\end{align*}
with probability at least $1 - \exp\bigg( cn - s^2 \bigg).$ Therefore, we may let $s = C \sqrt{n} $ to obtain that
\begin{align*}
    \| \sum_l \bm{N}^{(l)} \bm{S}^{(l)} \|  \leq C \sqrt{nL} \lambda_{\max}
\end{align*}
with probability at least $1 - \exp(- c n).$
\\ \ \\ \noindent
\textbf{Bounding $S_Q$}: 
First, we have
\begin{align*}
S_Q &=     \bigg\| \sum_l (\bm{N}^{(l)})^2 - \mathbb{E} (\bm{N}^{(l)} )^2 \bigg\| \leq \bigg\| \sum_l \mathcal{H}(\bm{N}^{(l)})^2  \bigg\| + \bigg\| \sum_l\mathcal{D}( \bm{N}^{(l)})^2 - \mathbb{E} (\bm{N}^{(l)} )^2 \bigg\|,
\end{align*}
where $\mathcal{H}(\cdot)$ is the \emph{hollowing operator} that sets the diagonal to zero, and $\mathcal{D}$ is the \emph{diagonal operator} that sets the off-diagonal to zero. We bound each term in turn.
\begin{itemize}
    \item \textbf{The off-diagonal term.} 
Observe that the first term can be written as a U-statistic in the entries of $\bm{N}^{(l)}$, and we may apply Theorem 1 of \citet{pena_decoupling_1995} to obtain that 
\begin{align*}
    \p\bigg\{ \sum_l \mathcal{H}(\bm{N}^{(l)})^2 \bigg\| > t \bigg\} \leq  C \p \bigg\{ \bigg\| \sum_l \mathcal{H} \bigg(\bm{N}^{(l)} \bm{\tilde N}^{(l)} \bigg) \bigg\| > C t \bigg\},
\end{align*}
where $\bm{\tilde N}^{(l)}$ is an independent copy.  We then have that 
\begin{align*}
    \bigg\| \sum_l \mathcal{H}\bigg( \bm{N}^{(l)} \bm{\tilde N}^{(l)} \bigg) \bigg\| \leq \underbrace{\bigg\| \sum_l \bm{N}^{(l)} \bm{\tilde N}^{(l)} \bigg\|}_{=: T_1} + \underbrace{\bigg\| \sum_l \mathcal{D} \bigg( \bm{N}^{(l)} \bm{\tilde N}^{(l)} \bigg) \bigg\|}_{=: T_2}.
\end{align*}
We now bound each term $T_1$ and $T_2$ as follows.
\begin{itemize}
    \item \textbf{The term $T_2$}.   For the diagonal term above, we note that the $i$'th diagonal entry can be written as
\begin{align*}
    \sum_l \sum_j \bm{N}^{(l)}_{ij} \bm{\tilde N}^{(l)}_{ij}.
\end{align*}
Conditional on $\bm{\tilde N}^{(l)}$, the above is a subgaussian random variable with variance proxy $\sum_l \sum_j \big( \bm{\tilde N}^{(l)}_{ij} \big)^2$.  Therefore, it can be bounded via
\begin{align*}
      \bigg| \sum_l \sum_j \bm{N}^{(l)}_{ij} \bm{\tilde N}^{(l)}_{ij} \bigg| \leq C t \sqrt{\sum_l \sum_j \big( \bm{\tilde N}^{(l)}_{ij} \big)^2}
\end{align*}
with probability at least $1 - 2 \exp( - c t^2)$, conditional on $\bm{\tilde N}^{(l)}$.  Using concentration of the norm for subgaussian random variables, uniformly over $i$ it holds that
\begin{align}
    \sqrt{\sum_l \sum_j \big( \bm{\tilde N}^{(l)}_{ij} \big)^2} \leq C \sqrt{nL} \label{eventevent}
\end{align}
with probability at least $1 - 2 n \exp( - c nL)$.  Therefore, on this event, for all $i$, with probability at least $1 - 2 n \exp( - c t^2)$  it holds that
\begin{align*}
     \bigg| \sum_l \sum_j \bm{N}^{(l)}_{ij} \bm{\tilde N}^{(l)}_{ij} \bigg| \leq C t \sqrt{nL},
\end{align*}
whenever the event \eqref{eventevent} holds.  Combining it all we have that this quantity is bounded by $C n \sqrt{L}$ with probability at least $1 - 2 n \exp( -c n) - 2 n\exp( - c nL)$.  
\item \textbf{The term $T_1$.} We replicate our proof for $\| \sum_l \bm{N}^{(l)} \bm{S}^{(l)} \|$ only replacing $\bm{S}^{(l)}$ with $\bm{\tilde N}^{(l)}$.  Conditional on the event
\begin{align*}
    \sup_{ \|x\| = 1} \sum_l \| \bm{N}^{(l)} x \|^2 \leq C Ln,
\end{align*}
it holds that 
\begin{align*}
    \bigg\| \sum_l \bm{N}^{(l)} \bm{\tilde N}^{(l)} \bigg\| \leq C s \sqrt{Ln}
\end{align*}
with probability at least $1 - 2 \exp( c n - s^2)$.  Taking $s \asymp \sqrt{n}$ yields the same bound, which holds with probability at least $1 - 2 \exp( - c n) - 2 \exp( - c Ln)$.  
\end{itemize}
We have thus shown that
\begin{align*}
    \bigg\| \sum_l \mathcal{H}\bigg( (\bm{N}^{(l)})^2 \bigg) \bigg\| \leq C n \sqrt{L}
\end{align*}
with probability at least $1 - 8 n \exp( - c n)$. 
\item \textbf{The Diagonal Term}.   We observe that
\begin{align*}
    \bigg\| \mathcal{D} \bigg( \sum_l (\bm{N}^{(l)})^2 - \mathbb{E}( \bm{N}^{(l)})^2 \bigg) \bigg\| &= \max_i \bigg| \sum_{l,j} ( \bm{N}^{(l)}_{ij})^2 - \mathbb{E}(\bm{N}^{(l)}_{ij})^2 \bigg|.
\end{align*}
For fixed $i$, this is a subexponential random variable.  Bernstein's inequality implies that 
\begin{align*}
    \bigg| \sum_{l,j} ( \bm{N}^{(l)}_{ij})^2 - \mathbb{E}(\bm{N}^{(l)}_{ij})^2 \bigg| \leq t
\end{align*} 
with probability at least $1 - 2 \exp( - c \min\{ \frac{t^2}{Ln}, t \} )$.  Take $t =  C n \sqrt{L}$ and a union bound to show that 
\begin{align*}
     \bigg\| \mathcal{D} \bigg( \sum_l (\bm{N}^{(l)})^2 - \mathbb{E}( \bm{N}^{(l)})^2 \bigg) \bigg\| \lesssim n \sqrt{L}
\end{align*}
with probability at least $1 - 2 n \exp( - c n)$.  \end{itemize}
Combining all these inequalities and adjusting constants completes the proof.
\end{proof}

\subsection{Proof of \cref{lem:goodevent}} \label{sec:goodeventproof}

\begin{proof}[Proof of \cref{lem:goodevent}]
The proof for \eqref{assertion1} is similar to the proof of \cref{lem:initialconcentration}.  First, let $\bm{Q}$ be any fixed matrix of rank at most $2r$.  Slightly modifying the proof of \cref{lem:initialconcentration} shows that we have that for any fixed matrix $\bm{Q}$ it holds that 
\begin{align*}
    \| \sum_l \bm{N}^{(l)} \bm{Q} \bm{S}^{(l)} \| \lesssim C \sigma  s \sqrt{L} \lambda_{\max} \| \bm{Q} \|_F
\end{align*}
with probability at least $1- \exp( cn - s^2)$. Next, consider
\begin{align*}
    \bm{\tilde Q} := \argsup_{\bm{Q}: \| \bm{Q}\|_F \leq 1; \text{rank}(\bm{Q}) \leq 2r} \| \sum_l \bm{N}^{(l)} \bm{Q} \bm{S}^{(l)} \|.
\end{align*}
Let $\mathcal{N}_{\eps}$ be an $\eps$-net for matrices of rank at most $2r$ with Frobenius norm at most 1.  Then $|\mathcal{N}_{\eps}| \leq \exp( c n r)$ by Lemma 3.1 of \citet{candes_tight_2011}. Define the event
\begin{align}
   \sup_{\bm{Q} \in \mathcal{N}_{\eps}} \| \sum_l \bm{N}^{(l)} \bm{Q} \bm{S}^{(l)} \| \leq C \sigma  \sqrt{Lnr} \lambda_{\max}. \label{bestfriendsparty}
\end{align}
By a union bound \eqref{bestfriendsparty} holds with probability at least $1 - \exp( - c n r)$ for an appropriate choice of $C$ above.  Let $\bm{Q}_{\eps}$ be the matrix in $\mathcal{N}_{\eps}$ within $\eps$ of $\bm{\tilde Q}$.  Without loss of generality we can assume that $(\bm{\tilde Q} - \bm{Q}_{\eps}) \bm{U} \neq 0$ (since if it does the result is zero). In this case, note that 
\begin{align*}
    \| \big( \bm{\tilde Q} - \bm{Q}_{\eps} \big) \bm{UU}\t \|_F \leq \eps < 1,
\end{align*}
and hence $\big( \bm{\tilde Q} - \bm{Q}_{\eps} \big) \bm{UU}\t$ is a matrix of rank at most $2r$ with Frobenius norm at most 1.  Furthermore, when \eqref{bestfriendsparty} holds,
\begin{align*}
M &:= \| \sum_l \bm{N}^{(l)}\bm{\tilde Q}\bm{S}^{(l)} \| \\
&\leq \| \sum_l \bm{N}^{(l)} \big( \bm{\tilde Q}  - \bm{Q}_{\eps} \big) \bm{S}^{(l)} \| + \| \sum_l \bm{N}^{(l)} \big(  \bm{Q}_{\eps} \big) \bm{S}^{(l)} \| \\
    &\leq \| \big(\bm{\tilde Q} - \bm{Q}_{\eps}\big) \bm{UU}\t \|_F \| \sum_l \bm{N}^{(l)} \frac{\big( \bm{\tilde Q}  - \bm{Q}_{\eps} \big)}{ \| \big(\bm{\tilde Q} - \bm{Q}_{\eps}\big)\bm{UU}\t \|_F} \bm{S}^{(l)} \| + \| \sum_l \bm{N}^{(l)} \big(  \bm{Q}_{\eps} \big) \bm{S}^{(l)} \| \\
    &\leq \eps M + C  \sigma  \sqrt{Lnr} \lambda_{\max}.
\end{align*} 
 This implies that $M \lesssim \sqrt{Lnr} \lambda_{\max}$ with probability at least $1- \exp( - c n r)$, which is the assertion \eqref{assertion1}.

To prove \eqref{assertion2}, we similarly apply an $\eps$-net, but we also use a decoupling argument. First, fix any matrix $\bm{Q}$ of rank at most $2r$.  Observe that since each $\bm{N}^{(l)}$ is a Wigner matrix, it holds that 
\begin{align*}
    \mathbb{E} \bm{N}^{(l)} \bm{Q} \bm{N}^{(l)} &= \frac{\sigma^2}{2} \bigg( \bm{Q}\t + {\sf diag}\big( {\sf Tr}( \bm{Q})\big) \bigg).
\end{align*}
Consequently, we have that
\begin{align*}
\bigg\| \sum_{l} \bm{N}^{(l)} \bm{Q} \bm{N}^{(l)} - \frac{\sigma^2}{2} L \big( \bm{Q}\t+ {\sf Tr}(\bm{Q}) \bm{I} \big) \bigg\| &\leq \bigg\| \sum_l \mathcal{H} \bigg(  \bm{N}^{(l)} \bm{Q} \bm{N}^{(l)} - \frac{\sigma^2}{2}  \bm{Q}\t \bigg)  \bigg\| \\
&\quad + \bigg\| \sum_l \mathcal{D} \bigg(   \bm{N}^{(l)} \bm{Q} \bm{N}^{(l)} -\frac{\sigma^2}{2}  \big(  \bm{Q} + {\sf Tr}(\bm{Q}) \bm{I} \big) \bigg)  \bigg\|,
\end{align*}
where we have separated the off-diagonal from the diagonal. We will bound each term separately for fixed $\bm{Q}$.  
\begin{itemize}
    \item \textbf{The off-diagonal. } We have that the $i,i'$ entry is given by (for $i \neq i'$)
\begin{align*}
     \bigg( \bm{N}^{(l)} \bm{Q} \bm{N}^{(l)} \bigg)_{ii'} - \frac{\sigma^2}{2}\bm{Q}_{i'i} &= \sum_{j,k=1}^{n} \bm{N}^{(l)}_{ij} \bm{Q}_{jk} \bm{N}^{(l)}_{i' k} -\frac{\sigma^2}{2} \bm{Q}_{i'i}.
\end{align*} 
Let $\bm{x,y}$ be unit vectors.  Then we have that
\begin{align*}
  \sum_l  \bm{x}\t \mathcal{H} &\bigg( \bm{N}^{(l)} \bm{Q} \bm{N}^{(l)} - \frac{\sigma^2}{2} \bm{Q}\t \bigg) \bm{y} \\
    &=\sum_l\bigg(  \sum_{i \neq i'} \sum_{j,k} \bm{x}_i \bm{y}_{i'}  \bm{N}^{(l)}_{ij} \bm{Q}_{jk} \bm{N}^{(l)}_{i' k} - \frac{\sigma^2}{2} \bm{x}\t \mathcal{H} (\bm{Q}\t) \bm{y}\bigg) \\
 &= \sum_l \bigg(  \sum_{i\neq i'}  \bm{x}_i \bm{y}_{i'}  \bm{N}^{(l)}_{ii'} \bm{Q}_{i'i} \bm{N}^{(l)}_{i' i} + \sum_{i\neq i'} \sum_{j \neq i', k\neq i}  \bm{x}_i \bm{y}_{i'}  \bm{N}^{(l)}_{ij} \bm{Q}_{jk} \bm{N}^{(l)}_{i' k}  - \bm{x}\t \mathcal{H} (\bm{Q}\t) \bm{y} \bigg) \\
 &=  \sum_l \sum_{i< i'} \big[ \big( \bm{N}^{(l)}_{ii'} \big)^2 - \frac{\sigma^2}{2}  \big] \bigg( \bm{x}_i \bm{y}_{i'}  \bm{Q}_{i'i} +  \bm{x}_{i'} \bm{y}_{i}  \bm{Q}_{ii'} \bigg)  + \sum_l \sum_{i\neq i'} \sum_{j \neq i', k\neq i}  \bm{x}_i \bm{y}_{i'}  \bm{N}^{(l)}_{ij} \bm{Q}_{jk} \bm{N}^{(l)}_{i' k}.
\end{align*}
We will control each of these two terms separately.
\begin{itemize}
    \item \textbf{The first term}: Observe that the first term is a sum of independent mean-zero sub-exponential random variables.  We have that
    \begin{align*}
        \sum_l \sum_{i < i'} \bigg\| \big[ \big( \bm{N}^{(l)}_{ii'} \big)^2 - \frac{\sigma^2}{2} \big] \bigg( \bm{x}_i \bm{y}_{i'}  \bm{Q}_{ii'} +  \bm{x}_{i'} \bm{y}_{i}  \bm{Q}_{i'i} \bigg) \bigg\|_{\psi_1}^2 \leq C L \sigma^4 \sum_{i,i'} \big( \bm{x}_i \bm{y}_{i'}  \bm{Q}_{i'i} \big)^2 \lesssim \sigma^4 L \| \bm{Q} \|^2,
    \end{align*}
    since $\bm{x}$ and $\bm{y}$ are unit vectors.  Similarly, we have that
    \begin{align*}
        \max_{i,i'} \bigg\|  \big[ \big( \bm{N}^{(l)}_{ii'} \big)^2 - \frac{\sigma^2}{2}  \big] \bigg( \bm{x}_i \bm{y}_{i'}  \bm{Q}_{i'i} +  \bm{x}_{i'} \bm{y}_{i}  \bm{Q}_{ii'} \bigg) \bigg\|_{\psi_1} \lesssim \sigma^2 \| \bm{Q} \|.
    \end{align*}
    Therefore, by the generalized Bernstein inequality (Theorem 2.8.1 of \citet{vershynin_high-dimensional_2018}), it holds that 
    \begin{align*}
     \bigg|  \sum_l  \sum_{i< i'} \big[ \big( \bm{N}^{(l)}_{ii'} \big)^2 - 1 \big] \bigg( \bm{x}_i \bm{y}_{i'}  \bm{Q}_{i'i} +  \bm{x}_{i'} \bm{y}_{i}  \bm{Q}_{ii'} \bigg) \bigg| \lesssim t \sqrt{Ln} \sigma^2 \|\bm{Q}\|
    \end{align*}
    with probability at least $1 - \exp( - c \min( t^2n, t \sqrt{Ln}) )$.  Letting $t \asymp \sqrt{n} r$ shows that with probability at least $1 - \exp\big( - c \min\{ n^2 r^2, n^2 \sqrt{Lr} \big) \geq 1 - \exp( - c nr )$, it holds that 
    \begin{align*}
         \bigg|  \sum_l  \sum_{i< i'} \big[ \big( \bm{N}^{(l)}_{ii'} \big)^2 - 1 \big] \bigg( \bm{x}_i \bm{y}_{i'}  \bm{Q}_{i'i} +  \bm{x}_{i'} \bm{y}_{i}  \bm{Q}_{ii'} \bigg) \bigg| \lesssim nr \sqrt{L} \sigma^2 \|\bm{Q}\|_F.
    \end{align*}
 \item \textbf{The second term}.  We note that for $i \neq i'$, $j \neq i',k\neq i$, the second term is a U-statistic in $\bm{N}^{(l)}_{ij}$.  By the decoupling inequality (Theorem 3.4.1 of \citet{de_la_pena_decoupling_1999}), it holds that
\begin{align*}
    \p\bigg\{ \bigg| \sum_l \sum_{i\neq i'} \sum_{j \neq i', k\neq i}  \bm{x}_i \bm{y}_{i'}  \bm{N}^{(l)}_{ij} \bm{Q}_{jk} \bm{N}^{(l)}_{i' k} \bigg| > t \bigg\} \leq C \p\bigg\{ \bigg| \sum_l \sum_{i\neq i'} \sum_{j \neq i', k\neq i}  \bm{x}_i \bm{y}_{i'}  \bm{\tilde N}^{(l)}_{ij} \bm{Q}_{jk} \bm{N}^{(l)}_{i' k}  \bigg| > C t \bigg\},
\end{align*}
where $\bm{\tilde N}^{(l)}$ is an independent copy of $\bm{N}^{(l)}$.
Define the event
\begin{align*}
    \mathcal{E} := \max_l \| \bm{N}^{(l)} \| \leq C \sigma \sqrt{nr}.
\end{align*}
By a union bound, we note that $\mathcal{E}$ holds with probability at least $1 - L \exp( - c nr)$.  We observe that on the event $\mathcal{E}$,
\begin{align*}
    \bigg\| \sum_l \sum_{i} \sum_{j} \bm{\tilde N}_{ij}^{(l)} \bm{x}_i \bigg( \sum_{k \neq i} \sum_{i' \neq i,j} \bm{Q}_{jk} \nl_{i'k} \bm{y}_{i'}  \bigg) \bigg\|_{\psi_2}^2 &\lesssim \sigma^2 L \max_{l,i} \sum_j \bigg( \sum_k \sum_{i' \neq k,i} \bm{Q}_{jk} \bm{N}^{(l)}_{i'k} \bm{y}_{i'} \bigg)^2 \\
    &\lesssim \sigma^2 L \max_l \max_i \sum_j \bigg\| e_j\t \bm{Q} \mathcal{H}\big( \mathcal{P}^{-i} \big( \bm{N}^{(l)} \big) \big) \bm{y} \bigg\|^2 \\
    &\leq \sigma^2 L \| \bm{Q} \|_F^2 \max_l \max_i \bigg\|  \mathcal{H}\big( \mathcal{P}^{-i} \big( \bm{N}^{(l)} \big) \big) \bm{y} \bigg\|^2 \\
    &\leq \sigma^2 L \| \bm{Q} \|_F^2\max_l \| \bm{N}^{(l)} \|^2 \\
    &\leq \sigma^4 L n r \| \bm{Q} \|_F^2,
\end{align*} 
where $\mathcal{P}^{-i}$ removes the $i$'th row of $\bm{N}^{(l)}$ and $\mathcal{H}$ is the hollowing operator. In the final line we used the fact that the spectral norm of any submatrix is bounded by the spectral norm of the matrix itself. 
Consequently, we have that 
\begin{align*}
    \p\bigg\{ \bigg| \sum_l \sum_{i\neq i'} \sum_{j \neq i', k\neq i}  \bm{x}_i \bm{y}_{i'}  \bm{\tilde N}^{(l)}_{ij} \bm{Q}_{jk} \bm{N}^{(l)}_{i' k}  \bigg| > C t \sigma^2 \sqrt{Lnr} \|\bm{Q}\|_F \cap \mathcal{E} \bigg\} \leq 2 \exp( - c t^2).
\end{align*}
If $t \asymp \sqrt{nr}$, then we have that with probability at least $1 - \exp( - c nr)$
\begin{align*}
     \bigg| \sum_l \sum_{i\neq i'} \sum_{j \neq i', k\neq i}  \bm{x}_i \bm{y}_{i'}  \bm{\tilde N}^{(l)}_{ij} \bm{Q}_{jk} \bm{N}^{(l)}_{i' k}  \bigg| \leq  C  \sigma^2 n r \sqrt{L} \|\bm{Q}\|_F.
\end{align*}
\end{itemize}
Combining these bounds  and taking a union bound over $\bm{x}$ and $\bm{y}$ in an $\eps$-net and applying Lemma 4.4.1 of \citet{vershynin_high-dimensional_2018} (adjusting constants if necessary) shows that
    \begin{align*}
         \bigg\| \sum_l \mathcal{H} \bigg(  \bm{N}^{(l)} \bm{Q} \bm{N}^{(l)} - \frac{\sigma^2}{2} L  \bm{Q} \bigg)  \bigg\| \lesssim  n r \sqrt{L} \sigma^2 \| \bm{Q} \|_F
    \end{align*}
    with probability at least $1 - \exp\big( - c n r \big).$
\item \textbf{The diagonal}. We note that since the matrix in question is diagonal, we need only bound it for fixed $i$ (and take a union bound over $i$).   We have that
\begin{align*}
    \sum_{l} \sum_{j,k} \bm{N}_{ij}^{(l)} \bm{Q}_{jk} \bm{N}_{ki}^{(l)} &= \sum_l  \sum_{j\neq k} \bm{N}^{(l)}_{ij} \bm{Q}_{jk} \bm{N}^{(l)}_{ki} + \sum_l  \sum_j \big( \bm{N}^{(l)}_{ij} \big)^2 \bm{Q}_{jj}.
\end{align*}
By a similar argument to the previous part of the proof, by the decoupling inequality we can show that 
 \begin{align*}
  \bigg|  \sum_l  \sum_{j\neq k} \bm{N}^{(l)}_{ij} \bm{Q}_{jk} \bm{N}^{(l)}_{ki} \bigg| &\lesssim t \sigma^2 \sqrt{Ln} \| \bm{Q} \|_F 
 \end{align*}
 with probability at least $ 1 - \exp( - c t^2 n)$.  Similarly, 
 \begin{align*}
    \bigg| \sum_l \sum_j \bigg( \big( \bm{N}^{(l)}_{ij} \big)^2 -  \frac{\sigma^2}{2} - \frac{\sigma^2}{2} \mathbb{I}_{\{i = j\}} \bigg)\bm{Q}_{jj}\bigg| &\lesssim t \sqrt{Ln} \sigma^2 \| \bm{Q} \|_F
  \end{align*}
  with probability at least $1 - \exp( - c \min(t^2 n, t \sqrt{Ln} ))$.  The rest of the argument is exactly the same as the previous part, which yields the proof of \cref{assertion2}.  
\end{itemize}
To prove \eqref{assertion3}, we first let $\bm{x},\bm{y}$ be deterministic unit vectors of dimension $n-r$ and $r$ respectively.  Then we have that 
\begin{align*}
    \sum_l \bm{x}\t \pert \nl \U\U\t \nl \U \bm{y} &= \sum_l \langle \bm{z}^{(l)}_1 , \bm{z}^{(l)}_2 \rangle,
\end{align*}
where $\bm{z}^{(l)}_1$ and $\bm{z}^{(l)}_2$ are uncorrelated $r$-dimensional random vectors with covariances equal to $\sigma^2 \bm{I}_r$ respectively.   The $\psi_1$ norm is bounded by $\sigma^2 r$, and we can apply Bernstein's inequality for sums of subexponential random variables to yield that 
\begin{align*}
    \p\bigg\{ \bigg| \sum_l \langle \bm{z}^{(l)}_1 , \bm{z}^{(l)}_2 \rangle \bigg| > \sigma^2 t \bigg\} &\leq 2 \exp\bigg\{ - c \min\bigg( \frac{t^2}{L r^2}, \frac{t}{r} \bigg) \bigg\}.
\end{align*}
Therefore, taking $t = C \max\{ \sqrt{nL} r, nr\}$ it holds that 
\begin{align*}
\bigg|    \sum_l \bm{x}\t \pert \nl \U\U\t \nl \U \bm{y} \bigg| \leq C \sigma^2 r \max\{ \sqrt{nL}, n \}
\end{align*}
with probability at least $1 - 2\exp(- c n)$.  Taking a net over $\bm{x,y}$ after adjusting necessary constants completes the proof.  

The proof for \eqref{assertion4} follows from \cref{lem:initialconcentration}.
\end{proof}

\subsection{Proof of \cref{lem:Lt}} \label{sec:linearproof}
\begin{proof} First, recall that
\begin{align*}
\begin{split}
    {\sf L}_t &:= - \frac{1}{L} \big(  \bm{\hat U}_t \bm{\hat U}_t\t - \bm{UU}\t \big) \sum_l \bm{N}^{(l)} \bm{\hat U}_t \bm{\hat U}_t\t \bm{S}^{(l)} \bm{\hat U}_t + \frac{1}{L} \bm{U}_{\perp} \bm{U}_{\perp}\t  \sum_l \bm{N}^{(l)} \bm{\hat U}_t \bm{\hat U}_t\t \bm{S}^{(l)} \bm{\hat U}_t \\
            &\quad -  \bm{\hat U}_t \bm{\hat U}_t\t \frac{1}{L} \sum_{l} \bm{S}^{(l)} \bm{\hat U}_t \bm{\hat U}_t\t \bm{N}^{(l)} \bm{\hat U}_t + \frac{1}{L} \sum_l \bm{S}^{(l)} \bm{\hat U}_t \bm{\hat U}_t\t \bm{N}^{(l)} \bm{\hat U}_t 
\end{split}
\end{align*}
Therefore, 
\begin{align*}
    \bm{U}_{\perp} \bm{U}_{\perp}\t {\sf L}_t &= -  \frac{1}{L} \bm{U}_{\perp} \bm{U}_{\perp}\t \bm{\hat U}_t \bm{\hat U}_t\t \sum_{l} \bm{N}^{(l)} \bm{\hat U}_t \bm{\hat U}_t\t \bm{S}^{(l)} \bm{\hat U}_t + \frac{1}{L} \sum_{l} \bm{U}_{\perp} \bm{U}_{\perp}\t \bm{N}^{(l)} \bm{\hat U}_t\bm{\hat U}_t\t \bm{S}^{(l)} \bm{\hat U}_t \\
    &\quad -\frac{1}{L} \bm{U}_{\perp} \bm{U}_{\perp}\t \bm{\hat U}_t \bm{\hat U}_t\t \sum_{l} \bm{S}^{(l)} \bm{\hat U}_t \bm{\hat U}_t\t \bm{N}^{(l)} \bm{\hat U}_t \\
    &=: T_1 + T_2 + T_3.
\end{align*}
We bound each term in turn.
\begin{itemize}
    \item \textbf{The term $T_1$}.  The first term satisfies
    \begin{align*}
        \frac{1}{L} \| \bm{U}_{\perp} \bm{U}_{\perp}\t \bm{\hat U}_t \bm{\hat U}_t\t \sum_l \bm{N}^{(l)} \bm{\hat U}_t \bm{\hat U}_t\t \bm{S}^{(l)} \bm{\hat U}_t \|_F &\leq
       \frac{1}{L}  \| \bm{U}_{\perp} \bm{U}_{\perp}\t \bm{\hat U}_t \bm{\hat U}_t\t \sum_l \bm{N}^{(l)} \big( \bm{\hat U}_t \bm{\hat U}_t\t - \bm{UU}\t \big) \bm{S}^{(l)} \bm{\hat U}_t \|_F \\
        &\quad + \frac{1}{L}  \| \bm{U}_{\perp} \bm{U}_{\perp}\t \bm{\hat U}_t \bm{\hat U}_t\t \sum_l \bm{N}^{(l)} \bm{S}^{(l)} \bm{\hat U}_t \|_F \\
        &\leq \frac{1}{L} \| \bm{U}_{\perp} \t \bm{\hat U}_t \|_F \bigg\| \sum_{l} \bm{N}^{(l)} \big( \bm{\hat U}_t \bm{\hat U}_t\t - \bm{UU}\t \big) \bm{S}^{(l)} \bigg\| \\
        &\quad +  \frac{1}{L} \| \bm{U}_{\perp}\t \bm{\hat U}_t \|_F \| \sum_l \bm{N}^{(l)} \bm{S}^{(l)} \| \\
        &\leq \| \bm{U}_{\perp}\t \bm{\hat U}_t \|_F \bigg( \frac{C \sigma \sqrt{nr} \lambda_{\max}}{\sqrt{L}} \| \uhat_t \uhat_t\t - \U\U\t  \|_F + \frac{\sigma \sqrt{n}\lambda_{\max}}{\sqrt{L}} \bigg) \\
        &\leq \| \bm{U}_{\perp}\t \bm{\hat U}_t \|_F \bigg( \frac{C \sigma \sqrt{nr} \lambda_{\max}}{\sqrt{L}} \| \bm{U}_{\perp}\t \bm{\hat U}_t \|_F + \frac{\sigma \sqrt{n}\lambda_{\max}}{\sqrt{L}} \bigg)\numberthis \label{billybob2}
    \end{align*}
    on the event $\mathcal{E}_{{\sf good}}$ by  \eqref{assertion1} and \eqref{assertion4}, where in the final inequality we have applied Lemma 1 of \citet{cai_rate-optimal_2018}. 
    \item \textbf{The term $T_2$}.  We have that 
    \begin{align*}
        \| T_2 \|_F &= \frac{1}{L} \| \pert \sum_l \nl \uhat_t \uhat_t\t \bm{S}\l \uhat_T \|_F \\
        &\leq \frac{1}{L} \| \pert \sum_l \nl ( \uhat_t \uhat_t\t - \U \U\t ) \bm{S}\l \|_F + \frac{1}{L} \| \pert \sum_l \nl \bm{S}\l \|_F \\
        &\leq \frac{1}{L} \|  \sum_l \nl ( \uhat_t \uhat_t\t - \U \U\t ) \bm{S}\l \|_F + \frac{1}{L} \| \pert \sum_l \nl \bm{S}\l \|_F \\
        &\leq \| \uhat_t \uhat_t\t - \U \U\t \|_F \frac{C\sigma \lambda_{\max} \sqrt{nr}}{\sqrt{L}} + \frac{C\sigma \lambda_{\max} \sqrt{nr}}{\sqrt{L}} \\
      &\leq    \| \pert \uhat_t \|_F \frac{C\sigma \lambda_{\max} \sqrt{nr}}{\sqrt{L}} + \frac{C\sigma \lambda_{\max} \sqrt{nr}}{\sqrt{L}}, \numberthis \label{billybob1}
    \end{align*}
    where we have again used \eqref{assertion1} and \eqref{assertion4} and Lemma 1 of \citet{cai_rate-optimal_2018}. 
    \item \textbf{The term $T_3$}. By exactly the same argument as the term $T_1$, we have that
    \begin{align*}
        \| T_3\|_F \leq \| \pert \uhat_t \|_F \bigg( \frac{C \sigma \sqrt{nr} \lambda_{\max}}{\sqrt{L}} \|\pert \uhat_t\|_F + \frac{\sigma \sqrt{n}\lambda_{\max}}{\sqrt{L}} \bigg). \numberthis \label{billybob}
    \end{align*}
\end{itemize}
Combining \eqref{billybob2},\eqref{billybob1},  and \eqref{billybob} shows that
\begin{align*}
    \| \bm{U}_{\perp} \bm{U}_{\perp}\t {\sf L}_t \|_F &\leq \| \bm{U}_{\perp}\t \bm{\hat U}_t \|_F \bigg( \frac{\sigma \sqrt{nr}\lambda_{\max}}{\sqrt{L}} \| \bm{U}_{\perp}\t \bm{\hat U}_t\|_F + \frac{\sigma \sqrt{n}\lambda_{\max}}{\sqrt{L}} \bigg) + \frac{\sigma \sqrt{nr}\lambda_{\max}}{\sqrt{L}} + \frac{C\sigma \sqrt{nr}\lambda_{\max}}{\sqrt{L}} \| \bm{U}_{\perp}\t \bm{\hat U}_t \|_F \\
    &\leq \| \bm{U}_{\perp}\t \bm{\hat U}_t \|_F \frac{C \sigma \sqrt{nr}\lambda_{\max}}{\sqrt{L}} + \frac{C \sigma \sqrt{nr}\lambda_{\max}}{\sqrt{L}}
\end{align*}
where the final bound holds as long $\|\pert \uhat_t \|_F \leq \frac{1}{2}.$

We now bound $\| {\sf L}_t \|_F$.  Using the same arguments as the previous bound, we have
\begin{align*}
    \| {\sf L}_t \|_F &\leq \frac{1}{L} \bigg\| \big( \uhat_t \uhat_t\t - \U \U\t \big) \sum_l \nl \uhat_t\uhat_t\t \bm{S}^{(l)} \uhat_t \bigg\|_F \\
    &\quad + \frac{1}{L} \bigg\| \pert \sum_l \nl \uhat_t \uhat_t\t \sl \uhat_t \bigg\|_F \\
    &\quad + \frac{1}{L} \bigg\| \sum_l \sl \uhat_t \uhat_t\t \nl \uhat_t \bigg\|_F \\
    &\quad + \frac{1}{L} \bigg\| \sum_l \sl \uhat_t \uhat_t\t \nl \bigg\|_F \\
    &\leq\frac{1}{L} \bigg\|   \uhat_t \uhat_t\t - \U \U\t  \bigg\|_F \bigg(  \bigg\|\sum_l \nl (\uhat_t\uhat_t\t - \U \U\t ) \bm{S}^{(l)} \uhat_t \bigg\|_F +  \bigg\| \pert \sum_l \nl \sl \bigg\|_F \bigg)\\
    &\quad + \frac{1}{L} \bigg\| \pert \sum_l \nl \bigg( \uhat_t \uhat_t\t - \U \U\t \bigg) \sl  \bigg\|_F + \frac{1}{L} \bigg\| \pert \sum_l \nl \sl \bigg\|_F \\
    &\quad + \frac{1}{L} \bigg\| \sum_l \sl (\uhat_t \uhat_t\t - \U\U\t ) \nl  \bigg\|_F + \frac{1}{L} \bigg\| \pert \sum_l \nl \sl \bigg\|_F\\
    &\quad + \frac{1}{L} \bigg\| \sum_l \sl (\uhat_t \uhat_t\t - \U\U\t ) \nl \bigg\|_F + \frac{1}{L} \bigg\| \pert \sum_l \nl \sl \bigg\|_F \\
    &\leq \sqrt{2} \| \pert \uhat \|_F \bigg( \sqrt{r} \|\uhat_t\uhat_t\t - \U \U\t \|_F \frac{C\sigma \lambda_{\max} \sqrt{nr}}{\sqrt{L}} + \frac{C\sigma \lambda_{\max} \sqrt{nr}}{\sqrt{L}} \bigg) \\
    &\quad + \sqrt{r} \|\uhat_t\uhat_t\t - \U \U\t \|_F \frac{C\sigma \lambda_{\max} \sqrt{nr}}{\sqrt{L}} + \frac{C\sigma \lambda_{\max} \sqrt{nr}}{\sqrt{L}} \\
    &\leq  \| \pert \uhat \|_F\sqrt{r}  \frac{C \sigma \lambda_{\max} \sqrt{nr}}{\sqrt{L}} + \frac{C \sigma \lambda_{\max} \sqrt{nr}}{\sqrt{L}}
\end{align*}
which completes the proof. 
\end{proof}

\subsection{Proof of \cref{lem:Qt}} \label{sec:quadraticproof}
\begin{proof}
    We have that
    \begin{align}
            {\sf Q}_t &:= - \frac{1}{L}  \bm{\hat U}_t \bm{\hat U}_t\t \sum_l \bm{N}^{(l)} \bm{\hat U}_t \bm{\hat U}_t\t \bm{N}^{(l)} \bm{\hat U}_t + \frac{1}{L}  \sum_l \bm{N}^{(l)} \bm{\hat U}_t \bm{\hat U}_t\t \bm{N}^{(l)} \bm{\hat U}_t. 
    \end{align}
On the event $\mathcal{E}_{{\sf good}}$ by \eqref{assertion2} for any matrix $\bm{Q}$ it holds that
\begin{align*}
    \bigg\| \sum_l \nl \bm{Q} \nl - \frac{\sigma^2}{2} \bigg( \bm{Q} + {\sf Tr}(\bm{Q}) \bm{I} \bigg) \bigg\| \lesssim \sigma^2 nr \sqrt{L} \| \bm{Q} \|_F.
\end{align*}
By adding and subtracting appropriately, we obtain that 
\begin{align*}
\bm{U}_{\perp} \bm{U}_{\perp}\t {\sf Q}_t    &=  \underbrace{- \frac{1}{L} \bm{U}_{\perp} \bm{U}_{\perp}\t \bm{\hat U}_t\bm{\hat U}_t\t \sum_l \bm{N}^{(l)} (\bm{\hat U}_t \bm{\hat U}_t\t - \bm{UU}\t) \bm{N}^{(l)} \bm{\hat U}_t + \frac{\sigma^2}{2}  \bm{U}_{\perp} \bm{U}_{\perp}\t \bm{\hat U}_t \bm{\hat U}_t\t \bigg( \bm{\hat U}_t \bm{\hat U}_t\t - \bm{UU}\t \bigg) \bm{\hat U}_t}_{=:T_1} \\
    & \underbrace{- 
     \frac{1}{L} \bm{U}_{\perp} \bm{U}_{\perp}\t \bm{\hat U}_t\bm{\hat U}_t\t \sum_l \bm{N}^{(l)}  \bm{UU}\t \bm{N}^{(l)} \bm{\hat U}_t   +  \frac{\sigma^2}{2}   \bm{U}_{\perp} \bm{U}_{\perp}\t \bm{\hat U}_t \bm{\hat U}_t\t \bigg(\bm{UU}\t + r \bm{I}_r \bigg)  \bm{\hat U}_t }_{=:T_2}
    \\&+ \underbrace{\frac{1}{L} \sum_l \bm{U}_{\perp} \bm{U}_{\perp}\t \bm{N}^{(l)} (\bm{\hat U}_t \bm{\hat U}_t\t - \bm{UU}\t ) \bm{N}^{(l)} \bm{\hat U}_t -  \frac{\sigma^2}{2}   \bm{U}_{\perp} \bm{U}_{\perp}\t \bm{\hat U}_t}_{=: T_3} \\
    &+ \underbrace{\frac{1}{L}\sum_l \bm{U}_{\perp} \bm{U}_{\perp}\t \bm{N}^{(l)} \bm{UU}\t \bm{N}^{(l)} \bm{U}_{\perp} \bm{U}_{\perp}\t \bm{\hat U}_t - \frac{\sigma^2}{2}  r \bm{U}_{\perp} \bm{U}_{\perp}\t  \bm{\hat U}_t}_{=:T_4}  \\
    &+  \underbrace{\frac{1}{L}\sum_l \bm{U}_{\perp} \bm{U}_{\perp}\t \bm{N}^{(l)} \bm{UU}\t \bm{N}^{(l)} \bm{U}\bm{U}\t \bm{\hat U}_t}_{=:T_5}.
\end{align*}
We bound each term in turn. 
\begin{itemize}
    \item \textbf{The term $T_1$:} By \eqref{assertion2} since ${\sf Tr}(\uhat\uhat\t) = {\sf Tr}(\U\U\t) = r$,  it holds that 
    \begin{align*}
         \| T_1 \|_F &= \frac{1}{L} \bigg\| \bm{U}_{\perp} \bm{U}_{\perp}\t \bm{\hat U}_t\bm{\hat U}_t\t \bigg(  \sum_l \bm{N}^{(l)} (\bm{\hat U}_t \bm{\hat U}_t\t - \bm{UU}\t) \bm{N}^{(l)} - \frac{\sigma^2}{2}  \big( \bm{\hat U}_t \bm{\hat U}_t\t - \bm{UU}\t \big) \bigg) \bm{\hat U}_t  \bigg\|_F \\
        &\leq \frac{1}{L} \| \bm{U}_{\perp}\t \bm{\hat U}_t \|_F \bigg\| \sum_l \bm{N}^{(l)} ( \bm{\hat U}_t \bm{\hat U}_t\t - \bm{UU}\t ) \bm{N}^{(l)}  - \frac{\sigma^2}{2} ( \bm{\hat U}_t \bm{\hat U}_t\t - \bm{UU}\t ) \bigg\| \\
        &\leq \| \bm{U}_{\perp}\t \bm{\hat U}_t \|_F \| \bm{\hat U}_t \bm{\hat U}_t\t - \bm{UU}\t \|_F  \frac{C \sigma^2 nr \sqrt{L}}{L} \\
        &\leq  \| \bm{U}_{\perp}\t \bm{\hat U}_t \|_F^2 C \frac{\sigma^2 nr}{\sqrt{L}},
    \end{align*}
    where the final inequality holds by Lemma 1 of \citet{cai_rate-optimal_2018}.
    \item \textbf{The term $T_2$:} By a similar argument, it holds that 
    \begin{align*}
        \| T_2 \|_F &=   \frac{1}{L}\bigg\| 
    \bm{U}_{\perp} \bm{U}_{\perp}\t \bm{\hat U}_t\bm{\hat U}_t\t \bigg(  \sum_l \bm{N}^{(l)}  \bm{UU}\t \bm{N}^{(l)} \bm{\hat U}_t   - \frac{\sigma^2}{2}  \big(  \bm{UU}\t + r \bm{I}_r \big) \bigg)  \bm{\hat U}_t \bigg\|_F \\
     &\leq \| \bm{U}_{\perp}\t \bm{\hat U}_t \|_F  \| \bm{UU}\t  \|_F \frac{C \sigma^2 nr }{\sqrt{L}}\\
     &\leq \| \bm{U}_{\perp}\t \bm{\hat U}_t \|_F C \sigma^2 \frac{n r^{3/2}}{\sqrt{L}} .
    \end{align*}
    \item \textbf{The term $T_3$:} Following a similar argument, 
    \begin{align*}
        \| T_3 \|_F &\leq \| \uhat_t \uhat_t\t - \U\U\t \|_F \frac{C \sigma^2 n r}{\sqrt{L}} \\
        &\leq \| \pert \uhat \|_F \frac{C \sigma^2 n r}{\sqrt{L}}.
    \end{align*}
    \item \textbf{The term $T_4$.} We have that
    \begin{align*}
        \| T_4 \|_F &\leq \frac{1}{L} \| \pert \uhat_t \|_F \bigg\| \pert \big( \sum_l \nl \U\U\t \nl - \frac{\sigma^2 r}{2} \big) \per \bigg\| \\
        &\leq \|\pert \uhat_t\|_F \| \U\U\t \|_F \frac{C \sigma^2 n r}{\sqrt{L}} \\
                 &\leq \frac{C \sigma^2 n r^{3/2}}{\sqrt{L}}  \|\pert\uhat_t \|_F.
    \end{align*}
    \item \textbf{The term $T_5$.}  We now instead invoke \eqref{assertion3} to note that 
    \begin{align*}
        \| \frac{1}{L} \sum_l \bm{U}_{\perp} \bm{U}_{\perp}\t \bm{N}^{(l)} \bm{U} \bm{U}\t \bm{N}^{(l)} \bm{UU}\t \bm{\hat U}_t \|_F \leq C \frac{\sigma^2 r^{3/2} \max\{ \sqrt{nL}, n \}}{L}.
    \end{align*}
\end{itemize}
Combining these inequalities shows that 
\begin{align*}
    \| \per \pert {\sf Q}_t \|_F &\leq \| \pert \uhat_t \|_F^2 \frac{C \sigma^2 nr}{\sqrt{L}} + \| \pert \uhat_t\|_F \frac{C \sigma^2 n r^{3/2}}{\sqrt{L}} + \frac{C \sigma^2 r^{3/2} \max\{ \sqrt{nL},n\}}{L} \\
    &\leq \| \pert \uhat_t\|_F \frac{C \sigma^2 n r^{3/2}}{\sqrt{L}} + \frac{C \sigma^2 r^{3/2} \max\{ \sqrt{nL},n\}}{L},
\end{align*}
where the last inequality holds whenever $\|\pert\uhat_t \| \leq \frac{1}{2}$, which is assumed.

We now bound $\|{\sf Q}_t\|_F$.  We can rewrite ${\sf Q}_t$ via 
\begin{align*}
      {\sf Q}_t &:= \frac{1}{L} \bigg( \bm{I} - \uhat_t \uhat_t\t \bigg)  \sum_l \bm{N}^{(l)} \bm{\hat U}_t \bm{\hat U}_t\t \bm{N}^{(l)} \bm{\hat U}_t  \\
      &= \frac{1}{L} \bigg( \bm{I} - \uhat_t \uhat_t\t \bigg) \bigg(  \sum_l \bm{N}^{(l)} \bm{\hat U}_t \bm{\hat U}_t\t \bm{N}^{(l)} \bm{\hat U}_t  - \frac{\sigma^2}{2} \big( \uhat_t \uhat_t\t + r \bm{I}_r\big) \bigg) \uhat_t,
\end{align*}
since $(\bm{I} - \uhat_t\uhat_t\t ) \uhat_t = 0$.  Therefore, appealing again to \eqref{assertion2},
\begin{align*}
    \| {\sf Q}_t \|_F &\leq \frac{\sqrt{r}}{L} \bigg\| \bigg( \bm{I} - \uhat_t \uhat_t\t \bigg) \bigg(  \sum_l \bm{N}^{(l)} \bm{\hat U}_t \bm{\hat U}_t\t \bm{N}^{(l)} \bm{\hat U}_t  - \frac{\sigma^2}{2} \big( \uhat_t \uhat_t\t + r \bm{I}_r\big) \bigg) \uhat_t \bigg\| \\
    &\leq \frac{\sqrt{r}}{L} \bigg\|   \sum_l \bm{N}^{(l)} \bm{\hat U}_t \bm{\hat U}_t\t \bm{N}^{(l)} \bm{\hat U}_t  - \frac{\sigma^2}{2} \big( \uhat_t \uhat_t\t + r \bm{I}_r\big) \bigg\| \\
    &\leq \frac{C \sigma^2 n r^{3/2}}{\sqrt{L}}  \|\uhat_t \uhat_t\t \|_F  \\
    &= \frac{C \sigma^2 n r^2}{\sqrt{L}}
\end{align*}
as required.
\end{proof}

\subsection{Proof of \cref{lem:contractionlemma}} \label{sec:contractionproof}
\begin{proof}[Proof of \cref{lem:contractionlemma}]
The upper bound is immediate.  For the lower bound, we first note that
\begin{align*}
    \lambda_r (\uhat_t\t \U \U\t\uhat_t ) = \lambda_r( \cos\Theta(\uhat_t,\U))^2 \geq 1 - \| \sin\Theta(\uhat_t,\U) \|_F^2 \geq \frac{3}{4}
\end{align*}
since $ \| \sin\Theta(\uhat_t,\U) \|_F^2  \leq \frac{1}{4}$ by assumption.  Consequently, with respect to the positive semidefinite ordering, it holds that 
\begin{align*}
     \uhat\t \bm{S}^{(l)} \uhat_t \uhat_t\t \bm{S}^{(l)} \uhat_t   
     &=  \uhat_t\t \U \rl \U\t\uhat_t\uhat_t\t \U \rl \U\t\uhat_t
     \succeq \frac{3}{4}  \uhat_t\t\U (\rl)^2 \U\t \uhat_t.
\end{align*}
Thus,
\begin{align*}
    \sum_l   \uhat_t\t \bm{S}^{(l)} \uhat_t \uhat_t\t \bm{S}^{(l)} \uhat_t  \succeq \frac{3}{4}  \uhat_t\t\U  \bigg( \sum_l (\rl)^2 \bigg) \U\t \uhat_t.
\end{align*}
Therefore, it holds that
\begin{align*}
    \lambda_r\bigg( \sum_l \uhat_t\t \sl \uhat_t\uhat_t\t \sl \uhat_t \bigg) &\geq \frac{3}{4} \lambda_r\bigg[ \uhat_t\t \U \bigg( \sum_l (\rl)^2 \bigg) \U\t \uhat_t \bigg] \\
    &\geq \frac{3}{4} \lambda_r( \uhat_t\t \U \U\t \uhat_t ) \lambda_r \bigg( \sum_l (\rl)^2 \bigg) \\
    &\geq \frac{9}{16} L \lambda^2 \\
    &\geq \frac{L \lambda^2}{4}
\end{align*}
which completes the proof.
\end{proof}

\section{Proofs for Asymptotic Normality and Inference} \label{sec:limitheorem}
In this section we prove our main asymptotic results; namely \cref{thm:normality,thm:minimaxoptimalconfidenceinterval}, both of which build on \cref{thm:mainthm}. In order to prove these results we require the following theorem, which isolates the leading-order term in the asymptotic expansion.
\begin{theorem} \label{thm:asymptotics}
Suppose that the conditions of \cref{thm:mainthm} and \cref{thm:normality} hold.  Then we have that     \begin{align*}
        \| \pert \uhat \|_F^2 &= \| \sum_l \pert \nl \U \rl \R \|_F^2 + {\sf Res},
    \end{align*}
    where ${\sf Res}$ satisfies the bound
\begin{align*}
    {\sf Res} \lesssim \frac{\sigma^3 \kappa^4 r^{5/2} n^{3/2}}{\lambda^3 L^{3/2}} + \frac{\sigma^4 \kappa^2 n^2 r^{7/2}}{\lambda^4 L^{3/2}}
\end{align*}
with probability at least $1- \exp( - cn) - \exp( - c \sqrt{L})$.
 \end{theorem}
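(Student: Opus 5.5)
Let $\bm{\Delta} := \sum_l \pert \nl \U \rl \R$ denote the candidate leading term. The plan is to first show that $\pert \uhat_t = \bm{\Delta}\,\bm{H}_t + \bm{E}_t$, where $\bm{H}_t := \U\t\uhat_t$ and $\bm{E}_t$ is of second order in the noise. Then
\begin{align*}
\|\pert\uhat_t\|_F^2 = \|\bm{\Delta}\bm{H}_t\|_F^2 + 2\langle \bm{\Delta}\bm{H}_t,\bm{E}_t\rangle + \|\bm{E}_t\|_F^2 ,
\end{align*}
and we replace $\|\bm{\Delta}\bm{H}_t\|_F^2$ by $\|\bm{\Delta}\|_F^2$ using $\|\bm{H}_t\t\bm{H}_t - \bm{I}_r\| \le \|\sin\Theta(\uhat_t,\U)\|^2$. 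All of the work goes into controlling $\bm{E}_t$ and, more delicately, the cross term.

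\textbf{Step 1: stationarity equation.} By \cref{thm:mainthm}, after $t \ge C_\eta \log(\cdot)$ iterations the iterate lies in the statistical neighborhood $\|\pert\uhat_t\|_F \lesssim \sigma\kappa\sqrt{nr}/(\lambda\sqrt{L})$, with an optimization error $(1-c/\kappa^2)^t B$ that is negligible at the order of ${\sf Res}$. Project the update onto $\pert$ as in the induction step of \cref{thm:mainthm}:
\begin{align*}
\pert\uhat_{t+1/2} = \pert\uhat_t\Big(\bm{I} - \tfrac{\eta}{L}\sum_l \uhat_t\t\sl\uhat_t\uhat_t\t\sl\uhat_t\Big) + \eta\pert({\sf L}_t + {\sf Q}_t).
\end{align*}
Using the geodesic convexity mentioned in the proof remark, I would argue that successive iterates satisfy $\|\pert(\uhat_{t+1}-\uhat_t)\|_F \to 0$ geometrically. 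Hence, up to a geometrically small error, $\uhat_t$ is an approximate fixed point, i.e.
\begin{align*}
\pert\uhat_t \, \bm{M}_t = L\,\pert({\sf L}_t + {\sf Q}_t) + (\text{retraction correction}), \qquad \bm{M}_t := \sum_l \uhat_t\t\sl\uhat_t\uhat_t\t\sl\uhat_t .
\end{align*}
The SVD retraction contributes a factor $\bm{\hat\Sigma}_t^{-1} = \bm{I} + O(\|\mathcal I_t\|^2)$, which is second order. We then expand $\bm{M}_t = \bm{H}_t\t\bm{\mathcal{R}\mathcal{R}}\t\bm{H}_t + O(L\lambda_{\max}^2\|\pert\uhat_t\|^2)$ and $\pert{\sf L}_t = \tfrac1L\sum_l\pert\nl\U\rl\bm{H}_t + (\text{terms carrying a factor }\|\pert\uhat_t\|)$, using \eqref{assertion1} and \eqref{assertion4}. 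Inverting $\bm{M}_t$ and commuting $\bm{H}_t$ through yields $\pert\uhat_t = \bm{\Delta}\bm{H}_t + \bm{E}_t$. The $\bm{E}_t$ collects the following pieces:
\begin{itemize}
\item the ${\sf Q}_t$ contribution, of size $\sigma^2 nr^{3/2}/(\lambda^2\sqrt L)\cdot\|\pert\uhat_t\| + \sigma^2 r^{3/2}n/(\lambda^2 L)$ by \cref{lem:Qt};
\item the linear-times-error terms, of size $\kappa\sigma\sqrt{nr}/(\lambda\sqrt L)\cdot\|\pert\uhat_t\|$;
\item the optimization error.
\end{itemize}
Squaring these gives the $\sigma^4 n^2/(\lambda^4 L^{3/2})$-type term and a part of the cubic term in ${\sf Res}$.

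\textbf{Step 2: the cross term (main obstacle).} The naive bound $|\langle\bm{\Delta},\bm{E}_t\rangle| \le \|\bm{\Delta}\|_F\|\bm{E}_t\|_F$ gives roughly $\sigma^3 n^{3/2}\cdot n/(\lambda^3 L\sqrt L)$-type terms, which lose a $\sqrt n$ factor against the claimed $\sigma^3 n^{3/2}/(\lambda^3L^{3/2})$. So the third-order contributions must be handled as polynomials in the noise rather than by Cauchy--Schwarz. The dependence on $\uhat_t$ gets in the way, so I would first substitute $\uhat_t \approx \U\bm{H}_t + \pert\bm{\Delta}\bm{H}_t$ inside ${\sf Q}_t$ and inside the linear error terms. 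This reduces the cross term to explicit cubic forms, for example
\begin{align*}
\sum_{l,l'} {\sf Tr}\big(\R\bm{R}^{(l')}\U\t\bm{N}^{(l')}\pert\pert\t\nl\U\U\t\nl\U\big),
\end{align*}
plus fourth-order remainders. Each cubic form would then be bounded by these ingredients:
\begin{itemize}
\item separate the $l = l'$ diagonal from the $l \ne l'$ terms;
\item apply decoupling for third-order chaos (de la Pe\~na--Montgomery-Smith), then Hanson--Wright or Bernstein conditionally on the decoupled copies;
\item take an $\eps$-net over the $r\times r$ factor $\bm{H}_t$, costing $e^{cr^2}$;
\item use the delocalization hypotheses in \eqref{variancelowerbound} to tame the diagonal entries.
\end{itemize}
The $\exp(-c\sqrt L)$ failure probability would come from the Bernstein step on the $L$-indexed sums with subexponential summands. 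Expectations such as $\mathbb{E}\,\nl\U\U\t\nl = \tfrac{\sigma^2}{2}(\U\U\t + r\bm{I})$ must be subtracted carefully, because $\pert$ kills only part of them. The residual mean terms have to cancel against the $\tfrac{\sigma^2}{2}$ corrections in ${\sf Q}_t$; I expect this bookkeeping to be the hardest step.

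\textbf{Step 3: assembling.} It remains to bound the quadratic and quartic pieces. We have $\|\bm{E}_t\|_F^2$, and $\|\bm{\Delta}\|_F^2\,\|\bm{H}_t\t\bm{H}_t - \bm{I}\| \lesssim \|\bm{\Delta}\|_F^2\,\sigma^2\kappa^2 nr/(\lambda^2 L)$, which is of size $\sigma^4\kappa^2 n^2 r^2/(\lambda^4 L^2)$ and hence within ${\sf Res}$. Collecting everything on $\mathcal{E}_{\sf good}$ and the decoupling events gives the stated bound on ${\sf Res}$.
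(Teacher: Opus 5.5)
Your plan has the same structure as the paper's proof. The stationarity residual (the paper's $T_4=-L\pert\nabla_{\sf Riemannian}h(\uhat)\rhatinv\O$) is made negligible using geodesic strong convexity. Then $\pert\uhat$ is written as $\sum_l\pert\nl\U\rl\R$ times an $r\times r$ near-orthogonal factor plus higher-order terms, and the square is expanded.

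Two of your details are slightly cleaner than the paper's:
\begin{itemize}
\item Since $\pert\bm{S}^{(l)}=0$, the identity $\pert\uhat_t\bm{M}_t=L\pert({\sf L}_t+{\sf Q}_t)+L\pert\nabla h(\uhat_t)$ holds exactly at every $t$, so the fixed-point/retraction detour is not needed.
\item Since $\bm{M}_t$ contains only signal, inverting it needs only the deterministic bound $\|\bm{M}_t-\bm{H}_t\t\bm{\mathcal{R}\mathcal{R}}\t\bm{H}_t\|\lesssim L\lambda_{\max}^2\|\sin\Theta(\uhat_t,\U)\|^2$. The paper instead needs the perturbation bound for $\rhatinv$ in \cref{lem:rlsquared}.
\end{itemize}

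The premise of your Step 2 is a miscalculation, and it sends you into work the statement does not need. Write $\delta:=\err$, so $\|\bm\Delta\|_F\lesssim\delta$ by \eqref{assertion4}. Take the pieces of $\bm E_t$ you list in Step 1, noting that in \cref{lem:Qt} $\max\{\sqrt{nL},n\}/L=n/L$ because $L\lesssim n$. Cauchy--Schwarz then gives
\[
|\langle\bm\Delta\bm H_t,\bm E_t\rangle|\lesssim\delta\Big(\delta\,\frac{\sigma^2nr^{3/2}}{\lambda^2\sqrt L}+\frac{\sigma^2nr^{3/2}}{\lambda^2L}+\sqrt r\,\delta^2\Big)\lesssim\frac{\sigma^4\kappa^2n^2r^{5/2}}{\lambda^4L^{3/2}}+\frac{\sigma^3\kappa^3n^{3/2}r^2}{\lambda^3L^{3/2}}.
\]
This is already inside the stated ${\sf Res}$, so no factor $\sqrt n$ is lost. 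You can therefore drop the third-order decoupling, the net over $\bm H_t$, and the mean-cancellation bookkeeping you single out as hardest; the failure probability even improves to $\exp(-cn)$. The paper does run decoupling arguments for $\langle\bm G^{(1)},\bm G^{(2)}\rangle$ and $\langle\bm G^{(1)},T_3\rangle$. However, its final ${\sf Res}$ is no better than what Cauchy--Schwarz gives for those two terms, and it is driven by Cauchy--Schwarz terms such as $\langle\bm G^{(1)},T_1\rangle$ anyway.

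The ingredient you cannot simply borrow from the remark is the geodesic convexity itself. \cref{thm:mainthm} only places $\uhat_t$ within $\delta$ of $\U$. There $\|\nabla h(\uhat_t)\|$ may still be of order $\lambda^2\delta$, which would put a first-order $O(\delta)$ error into $\pert\uhat_t$. You need the uniform two-sided Riemannian Hessian bounds of \cref{lem:geodesicconvexity} over the whole $\delta$-ball. These come from concentrating the linear and quadratic noise parts of the Hessian uniformly over $\bm W$ and the tangent direction. With them you get linear convergence, and hence $\|\nabla h(\uhat_t)\|\lesssim\lambda^2\delta^2$ after $O(\log(1/\delta))$ iterations. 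This lemma is what your plan still has to prove.
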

\begin{proof}
    See \cref{sec:asymptoticsproof}.
\end{proof}
 
\subsection{Proof of  \cref{thm:asymptotics}} \label{sec:asymptoticsproof}

\begin{proof}[Proof of \cref{thm:asymptotics}]
Under the conditions of the theorem, with probability at least $1 - \exp( - c n)$ we have that
\begin{align}
    \| \sin\Theta(\bm{\hat U}_{t} ,\bm{U}) \|_F \lesssim \frac{\sigma \kappa \sqrt{nr}}{\lambda \sqrt{L}}.   \label{yadayada}
\end{align}
We use this bound without additional remarks in the subsequent analysis.  Throughout the proof we will suppress the dependence of $\uhat$ on the time index $t$, and assume only that $t$ is taken such that \eqref{yadayada} holds. We also note that it is immediate that 
\begin{align*}
    \| \R \| \leq \frac{1}{L\lambda^2},
\end{align*}
a fact we will use repeatedly.  
In the subsequent analysis we will define the orthogonal matrix $$\mathcal{O} = \argmin_{\mathcal{OO}\t= \bm{I}_r} \| \uhat \mathcal{O} - \bm{U} \|_F;$$ the Frobenius-optimal orthogonal matrix aligning $\uhat$ and $\U$.  
Finally, by \cref{yadayada} it holds that
\begin{align*}
    \| \uhat \mathcal{O} - \U \|_F \lesssim \frac{\sigma \kappa \sqrt{nr}}{\lambda \sqrt{L}}; \qquad \| \uhat \uhat\t - \U \U\t \|_F \lesssim \frac{\sigma \kappa \sqrt{nr}}{\lambda \sqrt{L}},
\end{align*}
which follows from Lemma 1 of \citet{cai_rate-optimal_2018}.  
\begin{itemize}
\item 
\textbf{Step 1: Initial Expansion}.   First, it holds that
\begin{align*}
    \sum_l \bm{S}^{(l)} \bm{U} \bm{R}^{(l)}\R = \bm{U}
\end{align*}
and also that  $\bm{\hat R}^{(l)} = \uhat\t \bm{A}\l \uhat$.  Thus, using the fact that $\al = \sl + \nl$, it holds that 
\begin{align*}
    \bm{\hat U} \mathcal{O} &= \bm{U} + \sum_{l} \nl \U \rl \R  \\
    &\quad + \sum_l \bm{N}^{(l)} \bm{U} \bm{R}^{(l)}( \mathcal{O}\t \rhatinv \mathcal{O} - \R) + \sum_l \bm{S}^{(l)} \bm{U} \bm{R}^{(l)}( \mathcal{O}\t \rhatinv \mathcal{O} - \R) \\
    &\quad + \sum_l \bm{N}^{(l)} \big( \bm{\hat U} \bm{\hat R}^{(l)} \mathcal{O} - \bm{U} \bm{R}^{(l)} \big) \mathcal{O}\t \rhatinv \mathcal{O}   + \sum_{l} \bm{S}^{(l)} \big( \bm{\hat U} \bm{\hat R}^{(l)} \mathcal{O} - \bm{U R}^{(l)} \big) \mathcal{O}\t \rhatinv \mathcal{O} \\
    &\quad + 
    \bigg(\uhat - \sum_{l} \bm{A}^{(l)} \uhat \rhatl \rhatinv  \bigg) \mathcal{O}. \numberthis \label{tobound}
\end{align*}
We therefore expand $\pert \uhat \mathcal{O}$ via 
\begin{align*}
    \bm{U}_{\perp}\t \bm{\hat U} \mathcal{O} &=
\sum_l \pert \nl \U \rl \R + \sum_l \pert \nl \big( \uhat \uhat\t \bm{A}\l \uhat \mathcal{O} - \U\rl \big) \mathcal{O}\t \rhatinv \mathcal{O} \\ 
&\quad + \sum_{l} \pert \nl \U \rl \big( \mathcal{O}\t \rhatinv \mathcal{O} - \R \big) + \pert \bigg( \uhat - \sum_{l}\bm{A}\l \uhat \rhatl \rhatinv \bigg) \mathcal{O}
\\
 &= \sum_l \bm{U}_{\perp}\t \bm{N}^{(l)} \bm{U} \bm{R}^{(l)} \R + \sum_l \pert \nl \U\U\t \nl \U  \R \\
 &\quad + \sum_l \pert \nl \U\U\t \nl \U  \big( \O\t \rhatinv \O - \R \big) \\
 &\quad + \sum_l \pert\nl (\uhat \uhat\t - \U\U\t) \bm{S}^{(l)} \uhat  \O \O\t \rhatinv \O \\
 &\quad +  \sum_l \pert\nl  \bm{S}^{(l)} (\uhat  \O - \U) \O\t \rhatinv \O \\
    &\quad + \sum_l \bigg( \pert \nl \uhat \uhat\t \bm{N}^{(l)} \uhat  \O \O\t \rhatinv \O - \pert \nl \U\U\t \nl \U \O\t \rhatinv \O\bigg) \\
    &\quad + \sum_{l} \pert \nl \U \rl \big( \mathcal{O}\t \rhatinv \mathcal{O} - \R \big) + \pert \bigg( \uhat - \sum_{l}\bm{A}\l \uhat \rhatl \rhatinv \bigg) \mathcal{O}.
\numberthis\label{callousdaoboys}
\end{align*}
Define
\begin{align*}
   \bm{G}^{(1)} &:= \sum_l \pert \nl \U \rl \R; \\
   \bm{G}^{(2)} &:= \sum_l \pert \nl \U\U\t \nl \U\U\t \R; \\
   T_1 &:= \sum_l \pert\nl (\uhat \uhat\t - \U\U\t) \bm{S}^{(l)} \uhat  \O \mathcal{O}\t \rhatinv \mathcal{O} +  \sum_l \pert\nl  \bm{S}^{(l)} (\uhat  \O - \U) \mathcal{O}\t \rhatinv \mathcal{O}; \\
    T_2 &:= \sum_{l} \pert \nl \U\U\t \nl \U \bigg(\mathcal{O}\t \rhatinv \mathcal{O} - \R\bigg) + \sum_{l} \pert \nl \U \rl \bigg( \mathcal{O}\t \rhatinv \mathcal{O} - \R \bigg); \\
    T_3 &:= \sum_l \pert\nl \uhat \uhat\t \bm{N}^{(l)} \uhat  \O  \mathcal{O}\t \rhatinv \mathcal{O} - \sum_l \pert \nl \U\U\t \nl \U\U\t \mathcal{O}\t \rhatinv \mathcal{O}; \\
    T_4 &:= \pert \bigg( \uhat - \sum_{l} \bm{A}\l \uhat \rhatl \rhatinv \bigg) \mathcal{O}.
\end{align*}
We have thus shown that
\begin{align*}
    \pert\uhat \mathcal{O} &= \bm{G}^{(1)} + \bm{G}^{(2)} + T_1 + T_2 + T_3 + T_4. \numberthis \label{initialexpansion}
\end{align*}
In the next steps we will bound the residual quantities.  
\item 
\textbf{Step 2: Bounding $T_1$ through $T_3$.}
First, in order to bound $T_1$ through $T_3$, we first require the following bound controlling the approximation of $\rhatinv$ to $\R$.
\begin{lemma} \label{lem:rlsquared}
Suppose the conditions of \cref{thm:asymptotics} hold. Let $\mathcal{O}= \mathcal{O}_{\bm{U},\bm{\hat U}}$.  Then   with probability at least $1 - \exp( - c n)$, \begin{align*}
      \bigg\| \sum_l \bigg[ \big( \bm{\hat R}^{(l)} \big)^2 - \O\t \big( \rl \big)^2\O \bigg] \bigg\|_F &\lesssim L \lambda_{\max}^2 \err + \err \sigma^2 n r^{3/2} \sqrt{L} + \sigma^2 L r^{3/2} \\
    &\quad + \sigma^2 n r^{3/2} + \sigma^2 r^{3/2} \sqrt{nL} + \sigma r \sqrt{n} \lambda_{\max} \err + \sigma \sqrt{nLr}\lambda_{\max};  \numberthis \label{102825} \\
\bigg\| \rhatinv - \mathcal{O}\t \R \mathcal{O} \bigg\|_F &\lesssim  \frac{1}{\lambda^2 L} \frac{ \sigma \kappa^3 \sqrt{nr}}{\lambda \sqrt{L}} \numberthis \label{rhatinvbd}
    \end{align*}
\end{lemma}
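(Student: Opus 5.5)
The plan is to expand $\rhatl = \uhat\t(\sl+\nl)\uhat$ around $\O\t\rl\O$, square and sum, and then invert using a standard perturbation identity. Write $\uhat = \U\O\t + \bm{\Delta}$ with $\bm{\Delta} := \uhat - \U\O\t$, so $\|\bm{\Delta}\|_F \lesssim \err$ by \eqref{yadayada} and Lemma 1 of \citet{cai_rate-optimal_2018}. Then
\begin{align*}
\rhatl - \O\t\rl\O = \underbrace{\uhat\t\sl\uhat - \O\t\rl\O}_{=:\bm{E}_1^{(l)}} + \underbrace{\uhat\t\nl\uhat}_{=:\bm{E}_2^{(l)}},
\end{align*}
where $\bm{E}_1^{(l)} = \O\big(\U\t\uhat\uhat\t\U\big)\rl\big(\U\t\uhat\big)\text{-type terms}$. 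Concretely, $\bm{E}_1^{(l)} = \bm{\Delta}\t\sl\U\O\t + \O\U\t\sl\bm{\Delta} + \bm{\Delta}\t\sl\bm{\Delta}$, so $\|\bm{E}_1^{(l)}\|_F \lesssim \lambda_{\max}\err$. Since
\begin{align*}
\sum_l \big[(\rhatl)^2 - \O\t(\rl)^2\O\big] = \sum_l \big[\O\t\rl\O\,\bm{E}^{(l)} + \bm{E}^{(l)}\O\t\rl\O + (\bm{E}^{(l)})^2\big],
\end{align*}
I will bound the cross terms with $\bm{E}_1$ and with $\bm{E}_2$ separately, together with the squares.

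\textbf{Signal cross terms.} The $\bm{E}_1$ contributions are crude: $\sum_l\|\rl\|\,\|\bm{E}_1^{(l)}\|_F \lesssim L\lambda_{\max}^2\err$, which is the first term of \eqref{102825}.

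\textbf{Noise cross terms.} These require aggregation over $l$. Write $\sum_l \O\t\rl\O\,\uhat\t\nl\uhat$ and replace $\uhat$ by $\U\O\t+\bm{\Delta}$.
\begin{itemize}
\item The main piece, $\sum_l \rl\U\t\nl\U$, is an $r\times r$ matrix. By \eqref{assertion4}, or a direct $\eps$-net argument in $\mathbb{R}^r$, it is $\lesssim \sigma\sqrt{nLr}\lambda_{\max}$ in Frobenius norm; the crude bound via \eqref{assertion4} suffices.
\item The pieces with one $\bm{\Delta}$ have the form $\sum_l \rl\U\t\nl\bm{\Delta} = \U\t\big(\sum_l \sl\nl\big)\bm{\Delta}$. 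These are $\lesssim \sigma\sqrt{nL}\lambda_{\max}\err$ by \eqref{assertion4}, which is dominated by the listed terms.
\item The piece with two factors of $\bm{\Delta}$ is handled by \eqref{assertion1} with $\bm{Q}$ of rank $\le 2r$.
\end{itemize}

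\textbf{Quadratic noise terms.} For $\sum_l(\bm{E}_2^{(l)})^2 = \sum_l \uhat\t\nl\uhat\uhat\t\nl\uhat$, I use \eqref{assertion2} with $\bm{Q}=\uhat\uhat\t$. This yields the mean $\frac{\sigma^2}{2}L(\bm{I}_r + r\bm{I}_r)$, which has Frobenius norm $\lesssim \sigma^2 Lr^{3/2}$, plus a fluctuation of $\sigma^2 nr\sqrt{L}\cdot\sqrt r$. The $\U$-part of the fluctuation is sharper: by \eqref{assertion3}-type arguments, $\U\t\nl\U\U\t\nl\U$ summed over $l$ gives $\sigma^2 r^{3/2}\max\{\sqrt{nL},n\}$, while the $\bm{\Delta}$-parts contribute $\err\,\sigma^2nr^{3/2}\sqrt L$. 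The mixed $\bm{E}_1\bm{E}_2$ products give $\sigma r\sqrt n\lambda_{\max}\err$-type terms, and $\sum_l(\bm{E}_1^{(l)})^2$ is dominated by the first term of \eqref{102825}. Matching every piece to a term in \eqref{102825} is bookkeeping. The main obstacle is the fluctuation of $\sum_l\uhat\t\nl\uhat\uhat\t\nl\uhat$: because $\uhat$ depends on the noise, it is essential to have uniform-in-$\bm{Q}$ control from \eqref{assertion2} and \eqref{assertion3}, and a naive $n\sqrt L$ bound without splitting off the $\U$ directions would be too lossy.

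\textbf{Inversion for \eqref{rhatinvbd}.} Let $\bm{M}=\O\t\bm{\mathcal{R}\mathcal{R}}\t\O$ and $\hat{\bm{M}}=\bm{\mathcal{\hat R}}\bm{\mathcal{\hat R}}\t$, and use
\begin{align*}
\hat{\bm{M}}\inv-\bm{M}\inv=-\hat{\bm{M}}\inv(\hat{\bm{M}}-\bm{M})\bm{M}\inv.
\end{align*}
Under \eqref{maincondition} and the SNR assumption of \cref{thm:normality}, each term of \eqref{102825} is at most $L\lambda_{\max}^2\err$ up to constants. For example, $\sigma^2Lr^{3/2}\le L\lambda^2\err$ whenever $\lambda/\sigma\gtrsim \sqrt{r}\cdot r/\sqrt{n/L}$-type conditions hold; this is where conditions such as $L\lesssim n$ and $\lambda/\sigma\gg n r^2\kappa^4/\sqrt L$ enter. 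Hence $\|\hat{\bm{M}}-\bm{M}\|\le L\lambda^2/2$, so Weyl gives $\lambda_r(\hat{\bm{M}})\ge L\lambda^2/2$. Then
\begin{align*}
\|\hat{\bm{M}}\inv-\bm{M}\inv\|_F\lesssim \frac{1}{L^2\lambda^4}\,L\lambda_{\max}^2\err = \frac{1}{L\lambda^2}\,\frac{\sigma\kappa^3\sqrt{nr}}{\lambda\sqrt L},
\end{align*}
which is \eqref{rhatinvbd}.
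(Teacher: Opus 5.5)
Your proposal is correct and follows essentially the paper's own route: split $\sum_l(\rhatl)^2$ into signal, signal--noise and noise--noise parts, bound the signal part crudely by $L\lambda_{\max}^2\err$, handle the cross part with \eqref{assertion1} and \eqref{assertion4}, and handle the quadratic part with \eqref{assertion2} after isolating the pure-$\U$ piece $\sum_l\U\t\nl\U\U\t\nl\U$ (the paper's $M_5$); then invert with Weyl and $\hat{\bm{M}}\inv-\bm{M}\inv=-\hat{\bm{M}}\inv(\hat{\bm{M}}-\bm{M})\bm{M}\inv$. Two small repairs are needed: first, since $\O=\O_{\U,\uhat}$ gives $\uhat\approx\U\O$, set $\bm{\Delta}:=\uhat-\U\O$, because with your $\uhat-\U\O\t$ the expansion is centered at $\O\rl\O\t$ rather than $\O\t\rl\O$; second, the one-$\bm{\Delta}$ cross piece $\sum_l\rl\O\bm{\Delta}\t\nl\U$ is not of the form $\U\t(\sum_l\sl\nl)\bm{\Delta}$, so \eqref{assertion4} does not cover it and you need the uniform bound \eqref{assertion1}, which gives $\sigma r\sqrt{nL}\lambda_{\max}\err$, still dominated by $L\lambda_{\max}^2\err$ (alternatively, as the paper does, never expand the outer factors of $\uhat$, since $\|\uhat\t\bm{X}\uhat\|_F\le\sqrt{r}\|\bm{X}\|$, and split only the inner projector $\uhat\uhat\t=\U\U\t+(\uhat\uhat\t-\U\U\t)$).
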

\begin{proof}
    See \cref{sec:rlsquaredproof}.
\end{proof}
In particular, when $\lambda/\sigma \gg n r^2 \kappa^4/\sqrt{L}$, on the event above, it holds that
\begin{align*}
    \| \rhatinv \| \lesssim \frac{1}{L \lambda^2},\numberthis \label{rhatinvbd2}
\end{align*}
a fact that we will use repeatedly.

First, we note that on $\mathcal{E}_{{\sf good}}$, by \eqref{assertion1}, 
\begin{align*}
 \bigg\|    \sum_l \bm{U}_{\perp}\t \bm{N}^{(l)} \big( \bm{\hat U} \bm{\hat U}\t - \bm{UU}\t \big) \bm{S}^{(l)} \bm{\hat U} \mathcal{O} \O\t \rhatinv \O  \bigg\|_F  &\leq \big\|     \sum_l \bm{U}_{\perp}\t \bm{N}^{(l)} \big( \bm{\hat U} \bm{\hat U}\t - \bm{UU}\t \big) \bm{S}^{(l)} \big\| \big\|\rhatinv \big\|_F \\
&\lesssim \frac{\sigma^2 \kappa^2 r^{3/2} n}{L \lambda^2}. \numberthis \label{bigboy1}
\end{align*}
Similarly, on the event $\mathcal{E}_{{\sf good}}$, by \cref{assertion4} it holds that 
  \begin{align*}
  \| \sum_l \bm{U}_{\perp}\t \bm{N}^{(l)} \bm{S}^{(l)} \big( \bm{\hat U} \mathcal{O} - \bm{U}\big)\O\t \rhatinv \O \|_F 
      &\lesssim \sigma \sqrt{nL} \lambda_{\max} \frac{\sigma \sqrt{nr}\kappa}{\lambda \sqrt{L}} \frac{1}{\lambda^2 L} \\
      &\asymp \frac{\sigma^2 n \sqrt{r} \kappa^2}{\lambda^2 L} \numberthis \label{bigboy2}
  \end{align*}
Thus, we have shown that 
\begin{align*}
    \|T_1\|_F  \lesssim \frac{\sigma^2 \kappa^2 r^{3/2} n}{L \lambda^2}. \numberthis \label{t1t2}
\end{align*}
Next, we bound the term $T_2$.  First, we have that by \eqref{assertion3} and \cref{lem:rlsquared},
\begin{align*}
    \| \sum_{l} \pert \nl &\U\U\t \nl \U \big( \O\t \rhatinv \O - \R \big) \|_F  \\ &\lesssim \| \sum_{l} \pert \nl \U \U\t \nl \U \| \| \O\t \rhatinv \O - \R \|_F \\
    &\lesssim \sigma^2 r n \frac{1}{\lambda^2 L} \frac{\sigma \kappa^3 \sqrt{nr}}{\lambda \sqrt{L}} \asymp \frac{\sigma^3 \kappa^3 r^{3/2} n^{3/2}}{L^{3/2}}.
\end{align*}
Similarly, using \eqref{assertion4} instead,
\begin{align*}
    \bigg\| \sum_{l} \pert \nl \U \rl \big( \O\t \rhatinv \O - \R \big) \bigg\| \lesssim   \frac{\sigma \sqrt{nL} \lambda_{\max}}{\lambda^2 L} \frac{\sigma \kappa^3 \sqrt{nr}}{\lambda \sqrt{L}} \asymp \frac{\sigma^2 \kappa^4 n \sqrt{r}}{\lambda^2 L}.
\end{align*}
Combining these bounds, with probability at least $1 - \exp( - cn)$, 
\begin{align*}
    \| T_2 \|_F \lesssim \frac{\sigma^2 \kappa^4 n \sqrt{r}}{\lambda^2 L}. \numberthis \label{t2}
\end{align*}
Analyzing $T_3$ is slightly more complicated and requires additional expansion, so this analysis is deferred to the following lemma. 
\begin{lemma} \label{lem:complicatedresidual}
Under the conditions of \cref{thm:asymptotics},
on the event $\mathcal{E}_{{\sf good}}$ it holds that 
    \begin{align*}
      \| T_3\|_F  &\lesssim   \frac{\sigma^3 n^{3/2} r^2 \kappa}{\lambda^3 L}. 
  \end{align*}
\end{lemma}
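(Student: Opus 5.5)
The approach is to split $T_3$ into two pieces, each of the form $\sum_l \pert \nl \bm{Q} \nl(\cdot)$ with $\bm{Q}$ of rank at most $2r$. Each piece is then controlled by the uniform bound \eqref{assertion2} in $\mathcal{E}_{{\sf good}}$. Because \eqref{assertion2} holds simultaneously for all such $\bm{Q}$, the dependence between $\uhat$ and the noise creates no difficulty. This uniformity is the key point, and it is already built into $\mathcal{E}_{{\sf good}}$.

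\textbf{Step 1: split $T_3$.} I read the second summand of $T_3$ as $\pert \nl \U\U\t\nl\U\,\O\t\rhatinv\O$, so that $T_3 = \sum_l \pert \nl\big(\uhat\uhat\t\nl\uhat\O - \U\U\t\nl\U\big)\O\t\rhatinv\O$. I add and subtract $\U\U\t\nl\uhat\O$ to obtain
\begin{align*}
T_3 = \Big[\sum_l \pert\nl(\uhat\uhat\t-\U\U\t)\nl\uhat\O + \sum_l \pert\nl\U\U\t\nl(\uhat\O-\U)\Big]\O\t\rhatinv\O =: (T_{3,a}+T_{3,b})\,\O\t\rhatinv\O.
\end{align*}
Throughout, I use $\|\rhatinv\|\lesssim (L\lambda^2)^{-1}$ from \eqref{rhatinvbd2}. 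I also use $\|\uhat\uhat\t-\U\U\t\|_F + \|\uhat\O-\U\|_F \lesssim \sigma\kappa\sqrt{nr}/(\lambda\sqrt L)$, which follows from \eqref{yadayada}.

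\textbf{Step 2: the term $T_{3,a}$.} Set $\bm{Q}=\uhat\uhat\t-\U\U\t$, which has rank at most $2r$ and ${\sf Tr}(\bm{Q})=0$. By \eqref{assertion2},
\begin{align*}
\sum_l \nl\bm{Q}\nl = \frac{\sigma^2L}{2}\bm{Q} + \bm{E}, \qquad \|\bm{E}\|\lesssim \sigma^2 nr\sqrt L\,\|\bm{Q}\|_F.
\end{align*}
The mean part contributes $\frac{\sigma^2L}{2}\|\pert\bm{Q}\uhat\|_F\lesssim \sigma^2 L\,\|\sin\Theta(\uhat,\U)\|_F$. The fluctuation part contributes at most $\sqrt r\,\|\bm{E}\|$, since $\bm{E}\uhat\O$ has rank at most $r$. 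Multiplying by $\|\rhatinv\|$, the fluctuation part gives
\begin{align*}
\frac{\sigma^2 nr^{3/2}}{\sqrt L\lambda^2}\cdot\frac{\sigma\kappa\sqrt{nr}}{\lambda\sqrt L} = \frac{\sigma^3\kappa n^{3/2}r^2}{\lambda^3 L}.
\end{align*}
The mean part gives $\sigma^3\kappa\sqrt{nr}/(\lambda^3\sqrt L)$. This is smaller by the factor $\sqrt L/(n r^{3/2})\lesssim 1$, because $L\lesssim n$.

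\textbf{Step 3: the term $T_{3,b}$.} Here I take $\bm{Q}=\U\U\t$, which gives $\sum_l\nl\U\U\t\nl = \frac{\sigma^2L}{2}(\U\U\t + r\bm{I}) + \bm{E}'$ with $\|\bm{E}'\|\lesssim\sigma^2nr^{3/2}\sqrt L$. After left-multiplying by $\pert$, the mean part reduces to $\frac{\sigma^2Lr}{2}\pert(\uhat\O-\U)=\frac{\sigma^2Lr}{2}\pert\uhat\O$. Its Frobenius norm is at most $\sigma^2Lr\,\sigma\kappa\sqrt{nr}/(\lambda\sqrt L)$. For the fluctuation part I use $\|\bm{E}'(\uhat\O-\U)\|_F\le\|\bm{E}'\|\,\|\uhat\O-\U\|_F$. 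After multiplying by $(L\lambda^2)^{-1}$, the two parts are of order $\sigma^3\kappa r^{3/2}\sqrt n/(\lambda^3\sqrt L)$ and $\sigma^3\kappa n^{3/2}r^2/(\lambda^3L)$, respectively. Again the first is dominated by the second when $L\lesssim n$.

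Combining Steps 2 and 3 gives the claimed bound $\|T_3\|_F\lesssim \sigma^3n^{3/2}r^2\kappa/(\lambda^3L)$.

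The only delicate point is Step 2. There $\bm{Q}$ depends on the same noise matrices $\nl$, so I rely entirely on the ``for all rank-$2r$ $\bm{Q}$'' uniformity in \eqref{assertion2} and do not re-derive any concentration. A secondary check is the transpose convention in $\mathbb{E}\,\nl\bm{Q}\nl$. Since $\bm{Q}$ is symmetric in both steps, $\bm{Q}\t=\bm{Q}$ and this causes no issue.
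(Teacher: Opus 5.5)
Your proof is correct, but it uses a coarser decomposition than the paper. Your reading of the second summand of $T_3$ as $\sum_l \pert\nl\U\U\t\nl\U\,\O\t\rhatinv\O$ is the one consistent with \eqref{callousdaoboys}.

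The paper inserts $\per\pert + \U\U\t$ on both sides of $\uhat\uhat\t$ and in front of the final $\uhat$. This produces the nine terms $J_1,\dots,J_9$ of \eqref{decomp}. Eight of them are controlled by \eqref{assertion2}, each with its own matrix $\bm{Q}$ (e.g.\ $\per\pert\uhat\uhat\t\U\U\t$) and with the centering term checked case by case. The ninth, $J_9$, uses \eqref{assertion3}.

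Your single add-and-subtract of $\U\U\t\nl\uhat\O$ needs only two instances of \eqref{assertion2}, with $\bm{Q}=\uhat\uhat\t-\U\U\t$ and $\bm{Q}=\U\U\t$. Both centering terms reduce to multiples of $\pert\uhat\O$, because $\pert\U=0$ and ${\sf Tr}(\uhat\uhat\t-\U\U\t)=0$. Your two fluctuation terms are of the same order as the paper's leading terms $J_4$, $J_7$, $J_8$. So nothing is lost, and \eqref{assertion3} is never needed.

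What the nine-term split buys is finer bookkeeping. It shows that $J_1$ is cubic and $J_2,J_3,J_5,J_6$ are quadratic in $\|\pert\uhat\|_F$, and the same nine pieces are reused term by term in \cref{lem:complicatedresidual2}. For the rate stated there, your route would also suffice. Cauchy--Schwarz with your bound and $\|\bm{G}^{(1)}\|_F \lesssim \sigma\kappa\sqrt{nr}/(\lambda\sqrt{L})$ gives $|\langle \bm{G}^{(1)}, T_3\rangle| \lesssim \sigma^4\kappa^2 n^2 r^{5/2}/(\lambda^4 L^{3/2})$. This is already smaller, by a factor of $r$, than the bound stated in \cref{lem:complicatedresidual2}.
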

\begin{proof}
    See \cref{sec:complicatedresidualproof}.
\end{proof}
\noindent
Thus, we have shown that with high probability, 
\begin{align*}
    \| T_1 + T_2 + T_3 \|_F \lesssim \frac{\sigma^3 n^{3/2} r^2 \kappa}{\lambda^3 L} + \frac{\sigma^2 \kappa^4 n r^{3/2}}{\lambda^2 L}.
\end{align*}
\item \textbf{Step 3: Bounding $T_4$.}
We now turn to the quantity $T_4$.  This quantity is defined via
\begin{align*}
    T_4 &:= \pert \bigg( \uhat - \sum_{l} \al \uhat \rhatl \rhatinv \bigg) \mathcal{O} \\
    &= \pert \bigg( \uhat \bm{\mathcal{\hat R\hat R}}\t - \sum_{l} \al \uhat \rhatl \bigg) \rhatinv \mathcal{O} \\
    &= - \pert \big( \bm{I} - \uhat \uhat\t \big) \sum_{l} \al \uhat \rhatl \rhatinv \mathcal{O} \\
    &= - L \pert \nabla_{{\sf Riemannian}}h(\bm{\hat U}) \rhatinv \mathcal{O}. \numberthis \label{t4def}
\end{align*}
Here $\nabla_{{\sf Riemannian}}$ is understood as the Riemannian gradient of $h$ with respect to the Grassmann manifold.  We will show that on a good event, this gradient is small deterministically.  As a first step we demonstrate a form of geodesic strong convexity and smoothness of $h$.
\begin{lemma} \label{lem:geodesicconvexity}
Under the conditions of \cref{thm:asymptotics}, there exists an event $\mathcal{E}_{\nabla^2 h}$ satisfying $\mathbb{P}(\mathcal{E}_{\nabla^2 h}) \geq 1 - \exp( - c nr)$ such that 
\begin{align*}
3 \lambda_{\max}^2 \| \X \|_F^2 \geq 
   \nabla^2_{{\sf Riemannian}} h(\bm{W})[\X,\X] \geq \frac{\lambda^2}{4} \| \X \|_F^2
\end{align*}
uniformly over all $\W \in \mathbb{R}^{n\times r}$ satisfying $\W\t\W = \bm{I}_r$, $\| \sin\Theta(\W,\U)\|_F \leq \frac{C_0 \sigma \kappa \sqrt{nr}}{\lambda \sqrt{L}}$, and $\X \in \mathbb{R}^{n\times r} $ satisfying $\X\t \W = 0$. Here $\nabla^2_{{\sf Riemannian}}$ is the Riemannian Hessian of $h$ defined on the Grassmann Manifold $\bm{W}\t \bm{W} = \bm{I}_r$.
\end{lemma}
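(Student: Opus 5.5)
The plan is to reduce $h$ to a function of $\W$ alone, compute its Riemannian Hessian on the Grassmannian in closed form, and then split that Hessian into a signal part, a part linear in $\nl$, and a part quadratic in $\nl$.

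\textbf{Reduction and closed-form Hessian.} Profile out $\rhatl$: at $\rhatl=\W\t\al\W$ we have $\|\al-\W\W\t\al\W\W\t\|_F^2=\|\al\|_F^2-\|\W\t\al\W\|_F^2$. So, up to a constant,
\begin{align*}
h(\W) = -\frac{1}{4L}\sum_l {\sf Tr}\big[(\W\t\al\W)^2\big].
\end{align*}
Write $\bm{B}^{(l)}:=\W\t\al\W$. The Euclidean gradient is $-\frac1L\sum_l \al\W\bm{B}^{(l)}$. For a tangent direction $\X$ with $\X\t\W=0$, the Grassmann Hessian formula $D^2h[\X,\X]-{\sf Tr}(\X\t\X\,\W\t\nabla h)$ gives
\begin{align*}
\nabla^2_{{\sf Riemannian}}h(\W)[\X,\X]=\frac1L\sum_l\Big[{\sf Tr}\big(\X\t\X(\bm{B}^{(l)})^2\big)-{\sf Tr}\big(\bm{B}^{(l)}\X\t\al\X\big)-\tfrac12{\sf Tr}\big((\X\t\al\W+\W\t\al\X)^2\big)\Big].
\end{align*}
Substituting $\al=\sl+\nl$ splits this into the signal part, the part linear in $\nl$, and the part quadratic in $\nl$.

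\textbf{Signal part.} At $\W=\U$ we have $\X\t\U=0$, so $\X\t\sl=0$. The signal part then reduces to $\frac1L{\sf Tr}\big(\X\t\X\sum_l(\rl)^2\big)$, which lies between $\lambda^2\|\X\|_F^2$ and $\lambda_{\max}^2\|\X\|_F^2$.

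For general $\W$ in the neighborhood, decompose $\W=\U\U\t\W+\pert{}\!\!$-component and note $\|\X\t\U\|\le\|\sin\Theta(\W,\U)\|$ (since $\X\t\W=0$). By the argument of \cref{lem:contractionlemma}, ${\sf Tr}(\X\t\X\sum_l(\W\t\sl\W)^2)\ge \frac34 L\lambda^2\|\X\|_F^2$. The cross terms ${\sf Tr}(\W\t\sl\W\,\X\t\sl\X)$ and $(\X\t\sl\W)^2$ each carry at least one factor $\X\t\U$. Hence they are $O\big(\lambda_{\max}^2\|\sin\Theta(\W,\U)\|\,\|\X\|_F^2\big)$, which is at most $\lambda^2/16\cdot\|\X\|_F^2$ because $\kappa^2\sigma\kappa\sqrt{nr}/(\lambda\sqrt L)\ll1$ by \eqref{maincondition}. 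This gives a signal part between $\frac{\lambda^2}{2}\|\X\|_F^2$ and $2\lambda_{\max}^2\|\X\|_F^2$.

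\textbf{Noise parts.} I would work on $\mathcal{E}_{{\sf good}}$, whose bounds are uniform over all matrices $\bm{Q}$ of rank at most $2r$; this uniformity is what gives uniformity over $(\W,\X)$ at no extra cost.

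The linear terms have the form ${\sf Tr}(\X\t\X\,\W\t\nl\W\,\W\t\sl\W)$, ${\sf Tr}(\W\t\sl\W\,\X\t\nl\X)$ and ${\sf Tr}(\X\t\nl\W\,\X\t\sl\W)$. Each can be written as $\langle \X\bm{M},\sum_l\nl\bm{Q}\sl\rangle$ with $\bm{Q}\in\{\X\U\t,\W\U\t,\ldots\}$ of rank at most $2r$. So \eqref{assertion1} and \eqref{assertion4} bound their total by $C\sigma\lambda_{\max}\sqrt{nr}\,r^{1/2}/\sqrt L\cdot\|\X\|_F^2\le\frac{\lambda^2}{16}\|\X\|_F^2$.

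For the quadratic terms, ${\sf Tr}\big(\X\t\X\sum_l\W\t\nl\W\W\t\nl\W\big)$ and $\sum_l{\sf Tr}\big((\X\t\nl\W)^2\big)$ are controlled by \eqref{assertion2} with $\bm{Q}=\W\W\t$ or $\bm{Q}=\W\X\t$. The deterministic centering $\frac{\sigma^2}{2}L(\bm{Q}\t+{\sf Tr}(\bm{Q})\bm{I})$ contributes only $O(\sigma^2 r)$ per unit $\|\X\|_F^2$ after dividing by $L$. The fluctuation contributes $C\sigma^2nr^{2}/\sqrt L$. Both are $\ll\lambda^2$ by \eqref{maincondition}.

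\textbf{Main obstacle.} The term I expect to be hardest is $\frac1L\sum_l{\sf Tr}\big(\W\t\nl\W\,\X\t\nl\X\big)$. It is quadratic in $\nl$ with $\X$ on both sides, and naive Cauchy--Schwarz over $l$ loses a factor of $L$. My first attempt is to rewrite it as $\langle\X,\sum_l\nl\X(\W\t\nl\W)\rangle$ and replace $\W\t\nl\W$ by $\U\t\nl\U$ plus a remainder of size $\|\sin\Theta\|$. The main piece then has the same structure as the trilinear terms handled in \cref{lem:complicatedresidual}: decouple $\nl\X$ from $\U\t\nl\U$, then take an $\eps$-net over rank-$r$ $\X$, aiming for a bound $\lesssim\sigma^2 nr^{3/2}/\sqrt L\cdot\|\X\|_F^2$. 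This step may require enlarging the event, which is why the probability is stated as $1-\exp(-cnr)$ rather than just $\mathcal{E}_{{\sf good}}$.

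Summing the three parts gives the lower bound $\frac{\lambda^2}{4}\|\X\|_F^2$ and the upper bound $3\lambda_{\max}^2\|\X\|_F^2$.
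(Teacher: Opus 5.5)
Your proof is correct, and your Hessian formula agrees with the paper's, since $\frac12\tr((\X\t\al\W+\W\t\al\X)^2)=\tr((\X\t\al\W)^2)+\|\X\t\al\W\|_F^2$. You control the noise by a different route, though.

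\textbf{Noise terms.} The paper does not use $\mathcal{E}_{\sf good}$ here. It proves fresh scalar concentration for the forms that appear in the Hessian: Hoeffding plus an $\eps$-net for $\frac1L\sum_l\langle\nl,\bm{Q}_1\sl\bm{Q}_2\rangle$, and Hanson--Wright plus an $\eps$-net for $\frac1L\sum_l\tr(\bm{Q}_1\nl\bm{Q}_2\nl)$, over rank-$r$ matrices $\bm{Q}_1,\bm{Q}_2$. This gives noise bounds $C\sigma\lambda_{\max}\sqrt{nr/L}\,\|\X\|_F^2$ and $C\sigma^2 nr\|\X\|_F^2/L$. On the noise side the paper therefore needs only $\lambda/\sigma\gtrsim\kappa\sqrt{nr/L}$.

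You instead combine the operator-norm bounds \eqref{assertion1}--\eqref{assertion2}, which are already uniform over rank-$2r$ $\bm{Q}$, with the duality $|\tr(\bm{P}\bm{M})|\le\|\bm{P}\|_*\|\bm{M}\|$. This needs no new probabilistic argument and gives uniformity in $(\W,\X)$ for free. The cost is that your quadratic bound is worse by roughly a factor $\sqrt L$ (up to powers of $r$), so it uses up the $r\sqrt n/L^{1/4}$ part of \eqref{maincondition}. That is harmless, because the lemma is stated under the hypotheses of \cref{thm:asymptotics}.

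\textbf{Clean part.} The paper aligns $\W$ with $\U$ and perturbs $(\W\t\sl\W)^2$ to first order, losing a term of order $\lambda_{\max}^2\|\W-\U\|_F$. Your positive-semidefinite ordering argument from \cref{lem:contractionlemma} gives $(1-\|\sin\Theta(\W,\U)\|^2)^2\lambda^2$ with no $\kappa$ loss. The cross terms actually carry two factors of $\X\t\U$, so they are $O(\lambda_{\max}^2\|\sin\Theta(\W,\U)\|^2\|\X\|_F^2)$. The smallness you need should be drawn from $\lambda/\sigma\gg nr^2\kappa^4/\sqrt L$. Citing \eqref{maincondition} alone is delicate, because the radius of the neighborhood carries the same constant $C_0$.

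\textbf{Your ``main obstacle'' needs nothing new.} By cyclicity,
\begin{align*}
\frac1L\sum_l\tr\big(\W\t\nl\W\,\X\t\nl\X\big)=\frac1L\tr\Big(\X\W\t\sum_l\nl\big(\W\X\t\big)\nl\Big),
\end{align*}
so \eqref{assertion2} applies with $\bm{Q}=\W\X\t$. The centering term $\frac{\sigma^2}{2}L\,\tr\big(\X\W\t(\X\W\t+\tr(\W\X\t)\bm{I})\big)$ vanishes because $\W\t\X=0$. The fluctuation is at most $\|\X\W\t\|_*\cdot C\sigma^2 nr\sqrt L\,\|\X\|_F/L\le C\sigma^2 nr^{3/2}\|\X\|_F^2/\sqrt L$, the same order as your other quadratic terms. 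No decoupling or enlarged event is needed.

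Likewise, \eqref{assertion4} is unnecessary for the linear terms. Every $\bm{Q}$ that arises ($\W\W\t$, $\W\X\t$, $\W\X\t\X\W\t$) has rank at most $2r$. Using only \eqref{assertion1}--\eqref{assertion2} keeps the event at probability $1-\exp(-cnr)$, as the statement claims.
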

\begin{proof}
    See \cref{sec:geodesicconvexity}. 
\end{proof}
By taking $C_0 > 100C_1$ in \cref{thm:mainthm}, for all iterations $t$ larger than $C \log\big( \frac{\lambda/\sigma \sqrt{L}}{C_1 \kappa \sqrt{nr}}\big)$ it holds that every iterate lies strictly in the interior of the set $\{ \bm{W}: \|\sin\Theta(\W,\U)\|_F \leq \frac{C_0 \sigma \kappa \sqrt{nr}}{\lambda \sqrt{L}}\}$. In addition, \cref{alg} can be written as 
\begin{align*}
    \bm{\hat U}_{t+1} = {\sf SVD}_r\bigg( \bm{\hat U}_t - \eta \nabla_{{\sf Riemannian}} h( \bm{\hat U}_t) \bigg),
\end{align*}
so that ${\sf SVD}_r$ is a retraction of the iterate onto the Grassmann manifold.  Thus, \cref{lem:geodesicconvexity} implies that $h$ is smooth along the retraction curve \begin{align*}
    s \mapsto h\big( {\sf SVD}_r\big( \bm{\hat U}_t - s \nabla_{{\sf Riemannian}} h(\bm{\hat U}_t) \big), 
\end{align*}
and thus
\begin{align*}
    h(\bm{\hat U}_{t+1} ) &\leq h(\bm{\hat U}_t) + \langle \nabla_{{\sf Riemannian}} h(\bm{\hat U}_t), - \eta \nabla_{{\sf Riemannian}} h(\bm{\hat U}_t) \rangle + \frac{3 \lambda_{\max}^2}{2} \eta^2 \| \nabla_{{\sf Riemannian}}h(\bm{\hat U}_t) \|_F^2 \\
    &\leq h(\bm{\hat U}_t) - \eta \bigg(1 - \frac{3 \lambda_{\max}^2 \eta}{2} \bigg) \| \nabla_{{\sf Riemannian}} h(\bm{\hat U}_t) \|_F^2. \numberthis \label{contraction}
\end{align*}
The coefficient $\eta \big(1 - \frac{3 \lambda_{\max}^2 \eta}{2} \big)$ is strictly positive for $\eta$ chosen as in \cref{thm:mainthm}.  Letting $t_0$ be the smallest value $t$ such that $\|\sin\Theta(\bm{\hat U}_{t_0},\U)\|_F \leq \frac{2C_1 \sigma \kappa \sqrt{nr}}{\lambda \sqrt{L}}$ and applying the above result up to time $T$ yields
\begin{align*}
  \eta \bigg(1 - \frac{3 \lambda_{\max}^2 \eta}{2} \bigg)  \sum_{t=t_0}^{T-1}\| \nabla_{{\sf Riemannian}} h(\bm{\hat U}_t) \|_F^2 \leq h(\bm{\hat U}_{t_0}) - h(\bm{\hat U}_T) \leq h(\bm{\hat U}_{t_0}) < \infty.
\end{align*}
Letting $T \to \infty$ shows that $\| \nabla_{{\sf Riemannian}} h(\bm{\hat U}_t) \| \to 0$.  Therefore, there is at least one convergent subsequence by compactness of the Grassmann manifold.  Let this limit point be denoted as $\bm{\hat U}_{\infty}$.  Since the Riemannian gradient is continuous, we must have that $\nabla_{{\sf Riemannian}} h(\bm{\hat U}_{\infty}) = 0$.  Thus, every limit point of the iterates must be stationary.  

Next, note that $\frac{C_0 \sigma \kappa \sqrt{nr}}{\lambda \sqrt{L}} \ll 1$ under the conditions of \cref{thm:asymptotics}.  Thus, there is a geodesically convex ball $\mathcal{B}(\U)$ satisfying
\begin{align*}
    \big\{ \bm{W}: \|\sin\Theta(\W,\U) \|_F \leq \frac{2C_1  \sigma \kappa\sqrt{nr}}{\lambda \sqrt{L}} \big\} \subset \mathcal{B}(\U) \subset  \big\{ \bm{W}: \|\sin\Theta(\W,\U) \|_F \leq \frac{ C_0  \sigma \kappa\sqrt{nr}}{\lambda \sqrt{L}} \big\}.
\end{align*}
If necessary we may increase the value of $C_0$.  Then every geodesic lies within $\mathcal{B}(\U)$.  Hence, when the event in \cref{lem:geodesicconvexity} holds, the function $h$ is geodesically strongly convex and smooth within $\mathcal{B}(\U)$.  Thus, by Corollary 11.22 of \citet{boumal_introduction_2023}, every stationary point is a global minimizer within this set.

Now, suppose that $\bm{\hat U}_{\infty}$ is a local minimizer of $h$ within the ball of radius $\frac{2 C_1 \sigma \kappa \sqrt{nr}}{\lambda \sqrt{L}}$.  By Lemma 11.28 of \citet{boumal_introduction_2023}, it holds that
\begin{align*}
    h(\bm{\hat U}_t) - h(\bm{\hat U}_{\infty}) 
    &\leq \frac{2}{\lambda^2} \| \nabla_{{\sf Riemannian}} h(\bm{\hat U}_t )\|_F^2.
\end{align*}
Thus, plugging this into \eqref{contraction}, we have
\begin{align*}
    h(\bm{\hat U}_{t+1}) - h(\bm{\hat U}_{\infty}) &\leq h(\bm{\hat U}_t) - h(\bm{\hat U}_{\infty}) - \frac{\lambda^2 \eta}{2} \bigg( 1 - \frac{3 \lambda_{\max}^2 \eta}{2} \bigg) \big[h(\bm{\hat U}_t) - h(\bm{\hat U}_{\infty}) \big] \\
    &= \bigg( 1 - \frac{\lambda^2 \eta}{2} + \frac{3 \lambda_{\max}^2 \lambda^2 \eta^2}{4} \bigg) \big[h(\bm{\hat U}_t) - h(\bm{\hat U}_{\infty}) \big] \\
    &\leq \bigg( 1 - \frac{\lambda^2 \eta}{4}\bigg) \big[h(\bm{\hat U}_t) - h(\bm{\hat U}_{\infty}) \big],
\end{align*}
 where we recall that $\eta \leq c_{\eta}/\lambda_{\max}^2$, and provided that $c_{\eta}$ is sufficiently small.  
Next,  \cref{lem:geodesicconvexity} and Corollary 10.47 of \citet{boumal_introduction_2023} implies that $\nabla_{{\sf Riemannian}} h( \cdot)$ is $3 \lambda_{\max}^2$-Lipschitz, and hence
\begin{align*}
    \| \nabla_{{\sf Riemannian}}h(\bm{\hat U}_t) \|_F^2 &= \| \nabla_{{\sf Riemannian}} h(\bm{\hat U}_t) - \nabla_{{\sf Riemannian}} h(\bm{\hat U}_{\infty}) \|_F^2 \\
    &\leq 9 \lambda_{\max}^4 {\sf dist}(\bm{\hat U}_t, \bm{\hat U}_{\infty})^2. \\
    &\leq  \frac{18\lambda_{\max}^4  }{\lambda^2} (h (\bm{\hat U}_t) - h(\bm{\hat U}_{\infty})) \\
    &\leq 18\lambda_{\max}^2 \kappa^2 \bigg( 1 - \frac{\lambda^2 \eta}{4}\bigg)^{t- t_0} \frac{ C\kappa \sigma \sqrt{nr}}{\lambda \sqrt{L}},
\end{align*}
where the penultimate inequality is due to restricted strong convexity in \cref{lem:geodesicconvexity} (see the argument leading to Equation 11.19 in \citet{boumal_introduction_2023}).  
Thus, plugging in this bound to the expression \eqref{t4def}, we obtain
\begin{align*}
    \| T_4\|_F &\leq 18 L \lambda_{\max}^2 \kappa^2 \bigg( 1 - \frac{\lambda^2 \eta}{8}\bigg)^{t- t_0} \frac{ C\kappa \sigma \sqrt{nr}}{\lambda \sqrt{L}} \| \rhatinv \| \\
    &\lesssim \kappa^4 \bigg( 1 - \frac{\lambda^2 \eta}{4}\bigg)^{t- t_0}.
\end{align*}
Thus, when $t - t_0 \geq \log\bigg( \frac{C \sigma^2 \kappa^2 n r}{\lambda^2 L} \bigg( 1 - \frac{\lambda^2 \eta}{4}\bigg)\inv \bigg)$ (which is guaranteed by the conditions of \cref{thm:normality}) we have that
\begin{align*}
    \| T_4 \|_F\lesssim \frac{\sigma^2 \kappa^2 n r}{\lambda^2 L}. \numberthis\label{t4}
\end{align*}
\item 
\textbf{Step 4: Expanding the square and controlling small-order terms.}
From \eqref{initialexpansion},
\begin{align*}
       \| \pert \uhat \|_F^2 &= \| \bm{G}^{(1)} \|_F^2 + \| \bm{G}^{(2)} \|_F^2  + 2 \langle \bm{G}^{(1)}, \bm{G}^{(2)} \rangle + 2 \sum_{i=1}^{4} \langle \bm{G}^{(1)}, T_i \rangle + 2 \sum_{i=1}^{4} \langle \bm{G}^{(2)}, T_i \rangle  + \sum_{i=1}^{4} \| T_i \|_F^2.
   \end{align*}
We will bound all but the first term. 
\begin{itemize}
 \item \textbf{Bounding $\| \bm{G}^{(2)} \|_F^2$.}
 \cref{lem:goodevent} and \eqref{assertion3} show that  on $\mathcal{E}_{{\sf good}}$, 
\begin{align*}
    \| \bm{G}^{(2)} \|_F &\leq \sqrt{r} \| \R \| \| \sum_l \pert \nl \U \U\t \nl \U \| \\
    &\lesssim \frac{\sqrt{r}}{\lambda^2 L} \sigma^2 r \max\{ \sqrt{nL},n \} \\
    &\lesssim \frac{\sigma^4 r^3 n^2}{\lambda^4 L^2}. \numberthis \label{gtwobound}
\end{align*}
where we have used the fact that $L \lesssim n$.
 \item \textbf{Bounding $\langle \bm{G}^{(1)}, \bm{G}^{(2)} \rangle$}. Analyzing this quantity is somewhat involved and requires decoupling arguments, so its analysis is contained in the following lemma.
\begin{lemma} \label{lem:thirdorderguy}
Under the conditions of \cref{thm:asymptotics}, for all $u \leq L^{1/4}$, it holds that
\begin{align*}
\bigg|    \langle \bm{G}\one, \bm{G}\two \rangle \bigg| 
&\lesssim  u  \frac{\kappa \sigma^3 \sqrt{n} r^{5/2}}{\lambda^3 L} \max\{ 1, \sqrt{\frac{n}{L}} \} 
\end{align*}
with probability at least $1 - r^2 \exp( - c u^2) - r \exp( - c n)$. 
   \end{lemma}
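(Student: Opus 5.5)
We write $\bm{M} := \R$ (an $r\times r$ deterministic matrix with $\|\bm{M}\| \le 1/(L\lambda^2)$). Unpacking the definitions,
\begin{align*}
\langle \bm{G}\one, \bm{G}\two\rangle = \sum_{l,l'} {\sf Tr}\Big( \bm{M} \bm{R}^{(l)} \U\t \nl \per\pert \bm{N}^{(l')} \U\U\t \bm{N}^{(l')} \U \bm{M} \Big),
\end{align*}
which is a cubic polynomial in the noise. Setting $\bm{Z}^{(l)} := \pert \nl \U \in \mathbb{R}^{(n-r)\times r}$ and $\bm{W}^{(l)} := \U\t \nl \U$, each $\bm{Z}^{(l)}$ is independent of $\bm{W}^{(l)}$ when the noise is Gaussian, since the rotated entries are independent. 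For subgaussian Wigner noise they are only uncorrelated, and we would handle this by a leave-block-out argument or accept the same decoupling route. We then write
\begin{align*}
\langle \bm{G}\one, \bm{G}\two\rangle = {\sf Tr}\Big( \bm{M}\Big(\sum_l \bm{R}^{(l)} \bm{Z}^{(l)\top}\Big)\Big(\sum_{l'} \bm{Z}^{(l')}\bm{W}^{(l')}\Big)\bm{M}\Big).
\end{align*}
The plan is to split this into the diagonal part $l=l'$ and the off-diagonal part $l\neq l'$, and to bound each entrywise over the $r^2$ index pairs of the trace. This entrywise reduction is the source of the $r^2\exp(-cu^2)$ in the probability bound.

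\textbf{Off-diagonal part.} Condition on $\{\bm{W}^{(l)}\}$, which are $r\times r$ blocks with $\|\bm{W}^{(l)}\| \lesssim \sigma\sqrt r$ on a high-probability event; this is where the $r\exp(-cn)$ term enters. The sum over $l\neq l'$ is then a decoupled bilinear form in independent Gaussian-like matrices $\bm{Z}^{(l)}$. We apply the decoupling inequality of de la Peña as in the proof of \cref{lem:goodevent}, replacing one copy of $\bm{Z}$ by an independent copy $\tilde{\bm{Z}}$, and then condition on $\tilde{\bm{Z}}$. The remaining expression is linear subgaussian with variance proxy of order
\begin{align*}
\|\bm{M}\|^4 \, \sigma^2 \sum_l \big\|\bm{R}^{(l)}\big\|^2 \Big\|\sum_{l'} \tilde{\bm{Z}}^{(l')} \bm{W}^{(l')}\Big\|_F^2 .
\end{align*}
On a good event we have $\sum_l \|\bm{R}^{(l)}\|^2 \le L\kappa^2\lambda^2$ and $\big\|\sum_{l'}\tilde{\bm{Z}}^{(l')}\bm{W}^{(l')}\big\|_F \lesssim \sigma^2 r \sqrt{nL}$. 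This yields a deviation of order
\begin{align*}
u\,\frac{\kappa\sigma^3 r^{5/2}\sqrt{n}}{\lambda^3 L},
\end{align*}
after tracking factors of $r$ in the entrywise reduction. We take $u\le L^{1/4}$ so that the subexponential tail from Hanson--Wright-type bounds is still in its subgaussian regime.

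\textbf{Diagonal part.} The sum $\sum_l \bm{M}\bm{R}^{(l)} \bm{Z}^{(l)\top}\bm{Z}^{(l)}\bm{W}^{(l)}\bm{M}$ has mean zero, because $\bm{W}^{(l)}$ is mean zero and independent of $\bm{Z}^{(l)}$. Its summands are independent across $l$, with size $\|\bm{R}^{(l)}\|\,\sigma^2 n\,\sigma\sqrt r\,\|\bm{M}\|^2$ and $\psi$-norm of order $2/3$. A Bernstein-type inequality for sums of independent $\psi_{2/3}$ variables, or a truncation argument using $\|\bm{Z}^{(l)\top}\bm{Z}^{(l)}\| \lesssim \sigma^2 n$ with probability $1-\exp(-cn)$, gives
\begin{align*}
u\sqrt L\,\kappa\lambda\,\sigma^3 n\, r^{c}\,\frac{1}{L^2\lambda^4} = u\,\frac{\kappa\sigma^3 n\, r^{c}}{\lambda^3 L^{3/2}} = u\,\frac{\kappa\sigma^3\sqrt n\, r^{c}}{\lambda^3 L}\sqrt{\frac{n}{L}} .
\end{align*}
This is exactly the $\sqrt{n/L}$ branch of the maximum in the statement.

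\textbf{Main obstacle.} The hard step is handling the dependence when the noise is only subgaussian Wigner rather than GOE, since $\bm{Z}^{(l)}$ and $\bm{W}^{(l)}$ are then not independent, together with carrying out the decoupling cleanly for the third-order polynomial. For this, the first thing to try is the decoupling inequality for third-order U-statistics in the entries of $\nl$, in the same form as Theorem 3.4.1 of de la Peña, combined with an $\eps$-net over the $r$-dimensional directions.
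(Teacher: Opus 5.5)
Your proposal follows the paper's proof. You split into $l=l'$ and $l\neq l'$, bound the off-diagonal part by decoupling plus a conditional Hoeffding step whose variance proxy is controlled by the size of $\sum_{l}\pert\nl\U\U\t\nl\U$ (as in \eqref{assertion3}, which is where the $r\exp(-cn)$ term actually comes from; a uniform-in-$l$ bound $\|\U\t\nl\U\|\lesssim\sigma\sqrt r$ does not hold at that probability level, and you never use it), and you bound the diagonal part by a $\psi_{2/3}$ Bernstein inequality, whose $u^3$ tail term is what forces $u\le L^{1/4}$ (the off-diagonal part is purely subgaussian after conditioning).

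The GOE-versus-Wigner obstacle you flag disappears if, as the paper does, you decouple over whole layers rather than first conditioning on $\U\t\nl\U$. Treat the $l\neq l'$ sum as a U-statistic in the independent blocks $\{\nl\}_l$, replace the block in the linear factor by an independent copy, and condition on the original $\{\nl\}$; then independence between $\pert\nl\U$ and $\U\t\nl\U$ within a layer is never needed.
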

   \begin{proof}
       See \cref{sec:thirdorderguyproof}.
   \end{proof}
Thus, by taking $u \asymp L^{1/4}$, as long as $\log(r) \leq c \sqrt{L}$, we have, with probability at least $1 - \exp(- c \sqrt{L})$, that 
\begin{align*}
    \bigg| \langle \bm{G}^{(1)}, \bm{G}^{(2)} \rangle \bigg| \lesssim L^{1/4} \frac{\kappa \sigma^3 n r^{5/2}}{\lambda^3 L^{3/2}} \lesssim \frac{\sigma^3 \kappa n^{3/2} r^{5/2}}{\lambda^3 L^{3/2}}. \numberthis \label{g1g2}
\end{align*}
    \item \textbf{Bounding $\sum_{i} \langle \bm{G}^{(1)},T_i\rangle$}.  For terms $T_1, T_2$, and $T_4$, we apply Cauchy-Schwarz.  By \eqref{t1t2},\eqref{t2}, and \eqref{t4} we have that
    \begin{align*}
        | \langle \bm{G}^{(1)}, T_i \rangle | \lesssim \frac{\sigma \sqrt{nr}}{\lambda \sqrt{L}} \frac{\sigma^2 \kappa^4 n r^{3/2}}{\lambda^2 L } \asymp \frac{\sigma^3 \kappa^4 n^{3/2} r^2}{\lambda^3 L^{3/2}}.
    \end{align*}
    For the final term $\langle \bm{G}^{(1)}, T_3 \rangle$, we also require a more involved argument, which is given in the following result.
\begin{lemma} \label{lem:complicatedresidual2}
Under the conditions of \cref{thm:asymptotics}, with probability at least $1 - \exp( - c n)$ it holds that
    \begin{align*}
        \bigg| \langle \bm{G}^{(1)} , T_3 \rangle \bigg| \lesssim   \frac{\sigma^4 \kappa^2 n^2 r^{7/2}}{\lambda^4 L^{3/2}}
    \end{align*}
\end{lemma}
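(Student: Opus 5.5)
We need to bound $|\langle \bm{G}^{(1)}, T_3\rangle|$, where
\begin{align*}
T_3 = \sum_l \pert\nl \uhat \uhat\t \nl \uhat \O \O\t \rhatinv \O - \sum_l \pert \nl \U\U\t \nl \U\U\t \O\t \rhatinv \O .
\end{align*}
Cauchy--Schwarz with \cref{lem:complicatedresidual} gives only $\frac{\sigma\sqrt{nr}}{\lambda\sqrt L}\cdot\frac{\sigma^3 n^{3/2}r^2\kappa}{\lambda^3 L}$, which is of order $\frac{\sigma^4 n^2}{\lambda^4 L^{3/2}}$ up to $r,\kappa$. That is already the target order in $n,L,\lambda$. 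So the plan is a careful Cauchy--Schwarz with $\|\bm{G}^{(1)}\|_F$ controlled sharply, rather than a decoupling argument.

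\textbf{Step 1: bound $\|\bm{G}^{(1)}\|_F$.} We have $\|\bm{G}^{(1)}\|_F \le \sqrt{r}\,\|\sum_l \pert\nl\U\rl\|\,\|\R\|$. Apply \eqref{assertion4} (more precisely \eqref{assertion1} with $\bm{Q}=\U\U\t$, or directly the $S_L$ argument in \cref{lem:initialconcentration}, since $\nl\U\rl\U\t = \nl\sl$). Together with $\|\R\|\le (L\lambda^2)^{-1}$, this gives
\begin{align*}
\|\bm{G}^{(1)}\|_F \lesssim \sqrt r\,\frac{\sigma\sqrt{nL}\lambda_{\max}}{L\lambda^2} = \frac{\sigma\kappa\sqrt{nr}}{\lambda\sqrt L}
\end{align*}
with probability $1-\exp(-cn)$.

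\textbf{Step 2: bound $\|T_3\|_F$ with the correct $r,\kappa$ powers.} Decompose $T_3$ by writing $\uhat\uhat\t\nl\uhat\O = (\uhat\uhat\t-\U\U\t)\nl\uhat\O + \U\U\t\nl(\uhat\O-\U) + \U\U\t\nl\U$. This produces three differences, each of which is a quadratic form $\sum_l \pert\nl\bm{Q}\nl$ with $\bm{Q}$ of rank at most $2r$, multiplied by small factors.
\begin{itemize}
\item \emph{Terms involving $\uhat\uhat\t-\U\U\t$.} Use \eqref{assertion2}. Since $\pert$ kills the $\bm{Q}\t$ part only partially, the centering term $\frac{\sigma^2}{2}L(\bm{Q}\t+{\sf Tr}(\bm{Q})\bm{I})$ must be tracked. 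Here ${\sf Tr}(\uhat\uhat\t-\U\U\t)=0$, and $\pert(\uhat\uhat\t-\U\U\t)$ has Frobenius norm at most the $\sin\Theta$ error. The resulting contribution is at most $\sigma^2 nr\sqrt L\cdot\frac{\sigma\kappa\sqrt{nr}}{\lambda\sqrt L}\cdot\sqrt r\,\|\rhatinv\|$.
\item \emph{Terms with $\uhat\O-\U$ on the right.} Use $\|\sum_l\pert\nl\U\U\t\nl\|\lesssim\sigma^2 nr\sqrt L$ from \eqref{assertion2}; the centering is harmless because $\pert\U=0$. Multiply by $\|\uhat\O-\U\|_F$.
\end{itemize}
With $\|\rhatinv\|\lesssim (L\lambda^2)^{-1}$ from \eqref{rhatinvbd2}, each piece is $\lesssim \frac{\sigma^3\kappa n^{3/2}r^{2}}{\lambda^3 L}$, in agreement with \cref{lem:complicatedresidual}.

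\textbf{Step 3: combine.} Multiplying the two bounds gives
\begin{align*}
|\langle\bm{G}^{(1)},T_3\rangle| \lesssim \frac{\sigma^4\kappa^2 n^2 r^{5/2}}{\lambda^4 L^{3/2}},
\end{align*}
which is within the claimed $\frac{\sigma^4\kappa^2 n^2 r^{7/2}}{\lambda^4L^{3/2}}$; the slack absorbs any extra $r$ factors lost in converting spectral to Frobenius norms. All events used are contained in $\mathcal{E}_{\sf good}$ and the event of \cref{lem:rlsquared}, so the probability is at least $1-\exp(-cn)$.

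\textbf{Main obstacle.} The delicate point is Step 2: the centering terms in \eqref{assertion2}, in particular $\frac{\sigma^2}{2}\pert\bm{Q}\t$ and ${\sf Tr}(\bm{Q})\pert$, do not vanish automatically. Each must be checked to be either annihilated, for instance by ${\sf Tr}=0$ or by $\pert\U=0$, or of smaller order, for instance by $\pert\uhat\O$ being small. Otherwise one picks up a spurious $\sigma^2 L$ term that is not small relative to $\sigma^2 nr\sqrt L$ when multiplied out. If a leftover term such as $\frac{\sigma^2}{2}L\,\pert\uhat\O$ survives, I would pair it against $\bm{G}^{(1)}$ and bound it by $\sigma^2\sqrt r\,\|\pert\uhat\|_F\|\bm{G}^{(1)}\|_F/\lambda^2$, which is of lower order since $L\lesssim n$.
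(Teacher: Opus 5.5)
Your proposal is correct, and it is simpler than the paper's proof.

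\textbf{How the paper argues.} The paper splits $T_3=\sum_{i=1}^9 J_i$ and applies Cauchy--Schwarz to $J_1,J_2,J_3,J_5,J_6,J_9$. For $J_4,J_7,J_8$, which carry the dominant $\delta\cdot\sigma^2 n r^{3/2}/(\lambda^2\sqrt L)$ part of $\|T_3\|_F$ (with $\delta=\frac{\sigma\kappa\sqrt{nr}}{\lambda\sqrt L}$), it uses heavier machinery:
\begin{itemize}
\item a decoupling argument in the layer indices $l\neq l'$;
\item an $\varepsilon$-net over $\bm{Q}_1,\bm{Q}_2$;
\item a separate crude bound on the diagonal $l=l'$.
\end{itemize}
The union bound over the net for $\bm{Q}_2\in\mathbb{R}^{(n-r)\times r}$ forces $t\asymp\sqrt{nr}$. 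This exactly cancels what decoupling gains over using $\|\bm G^{(1)}\|_F\lesssim\delta$. Summing over $j\in[r]$ then costs a further factor $r$, which is where the $r^{7/2}$ in the statement comes from. In principle the paper's route could exploit the mean-zero coupling between the linear factor in $\bm G^{(1)}$ and the quadratic factor in $J_4$. That gain would only materialize if the net over $\bm{Q}_2$ were avoided, so as executed it buys nothing.

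\textbf{Your route.} You observe that plain Cauchy--Schwarz already meets the target:
$\|\bm G^{(1)}\|_F\,\|T_3\|_F\lesssim\delta\cdot\frac{\sigma^3\kappa n^{3/2}r^2}{\lambda^3L}=\frac{\sigma^4\kappa^2n^2r^{5/2}}{\lambda^4L^{3/2}}$.
This is right, and it improves the $r$-power by one. Your Step 2 is actually redundant. You can cite \cref{lem:complicatedresidual} directly; it holds on $\mathcal{E}_{\sf good}$ intersected with the events from \cref{thm:mainthm} and \cref{lem:rlsquared}, all of probability $1-\exp(-cn)$.

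\textbf{Minor fixes if you keep Step 2.} None of these changes the conclusion.
\begin{itemize}
\item For the piece with $\bm{Q}=\U\U\t$, the identity $\pert\U=0$ kills only the $\bm{Q}\t$ part of the centering in \eqref{assertion2}. The trace part $\frac{\sigma^2}{2}L\,{\sf Tr}(\U\U\t)\pert=\frac{\sigma^2}{2}Lr\,\pert$ survives. It contributes $\lesssim\frac{\sigma^2 r}{\lambda^2}\delta$, which is lower order since $\sqrt L\lesssim n$.
\item In that same piece, the concentration factor is $\sigma^2nr\sqrt L\,\|\U\U\t\|_F=\sigma^2nr^{3/2}\sqrt L$, not $\sigma^2nr\sqrt L$. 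With the corrected factor, the piece is exactly the $\frac{\sigma^3\kappa n^{3/2}r^2}{\lambda^3L}$ you state.
\item In the $\uhat\uhat\t-\U\U\t$ piece, the centering does leave $\frac{\sigma^2}{2}L\,\pert\uhat\O$. It contributes only $\lesssim\frac{\sigma^2}{\lambda^2}\delta$ to $\|T_3\|_F$, so it can be absorbed directly rather than paired separately with $\bm G^{(1)}$.
\end{itemize}
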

\begin{proof}
    See \cref{sec:complicatedresidual2proof}.
\end{proof}
Thus,  we have that with probability at least $1 - \exp(-cn),$
\begin{align*}
    \bigg| \langle \bm{G}^{(1)} , T_1 + T_2 + T_3 + T_4 \rangle \bigg| \lesssim  \frac{\sigma^3 n^{3/2} \kappa^4 r^2}{\lambda^3 L^{3/2}} + \frac{\sigma^4 \kappa^2 n^2 r^{7/2}}{\lambda^4 L^{3/2}}. \numberthis \label{g1resbound}
\end{align*}
    \item \textbf{Bounding $\sum_{i} \langle \bm{G}^{(2)},T_i\rangle$}.
 By the argument leading to \eqref{gtwobound} we have
   \begin{align*}
       \| \bm{G}^{(2)} \|_F \lesssim \frac{\sigma^2 r^{3/2} \sqrt{n}}{\lambda^2 \sqrt{L}} \max\{ 1, \sqrt{\frac{n}{L}}\} \lesssim \frac{\sigma^2 r^{3/2} n}{\lambda^2 L}.
   \end{align*}
   Therefore, by \eqref{t1t2}, \eqref{t2}, \cref{lem:complicatedresidual}, and \eqref{t4},
   \begin{align*}
       \big|\sum_{i} \langle \bm{G}^{(2)},T_i\rangle\big| &\lesssim \frac{\sigma^2 r^{3/2}n}{\lambda^2 L} \bigg( \frac{\sigma^2 \kappa^2 r^{3/2} n}{L \lambda^2} +\frac{\sigma^2 \kappa^4 n \sqrt{r}}{\lambda^2 L} +  \frac{\sigma^3 n^{3/2} r^2 \kappa}{\lambda^3L}  +  \frac{\sigma^2 \kappa^2 nr}{\lambda^2 L} \bigg) \\
       &\asymp \frac{\sigma^4 r^3 n^2 \kappa^2}{\lambda^4 L^2} + \frac{\sigma^4 \kappa^4 r^2 n^2}{\lambda^4 L} + \frac{\sigma^5 r^{7/2} \kappa n^{5/2}}{\lambda^5 L^2} + \frac{\sigma^4 \kappa^2 r^{5/2} n^2}{\lambda^4 L^2} \\
       &\lesssim \frac{\sigma^4 r^3 \kappa^4 n^2}{\lambda^4 L^2} + \frac{\sigma^5 r^{7/2} \kappa n^{5/2}}{\lambda^5 L^2}. \numberthis \label{G2Tibound}
   \end{align*}
    \item \textbf{Bounding $\sum_{i=1}^{4} \| T_i \|_F^2$}.  By \eqref{t1t2}, \eqref{t2}, \cref{lem:complicatedresidual}, and \eqref{t4}, we have that 
    \begin{align*}
        \sum_{i=1}^{4} \|T_i\|_F^2 &\lesssim \bigg(\frac{\sigma^2 \kappa^2 r^{3/2} n}{L \lambda^2} \bigg)^2 + \bigg(\frac{\sigma^2 \kappa^4 n \sqrt{r}}{\lambda^2 L} \bigg)^2 + \bigg( \frac{\sigma^3 n^{3/2} r^2 \kappa}{\lambda^3L} \bigg)^2 + \bigg( \frac{\sigma^2 \kappa^2 nr}{\lambda^2 L} \bigg)^2 \\
        &\lesssim  \frac{\sigma^4 \kappa^8 r^3 n^2}{\lambda^4 L^2}  + \frac{\sigma^6 n^3 r^4 \kappa^2}{\lambda^6 L^2}.  \numberthis \label{tisquared}
    \end{align*}
\end{itemize}

Thus, combining \eqref{gtwobound}, \eqref{g1g2},  \eqref{g1resbound}, \eqref{G2Tibound}, \eqref{tisquared}, we have that
\begin{align*}
    \| \pert\uhat \|_F^2 &= \| \bm{G}^{(1)} \|_F^2
    + {\sf Res},
   \end{align*}
   where
   \begin{align*}
       {\sf Res} &\lesssim \frac{\sigma^4 r^3 n^2}{\lambda^4 L^2} + \frac{\sigma^3 \kappa n^{3/2} r^{5/2}}{\lambda^3 L^{3/2}} + \frac{\sigma^3 n^{3/2}\kappa^4 r^2}{\lambda^3 L^{3/2}} + \frac{\sigma^4 \kappa^2 n^2 r^{7/2}}{\lambda^4 L^{3/2}} \\
       &\quad + \frac{\sigma^4 r^3 \kappa^4 n^2}{\lambda^4 L^2} + \frac{\sigma^5 r^{7/2} \kappa n^{5/2}}{\lambda^5 L^2} + \frac{\sigma^4 \kappa^8 r^3 n^2}{\lambda^4 L^2} + \frac{\sigma^6 n^3 r^4 \kappa^2}{\lambda^6 L^2}.
\end{align*}
Recall that in \cref{thm:asymptotics} we assume that 
\begin{align*}
    \lambda/\sigma \gg \frac{n r^{2} \kappa^4}{\sqrt{L}}.
\end{align*}
Thus, term-by-term comparison together with the assumption $L \lesssim n$ shows that 
\begin{align*}
    {\sf Res} \lesssim \frac{\sigma^3 \kappa^4 r^{5/2} n^{3/2}}{\lambda^3 L^{3/2}} + \frac{\sigma^4 \kappa^2 n^2 r^{7/2}}{\lambda^4 L^{3/2}}.
\end{align*}
\end{itemize}
This completes the proof. 
   \end{proof}
 
\subsubsection{Proof of \cref{lem:rlsquared}} \label{sec:rlsquaredproof}

\begin{proof}[Proof of \cref{lem:rlsquared}]
    We have that
    \begin{align*}
          \bigg\| \sum_{l} \bigg[ (\bm{\hat R}^{(l)})^2 - \mathcal{O}\t (\bm{R}^{(l)})^2 \mathcal{O} \bigg] \bigg\|_F &= \bigg\| \sum_{l} \bigg[ \bm{\hat U}\t ( \sl + \nl ) \uhat \uhat\t  ( \sl + \nl ) \uhat  - \mathcal{O}\t \U\t (\sl)^2  \U  \mathcal{O} \bigg]\bigg\|_F \\
          &\leq  \bigg\| \sum_{l} \bigg[ \bm{\hat U}\t \sl\uhat \uhat\t  \sl  \uhat  - \mathcal{O}\t \U\t (\sl)^2  \U  \mathcal{O} \bigg]\bigg\|_F \\
          &\quad + \bigg\| \sum_l \uhat\t \nl \uhat \uhat\t \nl \uhat \bigg\|_F +  2\bigg\| \sum_l \uhat\t \nl \uhat \uhat\t \sl \uhat \bigg\|_F \\
            &\leq 
            \bigg\| \sum_{l} \bigg[ \O \U\t  \sl \U \U\t   \sl  \big( \U \O - \uhat \big)  \bigg]\bigg\|_F \\
            &\quad +  \bigg\| \sum_{l} \bigg[ \O \U\t  \sl \big( \U \U\t - \uhat \uhat\t \big)    \sl  \uhat \bigg]\bigg\|_F  \\
            &\quad + \bigg\| \sum_{l} \bigg[ ( \U \O - \uhat )\t  \sl\uhat \uhat\t  \sl  \uhat   \bigg]\bigg\|_F \\
          &\quad + \bigg\| \sum_l \uhat\t \nl \uhat \uhat\t \nl \uhat \bigg\|_F +  2\bigg\| \sum_l \uhat\t \nl \uhat \uhat\t \sl \uhat \bigg\|_F.
    \end{align*}
    It is straightforward to show that since $\|\pert \uhat \|_F \lesssim \err$ with probability at least $1 - \exp(  - c n)$ when $t$ is chosen as in the lemma then it holds that 
    \begin{align*}
         \bigg\| \sum_{l} \bigg[ \O \U\t  \sl \U \U\t   \sl  \big( \U \O - \uhat \big)  \bigg]\bigg\|_F &\lesssim  L \lambda_{\max}^2 \err \\
          \bigg\| \sum_{l} \bigg[ \O \U\t  \sl \big( \U \U\t - \uhat \uhat\t \big)    \sl  \uhat \bigg]\bigg\|_F &\lesssim  L \lambda_{\max}^2 \err \\
          \bigg\| \sum_{l} \bigg[ ( \U \O - \uhat )\t  \sl\uhat \uhat\t  \sl  \uhat   \bigg]\bigg\|_F  &\lesssim L \lambda_{\max}^2 \err.
    \end{align*}
    Therefore, we focus on the remaining two terms containing $\nl$.  
    \begin{itemize}
        \item \textbf{The Term $\| \sum_l \uhat\t \nl \uhat \uhat\t \nl \uhat \|_F$}.  We note that
        \begin{align*}
            \| \sum_l &\uhat\t \nl \uhat \uhat\t \nl \uhat \|_F \\
            &=   \| \sum_l \uhat\t \bigg( \per \pert + \U \U\t \bigg)  \nl \uhat \uhat\t \nl \bigg( \per \pert + \U \U\t \bigg) \uhat \|_F \\
            &\leq  \| \sum_l \uhat\t  \per \pert   \nl  \uhat \uhat\t  \nl \per \pert \uhat \|_F \\ 
            &\quad + 2\| \sum_l \uhat\t  \per \pert   \nl\bigg( \uhat \uhat\t - \U\U\t \bigg) \nl\U \U\t  \uhat \|_F \\
            &\quad +2  \| \sum_l \uhat\t  \per \pert   \nl \U\U\t  \nl\U \U\t  \uhat \|_F \\
            &\quad +  \| \sum_l \uhat\t  \U \U\t   \nl  \bigg( \uhat \uhat\t  - \U\U\t \bigg) \nl \U \U\t  \uhat \|_F \\ 
            &\quad +  \| \sum_l \uhat\t  \U \U\t   \nl  \U\U\t \nl \U \U\t  \uhat \|_F \\
            &=: \sum_{i=1}^{5} M_i.
        \end{align*}
        We analyze each in turn. 
        \begin{itemize}
            \item \textbf{The Term $M_1$}. By \eqref{assertion2} we have that on the event $\mathcal{E}_{{\sf good}}$,
            \begin{align*}
                T_1 &\leq \| \uhat\t \per \|_F^2 \bigg\| \sum_l \pert \nl  \uhat \uhat\t  \nl \per \bigg\| \\
                &\leq \| \uhat\t \per \|_F^2 \bigg\|\pert \sum_l \bigg[ \nl  \uhat \uhat\t  \nl - \frac{\sigma^2  }{2} \bigg( \uhat \uhat\t + r\bm{I} \bigg)  \bigg]  \per \bigg\| \\ 
                &\quad + \| \uhat\t \per \|_F^4 \frac{\sigma^2 L r}{2} \\
                &\lesssim \bigg( \err \bigg)^2 \sigma^2 nr \sqrt{L} \| \uhat \uhat\t \|_F + \bigg( \err \bigg)^2 L \sigma^2  \bigg\| \pert \uhat \uhat\t\per + r \bm{I}_{n-r} \bigg\| \\
                &\lesssim \bigg( \err \bigg)^2 \sigma^2 nr^{3/2} \sqrt{L} + \bigg( \err \bigg)^2 \sigma^2 L r.
            \end{align*}
            \item \textbf{The Term $M_2$.} We bound similarly via
            \begin{align*}
                M_2 &\lesssim \| \uhat\t\per\|_F \bigg\| \sum_l \pert \nl \bigg(\uhat \uhat\t - \U \U\t \bigg) \nl \U \bigg\| \\
                &\lesssim \| \uhat \t \per \|_F \bigg\| \pert \bigg( \sum_l \nl \bigg[ \uhat\uhat\t - \U\U\t \bigg] \nl - \frac{\sigma^2}{2} \bigg[  \uhat\uhat\t - \U\U\t \bigg] \U \bigg\| + \| \uhat\t \per \|_F \frac{\sigma^2 L}{2} \bigg\| \pert \uhat \uhat\t \U \bigg\| \\
                &\lesssim \err \sigma^2 nr \sqrt{L} \| \uhat \uhat\t - \U\U\t \|_F + \bigg( \err \bigg)^2 \sigma^2 L \\
                &\lesssim \bigg( \err \bigg)^2 \sigma^2 n r \sqrt{L} + \bigg( \err \bigg)^2 \sigma^2 L.
            \end{align*}
            \item \textbf{The Term $M_3$.} We again proceed via a similar argument:
            \begin{align*}
                M_3 &\lesssim \| \uhat\t \per \|_F \bigg\|  \pert \sum_l \bigg( \nl \U\U\t \nl - \frac{\sigma^2}{2} \bigg[ \U\U\t +r \bm{I} \bigg] \bigg) \per \bigg\| + \| \uhat\t \per \|_F \frac{\sigma^2 L r}{2}  \\
                &\lesssim \err \sigma^2 nr \sqrt{L} \| \U\U\t \|_F + \err \sigma^2 L r \\
                &\lesssim \err \sigma^2 n r^{3/2} \sqrt{L} + \err \sigma^2 Lr.
            \end{align*}
            \item \textbf{The Term $M_4$.} We simply have that
            \begin{align*}
                M_4 &\lesssim  \bigg\| \U\t \sum_l \bigg( \nl \bigg( \uhat \uhat\t -\U\U\t \bigg) \nl - \frac{\sigma^2}{2} \bigg[ \uhat\uhat\t - \U\U\t \bigg] \bigg] \bigg) \U \bigg\|_F +  \frac{\sigma^2 L}{2} \| \uhat\uhat\t - \U\U\t \|_F \\
                &\leq \sqrt{r} \bigg\| \U\t \sum_l \bigg( \nl \bigg( \uhat \uhat\t -\U\U\t \bigg) \nl - \frac{\sigma^2}{2} \bigg[ \uhat\uhat\t - \U\U\t \bigg] \bigg] \bigg) \U \bigg\| + \frac{\sigma^2 L}{2} \| \uhat\uhat\t - \U\U\t \|_F
                \\
                &\lesssim \sigma^2 n r^{3/2} \sqrt{L} \| \uhat \uhat\t - \U\U\t \|_F + \sigma^2 L     \| \uhat \uhat\t - \U\U\t \|_F        \\
                &\lesssim \err \sigma^2 n r^{3/2} \sqrt{L} + \err \sigma^2 L                \end{align*}
                \item \textbf{The Term $M_5$}.  First, we note that
                \begin{align*}
                    M_5 &\leq  \sqrt{r} \bigg\| \sum_l \U\t \nl \U \U\t \nl \U \bigg\|.
                \end{align*}
                Define the matrix
                \begin{align*}
                    \bm{Z} := \big[ \U\t \bm{N}^{(1)} \U, \cdots , \U\t \bm{N}^{(L)} \U \big] \in \mathbb{R}^{r \times rL}.
                \end{align*}
                Then it holds that
                \begin{align*}
                    \bigg\| \sum_l \U\t \nl \U \U\t \nl \U \bigg\| &= \big\| \bm{ZZ}\t \big\| = \| \bm{Z} \|^2.
                \end{align*}
                A straightforward $\eps$-net argument shows that $\|\bm{Z}\| \lesssim \sigma \sqrt{Lr} + \sigma \sqrt{nr}$ with probability at least $1 - \exp( - nr)$.  Consequently, $\|\bm{ZZ}\t \|^2 \lesssim \sigma^2 Lr + \sigma^2 n r + \sigma^2 r \sqrt{nL}$. 
        \end{itemize}
    Combining these bounds shows that with probability at least $1 - \exp( - c n)$,
    \begin{align*}
        \| \sum_l \uhat\t \nl \uhat \uhat\t \nl \uhat \|_F &\lesssim 
        \err \sigma^2 n r^{3/2} \sqrt{L} + \err \sigma^2 Lr + \sigma^2 L r^{3/2}+ \sigma^2 n r^{3/2} + \sigma^2 r^{3/2} \sqrt{nL} \\
        &\lesssim \err \sigma^2 n r^{3/2} \sqrt{L} + \sigma^2 L r^{3/2} + \sigma^2 n r^{3/2} + \sigma^2 r^{3/2} \sqrt{nL}.  
    \end{align*}
    \item \textbf{The Term $\| \sum_l \uhat\t \nl \uhat \uhat\t \sl \uhat \|_F$}.  We bound using \eqref{assertion1} by
    \begin{align*}
        \| \sum_l \uhat\t \nl \uhat \uhat\t \sl \uhat \|_F &\leq \big\| \sum_l \uhat\t \nl \big( \uhat\uhat\t - \U\U\t \big) \sl \uhat \big\|_F \\
        &\quad + \big\| \sum_l \uhat\t \nl \sl \uhat \big\|_F \\
        &\lesssim C \sigma r \sqrt{n}\lambda_{\max} \err + C \sigma \sqrt{nLr} \lambda_{\max}
    \end{align*}.
    \end{itemize}
Combining all these inequalities, we end up with
\begin{align*}
    \bigg\| \sum_l \bigg[ \big( \bm{\hat R}^{(l)} \big)^2 - \O\t \big( \rl \big)^2\O \bigg] \bigg\|_F &\lesssim L \lambda_{\max}^2 \err + \err \sigma^2 n r^{3/2} \sqrt{L} + \sigma^2 L r^{3/2} \\
    &\quad + \sigma^2 n r^{3/2} + \sigma^2 r^{3/2} \sqrt{nL} + \sigma r \sqrt{n} \lambda_{\max} \err + \sigma \sqrt{nLr}\lambda_{\max} 
\end{align*}
with probability at least $1 - \exp( - c n)$.  

To prove the second bound, we note that
 $\bm{\mathcal{RR}}\t$ has smallest eigenvalue at least $L \lambda^2$. Thus, under the assumption that $\lambda/\sigma \gg \frac{\kappa^3 r^2 n}{\sqrt{L}}$, it holds that
\begin{align*}
   \bigg\| \sum_l \bigg[ \big( \bm{\hat R}^{(l)} \big)^2 - \O\t \big( \rl \big)^2\O \bigg] \bigg\|_F &\ll L \lambda^2,
\end{align*}
and hence Weyl's inequality implies that \begin{align*}\lambda_{\min} \bigg( \sum_l \big( \bm{\hat R}^{(l)} \big)^2\bigg) \gtrsim L \lambda^2.
\end{align*}
Thus, we have that 
\begin{align*}
    \bigg\| \bigg( \sum_l \big( \bm{\hat R}^{(l)} \big)^2 \bigg)\inv - \O\t \bigg( \sum_l \big( \rl \big)^2 \bigg)\inv \O \bigg\|_F &\lesssim \frac{1}{L^2 \lambda^4}   \bigg\| \sum_l \bigg[ \big( \bm{\hat R}^{(l)} \big)^2 - \O\t \big( \rl \big)^2\O \bigg] \bigg\|_F \\
    &\lesssim \frac{1}{\lambda^4 L^2} \bigg\{ L \lambda_{\max}^2 \err + \err \sigma^2 n r^{3/2} \sqrt{L} + \sigma^2 L r^{3/2} \\
    &\quad + \sigma^2 n r^{3/2} + \sigma^2 r^{3/2} \sqrt{nL} + \sigma r \sqrt{n} \lambda_{\max} \err + \sigma \sqrt{nLr}\lambda_{\max} \bigg\} \\
    &\asymp \err \frac{\kappa^2}{\lambda^2 L} + \err \frac{\sigma^2 n r^{3/2}}{\lambda^4 L^{3/2}} + \frac{\sigma^2 r^{3/2}}{\lambda^4 L^{3/2}}\\
    &\quad + \frac{\sigma^2 n r^{3/2}}{\lambda^4 L^2} + \frac{\sigma^2 r^{3/2} \sqrt{n}}{\lambda^4 L^{3/2}} + \err \frac{\sigma r \sqrt{n} \kappa}{\lambda^3 L^2} + \frac{\sigma \sqrt{nr} \kappa}{L^{3/2} \lambda^3}. 
\end{align*}
The final bound can be directly verified from the assumptions $L \lesssim n$ and $\lambda/\sigma \gg n r^2 \kappa^3/\sqrt{L}$.
\end{proof}

\subsubsection{Proof of \cref{lem:complicatedresidual}}

\label{sec:complicatedresidualproof}

\begin{proof}
We start with the full decomposition
   \begin{align*}
   T_3 &=  \sum_l \bm{U}_{\perp}\t \bm{N}^{(l)} \bm{\hat U} \bm{\hat U}\t \bm{N}^{(l)} \bm{\hat U} \mathcal{O}  \O\t \rhatinv\O -   \sum_l \bm{U}_{\perp}\t \bm{N}^{(l)} \U\U\t \bm{N}^{(l)} \U  \O\t \rhatinv\O\\
       &= \sum_l \bm{U}_{\perp}\t \bm{N}^{(l)} \bm{U}_{\perp} \bm{U}_{\perp}\t \bm{\hat U} \bm{\hat U}\t \per \bm{N}^{(l)} \bm{U}_{\perp} \bm{U}_{\perp}\t \bm{\hat U} \mathcal{O}  \O\t \rhatinv\O \\
       &\quad+ \sum_l \bm{U}_{\perp}\t \bm{N}^{(l)} \bm{U}_{\perp} \bm{U}_{\perp}\t \bm{\hat U} \bm{\hat U}\t \U\U\t \bm{N}^{(l)} \bm{U}_{\perp} \bm{U}_{\perp}\t \bm{\hat U} \mathcal{O}  \O\t \rhatinv\O \\
      &\quad + \sum_l \bm{U}_{\perp}\t \bm{N}^{(l)} \U \U\t \bm{\hat U} \bm{\hat U}\t \pert\nl \bm{U}_{\perp} \bm{U}_{\perp}\t \bm{\hat U} \mathcal{O}  \O\t \rhatinv\O \\
      &\quad + \sum_l \bm{U}_{\perp}\t \bm{N}^{(l)} \U \U\t \bm{\hat U} \bm{\hat U}\t \U\U\t \bm{N}^{(l)} \bm{U}_{\perp} \bm{U}_{\perp}\t \bm{\hat U} \mathcal{O}  \O\t \rhatinv\O \\
       &\quad+ \sum_l \bm{U}_{\perp}\t \bm{N}^{(l)} \per \bm{\hat U} \bm{\hat U}\t \per\per \bm{N}^{(l)} \U\U\t \bm{\hat U} \mathcal{O}  \O\t \rhatinv\O \\
           &\quad+  \sum_l \bm{U}_{\perp}\t \bm{N}^{(l)} \per \bm{\hat U} \bm{\hat U}\t \U\U\t \bm{N}^{(l)} \U\U\t \bm{\hat U} \mathcal{O}  \O\t \rhatinv\O \\
       &\quad+  \sum_l \bm{U}_{\perp}\t \bm{N}^{(l)} \U\U\t \bm{\hat U} \bm{\hat U}\t \per\per \bm{N}^{(l)} \U\U\t \bm{\hat U} \mathcal{O}  \O\t \rhatinv\O \\
         &\quad+  \sum_l \bm{U}_{\perp}\t \bm{N}^{(l)} \U\U\t\bigg(  \bm{\hat U} \bm{\hat U}\t - \U\U\t \bigg) \U\U\t \bm{N}^{(l)} \U\U\t \bm{\hat U} \mathcal{O}  \O\t \rhatinv\O \\
          &\quad+  \sum_l \bm{U}_{\perp}\t \bm{N}^{(l)} \U\U\t \bm{N}^{(l)} \U\U\t \big( \bm{\hat U} \mathcal{O} - \U)  \O\t \rhatinv\O \\
           &\quad+  \sum_l \bm{U}_{\perp}\t \bm{N}^{(l)} \U\U\t \bm{N}^{(l)} \U  \O\t \rhatinv\O \\
         &=: \sum_{i=1}^{9} J_i. \numberthis \label{decomp}
   \end{align*}
   We will bound each of these terms separately and collect the bounds at the end of the proof.  However, first recall that on the event $\mathcal{E}_{{\sf good}}$  by \eqref{assertion2} it holds that 
\begin{align*}
    \bigg\| \sum_{l} \bm{N}^{(l)} \bm{Q} \bm{N}^{(l)} - \sigma^2  L \big( \bm{Q}\t + {\sf Tr}(\bm{Q}) \bm{I} \big) \bigg\| \lesssim \sigma^2 nr \sqrt{L} \| \bm{Q} \|_F
\end{align*}
for all matrices $\bm{Q}$ of rank at most $2r$.  We will apply this result repeatedly without reference. For various choices of the matrix $\bm{Q}$ above, we refer to $\sigma^2 L \big( \bm{Q}\t + {\sf Tr}(\bm{Q}) \bm{I} \big)$ as the \emph{centering term}.  In the subsequent analysis, either the centering term will vanish, or we will include it directly.  
\begin{itemize}
    \item \textbf{The Term $J_1$}: By adding and subtracting the centering term, we have that 
    \begin{align*}
        \| J_1 \|_F
        &= \bigg\| \sum_l \bm{U}_{\perp}\t \nl \per \pert  \uhat\uhat\t \pert\nl \per\pert \uhat \O \O\t \rhatinv\O \bigg\|_F \\
        &\leq \bigg\| \bm{U}_{\perp}\t\bigg(  \sum_l \nl \per\pert  \uhat\uhat\t \per \pert \nl  - \sigma^2 L \big[\per\pert \uhat\uhat\t \per\pert  \\
        &\qquad \qquad \qquad \qquad \qquad + {\sf Tr} \big[ \per \pert \uhat \uhat\t \per \pert \big] \bm{I} \big]
        \bigg) \per \pert  \uhat \O \O\t \rhatinv\O
        \bigg\|_F \\
        &\quad + \sigma^2 L \bigg\|  \bm{U}_{\perp}\t\bigg( \per\pert \uhat\uhat\t \per \pert + {\sf Tr} \big[ \per \pert \uhat \uhat\t \per\pert  \big] \bm{I} \bigg) \per \pert \uhat \O \O\t \rhatinv\O \bigg\|_F\\
        &\leq  \bigg\| \bm{U}_{\perp}\t\bigg(  \sum_l \nl \per\pert  \uhat\uhat\t \per \pert \nl  - \sigma^2 L \big(\per\pert \uhat\uhat\t \per\pert  \\
        &\qquad \qquad \qquad \qquad \qquad + {\sf Tr} \big[ \per \pert \uhat \uhat\t \per \pert \big]  \big)
        \bigg) \per \bigg\| \big\| \pert  \uhat \big\|_F \| \O\t \rhatinv\O \| \\
          &\quad + \sigma^2 L \bigg\|  \bm{U}_{\perp}\t\bigg( \per\pert \uhat\uhat\t \per \pert + {\sf Tr} \big[ \per \pert \uhat \uhat\t \per\pert  \big] \bm{I} \bigg) \per \bigg\| \big\| \pert \uhat \big\|_F \big\| \O\t \rhatinv\O \big\|\\
        &\lesssim  \assertiontwo \| \per \pert \uhat \uhat\t \per \pert \|_F  \big\| \pert  \uhat \big\|_F \| \O\t \rhatinv\O \|
        \\
        &\quad + \sigma^2 L \bigg( \| \pert\uhat\uhat\t \per \|_F + {\sf Tr} \big[ \pert \uhat \uhat\t \per \big] \bigg) \| \pert\uhat \|_F \| \O\t \rhatinv\O \| \\
        &\lesssim \assertiontwo \| \pert\uhat\|_F^3 \| \O\t \rhatinv\O \|  + \sigma^2 L \| \pert\uhat\|_F^3 \| \O\t \rhatinv\O \| \\
        &\lesssim \frac{\assertiontwo }{L\lambda^2} \bigg( \err \bigg)^3 + \frac{\sigma^2}{\lambda^2} \bigg( \err \bigg)^3
        \numberthis \label{J1}
    \end{align*}
    \item \textbf{The Term $J_2$}: Since $\pert \U = 0$, it holds that ${\sf Tr}( \per \pert \uhat \uhat\t \U \U\t ) = {\sf Tr}( \U\t \per \pert \uhat \uhat\t \U) = 0$.  Then the centering term vanishes and hence
    \begin{align*}
    \| J_2 \| &= \bigg\| \sum_{l} \pert \nl \per\pert \uhat\uhat\t \U\U\t \nl \per\pert \uhat \O \O\t \rhatinv\O \bigg\|_F \\
        &= \bigg\| \pert \bigg( \sum_{l}  \nl \per\pert \uhat\uhat\t \U\U\t \nl \\
        &\qquad - \sigma^2 L \big(\U\U\t  \uhat \uhat\t \per \pert + {\sf Tr} \big[ \per \per \uhat \uhat\t \U\U\t \big] \bm{I} \big) \bigg) \per\pert \uhat \O \O\t \rhatinv\O \bigg\|_F \\
        &\leq \bigg\| \pert \bigg( \sum_{l}  \nl \per\pert \uhat\uhat\t \U\U\t \nl \\
        &\qquad - \sigma^2 L \big( \U\U\t  \uhat \uhat\t \per \pert + {\sf Tr} \big[ \per \per \uhat \uhat\t \U\U\t \big] \bm{I} \big) \bigg) \per \bigg\| \| \pert \uhat\|_F \| \O\t \rhatinv\O \| \\
        &\lesssim \assertiontwo \| \per \pert \uhat \uhat\t \U \U\t \|_F \| \pert\uhat\|_F \| \O\t \rhatinv\O \| \\
&\lesssim \frac{\assertiontwo}{L \lambda^2} \bigg( \err \bigg)^2. \numberthis \label{J2}
    \end{align*}
    \item \textbf{The Term $J_3$}. Similar to the prior argument, the centering term vanishes, and hence
    \begin{align*}
         \| J_3\|_F &= \bigg\| \sum_l \bm{U}_{\perp}\t \bm{N}^{(l)} \U \U\t \bm{\hat U} \bm{\hat U}\t \per\pert \bm{N}^{(l)} \bm{U}_{\perp} \bm{U}_{\perp}\t \bm{\hat U} \mathcal{O}  \O\t \rhatinv\O \bigg\|_F \\
         &= \bigg\| \bm{U}_{\perp}\t\bigg( \sum_l  \bm{N}^{(l)} \U \U\t \bm{\hat U} \bm{\hat U}\t \per \pert \bm{N}^{(l)} \\
         &\quad - \sigma^2 L \big( \per\pert \bm{\hat U} \bm{\hat U}\t\U\U\t + {\sf Tr} \big[ \U \U\t \bm{\hat U} \bm{\hat U}\t \per\pert\big] \bm{I}\big) \bigg)
         \bm{U}_{\perp} \bm{U}_{\perp}\t \bm{\hat U} \mathcal{O}  \O\t \rhatinv\O \bigg\|_F\\
      &\lesssim \frac{\assertiontwo}{\lambda^2 L} \| \U \U\t \uhat \uhat\t \per \pert \|_F \| \pert \uhat \|_F \\
      &\lesssim  \frac{ \assertiontwo}{\lambda^2 L} \bigg( \err \bigg)^2. \numberthis\label{j3}
    \end{align*}
    \item \textbf{The Term $J_4$}.  We note that only the trace part of the centering term does not vanish, which gives 
    \begin{align*}
         \|  J_4 \|_F &= \bigg\| \sum_l \bm{U}_{\perp}\t \bm{N}^{(l)} \U \U\t \bm{\hat U} \bm{\hat U}\t \U\U\t \bm{N}^{(l)} \bm{U}_{\perp} \bm{U}_{\perp}\t \bm{\hat U} \mathcal{O}  \O\t \rhatinv\O \bigg\|_F \\ 
         &\leq \bigg\| \bm{U}_{\perp}\t\sum_l  \bigg( \bm{N}^{(l)} \U \U\t \bm{\hat U} \bm{\hat U}\t \U\U\t \bm{N}^{(l)}  \\
         &\qquad - \sigma^2 L \big( \U \U\t \bm{\hat U} \bm{\hat U}\t \U\U\t + {\sf Tr}\big[\bm{\U \U\t \uhat \uhat\t \U \U\t }\big] \bm{I} \big) \bigg) \bm{U}_{\perp} \bm{U}_{\perp}\t \bm{\hat U} \mathcal{O}  \O\t \rhatinv\O \bigg\|_F \\
         &\quad + \sigma^2 L {\sf Tr}\big[\bm{\U \U\t \uhat \uhat\t \U \U\t }\big]   \bigg\| \bm{U}_{\perp}\t \bm{\hat U} \mathcal{O}  \O\t \rhatinv\O \bigg\|_F \\
         &\lesssim \frac{\assertiontwo}{\lambda^2 L} \| \U\U\t \uhat \uhat\t \U \U\t \|_F \| \pert \uhat \|_F + \frac{\sigma^2 L r}{\lambda^2 L} \| \pert \uhat \|_F \\
         &\lesssim \frac{\assertiontwo \sqrt{r} }{\lambda^2 L} \err + \frac{\sigma^2 r}{\lambda^2} \err, \numberthis\label{j4}
       \end{align*}
       where we have observed that $\|\U\U\t \uhat\uhat\t \U\U\t \|_F \leq \sqrt{r}$ and ${\sf Tr}( \U\t \uhat \uhat\t \U ) = \| \uhat\t \U \|_F^2 \leq r$.  
    \item \textbf{The Term $J_5$}.  As in previous cases, the centering term vanishes (due to the pre and post multiplication by $\pert$ and $\U$ respectively).  Consequently, 
    \begin{align*}
      \|  J_5\|_F &= \bigg\|  \sum_l \bm{U}_{\perp}\t \bm{N}^{(l)} \per \pert \bm{\hat U} \bm{\hat U}\t \per\per \bm{N}^{(l)} \U\U\t \bm{\hat U} \mathcal{O}  \O\t \rhatinv\O  \bigg\|_F \\
   &= \bigg\| \bm{U}_{\perp}\t\bigg(  \sum_l  \bm{N}^{(l)} \per \pert  \bm{\hat U} \bm{\hat U}\t \per\pert \bm{N}^{(l)} \\
   &\qquad \qquad \qquad - \sigma^2 L\big( \per \pert  \bm{\hat U} \bm{\hat U}\t \per\pert + {\sf Tr}\big[ \per \pert  \bm{\hat U} \bm{\hat U}\t \per\pert \big] \bm{I} \big) \bigg) \U\U\t \bm{\hat U} \mathcal{O}  \O\t \rhatinv\O  \bigg\|_F \\
   &\lesssim \frac{\assertiontwo}{\lambda^2 L}  \| \pert\uhat \uhat\t \pert \|_F \| \U\t \uhat\|_F \\
   &\lesssim  \frac{\assertiontwo \sqrt{r} }{\lambda^2 L} \bigg( \err \bigg)^2. \numberthis\label{j5}
         \end{align*}
    \item \textbf{The Term $J_6$}.  The centering term vanishes, yielding
    \begin{align*}
         \| J_6  \|_F &= \big\|   \sum_l \bm{U}_{\perp}\t \bm{N}^{(l)} \per\pert  \bm{\hat U} \bm{\hat U}\t \U\U\t \bm{N}^{(l)} \U\U\t \bm{\hat U} \mathcal{O}  \O\t \rhatinv\O \big\|_F \\
         &\leq \big\|   \bm{U}_{\perp}\t \bigg( \sum_l  \bm{N}^{(l)} \per\pert \bm{\hat U} \bm{\hat U}\t \U\U\t \bm{N}^{(l)} \\
         &\qquad \qquad \qquad - \sigma^2 L \big( \U\U\t \bm{\hat U} \bm{\hat U}\t \per\pert + {\sf Tr}\big[\per \pert \bm{\hat U} \bm{\hat U}\t \U\U\t\big] \bm{I} \big) \bigg) \U\U\t \bm{\hat U} \mathcal{O}  \O\t \rhatinv\O \big\|_F \\
         &\lesssim \frac{\assertiontwo}{\lambda^2 L} \|\per \pert \uhat \uhat \t \U \U\t \| \| \U\t \uhat \|_F \\ 
         &\lesssim \frac{\assertiontwo}{\lambda^2 L} \bigg( \err \bigg)^2. 
         \numberthis\label{j6}
        \end{align*}
    \item \textbf{The Term $J_7$}. Only the trace term vanishes, yielding
    \begin{align*}
    \| J_7 \|_F &= \bigg\|   \sum_l \bm{U}_{\perp}\t \bm{N}^{(l)} \U\U\t \bm{\hat U} \bm{\hat U}\t \per\per \bm{N}^{(l)} \U\U\t \bm{\hat U} \mathcal{O}  \O\t \rhatinv\O \bigg\|_F  \\
    &\leq \bigg\|   \bm{U}_{\perp}\t \bigg( \sum_l \bm{N}^{(l)} \U\U\t \bm{\hat U} \bm{\hat U}\t \per\pert \bm{N}^{(l)} \\
    &\qquad \qquad \qquad - \sigma^2 L \big( \per\pert \bm{\hat U} \bm{\hat U}\t \U\U\t + {\sf Tr}\big[\U\U\t \bm{\hat U} \bm{\hat U}\t \per\pert\big] \bm{I} \big) \bigg)  \U\U\t \bm{\hat U} \mathcal{O}  \O\t \rhatinv\O \bigg\|_F \\
    &\quad + \sigma^2 L \| \pert \uhat \uhat\t \U \|_F \| \O\t \rhatinv\O \| \\
    &\lesssim \frac{\assertiontwo}{\lambda^2 L} \| \U\U\t \uhat \uhat\t \per \pert \|_F \| \U\t \uhat \|_F + \frac{\sigma^2 }{\lambda^2} \err \\
    &\lesssim  \frac{\assertiontwo \sqrt{r} }{\lambda^2 L} \err + \frac{\sigma^2 }{\lambda^2} \err. \numberthis \label{j7}
        \end{align*}
    \item \textbf{The Term $J_8$}.  Here the centering term vanishes giving
    \begin{align*}
        \|  J_8 \|_F &= \bigg\|   \sum_l \bm{U}_{\perp}\t \bm{N}^{(l)} \U\U\t\bigg(  \bm{\hat U} \bm{\hat U}\t - \U\U\t \bigg) \U\U\t \bm{N}^{(l)} \U\U\t \bm{\hat U} \mathcal{O}  \O\t \rhatinv\O  \bigg\| \\
        &= \bigg\|   \bm{U}_{\perp}\t\bigg(  \sum_l \bm{N}^{(l)} \U\U\t\bigg(  \bm{\hat U} \bm{\hat U}\t - \U\U\t \bigg) \U\U\t \bm{N}^{(l)} \\
        &\qquad \qquad 
        -\sigma^2L \bigg[  \U\U\t\big(  \bm{\hat U} \bm{\hat U}\t - \U\U\t \big) \U\U\t + {\sf Tr}\big[ \U\U\t\bigg(  \bm{\hat U} \bm{\hat U}\t - \U\U\t \bigg) \U\U\t \big] \bm{I}\bigg]
        \bigg) \U\U\t \bm{\hat U} \mathcal{O}  \O\t \rhatinv\O  \bigg\| \\
        &\lesssim  \frac{\sigma^2 \maxterm}{\lambda^2 L} \| \U \U\t \big( \uhat\uhat\t - \U \U\t \big) \U \U\t \|_F \| \U\t \uhat \|_F \\
        &\lesssim  \frac{\assertiontwo  \sqrt{r} }{\lambda^2 L} \err. \numberthis \label{j8}
    \end{align*}
    \item \textbf{The term $J_9$}.  For $J_9$ we instead invoke \eqref{assertion3} to yield
    \begin{align*}
      \|   J_9 \|_F &=\bigg\|  \sum_l \bm{U}_{\perp}\t \bm{N}^{(l)} \U\U\t \bm{N}^{(l)} \U\U\t \big( \bm{\hat U} \mathcal{O} - \U)  \O\t \rhatinv\O \bigg\|_F \\
&\lesssim \bigg\|  \sum_l \bm{U}_{\perp}\t \bm{N}^{(l)} \U\U\t \bm{N}^{(l)} \U\U\t \bigg\| \|  \bm{\hat U} \mathcal{O} - \U \|_F \| \O\t \rhatinv\O \|   \\
      &\lesssim  \frac{\sigma^2 r \max\{ \sqrt{nL},n\}}{L \lambda^2} \err. \numberthis \label{j9}
    \end{align*}
\end{itemize}
Define
\begin{align*}
    \delta := \err.
\end{align*}
Combining \eqref{J1},\eqref{J2},\eqref{j3},\eqref{j4},\eqref{j5},\eqref{j6},\eqref{j7},\eqref{j8}, and \eqref{j9}, we have that 
\begin{align*}
\sum_i \| J_i \|_F &\lesssim 
    \delta^3 \bigg( \frac{\assertiontwo}{L\lambda^2} + \frac{\sigma^2}{\lambda^2} \bigg) + \delta^2 \bigg( \frac{\sigma^2 nr^{3/2} \sqrt{L}}{\lambda^2 L} + \bigg) + \delta \bigg( \frac{\sigma^2 n r^{3/2} \sqrt{L}}{\lambda^2 L} + \frac{\sigma^2 r}{\lambda^2} + \frac{\sigma^2 r \max\{ \sqrt{nL},n\}}{L\lambda^2} \bigg) \\
    &\lesssim  \delta \bigg( \frac{\sigma^2 n r^{3/2} }{\lambda^2 \sqrt{L}} + \frac{\sigma^2 r}{\lambda^2} + \frac{\sigma^2 r \max\{ \sqrt{nL},n\}}{L\lambda^2} \bigg) \\
    &\lesssim \err \frac{\sigma^2 n r^{3/2}}{\lambda^2 \sqrt{L}},
\end{align*}
where the final bound is due to the assumption that $L \lesssim n$. 
This completes the proof. 
   \end{proof}

  \subsubsection{Proof of \cref{lem:geodesicconvexity}} \label{sec:geodesicconvexity}
  \begin{proof}
Define
\begin{align*}
    h^{{\sf clean}}(\bm{W}) &= \frac{1}{4L} \sum_{l} \| \sl - \bm{WW}\t \sl \bm{WW}\t \|_F^2,
\end{align*}
which is the noiseless loss function.  Then for any $\X$ satisfying $\bm{W}\t \X = 0$, 
\begin{align*}
    \nabla^2_{{\sf Riemannian}}h   ^{{\sf clean}}( \bm{W})[ \X, \X] &:=  \frac{1}{L} \sum_{l} {\sf Tr} \bigg[ \X\t \X (\bm{W}\t \sl \bm{W})^2 \bigg] - \frac{1}{L} \sum_{l} {\sf Tr} \bigg[ \X\t \sl \X \bm{W}\t \sl \bm{W} \bigg] \\
    &\quad - \frac{1}{L} \sum_{l} {\sf Tr} \bigg[ \big( \X\t \sl \bm{W} \big)^2 \bigg] - \frac{1}{L} \sum_{l} \| \X\t \sl \bm{W} \|_F^2. \numberthis \label{nablahclean}
\end{align*}
Thus, we have the decomposition
\begin{align*}
    \nabla^2_{{\sf Riemannian}}& h(\bm{W})[\X,\X]\\
    &=  \nabla^2_{{\sf Riemannian}} h^{{\sf clean}}(\bm{W})[\X,\X] \\
    &+ \frac{1}{L} \sum_{l} \tr \bigg[ \X\t \X \W\t \sl \W \W\t \nl \W \bigg] + \frac{1}{L} \sum_{l} \tr\bigg[ \X\t \X \W\t \nl \W \W\t \sl \W \bigg] \\
    &\quad + \frac{1}{L} \sum_{l} \tr \bigg[ \X\t\X ( \W\t \nl \W )^2 \bigg] - \frac{1}{L} \sum_{l} \tr \bigg[ \X\t \sl \X \W\t \nl \W \bigg] - \frac{1}{L} \sum_{L} \tr\bigg[ \X\t \nl \X \W\t \sl \W \bigg] \\
    &\quad - \frac{1}{L} \sum_{l} \tr\bigg[ \X\t\nl \X \W\t\nl \W \bigg] - \frac{2}{L} \sum_{l} \tr\bigg[ \X\t\sl \W \X\t\nl \W \bigg] \\
    &\quad - \frac{1}{L} \sum_{l} \tr \bigg[ ( \X\t \nl \W )^2 \bigg] - \frac{2}{L} \sum_{l} \tr\bigg[ \W\t\sl \X \X\t \nl \W \bigg] - \frac{1}{L} \sum_{l} \| \X\t \nl \W \|_F^2.
\end{align*}
Define
\begin{align*}
{\sf L}_{\nabla^2 h}(\W,\X) &:= \frac{1}{L} \sum_{l} \tr \bigg[ \X\t \X \W\t \sl \W \W\t \nl \W \bigg]+ \frac{1}{L} \sum_{l} \tr\bigg[ \X\t \X \W\t \nl \W \W\t \sl \W \bigg] \\
&\quad - \frac{1}{L} \sum_{l} \tr \bigg[ \X\t \sl \X \W\t \nl \W \bigg] - \frac{1}{L} \sum_{L} \tr\bigg[ \X\t \nl \X \W\t \sl \W \bigg] \\
&\quad - \frac{2}{L} \sum_{l} \tr\bigg[ \X\t\sl \W \X\t\nl \W \bigg]- \frac{2}{L} \sum_{l} \tr\bigg[ \W\t\sl \X \X\t \nl \W \bigg]; \\
{\sf Q}_{\nabla^2 h}(\W,\X) &:= \frac{1}{L} \sum_{l} \tr \bigg[ \X\t\X ( \W\t \nl \W )^2 \bigg] - \frac{1}{L} \sum_{l} \tr\bigg[ \X\t\nl \X \W\t\nl \W \bigg] \\
&\quad - \frac{1}{L} \sum_{l} \tr \bigg[ ( \X\t \nl \W )^2 \bigg] - \frac{1}{L} \sum_{l} \| \X\t \nl \W \|_F^2, \numberthis \label{Qnabladef}
\end{align*}
the first being linear in $\nl$ and the second being quadratic in $\nl$.  Then we have that
\begin{align*}
   \nabla^2_{{\sf Riemannian}} h(\W) [ \X,\X] = \nabla^2_{{\sf Riemannian}} h^{{\sf clean}}(\W)[\X,\X] + {\sf L}_{{\nabla^2 h}}(\W,\X)  + {\sf Q}_{\nabla^2 h}(\W,\X) .  
\end{align*}
First we bound $\nabla^2_{{\sf Riemannian}} h^{{\sf clean}}(\W)[\X,\X]$ and then we upper bound the two noise terms.
\\ \ \\
\noindent
\textbf{Bounding $\nabla^2_{{\sf Riemannian}} h^{{\sf clean}}$:} 
Observe that
\begin{align*}
\lambda_{\max}^2 \|\X\|_F^2 \geq \frac{1}{L} \sum_{l} \tr\bigg[ \X\t\X ( \rl)^2 \bigg] \geq \| \X \|_F^2 \lambda^2.
\end{align*}
Without loss of generality we may assume that $\W = \W \mathcal{O}_{\W,\U}$ since $h(\W) = h(\W \mathcal{O})$ for any orthogonal $\mathcal{O}$.  Then we have that
\begin{align*}
    \| \W - \U \|_F \leq  R := C \err,
\end{align*}
where $C$ is some sufficiently large fixed constant.  Now suppose that $\X$ satisfies $\X\t \W = 0$.  Then the first term on the right hand side of \eqref{nablahclean} satisfies
\begin{align*}
    \frac{1}{L} \sum_{l} \tr \bigg[ \X\t \X (\W\t \sl \W)^2 \bigg] &\geq \| \X \|_F^2 \lambda_{\min} \bigg( \frac{1}{L} ( \W\t \sl \W)^2 \bigg).
\end{align*}
We have that
\begin{align*}
    \| \W\t \U - \bm{I}_r \|_F \leq \| \W - \U \|_F \leq R.
\end{align*}
Thus, for any $l$, since $\|\W\t\U\|\leq 1$, 
\begin{align*}
    \| (\W\t \U \rl \U\t \W)^2- (\rl)^2 \|_F &\leq \| (\W\t \U \rl \U\t \W - \rl ) \W\t \U \rl \U\t \W  \|_F + \| \rl ( \W\t \U \rl \U\t \W  - \rl)\|_F \\
    &\leq 2 \lambda_{\max} \| \W\t \U \rl \U\t \W - \rl \|_F \\
    &\leq 2 \lambda_{\max} \bigg( 2\| \W\t \U - \bm{I}  \|_F \| \rl \| + \| \W\t \U - \bm{I} \|_F^2 \| \rl\| \bigg) \\
    &\leq 6 R \lambda_{\max}^2.
\end{align*}
Thus, it holds that
\begin{align*}
    \frac{1}{L} \sum_{l} \tr \bigg[ \X\t\X (\W\t\sl \W)^2 \bigg] &\geq \| \X \|_F^2 \bigg( \lambda^2 - 6 R \lambda_{\max}^2 \bigg).
\end{align*}
Similarly
\begin{align*}
      \frac{1}{L} \sum_{l} \tr \bigg[ \X\t\X (\W\t\sl \W)^2 \bigg] &\leq  \| \X \|_F^2 \lambda_{\max}^2.
\end{align*}
We now upper bound the remaining three terms on the right hand side of \eqref{nablahclean}.  First, we note that since $ \W\t \X = 0$, then $ \U\t \X  = ( \U - \W )\t \X$ and hence $\| \U\t \X \|_F \leq R \|\X \|_F$.  Then it holds that
\begin{align*}
\bigg|    \frac{1}{L} \sum_{l} \tr \bigg[ \X\t \sl \X \W \t\sl \W \bigg] \bigg| &\leq \frac{1}{L} \sum_{l} \| \X\t \U \|_F^2 \lambda_{\max}^2 \leq R^2 \| \X \|^2 \lambda_{\max}^2.
\end{align*}
Similarly, 
\begin{align*}
    \frac{1}{L} \sum_{l} \tr\bigg[ (\X\t \sl \W )^2 \bigg] &\leq  \frac{1}{L} \sum_{l} \| \X\t \sl \W \|_F^2 \leq \| \X \|^2 R^2 \lambda_{\max}^2; \\
    \frac{1}{L} \sum_{l} \| \X\t \sl \W \|_F^2 &\leq \| \X\|^2 R^2 \lambda_{\max}^2.
\end{align*}
Thus, we have that
\begin{align*}
     \nabla^2_{{\sf Riemannian}} h^{{\sf clean}}(\W)[\X,\X] \geq \| \X \|_F^2 \bigg(  \lambda^2 - 6 R \lambda_{\max}^2  - 3 R^2 \lambda_{\max}^2 \bigg) \geq \| \X \|_F^2 \frac{\lambda^2}{2}
\end{align*}
as long as $\frac{\kappa^3 \sqrt{nr}}{\lambda \sqrt{L}} \ll 1$.  In addition,
\begin{align*}
      \nabla^2_{{\sf Riemannian}} h^{{\sf clean}}(\W)[\X,\X] \leq  2 \lambda_{\max}^2 \| \X \|_F^2.
\end{align*}
\\ \ \\
\noindent \textbf{Upper bounding ${\sf L}_{{\nabla^2 h}}(\W,\X)$}. We bound only the first term; the other terms are similar. The first term of ${\sf L}_{{\nabla^2 h}}(\W,\X)$ is of the form
\begin{align*}
    \frac{1}{L} \sum_{l} \langle \nl, \W \X\t \X \W\t \sl \W \W\t \rangle.  
\end{align*}
Let $\bm{Q}_1$ and $\bm{Q}_2$ be any fixed rank at most $r$ matrices with Frobenius norm at most 1.  Then a standard Hoeffding inequality argument shows that 
\begin{align*}
    \bigg| \frac{1}{L} \sum_{l} \langle \nl, \bm{Q}_1 \sl \bm{Q}_2 \rangle \bigg|  \lesssim \frac{\lambda_{\max} \sigma}{\sqrt{L}} t
\end{align*}
with probability at least $1- \exp( - c t^2)$.  Taking a union bound and applying a simple net argument shows that 
\begin{align*}
    \bigg| \frac{1}{L} \sum_{l} \langle \nl, \bm{Q}_1 \sl \bm{Q}_2 \rangle \bigg| \lesssim \frac{\lambda_{\max} \sigma \sqrt{nr}}{\sqrt{L}}
\end{align*}
with probability at least $1 - \exp( - c nr)$.  Applying this argument to each linear quantity shows that
\begin{align*}
    | {\sf L}_{\nabla^2}(\W,\X) | \lesssim \| \X \|^2 \frac{\lambda_{\max} \sigma \sqrt{nr}}{\sqrt{L}}
\end{align*}
with probability at least $1 - \exp( - c nr)$, uniformly over all matrices $\X$ and $\bm{W}$.  
\\ \ \\ 
\noindent \textbf{Upper bounding ${\sf Q}_{{\nabla^2 h}}(\W,\X)$}.  We bound each term in turn, though the argument is similar.  First, for any fixed $\bm{Q}_1$ and $\bm{Q}_2$ of rank at most $r$, define the process
\begin{align*}
    \mathcal{N}( \bm{Q}_1, \bm{Q}_2) := \frac{1}{L} \sum_{l} {\sf Tr}\bigg[ \bm{Q}_1 \nl \bm{Q}_2 \nl \bigg] - \mathbb{E} {\sf Tr}\bigg[ \bm{Q}_1 \nl \bm{Q}_2 \nl \bigg].
\end{align*}
It is straightforward to check that $\mathbb{E} {\sf Tr}\big[ \bm{Q}_1 \nl \bm{Q}_2 \nl \big] = \frac{\sigma^2}{2}\big[ {\sf Tr}(\bm{Q}_1 \bm{Q}_2\t) + {\sf Tr}(\bm{Q}_1) \tr(\bm{Q}_2) \big].$  Note that ${\sf Tr}(\bm{Q}_1 \nl \bm{Q}_2 \nl ) = {\sf Vec}(\nl)\t (\bm{Q}_1\t \otimes \bm{Q}_2 ) {\sf Vec}(\nl)$ (possibly up to transposes depending on vectorization convention, but this does not change the argument).  Therefore, up to potential rescaling due to repetition, the quantity $\mathcal{N}(\bm{Q}_1, \bm{Q}_2)$ is a quadratic form for the random variable ${\sf Vec}(\nl)$.  Thus, the Hanson-Wright inequality (Theorem 6.2.1 of \citet{vershynin_high-dimensional_2018}) gives
\begin{align*}
    \mathbb{P} \bigg\{ | \mathcal{N}(\bm{Q}_1, \bm{Q}_2 ) | \geq t \bigg\} &\leq 2 \exp\bigg\{ - cL \min\bigg( \frac{t^2}{\sigma^4 \| \bm{Q}_1 \otimes \bm{Q}_2\|_F^2}, \frac{t}{\sigma^2 \| \bm{Q}_1 \otimes \bm{Q}_2 \|} \bigg) \bigg\} \leq 2 \exp\bigg\{ - cL \min\big( \frac{t^2}{\sigma^4} , \frac{t}{\sigma^2} \big) \bigg\}.
\end{align*}
Take $t \asymp \frac{\sigma^2 nr}{L}$.  Then with probability at least $1 - \exp ( - c n r)$, 
\begin{align*}
    | \mathcal{N}(\bm{Q}_1,\bm{Q}_2) | \lesssim \frac{\sigma^2 n r}{L},
\end{align*}
where we have implicitly used the fact that $L \lesssim n$.  Taking a union bound over all $\bm{Q}_1$ and $\bm{Q}_2$ in an $\eps$-net shows that this bound holds uniformly over all $\bm{Q}_1$ and $\bm{Q}_2$ in the net.  Define 
\begin{align*}
    M := \sup_{\|\bm{Q}_1\|_F=1, \|\bm{Q}_2\|=1} | \mathcal{N}(\bm{Q}_1,\bm{Q}_2) |,
\end{align*}
where the supremum is over rank at most $r$ matrices. Let $\bm{\tilde Q}_1, \bm{\tilde Q}_2$ be points in the nets such that they are $\eps$-close to the maximizers $\bm{\check Q}_1, \bm{\check Q}_2$.  Then we have that
\begin{align*}
    M = | \mathcal{N}(\bm{\check Q}_1,\bm{ \check Q}_2) | &\leq | \mathcal{N}( \bm{\tilde Q}_1, \bm{\tilde Q}_2) | + |\mathcal{N}(\bm{\check Q}_1 - \bm{\tilde Q}_1, \bm{\check Q}_2 - \bm{\tilde Q}_2) | + |\mathcal{N}(\bm{\check Q}_1 - \bm{\tilde Q}_1, \bm{\tilde Q}_2) | + |\mathcal{N}( \bm{\tilde Q}_1, \bm{\check Q}_2 - \bm{\tilde Q}_2)| \\
    &\leq \frac{C \sigma^2 nr}{L} + 8\eps M,
\end{align*}
where we have implicitly used the fact that a matrix of rank at most $2r$ with Frobenius norm at most $\eps$ can be written as a sum of two rank at most $r$ matrices with Frobenius norm at most $\eps$.  Consequently, we have that $M \lesssim \frac{\sigma^2 n r}{L}$ by taking $\eps$ appropriately.  

We now apply this result to each term in the definition of ${\sf Q}_{{\nabla^2 h}}(\W,\X)$.  We have that
\begin{align*}
\bigg|    \frac{1}{L} \sum_{l}  \tr\bigg[ \W \X\t\X \W\t \nl \W \W\t \nl \bigg] - \frac{\sigma^2}{2}\bigg[ \tr\big[\W \X\t\X \W\t \W \W\t] +& \tr\big[ \W \X\t\X \W\t ] \tr\big[ \W \W\t \big] \bigg]\bigg| \\
&\lesssim \| \X \|_F^2 \frac{\sigma^2 nr}{L}.
\end{align*}
We also have that
\begin{align*}
    \frac{\sigma^2}{2}\big[ \tr\big[\W \X\t\X \W\t \W \W\t] + \tr\big[ \W \X\t\X \W\t ] \tr\big[ \W \W\t \big] = \frac{\sigma^2}{2} \big( \| \X\|_F^2 + \| \X \|_F^2 r \big) \lesssim \sigma^2 r  \| \X \|_F^2 \lesssim \frac{\sigma^2 n r}{L} \|\X \|_F^2.
\end{align*}
Similarly, the second term in \eqref{Qnabladef} satisfies
\begin{align*}
\bigg|    \frac{1}{L} \sum_{l} \tr\bigg[ \W \X\t \nl \X \W\t \nl \bigg] - \frac{\sigma^2}{2} \bigg[ \tr\big[ \W\X\t \X \W\t \big] + \tr\big[ \W \X\t \big] \tr\big[ \X \W\t\big] \bigg] \bigg| \lesssim \frac{\sigma^2 nr}{L} \|\X\|_F^2.
\end{align*}
The expectation satisfies $\tr\big[ \W \X \X\t \W \big] = \| \X \|_F^2,$ and $\tr[ \W \X\t ] = 0$ since $\X\t \W = 0$.  The remaining two terms are similar, with centering term equal to zero and $r \| \X \|_F^2$ respectively.  Thus, with probability at least $1- \exp( - c nr)$, we have that
\begin{align*}
    | {\sf Q}_{\nabla^2 h}(\W,\X) | \lesssim \frac{\sigma^2 nr}{L}\|\X\|_F^2.
\end{align*}
\noindent\textbf{Putting it all together.} Thus, we have shown that
\begin{align*}
    \nabla^2_{{\sf Riemannian}} h(\W)[\X,\X] \geq \| \X \|_F^2 \bigg( \frac{\lambda^2}{2} - C\frac{\lambda_{\max} \sigma \sqrt{nr}}{\sqrt{L}} - C\frac{\sigma^2 nr}{L} \bigg) \geq \|\X\|_F^2 \frac{\lambda^2}{4}
\end{align*}
which holds under the assumption that $\lambda/\sigma \geq C \frac{\kappa \sqrt{nr}}{\sqrt{L}}$. Similarly,
\begin{align*}
    \nabla^2_{{\sf Riemannian}} h(\W)[\X,\X] \leq  \| \X \|_F^2 \bigg( 2 \lambda_{\max}^2  + C\frac{\lambda_{\max} \sigma \sqrt{nr}}{\sqrt{L}} + C\frac{\sigma^2 nr}{L} \bigg) \leq  3 \lambda_{\max}^2 \|\X\|_F^2.
\end{align*}
This completes the proof.
\end{proof}

\subsubsection{Proof of \cref{lem:thirdorderguy}} \label{sec:thirdorderguyproof}

\begin{proof}
    First, we have that
    \begin{align*}
        \langle \bm{G}\one, \bm{G}\two \rangle &= \sum_{l} \sum_{l'} {\sf Tr}\bigg[ \bm{R}^{(l')} \U\t \bm{N}^{(l')} \per\pert \nl \U\U\t \nl \U (\bm{\mathcal{R} \mathcal{R}}\t)^{-2} \bigg] \\&= \sum_{j=1}^{r} \sum_{l} \sum_{l'} \bigg[\bm{R}^{(l')} \U\t \bm{N}^{(l')} \per\pert \nl \U\U\t \nl \U (\bm{\mathcal{R}\mathcal{R}}\t)^{-2} \bigg]_{jj}.
    \end{align*}
    We can view the summation above as a vector-valued (asymmetric) $U$-statistic computed from the matrix observations $\{\bm{N}^{(l)}\}_{l=1}^{L}$. We will bound it for fixed $j \in [r]$ and take a union bound.

    Define the vector
    \begin{align*}
        \bm{A}_{j}^{(l)} := \big(\per\pert \nl \U\U\t \nl \U(\bm{\mathcal{R} \mathcal{R}}\t)^{-2}  \big)_{\cdot j}.
    \end{align*}
    Then for fixed $j \in [r]$ we can write the inner product as
    \begin{align*}
        \sum_{l'} \bigg\langle \big(\bm{R}^{(l')} \U\t \bm{N}^{(l')} \big)_{j\cdot} , \sum_{l \neq l'} \bm{A}_{j}^{(l)} \bigg\rangle + \sum_{l} \big\langle \big( \bm{R}^{(l)} \U\t \nl \big)_{j\cdot}, \bm{A}_j^{(l)} \big\rangle =: K_1 + K_2.  
    \end{align*}
    We analyze each term separately.
    \begin{itemize}
        \item \textbf{The term $K_1$}:   
        By Theorem 3.4.1 of \citet{de_la_pena_decoupling_1999} (using an order-two decoupling inequality with an appropriate kernel $h$), 
        it suffices to consider the case where $\bm{N}^{(l')}$  is independent from  $\nl$.  Conditioning on $\sum_{l} \bm{A}_{j}^{(l)}$, $K_1$ is a sum of independent random variables.  Its $\psi_2$ norm satisfies
    \begin{align*}
        \bigg\|  \sum_{l'} \bigg\langle \big(\bm{R}^{(l')} \U\t \bm{N}^{(l')} \big)_{j\cdot} , \sum_{l} \bm{A}_{j}^{(l)} \bigg\rangle \bigg\|_{\psi_2}^2 \leq L \sigma^2 \max_{l'} \| \bm{R}^{(l')} \|^2 \bigg\| \sum_{l} \bm{A}_{j}^{(l)} \bigg\|^2.
    \end{align*}
    Therefore, by Hoeffding's inequality  it holds that
    \begin{align*}
        \bigg| \sum_{l'} \bigg\langle \big(\bm{R}^{(l')} \U\t \bm{N}^{(l')} \big)_{j\cdot} , \sum_{l} \bm{A}_{j}^{(l)} \bigg\rangle \bigg| \lesssim t \sigma \sqrt{L} \lambda_{\max}\bigg\| \sum_{l} \bm{A}_{j}^{(l)} \bigg\| 
    \end{align*}
    with probability at least $1 - \exp(- ct^2)$. 
Define the event  
       \begin{align*}
      \mathcal{E} := \bigcap_{j=1}^{r} \bigg\{  \bigg\| \sum_{l} \bm{A}^{(l)}_{j} \bigg\| \lesssim   \frac{\sigma^2 r \max \{\sqrt{nL},n\} }{\lambda^4 L^2} \bigg\}. \numberthis \label{event}
    \end{align*}
On the event \eqref{event} it holds that 
\begin{align*}
     \bigg| \sum_{l'} \bigg\langle \big(\bm{R}^{(l')} \U\t \bm{N}^{(l')} \big)_{j\cdot} , \sum_{l} \bm{A}_{j}^{(l)} \bigg\rangle \bigg| \lesssim t \sigma \sqrt{L} \lambda_{\max} \frac{\sigma^2 r \max\{ \sqrt{nL},n\}}{\lambda^4 L^2} &\asymp t \frac{\kappa \sigma^3 \sqrt{n} r}{\lambda^3 L} \max\{ 1, \sqrt{\frac{n}{L}}\}.
\end{align*}
Therefore, we have that 
\begin{align*}
    \p\bigg\{   \bigg| \sum_{l'} \bigg\langle \big(\bm{R}^{(l')} \U\t \bm{N}^{(l')} \big)_{j\cdot} &, \sum_{l} \bm{A}_{j}^{(l)} \bigg\rangle \bigg| > C t \frac{\kappa \sigma^3 \sqrt{n} r}{\lambda^3 L} \max\{ 1, \sqrt{\frac{n}{L}}\} \bigg\} \\
    &\leq \p\bigg\{  \bigg| \sum_{l'} \bigg\langle \big(\bm{R}^{(l')} \U\t \bm{N}^{(l')} \big)_{j\cdot} , \sum_{l} \bm{A}_{j}^{(l)} \bigg\rangle \bigg| > C t \frac{\kappa \sigma^3 \sqrt{n} r}{\lambda^3 L} \max\{ 1, \sqrt{\frac{n}{L}}\} \cap \mathcal{E} \bigg\} + \p ( \mathcal{E}^c ) \\
    &\leq 2 \exp( - c t^2) +  \p ( \mathcal{E}^c ).
\end{align*}
By \cref{lem:goodevent}, the event $\mathcal{E}$ holds with probability at least $1 - \exp( - c n)$ by \eqref{assertion3}.     
\item \textbf{The term $K_2$.} The term $K_2$ is directly a sum of independent random variables.  Moreover, we have that
\begin{align*}
    K_2 &= \sum_{k=1}^{r} \sum_{l=1}^{L} \bigg[ \rl \U\t \nl \per \pert \nl \U \bigg]_{jk} \bigg[ \U\t \nl \U \big( \R \big)^{-2} \bigg]_{kj}.
\end{align*}
For fixed $j$ and $k$ this is a sum of $L$ independent random variables with bounded  $\psi_{2/3}$ norm.
By Lemma 7 of \citet{hao_sparse_2020}, with probability at least $1 - \delta$ it holds that
\begin{align*}
    K_2 &\lesssim r \sqrt{L} \max_{j,k} \bigg\| \bigg[ \rl \U\t \nl \per \pert \nl \U \bigg]_{jk} \bigg[ \U\t \nl \U \big( \R \big)^{-2} \bigg]_{kj} \bigg\|_{\psi_{2/3}}  \sqrt{\log(1/\delta)} \\
    &\quad + r \max_{j,k} \bigg\| \bigg[ \rl \U\t \nl \per \pert \nl \U \bigg]_{jk} \bigg[ \U\t \nl \U \big( \R \big)^{-2} \bigg]_{kj} \bigg\|_{\psi_{2/3}} \big( \log(1/\delta) \big)^{3/2}
\end{align*}
Therefore, it suffices to bound the Orlicz $\psi_{2/3}$ norm of the random variable above.  We have
\begin{align*}
    \bigg\| &\bigg[ \rl \U\t \nl \per \pert \nl \U \bigg]_{jk} \bigg[ \U\t \nl \U \big( \R \big)^{-2} \bigg]_{kj} \bigg\|_{\psi_{2/3}}\\ &\lesssim \bigg\| \bigg[ \rl \U\t \nl \per \pert \nl \U \bigg]_{jk} \bigg\|_{\psi_1} \bigg\| \bigg[ \U\t \nl \U  \big( \R \big)^{-2} \bigg]_{kj} \bigg\|_{\psi_2} \\
    &\lesssim \sigma^3 n \lambda_{\max} \sqrt{r} \frac{1}{L^2 \lambda^4} \asymp \frac{\sigma^3 n \kappa \sqrt{r}}{L^2 \lambda^3}.
\end{align*}
Therefore, letting $t = \sqrt{\log(1/\delta)}$, by taking a union bound for each $k$, we have that with probability at least $1 - r\exp( - c t^2)$ it holds that
\begin{align*}
    K_2 &\lesssim t \frac{\kappa \sigma^3 n r^{3/2}}{\lambda^3 L^{3/2}} + t^3 \frac{\sigma^3 n \kappa r^{3/2}}{L^2 \lambda^3}.
\end{align*}
In particular, as long as $t\leq L^{1/4}$ it holds that
\begin{align*}
    K_2 \lesssim t \frac{\kappa \sigma^3 n r^{3/2}}{\lambda^3 L^{3/2}}.
\end{align*}
    \end{itemize}
Combining these two inequalities and bounding the summation over $j$ by $r$ times the maximum entry and taking a union bound shows that for all $t \leq L^{1/4}$, 
\begin{align*}
\bigg|    \langle \bm{G}\one, \bm{G}\two \rangle \bigg| &\lesssim  t  \frac{\kappa \sigma^3 \sqrt{n} r^2}{\lambda^3 L} \max\{ 1, \sqrt{\frac{n}{L}} \} + t \frac{\kappa \sigma^3 n \kappa r^{5/2}}{L^{3/2} \lambda^3} \\ 
&\lesssim  t  \frac{\kappa \sigma^3 \sqrt{n} r^{5/2}}{\lambda^3 L} \max\{ 1, \sqrt{\frac{n}{L}} \} 
\end{align*}
with probability at least $1 - r^2 \exp( - c t^2) - r \exp( - c n)$. 
\end{proof}

\subsubsection{Proof of \cref{lem:complicatedresidual2}}
\label{sec:complicatedresidual2proof}
\begin{proof}
    First, according to the same decomposition in \eqref{decomp}, we have that
    \begin{align*}
        \langle \bm{G}^{(1)}, T_3 \rangle &= \sum_{i=1}^9 \langle \bm{G}^{(1)}, J_i \rangle,
    \end{align*}
    where we  have that
   \begin{align*}
       J_1 &=      \sum_l \bm{U}_{\perp}\t \bm{N}^{(l)} \bm{U}_{\perp} \bm{U}_{\perp}\t \bm{\hat U} \bm{\hat U}\t \per \bm{N}^{(l)} \bm{U}_{\perp} \bm{U}_{\perp}\t \bm{\hat U} \mathcal{O}   \O\t \rhatinv\O \\
      J_2 &= \sum_l \bm{U}_{\perp}\t \bm{N}^{(l)} \bm{U}_{\perp} \bm{U}_{\perp}\t \bm{\hat U} \bm{\hat U}\t \U\U\t \bm{N}^{(l)} \bm{U}_{\perp} \bm{U}_{\perp}\t \bm{\hat U} \mathcal{O}   \O\t \rhatinv\O \\
     J_3 &= \sum_l \bm{U}_{\perp}\t \bm{N}^{(l)} \U \U\t \bm{\hat U} \bm{\hat U}\t \pert\nl \bm{U}_{\perp} \bm{U}_{\perp}\t \bm{\hat U} \mathcal{O}  \O\t \rhatinv\O\\
     J_4 &= \sum_l \bm{U}_{\perp}\t \bm{N}^{(l)} \U \U\t \bm{\hat U} \bm{\hat U}\t \U\U\t \bm{N}^{(l)} \bm{U}_{\perp} \bm{U}_{\perp}\t \bm{\hat U} \mathcal{O}  \O\t \rhatinv\O  \\
    J_5   &= \sum_l \bm{U}_{\perp}\t \bm{N}^{(l)} \per \bm{\hat U} \bm{\hat U}\t \per\per \bm{N}^{(l)} \U\U\t \bm{\hat U} \mathcal{O}  \O\t \rhatinv\O \\
      J_6     &=  \sum_l \bm{U}_{\perp}\t \bm{N}^{(l)} \per \bm{\hat U} \bm{\hat U}\t \U\U\t \bm{N}^{(l)} \U\U\t \bm{\hat U} \mathcal{O}   \O\t \rhatinv\O  \\
     J_7  &=  \sum_l \bm{U}_{\perp}\t \bm{N}^{(l)} \U\U\t \bm{\hat U} \bm{\hat U}\t \per\per \bm{N}^{(l)} \U\U\t \bm{\hat U} \mathcal{O}   \O\t \rhatinv\O  \\
      J_8   &=  \sum_l \bm{U}_{\perp}\t \bm{N}^{(l)} \U\U\t\bigg(  \bm{\hat U} \bm{\hat U}\t - \U\U\t \bigg) \U\U\t \bm{N}^{(l)} \U\U\t \bm{\hat U} \mathcal{O}  \O\t \rhatinv\O \\
        J_9  &=  \sum_l \bm{U}_{\perp}\t \bm{N}^{(l)} \U\U\t \bm{N}^{(l)} \U\U\t \big( \bm{\hat U} \mathcal{O} - \U)   \O\t \rhatinv\O.
   \end{align*}
We will bound each of these terms in turn.  Let $\mathcal{E}_{\bm{G}}$ denote the event that $\|\bm{G}^{(1)}\|_F \lesssim \frac{\kappa \sigma \sqrt{nr}}{\lambda \sqrt{L}}$, which can be shown to happen with probability at least $1 - \exp( - c n)$ through a similar argument as in the proof of \cref{lem:initialconcentration} in \cref{sec:initiallemproof}. For some of the subsequent terms we require additional technical arguments, and for some terms we simply apply Cauchy-Schwarz.  
\begin{itemize}
    \item \textbf{The term $\langle \bm{G}^{(1)},J_1\rangle$.} We have by Cauchy-Schwarz that
    \begin{align*}
        \bigg| \langle \bm{G}^{(1)}, J_1 \rangle \bigg| \leq \| \bm{G}^{(1)} \|_F \| J_1 \|_F.
    \end{align*}
   On the event $\mathcal{E}_{{\bm{G}}}$ it holds that $\| \bm{G}^{(1)} \|_F \lesssim \err$.  In addition, by the bound \eqref{J1}, we have that on the event $\mathcal{E}_{{\sf good}}$
    \begin{align*}
        \| J_1 \|_F \lesssim \frac{\sigma^2 nr \sqrt{L}}{L\lambda^2} \bigg( \err \bigg)^3 + \frac{\sigma^2}{\lambda^2} \bigg( \err \bigg)^3,
    \end{align*}
    which gives that
    \begin{align*}
          \bigg| \langle \bm{G}^{(1)}, J_1 \rangle \bigg|  &\lesssim \frac{\sigma^2 nr \sqrt{L}}{L\lambda^2} \bigg( \err \bigg)^4 + \frac{\sigma^2}{\lambda^2} \bigg( \err \bigg)^4. \numberthis\label{g1j1}
    \end{align*}
    \item \textbf{The term $\langle \bm{G}^{(1)},J_2 \rangle$.} Again we use Cauchy Schwarz and the bound \eqref{J2}, to obtain 
    \begin{align*}
        \bigg| \langle \bm{G}^{(1)}, J_2 \rangle \bigg| &\lesssim \| \bm{G}^{(1)} \|_F \| J_2 \|_F \\
        &\lesssim \err  \frac{\sigma^2 nr}{\sqrt{L}\lambda^2} \bigg( \err \bigg)^2 \asymp \bigg( \err \bigg)^3 \frac{\sigma^2 nr }{\sqrt{L}\lambda^2}, \numberthis \label{g1j2}
    \end{align*}
    which again holds on the event $\mathcal{E}_{\bm{G}} \cap \mathcal{E}_{{\sf good}}$.  
    \item \textbf{The term $\langle \bm{G}^{(1)},J_3\rangle $.}  Again we use Cauchy-Schwarz and the bound \cref{j3} to yield
    \begin{align*}
         \bigg| \langle \bm{G}^{(1)}, J_3 \rangle \bigg| &\lesssim \| \bm{G}^{(1)} \|_F \| J_3 \|_F \\
        &\lesssim \bigg( \err \bigg)^3 \frac{\sigma^2 nr }{\sqrt{L}\lambda^2} \numberthis \label{g1j3}
    \end{align*}
    which matches the bound \eqref{g1j2}.  
    \item \textbf{The term $\langle \bm{G}^{(1)},J_4\rangle $.} We require a more involved argument.  
   We first decompose 
   \begin{align*}
       \langle \bm{G}^{(1)}, J_4 \rangle &= \sum_{l} \sum_{l'} {\sf Tr} \bigg[ ( \bm{\mathcal{RR}}\t )^{-1} \bm{R}^{(l')} \U\t \bm{N}^{(l')} \per  \pert \nl \U \U\t\uhat\uhat\t \U\U\t \nl \per \pert\uhat \mathcal{O} \O\t \rhatinv \O  \bigg] \\
       &= \sum_{l \neq l'}{\sf Tr} \bigg[ ( \bm{\mathcal{RR}}\t )^{-1} \bm{R}^{(l')} \U\t \bm{N}^{(l')} \per  \pert \nl \U \U\t\uhat\uhat\t \U\U\t \nl \per \pert\uhat  \rhatinv \O   \bigg] \\
       &\quad + \sum_{l}{\sf Tr} \bigg[ ( \bm{\mathcal{RR}}\t )^{-1} \bm{R}^{(l)} \U\t \bm{N}^{(l)} \per  \pert \nl \U \U\t\uhat\uhat\t \U\U\t \nl \per \pert\uhat  \rhatinv \O   \bigg].
   \end{align*}
   The first term above represents the off-diagonal contribution, and the second term represents the diagonal contribution.  We bound both terms in turn.
\begin{itemize}
    \item \textbf{The off-diagonal.}   
   To bound this term we will use an $\eps$-net argument.    First, let $\bm{Q}_1 \in \mathbb{R}^{r\times r}$ be any symmetric positive semidefinite  matrix of Frobenius norm at most $1$, and let $\bm{Q}_2 \in \mathbb{R}^{n-r \times r}$ be another matrix of Frobenius norm at most one. 
We have that
\begin{align*}
    \sum_{l \neq l'}{\sf Tr} &\bigg[ ( \bm{\mathcal{RR}}\t )^{-1} \bm{R}^{(l')} \U\t \bm{N}^{(l')} \per  \pert \nl \U \bm{Q}_1 \U\t \nl \per \bm{Q}_2  \bigg] \\
    &= \sum_{j=1}^r \sum_{l \neq l'} \bigg\langle  \bigg( ( \bm{\mathcal{RR}}\t )^{-1} \bm{R}^{(l')} \U\t \bm{N}^{(l')}\bigg)_{j\cdot} , \bigg(\per \pert \nl \U \bm{Q}_1 \U\t \nl \per \bm{Q}_2\bigg)_{j\cdot} \bigg \rangle.
\end{align*}
Fix an index $j$.  
Define
    \begin{align*}
        \bm{A}_j^{(l)} := \bigg(\per \pert \nl \U \bm{Q}_1 \U\t \nl \per \bm{Q}_2\bigg)_{j\cdot}.
    \end{align*}
    By repeating the argument in the proof of \cref{lem:thirdorderguy} in \cref{sec:thirdorderguyproof}, through a decoupling argument we can show that with probability at least $1 - \exp ( - c t^2)$ it holds that
    \begin{align*}
        \bigg| \sum_{l'} \bigg\langle  \bigg( ( \bm{\mathcal{RR}}\t )^{-1} \bm{R}^{(l')} \U\t \bm{N}^{(l')}\bigg)_{j\cdot} , \sum_{l}\bigg(\per \pert \nl \U \bm{Q}_1 \U\t \nl \per \bm{Q}_2\bigg)_{j\cdot} \bigg\rangle  \bigg| \lesssim t \sigma \sqrt{L} \frac{\lambda_{\max}}{L \lambda^2} \bigg\| \sum_{l} \bm{A}_j^{(l)} \bigg\|.
    \end{align*}
    We note that by \cref{assertion2}, it holds that with probability at least $1 - \exp( - c nr)$, uniformly over $\bm{Q}_1$,
\begin{align*}
    \bigg\| \sum_{l} \pert \nl \U \bm{Q}_1 \U\t \nl \per \bigg\| &\leq    \bigg\| \pert  \bigg( \sum_{l} \nl \U \bm{Q}_1 \U\t \nl - \frac{\sigma^2 L}{2} {\sf Tr} \big( \U \bm{Q}_1 \bm{U}\t \big) \bigg) \per \bigg\|_F + \frac{\sigma^2 L}{2} {\sf Tr} \big( \U \bm{Q}_1 \bm{U}\t \big) \\
    &\lesssim \sigma^2 nr \sqrt{L} \| \U \bm{Q}_1 \bm{U}\t \|_F + \sigma^2 L \sqrt{r} \\
    &\lesssim \sigma^2 nr \sqrt{L} + \sigma^2 L \sqrt{r}.
\end{align*}
Thus, with probability at least $1 - \exp( - c t^2) - \exp( - c nr)$, uniformly over $\bm{Q}_1$ it holds that
\begin{align*}
         \bigg| \sum_{l'} \bigg\langle  \bigg( ( \bm{\mathcal{RR}}\t )^{-1} \bm{R}^{(l')} \U\t \bm{N}^{(l')}\bigg)_{j\cdot} , \sum_{l}\bigg(\per \pert \nl \U \bm{Q}_1 \U\t \nl \per \bm{Q}_2\bigg)_{j\cdot} \bigg\rangle  \bigg| &\lesssim t \sigma \sqrt{L} \frac{\lambda_{\max}}{L \lambda^2} \bigg( \sigma^2 nr \sqrt{L} + \sigma^2 L \sqrt{r} \bigg) \\
         &\asymp t \frac{\sigma^3 \kappa n r}{\lambda} + t \frac{\sigma^3 \kappa \sqrt{rL} }{\lambda  }.
\end{align*}
Thus, by taking a union bound over $\bm{Q}_2$ (and possibly increasing implicit constants in the proof of \cref{assertion2}), with probability at least $1 - \exp( - c nr)$ it holds that
\begin{align*}
    \bigg|  \sum_{l \neq l'} \bigg\langle  \bigg( ( \bm{\mathcal{RR}}\t )^{-2} \bm{R}^{(l')} \U\t \bm{N}^{(l')}\bigg)_{j\cdot} , \bigg(\per \pert \nl \U \bm{Q}_1 \U\t \nl \per \bm{Q}_2\bigg)_{j\cdot} \bigg \rangle\bigg| \lesssim  \frac{\sigma^3 \kappa n^{3/2} r^{3/2}}{\lambda} +  \frac{\sigma^3 \kappa \sqrt{nL} r}{\lambda}.
\end{align*}
In particular, since this term is linear in $\bm{Q}_i$, by \eqref{rhatinvbd2} it holds that with this same probability
\begin{align*}
    \sum_{j=1}^{r} \bigg| &\sum_{l\neq l} \bigg\langle  \bigg( ( \bm{\mathcal{RR}}\t )^{-1} \bm{R}^{(l')} \U\t \bm{N}^{(l')}\bigg)_{j\cdot} , \bigg(\per \pert \nl \U \U\t \uhat \uhat\t \U \U\t \nl \per \pert \uhat \rhatinv \mathcal{O}\bigg)_{j\cdot} \bigg \rangle\bigg| \\
    &\lesssim r \| \U\t \uhat \uhat\t \U \|_F \| \pert \uhat \|_F \bigg(  \frac{\sigma^3 \kappa n^{3/2} r^{3/2}}{\lambda^3 L} +  \frac{\sigma^3 \kappa \sqrt{n} r}{\lambda^3 \sqrt{L}} \bigg) \\
    &\lesssim r^{3/2} \err  \bigg( \frac{\sigma^3 \kappa n^{3/2} r^{3/2}}{\lambda^3 L} +  \frac{\sigma^3 \kappa \sqrt{n} r}{\lambda^3 \sqrt{L}}  \bigg) \\
    &\asymp \frac{\sigma^4 \kappa^2 n^2 r^{7/2}}{\lambda^4 L^{3/2}} + \frac{\sigma^4 \kappa^2 n r^{3}}{\lambda^4 L}.
\end{align*}
\item \textbf{The diagonal.} We now consider the diagonal term.  Instead of a net argument, we will consider the particular choices $\bm{Q}_1 = \U\t \uhat \uhat\t \U $ and $\bm{Q}_2 = \pert \uhat \rhatinv \mathcal{O}$.  Observe that
\begin{align*}
   \bigg| \sum_{l} {\sf Tr} &\bigg[ ( \bm{\mathcal{RR}}\t )^{-1} \bm{R}^{(l)} \U\t \bm{N}^{(l)}\per \pert \nl \U \bm{Q}_1 \U\t \nl \per \bm{Q}_2 \bigg] \bigg| \\
   &\leq L \max_{l} \| \nl \|^3 \| (\bm{\mathcal{RR}}\t )^{-1} \bm{R}^{(l)} \U\t \| \| \per \pert \| \| \U \bm{Q}_1 \U\t \|   \| \per \bm{Q}_2 \|_F \\
 &\lesssim L \sigma^3 n^{3/2} \frac{\kappa}{L \lambda} \err \frac{1}{L\lambda^2}\\
   &\asymp \frac{\sigma^4 n^2 \kappa^2 \sqrt{r}}{ \lambda^4 L^{3/2}}.
\end{align*}
where we used the fact that $\|\bm{Q}_2 \|_F \lesssim \err \frac{1}{L\lambda^2}$ when $\bm{Q}_2 = \pert \uhat \rhatinv \mathcal{O}$ when \eqref{rhatinvbd2} holds, together with the high probability bound $\|\nl\|\lesssim \sigma \sqrt{n}$.  These bounds hold cumulatively with probability at least $1 - \exp( - c n)$.
\end{itemize}
   Thus, we have shown that
   \begin{align*}
       \bigg| \langle \bm{G}^{(1)}, J_4 \rangle \bigg| &\lesssim \frac{\sigma^4 \kappa^2 n^2 r^{7/2}}{\lambda^4 L^{3/2}} + \frac{\sigma^4 \kappa^2 n r^{3}}{\lambda^4 L} +  \frac{\sigma^4 n^2 \kappa^2 \sqrt{r}}{ \lambda^4 L^{3/2}} \\
       &\lesssim \frac{\sigma^4 \kappa^2 n^2 r^{7/2}}{\lambda^4 L^{3/2}} + \frac{\sigma^4 \kappa^2 n r^{3}}{\lambda^4 L} \\
       &\asymp \frac{\sigma^4 \kappa^2 n^2 r^{7/2}}{\lambda^4 L^{3/2}},
       \numberthis \label{g1j4}
   \end{align*}
   since $L \lesssim n$. 
\item \textbf{The term $\langle \bm{G}^{(1)},J_5\rangle$}. Here again we use Cauchy-Schwarz and the bound \eqref{j5} to yield
\begin{align*}
\bigg| \langle \bm{G}^{(1)} , J_5 \rangle \bigg| \lesssim \err \frac{\sigma^2 nr^{3/2}}{\lambda^2 \sqrt{L}} \bigg( \err \bigg)^2 \asymp \frac{\sigma^5 \kappa^3 n^{5/2} r^3}{\lambda^5 L^2}. \numberthis \label{g1j5}
\end{align*}
\item \textbf{The term $\langle \bm{G}^{(1)},J_6\rangle.$} Again we apply Cauchy-Schwarz with the bound \eqref{j6} to yield
\begin{align*}
    \bigg| \langle \bm{G}^{(1)}, J_6 \rangle \bigg| &\lesssim \err \bigg[ \frac{\sigma^2 n r}{\lambda^2 \sqrt{L}} \bigg( \err \bigg)^2 + \frac{\sigma^2}{\lambda^2} \err \bigg] \\
    &\asymp \frac{\sigma^5 \kappa^3 n^{5/2} r^{5/2}}{\lambda^5 L^4} + \frac{\sigma^4 \kappa^2 nr}{\lambda^4 L}. \numberthis \label{g1j6}
\end{align*}
\item \textbf{The term $\langle \bm{G}^{(1)},J_7\rangle$}. By a completely identical argument to the term $\langle \bm{G}^{(1)}, J_4\rangle$, we can derive the exact same bound, yielding
\begin{align*}
    \bigg| \langle \bm{G}^{(1)}, J_7 \rangle \bigg| \lesssim   \frac{\sigma^4 \kappa^2 n^2 r^{7/2}}{\lambda^4 L^{3/2}}   
    \numberthis \label{g1j7}
\end{align*}
which holds with the same probability as in \eqref{g1j4}.
\item \textbf{The term $\langle \bm{G}^{(1)}, J_8\rangle$}.  The same argument as for the term $\langle \bm{G}^{(1)}, J_4\rangle$ applies verbatim with different choices of $\bm{Q}_1$ and $\bm{Q}_2$, yielding the bound
\begin{align*}
    \bigg| \langle \bm{G}^{(1)}, J_8 \rangle \bigg| \lesssim  \frac{\sigma^4 \kappa^2 n^2 r^{7/2}}{\lambda^4 L^{3/2}}. 
    \numberthis\label{g1j8}
\end{align*}
\item \textbf{The term $\langle \bm{G}^{(1)}, J_9\rangle$.} By Cauchy-Schwarz and \eqref{j9},
\begin{align*}
    \bigg| \langle \bm{G}^{(1)}, J_9 \rangle \bigg| &\lesssim \err \frac{\sigma^2 r n}{L\lambda^2} \err \\
    &\asymp  \frac{\sigma^4 \kappa^2 r^2 n^2}{\lambda^4 L^2}. \numberthis \label{g1j9}
\end{align*}
\end{itemize}
Combining \cref{g1j1,g1j2,g1j3,g1j4,g1j5,g1j6,g1j7,g1j8,g1j9}, we have that with probability at least $1 - \exp( - c n)$,
\begin{align*}
    \bigg| \sum_{i=1}^{9} \langle \bm{G}^{(1)}, J_i \rangle \bigg| &\lesssim \frac{\sigma^2 n r}{\sqrt{L}\lambda^2} \bigg( \err \bigg)^4 + \frac{\sigma^2}{\lambda^2} \bigg( \err \bigg)^4 + \bigg( \err \bigg)^3 \frac{\sigma^2 n r}{\sqrt{L}\lambda^2} \\
    &\quad + \frac{\sigma^4 \kappa^2 n^2 r^{7/2}}{\lambda^4 L^{3/2}} +  \frac{\sigma^5 \kappa^3 n^{5/2} r^3}{\lambda^5 L^2} + \frac{\sigma^5 \kappa^3 n^{5/2} r^{5/2}}{\lambda^5 L^4} + \frac{\sigma^4 \kappa^2 nr}{\lambda^4 L}+ \frac{\sigma^4 \kappa^2 r^2 n^2}{\lambda^4 L^2} \\
    &\lesssim  
    \frac{\sigma^4 \kappa^2 n^2 r^{7/2}}{\lambda^4 L^{3/2}}, 
\end{align*}
where we have used the assumption that $L \lesssim n$ and $\err \lesssim 1$. This completes the proof.  
\end{proof}

\subsection{Proof of \cref{thm:normality}}

\begin{proof}[Proof of \cref{thm:normality}]
First invoke \cref{thm:asymptotics} to have that
 \begin{align*}
     \| \pert \uhat \|_F^2 &=     {\sf Tr}\bigg( \bm{U}_{\perp} \bm{U}_{\perp}\t \bigg[ \sum_l \bm{N}^{(l)} \bm{U} \bm{R}^{(l)} \R \bigg] \bigg[ \sum_l \R  \bm{R}^{(l)}\bm{U} \bm{N}^{(l)} \bigg] \bigg) + {\sf Res},
 \end{align*}
where, with probability at least $1 - \exp( - c n) - \exp( - c \sqrt{L})$, 
\begin{align*}
    {\sf Res} \lesssim \frac{\sigma^3 \kappa^4 r^{5/2} n^{3/2}}{\lambda^3 L^{3/2}} + \frac{\sigma^4 \kappa^2 n^2 r^{7/2}}{\lambda^4 L^{3/2}}.
\end{align*}
We note that
\begin{align*}
    \sigma^2 \sqrt{n/2} \| \R \|_F \gtrsim \frac{\sigma^2 \sqrt{nr}}{\lambda^2 L}.
\end{align*}
Thus, with probability at least $1 - \exp( - c n) - \exp( - c \sqrt{L})$, it holds that
\begin{align*}
  \frac{ {\sf Res} }{\sigma^2 \sqrt{n/2} \| \R \|_F } \lesssim  
  \frac{\sigma\kappa^4 r^{2} n}{\lambda \sqrt{L}} + \frac{\sigma^2 \kappa^2 n^{3/2} r^3}{\lambda^2 \sqrt{L}}.
\end{align*}
We claim that this quantity is $o(1)$.  Indeed, the first quantity is $o(1)$ by assumption.  The second term satisfies
\begin{align*}
    \frac{\sigma^2 \kappa^2 n^{3/2} r^3}{\lambda^2 \sqrt{L}} \times \sqrt{\frac{n}{L}} \sqrt{\frac{L}{n}} \asymp \frac{\sigma^2 \kappa^2 n^2 r^3}{\lambda^2 L} \sqrt{\frac{L}{n}},
\end{align*}
which is $o(1)$ again with the assumption that $L \lesssim n$.

 It therefore suffices to focus on the leading-order term. We further decompose the leading-order term via
\begin{align*}
    {\sf Tr}\bigg( &\bm{U}_{\perp} \bm{U}_{\perp}\t \bigg[ \sum_l \bm{N}^{(l)} \bm{U} \bm{R}^{(l)} \R \bigg] \bigg[ \sum_l \R  \bm{R}^{(l)}\bm{U} \bm{N}^{(l)} \bigg] \bigg) \\
    &=  {\sf Tr}\bigg(  \bigg[ \sum_l \bm{N}^{(l)} \bm{U} \bm{R}^{(l)} \R \bigg] \bigg[ \sum_l \R  \bm{R}^{(l)}\bm{U} \bm{N}^{(l)} \bigg] \bigg) \\
    &\quad - {\sf Tr} \bigg( \bm{U} \bm{U}\t \bigg[ \sum_l \bm{N}^{(l)} \bm{U} \bm{R}^{(l)} \R \bigg] \bigg[ \sum_l \R  \bm{R}^{(l)}\bm{U} \bm{N}^{(l)} \bigg] \bigg).
\end{align*}
The following lemma bounds the second term.
\begin{lemma} \label{lem:extraterm}
Under the conditions of \cref{thm:normality}, with probability at least $1 - \exp( - c t^2)$ it holds that 
    \begin{align*}
        \| \U\t \sum_l \nl \sl \U \R \|_F \lesssim (\sqrt{r} + t) \frac{\sigma \kappa \sqrt{r}}{\sqrt{L} \lambda}
    \end{align*}
\end{lemma}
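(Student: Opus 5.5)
The plan is to reduce to a fixed Gaussian-like linear form in the noise and apply Hanson--Wright/Hoeffding-type concentration directly. No $\varepsilon$-net is needed, because $\U$, $\sl = \U\rl\U\t$ and $\R$ are all deterministic. Write $\U\t\nl\sl\U\R = \U\t\nl\U\rl\R$, and set
\begin{align*}
\bm{Z} := \sum_l \U\t\nl\U\rl\R \in \mathbb{R}^{r\times r}.
\end{align*}
Each entry $\bm{Z}_{ij} = \sum_l \langle \nl, \U e_i e_j\t \R \rl \U\t\rangle$ is a linear combination of the independent upper-triangular entries of the $\nl$. It is therefore subgaussian with variance proxy
\begin{align*}
\sigma^2\sum_l \|\U e_i e_j\t\R\rl\U\t\|_F^2 \le \sigma^2 \sum_l \|e_j\t \R\rl\|^2 .
\end{align*}
The constant factor from symmetrization of the Wigner matrix is harmless.

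First compute the total second moment. Summing the proxy over $i$ and $j$ gives
\begin{align*}
\mathbb{E}\|\bm{Z}\|_F^2 \lesssim \sigma^2 r\sum_l \|\R\rl\|_F^2 = \sigma^2 r\,{\sf Tr}\big(\R\,\bm{\mathcal{RR}}\t\,\R\big) = \sigma^2 r\,{\sf Tr}\R \le \frac{\sigma^2 r^2}{L\lambda^2}.
\end{align*}
This already yields the $\sqrt r\cdot \sigma\sqrt r/(\sqrt L\lambda)$ part of the bound; the $\kappa$ in the statement is slack. Next, for the deviation term, view $\bm{Z} = \mathcal{A}(\bm{n})$ as a linear map of the standardized noise vector $\bm{n}$ consisting of all upper-triangular entries across all $l$. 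Then $\|\bm{Z}\|_F$ is a Lipschitz (convex) function of $\bm{n}$ with Lipschitz constant $\|\mathcal{A}\|_{op}$. Bound this via
\begin{align*}
\|\mathcal{A}\|_{op}^2 = \sup_{\|\bm{B}\|_F\le1}\sigma^2\sum_l\|\U\bm{B}\R\rl\U\t\|_F^2 \lesssim \sigma^2\sum_l\|\R\rl\|^2\ \lesssim\ \sigma^2\|\R\| \le \frac{\sigma^2}{L\lambda^2},
\end{align*}
where the middle step uses $\sum_l \R\rl\rl\R = \R$.

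Finally, conclude with the standard subgaussian norm-concentration inequality for linear images, $\|\mathcal{A}\bm{n}\|_F \le \|\mathcal{A}\|_{HS} + t\|\mathcal{A}\|_{op}$ with probability at least $1-\exp(-ct^2)$ (e.g.\ Hanson--Wright applied to $\|\mathcal{A}\bm{n}\|_F^2$, as in Theorem 6.3.2 of \citet{vershynin_high-dimensional_2018}). This gives
\begin{align*}
\|\bm{Z}\|_F\lesssim \frac{\sigma\sqrt r\cdot\sqrt r}{\sqrt L\lambda}+ t\frac{\sigma}{\sqrt L\lambda},
\end{align*}
which is dominated by the claimed $(\sqrt r+t)\sigma\kappa\sqrt r/(\sqrt L\lambda)$ since $\kappa\ge1$.

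The only delicate point is the Wigner symmetry: diagonal and off-diagonal entries are tied. This is handled by splitting $\nl$ into its upper-triangular part plus its transpose as in the proof of \cref{lem:initialconcentration}, and treating each piece separately; each is a linear map of independent subgaussians with the same $\|\cdot\|_{HS}$ and $\|\cdot\|_{op}$ bounds up to constants. The identity $\sum_l\R\rl\rl\R=\R$ is the key step making both norms small, so I expect no real obstacle.
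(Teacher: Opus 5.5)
Your proof is correct. It follows a different, sharper route than the paper's.

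\textbf{The paper's route.} The paper first pulls $\R$ out, writing $\|\U\t\sum_l\nl\sl\U\R\|_F\le\sqrt{r}\,\|\R\|\,\|\U\t\sum_l\nl\sl\|$. It then bounds the spectral norm by $\sigma(\sqrt r+t)\sqrt{L}\lambda_{\max}$ using the same $\varepsilon$-net template as in \cref{lem:initialconcentration}. The net is over $r$-dimensional unit vectors, and each fixed bilinear form has variance proxy $\sigma^2 L\lambda_{\max}^2$. Combining this with $\|\R\|\le 1/(L\lambda^2)$ gives exactly the stated $(\sqrt r+t)\sigma\kappa\sqrt r/(\sqrt L\lambda)$.

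\textbf{Your route.} You keep $\R$ inside the linear form, so you can use the whitening identity $\sum_l\R(\rl)^2\R=\R$. This gives $\|\mathcal{A}\|_{HS}^2\lesssim\sigma^2 r\,{\sf Tr}\R$ and $\|\mathcal{A}\|_{op}^2\lesssim\sigma^2\|\R\|$. Applying Theorem 6.3.2 of \citet{vershynin_high-dimensional_2018} to the standardized independent upper-triangular entries then yields $\sigma r/(\sqrt L\lambda)+t\sigma/(\sqrt L\lambda)$. This bound has no $\kappa$, and its deviation term is smaller by a factor $\kappa\sqrt r$.

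\textbf{What each buys.} The paper's argument is quick given machinery already in place. However, it discards the cancellation between $\R$ and $\sum_l(\rl)^2$, and that is where its $\kappa$ comes from. Your argument is net-free and tighter. Fed into \eqref{extratermbound}, it would let one replace the last requirement in \eqref{variancelowerbound} by the weaker condition $r^{3/2}/\sqrt n+\log(n)/\sqrt{nr}\ll1$.

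\textbf{One small slip.} The intermediate expression $\sigma^2\sum_l\|\R\rl\|^2$ in your operator-norm display is not $\lesssim\sigma^2\|\R\|$ in general. It is only bounded by $\sigma^2\min\{{\sf Tr}\R,\ \kappa^2\|\R\|\}$, and the ${\sf Tr}\R$ case can be attained. The step you actually need is $\sum_l\|\bm{B}\R\rl\|_F^2={\sf Tr}(\bm{B}\R\bm{B}\t)\le\|\R\|\,\|\bm{B}\|_F^2$, which is what your cited identity gives. The cruder bound would also suffice for the statement as written, since it carries $\kappa\sqrt r$ in front of $t$.
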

\begin{proof}
    See \cref{sec:extratermproof}.
\end{proof}
As a result, we have that with probability at least $1 - O(n^{-10})$, 
\begin{align*}
    \bigg| {\sf Tr} \bigg( \bm{U} \bm{U}\t \bigg[ \sum_l \bm{N}^{(l)} \bm{U} \bm{R}^{(l)} \R  \bigg] \bigg[ \sum_l \R  \bm{R}^{(l)}\bm{U} \bm{N}^{(l)} \bigg] \bigg) \bigg| &= \| \U\t \sum_l \nl \sl \U \R \|_F^2 \\
    &\lesssim \big( r + \log(n)  + \sqrt{r\log(n)}) \frac{\sigma^2 \kappa^2 r}{L \lambda^2} \\
    &\lesssim \frac{\sigma^2 \kappa^2 r^2\log(n)}{L \lambda^2}. \numberthis \label{extratermbound}
\end{align*}
We note that $\sigma^2 \sqrt{n/2}\|\R \|_F \gtrsim \sigma^2 \frac{\sqrt{nr}}{\lambda^2 L}$.  Therefore, on the event above it holds that 
\begin{align*}
   \frac{1}{\sigma^2 \sqrt{n/2} \|\R\|_F } \| \U\t \sum_l \nl \sl \U \R \|_F^2  &\lesssim \frac{\lambda^2 L}{\sigma^2 \sqrt{nr}} \frac{\sigma^2 \kappa^2 r^2\log(n)}{L \lambda^2} \\
   &\lesssim \frac{\kappa^2 r^{3/2} \log(n)}{\sqrt{n}} \ll 1,
\end{align*}
where the final inequality follows from the assumption
\eqref{variancelowerbound}.  Thus, it suffices to consider the asymptotics of the remaining term. 

The remaining term 
can be written via
\begin{align*}
{\sf Tr}\bigg(  \bigg[ \sum_l \bm{N}^{(l)} &\bm{U} \bm{R}^{(l)} \R \bigg] \bigg[ \sum_l \R  \bm{R}^{(l)}\bm{U} \bm{N}^{(l)} \bigg] \bigg)  \\
&= \sum_{i=1}^{n} \sum_{k_1,k_2=1}^{n} \sum_{l_1,l_2=1}^{L} \bm{N}_{ik_1}^{(l_1)} \bm{N}_{ik_2}^{(l_2)} \bigg[  \big( \bm{S}^{(l_1)} \bm{U} \R  \R \U\t \bm{S}^{(l_2)}  \big)_{k_1k_2} \bigg].
\end{align*}
We will apply the martingale central limit theorem to this quantity.  
Define the coefficient 
\begin{align*}
    c_{k_1k_2}^{l_1l_2} :=  \big( \bm{S}^{(l_1)} \bm{U} \R  \R \U\t \bm{S}^{(l_2)}  \big)_{k_1k_2} = e_{k_1}\t \U \bm{R}^{(l_1)} \R \R \bm{R}^{(l_2)} \U\t e_{k_2}.
\end{align*}
Define the random variable
\begin{align*}
    \bm{Z}_{ikl} := \begin{cases} \bm{N}^{(l)}_{kk} & i = k \\
    \sqrt{2} \bm{N}^{(l)}_{ik} & i \neq k,
    \end{cases}
\end{align*}
so that each $\bm{Z}_{ikl}$ has common variance $\sigma^2$ and fourth moment $\sigma_4^4$. 
Then the sum can be written as \renewcommand{\Z}{\bm{Z}}
\begin{align*}
    \sum_{i=1}^{n} &\sum_{k_1,k_2=1}^{n} \sum_{l_1,l_2} \bm{N}_{ik_1}^{(l_1)} \bm{N}_{ik_2}^{(l_2)} c_{k_1k_2}^{l_1l_2}   \\
   &= \sum_{i} \sum_{l} c_{ii}^{ll} \bm{Z}_{iil}^2 + \sum_{i} \sum_{l_1} \sum_{l_2 \neq l_1} c_{ii}^{l_1l_2} \bm{Z}_{iil_1} \bm{Z}_{iil_2}  + \frac{1}{\sqrt{2}} \sum_{i} \sum_{l} \sum_{k\neq i} c_{ik}^{ll} \Z_{iil} \Z_{ikl} + \frac{1}{\sqrt{2}} \sum_{i} \sum_{l_1} \sum_{l_2 \neq l_1} \sum_{k \neq i} c_{ik}^{l_1l_2} \Z_{iil_1} \Z_{ikl_2} \\
   &\quad + \frac{1}{\sqrt{2}} \sum_{i} \sum_{l_1} \sum_{l_2 \neq l_1} \sum_{k\neq i} c_{ki}^{l_1l_2} \Z_{ikl_1} \Z_{iil_2}  + \frac{1}{2} \sum_{i} \sum_{l} \sum_{k_1\neq i} \sum_{k_2 \neq i, k_1} c_{k_1k_2}^{ll} \Z_{ik_1l} \Z_{ik_2l} + \frac{1}{2} \sum_{i} \sum_{l} \sum_{k_1 \neq i} c_{k_1k_1}^{ll} \Z_{ik_1l}^2 \\
   &\quad + \frac{1}{2} \sum_{i} \sum_{l_1} \sum_{l_2 \neq l_1} \sum_{k_1 \neq i} \sum_{k_2 \neq i,k_1} c_{k_1k_2}^{l_1l_2} \Z_{ik_1l_1} \Z_{ik_2l_2}  + \frac{1}{2} \sum_{i} \sum_{l_1} \sum_{l_2 \neq l_1} \sum_{k_1 \neq i} c_{k_1k_1}^{l_1l_2} \Z_{ik_1l_1} \Z_{ik_1l_2} \\
 &= \sum_{i} \sum_{l} c_{ii}^{ll} \bm{Z}_{iil}^2 + 2 \sum_{i} \sum_{l_1} \sum_{l_2< l_1} c_{ii}^{l_1l_2} \bm{Z}_{iil_1} \bm{Z}_{iil_2} \\
    &\quad + \frac{1}{\sqrt{2}} \sum_{i} \sum_{l} \sum_{k< i} c_{ik}^{ll} \big( \Z_{iil} \Z_{ikl} + \Z_{kkl} \Z_{ikl} \big) \\
    &\quad + \frac{1}{\sqrt{2}} \sum_{i} \sum_{l_1} \sum_{l_2 < l_1} \sum_{k < i} c_{ik}^{l_1l_2} \big( \Z_{iil_1} \Z_{ikl_2} + \Z_{kkl_1} \Z_{ikl_2} \big) \\
   &\quad + \sum_{i} \sum_{l} \sum_{k_1 < i} \sum_{k_2 < k_1} c_{k_1k_2}^{ll} \Z_{ik_1l} \Z_{ik_2l} + 2 \sum_{i} \sum_{l_1} \sum_{l_2 < l_1} \sum_{k_1 < i} \sum_{k_2 < k_1} c_{k_1k_2}^{l_1l_2} \Z_{ik_1l_1} \Z_{ik_2l_2} \\
   &\quad + \frac{1}{2} \sum_{i} \sum_{k_1 < i} \sum_{l} ( c_{k_1k_1}^{ll}  + c_{ii}^{ll}) \Z_{ik_1l}^2 \\
   &\quad + \sum_{i} \sum_{k_1 < i} \sum_{l_1} \sum_{l_2 < l_1} (c_{k_1k_1}^{l_1l_2}  + c_{ii}^{l_1l_2}) \Z_{ik_1l_1} \Z_{ik_1l_2}. 
\end{align*} 
Let $M$ denote the random variable above.  We  now calculate the expectation of $M$.  Only the terms with $\bm{Z}_{i k_1 l_1}^2$ contribute, yielding
\begin{align*}
    \mathbb{E}( M) &= \sigma^2 \sum_{i} \sum_{l} c_{ii}^{ll} + \frac{\sigma^2}{2}\sum_{i} \sum_{k_1 < i} \sum_{l} ( c_{k_1k_1}^{ll} + c_{ii}^{ll}) \\
    &=  \frac{\sigma^2 (n-1)}{2} \sum_{i} \sum_{l} e_i\t \U \rl \R \R \rl  \U\t e_i \\
    &= \frac{\sigma^2 (n-1)}{2} {\sf Tr} \bigg( \sum_{l} (\rl)^2 \R \R \bigg) \\
    &=  \frac{\sigma^2 (n-1)}{2} {\sf Tr}\big( \R \big),
\end{align*}
which is nearly the centering term.  Note that
\begin{align*}
    \frac{\frac{\sigma^2}{2} {\sf Tr}\big( \R \big) }{\sigma^2\sqrt{n/2} \| \R \|_F} \lesssim \frac{\lambda^2 L}{\sqrt{nr}} \frac{\sigma^2 r}{L \lambda^2} &\asymp \sqrt{\frac{r}{n}} \ll 1.
\end{align*}
Therefore, if we can show that $\frac{M - \mathbb{E}(M)}{\sigma^2 \sqrt{n/2} \| \R \|_F} \to \mathcal{N}(0,1)$, then it also holds that $\frac{M - \frac{\sigma^2 n}{2} {\sf Tr}\big( \R \big)}{\sigma^2 \sqrt{n/2} \| \R \|_F} \to \mathcal{N}(0,1)$ since $r/n \to 0$ by \eqref{variancelowerbound}.  We therefore focus on the asymptotic normality of $M - \mathbb{E} M$.

We now calculate the variance.  
In order to do so, we first define a $\sigma$-algebra $\mathcal{F}_{t} := \sigma\{ \bm{z}_{1}, .. \bm{z}_t \}$, where $\bm{z}_{t} = \bm{Z}_{ik l}$ with $t(i,k,l)$ as follows.  First, define the set
\begin{align*}
    \mathcal{I} := \{(i,k,l): 1 \leq i \leq n, 1 \leq l \leq L, 1 \leq k \leq i\}.
\end{align*}
Then enumerate $\mathcal{I}$ via 
\begin{align*}
    (1,1,1), (1,2,1), ..., (1,i,1), (2,1,1), \dots, (n,i,1), (1,1,2), \dots,
\end{align*}
so that first $i$ is fixed, then $l$ is fixed, and then $k$ is fixed (so $k$ increases fastest and $i$ increases slowest).  
Let $M_T = M - \frac{\sigma^2 (n-1)}{2} {\sf Tr} (\R )$, which is mean-zero.  
From this definition it is evident that $M_T$ is a sum of martingale differences $Y_t := M_t - M_{t-1}$ with increments given by
\begin{align*}
    Y_t &=  A_t \big( \bm{Z}_t^2 - \sigma^2 \big) + B_t \bm{Z}_t,
\end{align*}
where for $t = t(i,k,l)$ we have
\begin{align*}
    A_{t} &= \begin{cases} c_{kk}^{ll} & i = k \\
    c_{kk}^{ll} + c_{ii}^{ll} & k < i; \end{cases} \\
    B_t &= \begin{cases}
        2 \sum_{l_2 < l} c_{ii}^{ll_2} \Z_{iil_2} + \frac{1}{\sqrt{2}} \sum_{k_2 <i} \bigg( c_{ik_2}^{ll_2} \Z_{ik_2l} + \sum_{l_2 < l} c_{ik_2}^{ll_2} \Z_{ik_2l_2} \bigg), & k = i; \\
     \sum_{l_2 < l} (c_{kk}^{ll_2} + c_{ii}^{ll_2}) \Z_{ikl_2} + \frac{1}{\sqrt{2}} \sum_{l_3 \geq l} c_{ik}^{l_3l} \Z_{kkl_3} + \sum_{k_2 < k}\bigg( c_{kk_2}^{ll} \Z_{ik_2l} + 2 \sum_{l_2 < l} c_{kk_2}^{ll_2} \Z_{ik_2l_2} \bigg) & k < i.
    \end{cases}
\end{align*}
Note that these coefficients are $\mathcal{F}_{t-1}$ measurable random variables since any appearance of $\bm{Z}_{i'k'l'}$ in the definition of $B_t$ has at least one index $t$ less than $t(i,k,l)$ with respect to this enumeration of $\mathcal{I}$.  
We compute the conditional variance:
\begin{align*}
    \mathbb{E} Y_t^2 | \mathcal{F}_{t-1} &= A_t^2 \big( \sigma_4^4 - \sigma^4) + B_t^2 \sigma^2 + 2 A_t B_t \sigma_3^3,
\end{align*}
where $\sigma_4^4$ and $\sigma_3^3$ are the noncentral moments of each $\bm{Z}_t$.  Furthermore, observe that $B_t$ is linear in previous $\bm{Z}_s$ (for $s < t$) and $A_t$ is deterministic.  Therefore, since cross-terms cancel,
\begin{align*}
    \sum_{t=1}^{T} \mathbb{E} Y_t^2 &= \sum_{t=1}^{T} A_t^2 \big( \sigma_4^4 - \sigma^4) +  \sigma^2 \mathbb{E} B_t^2.
\end{align*}
Moreover, we have that
\begin{align*}
    \mathbb{E} B_t^2 &= \begin{cases} 4 \sigma^2 \sum_{l_2 < l} (c_{ii}^{ll_2})^2 + \frac{\sigma^2 }{2} \sum_{k_2 < i} (c_{ik_2}^{ll_2})^2  + \frac{\sigma^2}{2} \sum_{k_2 < i} \sum_{l_2 < l} (c_{ik_2}^{ll_2})^2 & k = i; \\
\sigma^2 \sum_{l_2 < l} ( c_{kk}^{ll_2} + c_{ii}^{ll_2})^2 + \frac{\sigma^2}{2} \sum_{l_3 \geq l} (c_{ik}^{l_3l})^2 + \sigma^2 \sum_{k_2 < k} (c_{kk_2}^{ll})^2 + 2\sigma^2 \sum_{k_2 < k} \sum_{l_2 < l} (c_{kk_2}^{ll_2})^2 & k <i.
    \end{cases}
\end{align*}
Therefore,
\begin{align*}
    \sum_{t=1}^{T} \mathbb{E} Y_t^2 &= \sum_{i=1}^{n} \sum_{l=1}^{L} (\sigma_4^4 - \sigma^4) (c_{ii}^{ll})^2 + \sum_{i=1}^{n} \sum_{k < i} \sum_{l=1}^{L} (\sigma_4^4 - \sigma^4)( c_{kk}^{ll} + c_{ii}^{ll})^2 \\
&\quad + 4 \sigma^4 \sum_{i=1}^{n} \sum_{l=1}^{L} \sum_{l_2 < l} (c_{ii}^{ll_2})^2 + \frac{\sigma^4 }{2}\sum_{i=1}^{n} \sum_{l=1}^{L} \sum_{k_2 < i} (c_{ik_2}^{ll_2})^2  + \frac{\sigma^4}{2} \sum_{i=1}^{n}  \sum_{l=1}^{L}\sum_{k_2 < i} \sum_{l_2 < l} (c_{ik_2}^{ll_2})^2\\
&\quad + \sigma^4 \sum_{i=1}^{n}  \sum_{l=1}^{L} \sum_{k < i} \sum_{l_2 < l} ( c_{kk}^{ll_2} + c_{ii}^{ll_2})^2 + \frac{\sigma^4}{2} \sum_{i=1}^{n}  \sum_{l=1}^{L} \sum_{k < i}  \sum_{l_3 \geq l} (c_{ik}^{l_3l})^2 + \sigma^4 \sum_{i=1}^{n}  \sum_{l=1}^{L} \sum_{k < i}  \sum_{k_2 < k} (c_{kk_2}^{ll})^2 \\
&\quad + 2\sigma^4 \sum_{i=1}^{n}  \sum_{l=1}^{L} \sum_{k < i}  \sum_{k_2 < k} \sum_{l_2 < l} (c_{kk_2}^{ll_2})^2\\
&= (n - 1)\sum_{i,l} (\sigma^4_4 - \sigma^4) (c_{ii}^{ll})^2 +    \sum_{i=1}^{n} \sum_{k} \sum_{l=1}^{L} (\sigma_4^4-\sigma^4) c_{ii}^{ll} c_{kk}^{ll}  \\
&\quad + 2\sigma^4 \sum_{i} \sum_{l_1} \sum_{l_2} (c_{ii}^{l_1l_2})^2 - 2\sigma^4 \sum_{i} \sum_{l} (c_{ii}^{ll})^2 \\
&\quad + \frac{\sigma^4}{2} \sum_{i} \sum_{l} \sum_{k} (c_{ik}^{ll})^2 - \frac{\sigma^4}{2} \sum_{i=1}^{n} \sum_{l} (c_{ii}^{ll})^2 \\
&\quad + \frac{\sigma^4}{8} \sum_{i} \sum_{k} \sum_{l_1} \sum_{l_2} (c_{ik}^{l_1l_2})^2 - \frac{\sigma^4}{8} \sum_{i} \sum_{k} \sum_{l} (c_{ik}^{ll})^2 \\
&\quad + \frac{\sigma^4}{4} \sum_{i} \sum_{l} \sum_{k_1} \sum_{k_2} (c_{k_1k_2}^{ll})^2 - \frac{\sigma^4}{2} \sum_{i} \sum_{l} \sum_{k_1} (c_{k_1i}^{ll})^2 +\frac{\sigma^4}{4} \sum_{i} \sum_{l} (c_{ii}^{ll})^2 \\
&\quad + \frac{\sigma^4}{2} \sum_{i,k_1,k_2,l_1,l_2} (c_{k_1k_2}^{l_1l_2})^2 - \sigma^4 \sum_{i,k_1,l_1,l_2} (c_{k_1i}^{l_1l_2})^2 + \frac{\sigma^4}{2} \sum_{i,l_1,l_2} (c_{ii}^{l_1l_2})^2 \\
&\quad + \frac{\sigma^4}{4} \sum_{i,k_1,l_1,l_2}( c_{k_1k_1}^{l_1l_2})^2 +\frac{\sigma^4}{4} \sum_{i,k_1,l_1,l_2}  c_{ii}^{l_1l_2})^2  + \frac{\sigma^4}{2} \sum_{i,k_1,l_1,l_2} c_{k_1k_1}^{l_1l_2}c_{ii}^{l_1l_2} \\
&\quad - \frac{\sigma^4}{4} \sum_{i,k_1,l} (c_{k_1k_1}^{ll})^2 - \frac{\sigma^4}{4} \sum_{i,k_1,l} (c_{ii}^{ll})^2 - \frac{\sigma^4}{2}  \sum_{i,k_1,l} c_{k_1k_1}^{ll}c_{ii}^{ll}\numberthis \label{tosimplify}
\end{align*}
We now simplify \eqref{tosimplify}.  First, we note that we have the following identity: 
\begin{align*}
    \sum_{k_1} \sum_{k_2} ( c^{l_1l_2}_{k_1k_2} )^2 &= \sum_{k_1} \sum_{k_2} \Bigg[ e_{k_1}\t \U \bm{R}^{(l_1)} \R \R \bm{R}^{(l_2)} \U\t e_{k_2} \Bigg]^2 \\
    &= \sum_{j_1,j_2,j_3,j_4}\bigg(\bm{R}^{(l_1)} \R \R \bm{R}^{(l_2)}\bigg)_{j_1j_2}\bigg(\bm{R}^{(l_1)} \R \R \bm{R}^{(l_2)}\bigg)_{j_3j_4} \\
    &\quad \times \sum_{k_1} \sum_{k_2} \U_{k_1j_1} \U_{k_1j_3} \U_{k_2j_2} \U_{k_2j_4} \\
    &= \sum_{j_1,j_2,j_3,j_4} \bigg(\bm{R}^{(l_1)} \R\R  \bm{R}^{(l_2)}\bigg)_{j_1j_2}\bigg(\bm{R}^{(l_1)} \R \R \bm{R}^{(l_2)}\bigg)_{j_3j_4} \mathbb{I}\{j_1 = j_3\} \mathbb{I}\{ j_2 = j_4\} \\
    &= \sum_{j_1,j_2} \bigg( \bm{R}^{(l_1)} \R \R \bm{R}^{(l_2)}\bigg)_{j_1j_2}^2 \\
    &= \bigg\| \bm{R}^{(l_1)} \R\R  \bm{R}^{(l_2)} \bigg\|_F^2,
\end{align*}
where we have used the orthogonality of $\U$ above.  
Therefore, \eqref{tosimplify} can be written as
\begin{align*}
    \eqref{tosimplify} &= (n-1) (\sigma_4^2 - \sigma^4) \sum_{i,l} (c_{ii}^{ll})^2 +(\sigma_4^2 - \sigma^4) \sum_{i,k,l} c_{ii}^{ll} c_{kk}^{ll} + \sigma^4 \bigg(-2 - \frac{1}{2} + \frac{1}{4} - \frac{n}{4} - \frac{n}{4} \bigg)    \sum_{i,l} (c_{ii}^{ll})^2 \\
    &\quad + \sigma^4 \bigg( \frac{n}{2} + 2 + \frac{1}{2} \bigg) \sum_{i,l_1,l_2} (c_{ii}^{l_1l_2})^2 \\
    &\quad + \frac{\sigma^4}{2} \sum_{i,k_1,l_1,l_2} c_{k_1k_1}^{l_1l_2} c_{ii}^{l_1l_2} - \frac{\sigma^4}{2} \sum_{k,k_1,l} c_{k_1k_1}^{ll} c_{kk}^{ll} \\
    &\quad + \sigma^4 \bigg[ \frac{n}{2} - 1 + \frac{1}{8} \bigg] \| \R \|_F^2 + \sigma^4 \bigg[ \frac{n}{4} - \frac{1}{8} \bigg] \sum_l \| \rl \R \R \rl \|_F^2 \\
    &= \frac{\sigma^4 n}{2} \bigg[ 1 + \frac{2}{n} \bigg] \| \R \|_F^2 + \frac{\sigma^4 n}{2} \bigg[ \frac{1}{2} - \frac{1}{4n} \bigg]  \sum_l \| \rl \R \R \rl \|_F^2 \\
    &\quad +\bigg[ (n-1)(\sigma_4^4 - \sigma^4) - \sigma^4 \big( \frac{n}{2} - 2.25\big) \bigg] \sum_{i,l} (c_{ii}^{ll})^2 \\
    &\quad + n \sigma^4 \bigg( \frac{1}{2} + \frac{2.5}{n} \bigg) \sum_{i,l_1,l_2} (c_{ii}^{l_1l_2})^2 + \frac{\sigma^4}{2} \sum_{i,k_1,l_1,l_2} c_{k_1k_1}^{l_1l_2} c_{ii}^{l_1l_2} - \frac{\sigma^4}{2} \sum_{k,k_1,l} c_{k_1k_1}^{ll} c_{ii}^{ll}.\end{align*}
We will show that under the conditions of \cref{thm:normality} that this equals $\frac{\sigma^4 n}{2} \| \R \|_F^2 ( 1+ o(1))$.  Indeed, we have that
\begin{align*}
    \bigg| \eqref{tosimplify} - \frac{\sigma^4 n}{2} \| \R \|_F^2 \bigg| &\lesssim \sigma^4 \| \R \|_F^2 + C \sigma^4 n \sum_{l} \| \rl \R \R \rl\|_F^2 + C n \sigma^4 \sum_{i,l} (c_{ii}^{ll})^2  \\
    &\quad + C n \sigma^4 \sum_{i,l_1,l_2} (c_{ii}^{l_1l_2})^2 + C \sigma^4 \sum_{i,k_1,l_1,l_2} c_{k_1k_1}^{l_1l_2} c_{ii}^{l_1l_2} + C \sigma^4 \sum_{k,k_1,l} c_{k_1k_1}^{ll} c_{ii}^{k_1k_1}.
\end{align*}
Note that $\frac{\sigma^4 n}{2} \| \R \|_F^2 \gtrsim \frac{\sigma^4 n r }{L^2 \lambda^4}$.  We also have that
\begin{align*}
    \sigma^4 n\sum_{i,l_1,l_2} (c_{ii}^{l_1l_2})^2 &= \sum_{i,l_1,l_2} \bigg( e_i\t \U \bm{R}^{(l_1)} \R \R \bm{R}^{(l_2)} \U\t e_i \bigg)^2 \\
    &\leq\sigma^4 n\max_{i} \| e_i\t \U \|^2 \sum_{i,l_1,l_2} \| e_i\t \U \|^2 \| \bm{R}^{(l_1)} \R \R \bm{R}^{(l_2)} \|_F^2 \\
    &\leq \| \U \|_{2,\infty}^2 \frac{ \sigma^4 n \kappa^4 r }{L^2 \lambda^4} \numberthis \label{ciboundguy} \\
    &\ll \frac{\sigma^4 n r}{L^2 \lambda^4}
\end{align*}
as long as $\|\U\|_{2,\infty}^2 \ll \frac{1}{\kappa^4}$.   The same arguments hold for each of the other terms containing $c_{ii}^{l_1l_2}$ or $c_{ii}^{ll}$ as well.  In addition,
\begin{align*}
    \sigma^4 n \sum_{l} \| \rl \R \R \rl \|_F^2 &\lesssim  \frac{\sigma^4 n r \kappa^4}{L^3 \lambda^4 } \ll \frac{\sigma^4 nr}{L^2 \lambda^4}
\end{align*}
as long as $\frac{\kappa^4}{L} \ll 1$.  Thus,
\begin{align*}
     \sum_{t=1}^{T} \mathbb{E} Y_t^2 = \frac{\sigma^4n}{2} \| \R \|_F^2 \bigg( 1 + o(1) \bigg).
\end{align*}
 For simplicity, define $$s^2 = \sum_{t} \mathbb{E} Y_t^2 = \frac{\sigma^4 n}{2} \| \R \|_F^2\big(1 + o(1) \big).$$ 
It suffices to prove the asymptotic normality of $\frac{M_t - \mathbb{E} M_t}{s}$.  We apply the martingale central limit theorem (Lemma 9.12 of \citet{bai_spectral_2010}).  We need to show that \begin{align*}
    \frac{\sum_{t=1}^{T} \mathbb{E} Y_t^2 | \mathcal{F}_{t-1} }{s^2} \xrightarrow{p} 1 ; \numberthis \label{martingale1} \\
    \frac{\sum_{t=1}^{T} \mathbb{E} Y_t^2 \mathbb{I}\{Y_t^2 \geq \eps s^2\}}{s^2} \to 0 \numberthis \label{martingale2}
\end{align*}
 for all $\eps > 0$. To prove \eqref{martingale1}, we  have that
\begin{align*}
    \sum_{t=1}^{T} \mathbb{E} Y_t^2 | \mathcal{F}_{t-1} 
    &= \sum_{t} A_t^2( \sigma_4^4 - \sigma^4) + B_t^2 \sigma^2 + 2 A_t B_t \sigma_3^3 \\
    &= (\sigma_4^4-\sigma^4) \sum_{i,l} (c_{ii}^{ll})^2 + \sigma^2 \sum_{i,l} \Bigg[  2 \sum_{l_2 < l} c_{ii}^{ll_2} \Z_{iil_2} + \frac{1}{\sqrt{2}} \sum_{k_2 <i} \bigg( c_{ik_2}^{ll_2} \Z_{ik_2l} + \sum_{l_2 < l} c_{ik_2}^{ll_2} \Z_{ik_2l_2} \bigg) \Bigg]^2 \\
    &\quad + 2 \sigma_3^3 \sum_{i,l} c_{ii}^{ll} \Bigg[  2 \sum_{l_2 < l} c_{ii}^{ll_2} \Z_{iil_2} + \frac{1}{\sqrt{2}} \sum_{k_2 <i} \bigg( c_{ik_2}^{ll_2} \Z_{ik_2l} + \sum_{l_2 < l} c_{ik_2}^{ll_2} \Z_{ik_2l_2} \bigg)\Bigg] \\
    &\quad + \sum_{i,l} \sum_{k<i} \big[ c_{kk}^{ll} + c_{ii}^{ll} \big]^2 (\sigma_4^4-\sigma^4 ) \\
    &\quad + \sigma^2 \sum_{i,l} \sum_{k <i} \Bigg[  \sum_{l_2 < l} (c_{kk}^{ll_2} + c_{ii}^{ll_2}) \Z_{ikl_2} + \frac{1}{\sqrt{2}} \sum_{l_3 \geq l} c_{ik}^{l_3l} \Z_{kkl_3} + \sum_{k_2 < k}\bigg( c_{kk_2}^{ll} \Z_{ik_2l} + 2 \sum_{l_2 < l} c_{kk_2}^{ll_2} \Z_{ik_2l_2} \bigg)\Bigg]^2 \\
    &\quad + 2 \sigma_3^3 \sum_{i,l} \sum_{k < i} [ c_{kk}^{ll} + c_{ii}^{ll}] \Bigg[  \sum_{l_2 < l} (c_{kk}^{ll_2} + c_{ii}^{ll_2}) \Z_{ikl_2} + \frac{1}{\sqrt{2}} \sum_{l_3 \geq l} c_{ik}^{l_3l} \Z_{kkl_3} + \sum_{k_2 < k}\bigg( c_{kk_2}^{ll} \Z_{ik_2l} + 2 \sum_{l_2 < l} c_{kk_2}^{ll_2} \Z_{ik_2l_2} \bigg)\Bigg].
\end{align*}
Define
\begin{align*}
    T_1 &:= (\sigma_4^4 - \sigma^4) \sum_{i,l} (c_{ii}^{ll})^2  + \sum_{i,l} \sum_{k<i} \big[ c_{kk}^{ll} + c_{ii}^{ll} \big]^2 (\sigma_4^4 - \sigma^4 ) ; \\
    T_2 &:= 2 \sigma_3^3 \sum_{i,l} c_{ii}^{ll} \Bigg[  2 \sum_{l_2 < l} c_{ii}^{ll_2} \Z_{iil_2} + \frac{1}{\sqrt{2}} \sum_{k_2 <i} \bigg( c_{ik_2}^{ll_2} \Z_{ik_2l} + \sum_{l_2 < l} c_{ik_2}^{ll_2} \Z_{ik_2l_2} \bigg)\Bigg] \\
    &\quad + 2 \sigma_3^3 \sum_{i,l} \sum_{k < i} [ c_{kk}^{ll} + c_{ii}^{ll}] \Bigg[  \sum_{l_2 < l} (c_{kk}^{ll_2} + c_{ii}^{ll_2}) \Z_{ikl_2} + \frac{1}{\sqrt{2}} \sum_{l_3 \geq l} c_{ik}^{l_3l} \Z_{kkl_3} + \sum_{k_2 < k}\bigg( c_{kk_2}^{ll} \Z_{ik_2l} + 2 \sum_{l_2 < l} c_{kk_2}^{ll_2} \Z_{ik_2l_2} \bigg)\Bigg] \\
    T_3 &:=    \sigma^2 \sum_{i,l} \Bigg[  2 \sum_{l_2 < l} c_{ii}^{ll_2} \Z_{iil_2} + \frac{1}{\sqrt{2}} \sum_{k_2 <i} \bigg( c_{ik_2}^{ll_2} \Z_{ik_2l} + \sum_{l_2 < l} c_{ik_2}^{ll_2} \Z_{ik_2l_2} \bigg) \Bigg]^2   \\
    &\quad + \sigma^2\sum_{i,l} \sum_{k <i} \Bigg[  \sum_{l_2 < l} (c_{kk}^{ll_2} + c_{ii}^{ll_2}) \Z_{ikl_2} + \frac{1}{\sqrt{2}} \sum_{l_3 \geq l} c_{ik}^{l_3l} \Z_{kkl_3} + \sum_{k_2 < k}\bigg( c_{kk_2}^{ll} \Z_{ik_2l} + 2 \sum_{l_2 < l} c_{kk_2}^{ll_2} \Z_{ik_2l_2} \bigg)\Bigg]^2.
\end{align*}
 We analyze $T_1,T_2$, and $T_3$ separately. 
\begin{itemize}
    \item \textbf{The term $T_1$.} By a similar argument to \eqref{ciboundguy} it holds that $T_1 \ll s^2$. 
    \item \textbf{The term $T_2$.}  As for $T_2$, we note that it is mean-zero, and, furthermore,
\begin{align*}
    \mathbb{E} T_2^2 &\leq 8 \sigma_3^6 \mathbb{E} \Bigg[ \sum_{i,l} c_{ii}^{ll} \bigg[2 \sum_{l_2 < l} c_{ii}^{ll_2} \bm{Z}_{iil_2} + \frac{1}{\sqrt{2}} \sum_{k_2< i } c_{ik_2}^{ll_2} \bm{Z}_{ik_2l} +\frac{1}{\sqrt{2}} \sum_{k_2< i } \sum_{l_2<l} c_{ik_2}^{ll_2} \bm{Z}_{ik_2l_2} \bigg] \Bigg]^2 \\
    &\quad + 8 \sigma_3^6 \mathbb{E} \Bigg[ \sum_{i,l} \sum_{k < i} [ c_{kk}^{ll} + c_{ii}^{ll} ] \Bigg( \sum_{l_2 < l} (c_{kk}^{ll_2} + c_{ii}^{ll_2}) \bm{Z}_{ikl_2} + \frac{1}{\sqrt{2}} \sum_{l_3 \geq l} c_{ik}^{l_3 l} \bm{Z}_{kkl_3} + \sum_{k_2 < k} c_{kk_2}^{ll} \bm{Z}_{ik_2l} +2 \sum_{k_2 < k} \sum_{l_2 < l} c_{kk_2}^{ll_2} \bm{Z}_{ik_2l_2} \Bigg) \Bigg]^2 \\
    &\leq 8 \sigma_3^6 \sigma^2 \Bigg(  \sum_{i,l} \sum_{l_2 < l} 4 (c_{ii}^{ll})^2 (c_{ii}^{ll_2})^2 + \sum_{i,l} \sum_{k_2 < i} \frac{1}{2} (c_{ii}^{ll})^2 (c_{ik_2}^{ll_2})^2 + \sum_{i,l} \sum_{k_2 < i} \sum_{l_2 < l} \frac{1}{2} (c_{ii}^{ll})^2 (c_{ik_2}^{ll_2} )^2 \Bigg) \\
    &\quad + 8 \sigma_3^6 \sigma^2 \Bigg[ \sum_{i,l}\sum_{k < i} \sum_{l_2 < l} (c_{kk}^{ll} + c_{ii}^{ll})^2 (c_{kk}^{ll_2} + c_{ii}^{ll_2})^2 + \sum_{i,l} \sum_{k < i} \sum_{l_3 \geq l} \frac{1}{2} (c_{kk}^{ll} + c_{ii}^{ll})^2 (c_{ik}^{l_3l})^2 \Bigg] \\
    &\quad + 8 \sigma_3^6 \sigma^2 \Bigg[ \sum_{i,l} \sum_{k < i} \sum_{k_2 < k}  (c_{kk}^{ll} + c_{ii}^{ll})^2 (c_{kk_2}^{ll})^2  + 4 \sum_{i,l} \sum_{k < i}  \sum_{k_2 < k} \sum_{l_2 < l} (c_{kk}^{ll} + c_{ii}^{ll})^2 (c_{kk_2}^{ll_2})^2 \Bigg] \\
    &\lesssim  \sigma^8 \sum_{i,l,k,k_2,l_2} (c_{ii}^{ll})^2 (c_{kk_2}^{ll_2})^2 \\
    &\lesssim n\sigma^8 \max_{i,l} (c_{ii}^{ll})^2 \sum_{l,k,k_2l_2} (c_{kk_2}^{ll_2})^2 \\
    &\lesssim n \sigma^8 \max_{i,l} (c_{ii}^{ll})^2 \| \R \|_F^2.
\end{align*}
We note that Cauchy-Schwarz implies that
\begin{align*}
    | c_{kk_1}^{l_1l_2}| \leq \| \U \|_{2,\infty}^2 \frac{\kappa^2 r}{L^2 \lambda^2}.  
\end{align*}
Thus, 
\begin{align*}
    n \sigma^8 \max_{i,l} (c_{ii}^{ll})^2 \| \R \|_F^2 &\lesssim \frac{n \sigma^8 r }{L^2 \lambda^4 } \|  \U\|_{2,\infty}^4 \frac{\kappa^4 r^2}{L^4 \lambda^4} \asymp  \|\U\|_{2,\infty}^4 \frac{n \sigma^8 r^3 \kappa^4}{L^6 \lambda^8}.
\end{align*}
As a result, Chebyshev's inequality implies 
\begin{align*}
    \p\bigg\{ |T_2| \geq \eps s^2 \bigg\} &\lesssim \|\U\|_{2,\infty}^4 \frac{n \sigma^8 r^3 \kappa^4}{L^6 \lambda^8} \frac{1}{\eps^2 s^4} \\
    &\lesssim \|\U\|_{2,\infty}^4 \frac{n \sigma^8 r^3 \kappa^4}{L^6 \lambda^8} \frac{1}{\eps^2} \frac{L^4 \lambda^8}{r^2 n^2 \sigma^8} \\
    &\lesssim \| \U\|_{2,\infty}^4 \frac{r \kappa^4}{L^2 n \eps^2} \to 0
\end{align*}
by \cref{variancelowerbound}, 
which implies that $T_2 \to 0$ in probability.
\item \textbf{The term $T_3$.}  From the previous calculations, we note that $T_3$ has expected value $\frac{\sigma^4 n}{2} \| \R \|_F^2\big(1 + o(1) \big)$.  We therefore prove the convergence in probability to its expected value.  We decompose $T_3$ via:
\begin{align*}
T_3
&=
4\sigma^2
\sum_{i,l}\sum_{l_2<l}
\big(c_{ii}^{ll_2}\big)^2 \bm{Z}_{iil_2}^2
+
\frac{\sigma^2}{2}
\sum_{i,l}\sum_{k_2<i}
\big(c_{ik_2}^{ll}\big)^2 \bm{Z}_{ik_2l}^2
+
\frac{\sigma^2}{2}
\sum_{i,l}\sum_{k_2<i}\sum_{l_2<l}
\big(c_{ik_2}^{ll_2}\big)^2 \bm{Z}_{ik_2l_2}^2
\\
&\quad
+\sigma^2
\sum_{i,l}\sum_{k<i}\sum_{l_2<l}
\big(c_{kk}^{ll_2}+c_{ii}^{ll_2}\big)^2 \bm{Z}_{ikl_2}^2
+
 \sigma^2 \frac12
\sum_{i,l}\sum_{k<i}\sum_{l_3\ge l}
\big(c_{ik}^{l_3l}\big)^2 \bm{Z}_{kkl_3}^2
\\
&\quad
+
\sigma^2\sum_{i,l}\sum_{k<i}\sum_{k_2<k}
\big(c_{kk_2}^{ll}\big)^2 \bm{Z}_{ik_2l}^2
+
4\sigma^2\sum_{i,l}\sum_{k<i}\sum_{k_2<k}\sum_{l_2<l}
\big(c_{kk_2}^{ll_2}\big)^2 \bm{Z}_{ik_2l_2}^2
\\
&\quad
+
\frac{4\sigma^2}{\sqrt{2}}
\sum_{i,l}\sum_{l_2<l}\sum_{k_2<i}
c_{ii}^{ll_2}c_{ik_2}^{ll}
\bm{Z}_{iil_2}\bm{Z}_{ik_2l}
\\
&\quad
+
\frac{4\sigma^2}{\sqrt{2}}
\sum_{i,l}\sum_{l_2<l}\sum_{k_2<i}\sum_{l_2'<l}
c_{ii}^{ll_2}c_{ik_2}^{ll_2'}
\bm{Z}_{iil_2}\bm{Z}_{ik_2l_2'}
\\
&\quad
+
\sigma^2
\sum_{i,l}
\sum_{\substack{k_2<i\\ k_2'\neq k_2}}
c_{ik_2}^{ll}c_{ik_2'}^{ll_2}
\bm{Z}_{ik_2l}\bm{Z}_{ik_2'l_2}
+
\frac{\sigma^2}{2}
\sum_{i,l}
\sum_{\substack{k_2<i\\ k_2'\neq k_2}}
\sum_{l_2<l}\sum_{l_2'<l}
c_{ik_2}^{ll_2}c_{ik_2'}^{ll_2'}
\bm{Z}_{ik_2l_2}\bm{Z}_{ik_2'l_2'}
\\
&\quad
+
\sigma^2\frac{2}{\sqrt{2}}
\sum_{i,l}\sum_{k<i}\sum_{l_2<l}\sum_{l_3\ge l}
(c_{kk}^{ll_2}+c_{ii}^{ll_2})c_{ik}^{l_3l}
\bm{Z}_{ikl_2}\bm{Z}_{kkl_3}
\\
&\quad
+
2\sigma^2\sum_{i,l}\sum_{k<i}\sum_{l_2<l}\sum_{k_2<k}
(c_{kk}^{ll_2}+c_{ii}^{ll_2})c_{kk_2}^{ll}
\bm{Z}_{ikl_2}\bm{Z}_{ik_2l}
\\
&\quad
+
4\sigma^2\sum_{i,l}\sum_{k<i}\sum_{l_2<l}\sum_{k_2<k}\sum_{l_2'<l}
(c_{kk}^{ll_2}+c_{ii}^{ll_2})c_{kk_2}^{ll_2'}
\bm{Z}_{ikl_2}\bm{Z}_{ik_2l_2'}
\\
&\quad
+
\sigma^2\frac{2}{\sqrt{2}}
\sum_{i,l}\sum_{k<i}\sum_{l_3\ge l}\sum_{k_2<k}
c_{ik}^{l_3l}c_{kk_2}^{ll}
\bm{Z}_{kkl_3}\bm{Z}_{ik_2l}
\\
&\quad
+ \sigma^2\frac{4}{\sqrt{2}}
\sum_{i,l}\sum_{k<i}\sum_{l_3\ge l}\sum_{k_2<k}\sum_{l_2<l}
c_{ik}^{l_3l}c_{kk_2}^{ll_2}
\bm{Z}_{kkl_3}\bm{Z}_{ik_2l_2}
\end{align*}
We claim that we can write this as $\bm{Z}\t \bm{QQ}\t\bm{Z}$, where $\bm{Z}$ is the vector associated to all unique components of $\bm{Z}_{ikl}$ and $\bm{Q}$ is a matrix defined as follows. First, recall the set $\mathcal{I} := \{ (i,k,l), k\leq i, l \leq L\}$.  Define $\mathcal{J}_1 := \{ (i,l,l_2) : l_2 < l \}$ and $\mathcal{J}_2 := \{ (i,l,k,l_2) : k < i,\, l_2 < l \}$, with $\mathcal{J} = \mathcal{J}_1 \cup \mathcal{J}_2$. We write $\bm{Q} \in \mathbb{R}^{|\mathcal{I}| \times |\mathcal{J}|}$ with entries given by
\begin{align*}
\bm{Q}_{(i',k',l'),(i,l,l_2)} &=
\begin{cases}
2\sigma\,c_{ii}^{ll_2}, & (i',k',l')=(i,i,l_2),\\[1mm]
\dfrac{\sigma}{\sqrt{2}}\,c_{ik'}^{ll}, & i'=i,\, k'<i,\, l'=l,\\[1mm]
\dfrac{\sigma}{\sqrt{2}}\,c_{ik'}^{ll'}, & i'=i,\, k'<i,\, l'<l,\\[1mm]
0, & \text{otherwise},
\end{cases} \\[1mm]
\bm{Q}_{(i',k',l'),(i,l,k,l_2)} &=
\begin{cases}
 \sigma c_{kk}^{ll_2}+ \sigma c_{ii}^{ll_2}, & (i',k',l')=(i,k,l_2),\\[1mm]
\dfrac{\sigma}{\sqrt{2}}\,c_{ik}^{l'l}, & (i',k',l')=(k,k,l'),\, l'\ge l,\\[1mm]
 \sigma c_{kk_2}^{ll}, & i'=i,\, k'<k,\, l'=l,\\[1mm]
2\, \sigma c_{kk_2}^{ll'}, & i'=i,\, k'<k,\, l'<l,\\[1mm]
0, & \text{otherwise}.
\end{cases}
\end{align*}
With this definition, we note that $\mathbb{E} T_3 = \sigma^2 \| \bm{Q} \|_F^2 $, and, moreover, $T_3 = \bm{Z}\t \bm{QQ}\t \bm{Z}$.  Therefore, we may apply the Hanson-Wright inequality to $\bm{Z}\t \bm{QQ}\t \bm{Z}$.  
In particular, it holds that
\begin{align*}
    \mathbb{P} \bigg\{ \bigg| \bm{Z}\t \bm{QQ}\t \bm{Z} - \sigma^2 \| \bm{Q} \|_F^2 \bigg| >  \sigma^2 \|\bm{Q}\|_F^2 t \bigg\} \leq 2 \exp\bigg\{ -c \frac{\|\bm{Q}\|_F^2}{\|\bm{Q}\|^2} \min(t^2,t) \bigg\}.
\end{align*}
Thus, convergence in probability is achieved if $\|\bm{Q}\|_F^2/\|\bm{Q}\|^2 \to \infty$.  We now upper bound $\| \bm{Q}\|^2$.  First, we have that $\|\bm{Q}\|^2 \leq \|\bm{Q}\|_{1,1} \| \bm{Q}\|_{\infty,\infty}$ (these are operator norms with respect to these vector spaces).  Note that every entry of $\bm{Q}$ satisfies
\begin{align*}
    \max |\bm{Q}_{(i'k'l'),(i,l,l_2)}| \leq C \sigma \max_{ikll'} |c_{ik}^{ll'}|.
\end{align*}
We simply count the number of nonzeros per row and column of $\bm{Q}$.  Note that the only time that $\bm{Q}$ has nonzero entries within a row it must be that $i' = i$.  Thus, we may only vary $k'$ and $l'$, meaning there are at most $O(nL)$ nonzero entries.  Similarly for the columns, yielding
\begin{align*}
    \| \bm{Q}\|^2 &\leq C \sigma^2 n^2 L^2 \max_{ikll'}|c_{ik}^{ll'}|^2 \\
    &\lesssim C n^2 L^2 \| \U\|_{2,\infty}^4 \frac{\kappa^2 r^2}{L^4 \lambda^4} \asymp \| \U\|_{2,\infty}^4 \frac{\sigma^2 n^2}{L^2 \lambda^4}.
\end{align*}
Thus,
\begin{align*}
    \frac{\|\bm{Q}\|_F^2}{\|\bm{Q}\|^2} \gtrsim \frac{\sigma^2 n \| \R \|_F^2}{\| \U\|_{2,\infty}^4 \frac{\sigma^2 n^2}{L^2 \lambda^4}} \gtrsim \frac{\frac{\sigma^2 n}{\lambda^4 L^2}}{\| \U\|_{2,\infty}^4 \frac{\sigma^2 n^2}{L^2 \lambda^4}} \asymp \frac{1}{\|\bm{U}\|_{2,\infty}^4 n},
\end{align*}
which diverges by \eqref{variancelowerbound}.
\end{itemize}
Combining our results for $T_1$, $T_2$, and $T_3$, we have proven \eqref{martingale1}. 
We now prove \eqref{martingale2}.  We will first bound the quantity $\sum_{t} \mathbb{E} Y_t^4$.  We recall that
\begin{align*}
    Y_t = A_t (\bm{Z}_t^2 - \sigma^2) + B_t \bm{Z}_t,
\end{align*}
where $B_t$ can be written as $\sum_{s < t} a_{s} \bm{Z}_s$, where $a_{s}$ are deterministic coefficients on each $\bm{Z}_s$.  Thus, by subgaussianity, we have that
\begin{align*}
    \mathbb{E} Y_t^4 &= A_t^4 \bigg( \sigma_8^8 - 4 \sigma^2 \sigma_6^6 + 6 \sigma^4 \sigma_4^4 - 3 \sigma^8 \bigg) + 6 A_t^2 (\sigma_4^4 - \sigma^4) \sigma^2  \mathbb{E} B_t^2 + \sigma_4^4 \mathbb{E} B_t^4 \\
    &\lesssim \sigma^8 A_t^4 + \sigma^6 A_t^2 \mathbb{E} B_t^2 + \sigma^4 \mathbb{E} B_t^4. \numberthis \label{newyears}
    \end{align*}
We first compute $\sum_{t} \mathbb{E} B_t^4$. We have 
\begin{align*}
\mathbb{E}[B_t^4] = 
\begin{cases} 
\sigma_4^4 \Bigg[
\sum_{l_2<l} 16 (c_{ii}^{ll_2})^4 
+ \sum_{k_2<i} \frac{1}{4} (c_{ik_2}^{ll})^4 
+ \sum_{k_2<i} \sum_{l_2<l} \frac{1}{4} (c_{ik_2}^{ll_2})^4
\Bigg] \\
\qquad + 6 \sigma^4 \bigg[
\sum_{l_2<l} \sum_{l_2'<l,\, l_2'<l_2} 16 (c_{ii}^{ll_2})^2 (c_{ii}^{ll_2'})^2 \\
\qquad + \sum_{l_2<l} \sum_{k_2<i} 2 (c_{ii}^{ll_2})^2 (c_{ik_2}^{ll})^2 
+ \sum_{l_2<l} \sum_{k_2<i} \sum_{l_2'<l} 2 (c_{ii}^{ll_2})^2 (c_{ik_2}^{ll_2'})^2 \\
\qquad + \sum_{k_2<i} \sum_{k_2'<i,\, k_2'<k_2} \frac{1}{4} (c_{ik_2}^{ll})^2 (c_{ik_2'}^{ll})^2 
+ \sum_{k_2<i} \sum_{k_2'<i} \sum_{l_2'<l} \frac{1}{4} (c_{ik_2}^{ll})^2 (c_{ik_2'}^{ll_2'})^2 \\
\qquad + \sum_{k_2<i} \sum_{l_2<l} \sum_{k_2'<i} \sum_{l_2'<l,\, (k_2',l_2')<(k_2,l_2)} \frac{1}{4} (c_{ik_2}^{ll_2})^2 (c_{ik_2'}^{ll_2'})^2
\bigg], & k=i
\\
\sigma_4^4 \Bigg[
\sum_{l_2<l} (c_{kk}^{ll_2}+c_{ii}^{ll_2})^4 
+ \sum_{l_3 \ge l} \frac{1}{4} (c_{ik}^{l_3 l})^4 
+ \sum_{k_2<k} (c_{kk_2}^{ll})^4 
+ \sum_{k_2<k} \sum_{l_2<l} 16 (c_{kk_2}^{ll_2})^4
\Bigg] \\
\quad + 6 \sigma^4 \Bigg[
\sum_{l_2<l} \sum_{l_2'<l,\, l_2'<l_2} (c_{kk}^{ll_2}+c_{ii}^{ll_2})^2 (c_{kk}^{ll_2'}+c_{ii}^{ll_2'})^2
+ \sum_{l_2<l} \sum_{l_3 \ge l} \frac{1}{2} (c_{kk}^{ll_2}+c_{ii}^{ll_2})^2 (c_{ik}^{l_3 l})^2 \\
\quad + \sum_{l_2<l} \sum_{k_2<k} (c_{kk}^{ll_2}+c_{ii}^{ll_2})^2 (c_{kk_2}^{ll})^2
+ \sum_{l_2<l} \sum_{k_2<k} \sum_{l_2'<l} 4 (c_{kk}^{ll_2}+c_{ii}^{ll_2})^2 (c_{kk_2}^{ll_2'})^2 \\
\quad + \sum_{l_3<l_3',\, l_3,l_3'\ge l} \frac{1}{4} (c_{ik}^{l_3 l})^2 (c_{ik}^{l_3' l})^2
+ \sum_{l_3 \ge l} \sum_{k_2<k} \frac{1}{2} (c_{ik}^{l_3 l})^2 (c_{kk_2}^{ll})^2 \\
\quad + \sum_{l_3 \ge l} \sum_{k_2<k, l_2<l} 2 (c_{ik}^{l_3 l})^2 (c_{kk_2}^{ll_2})^2
+ \sum_{k_2<k} \sum_{k_2'<k, k_2'<k_2} (c_{kk_2}^{ll})^2 (c_{kk_2'}^{ll})^2 \\
\quad + \sum_{k_2<k} \sum_{k_2'<k, l_2<l} 4 (c_{kk_2}^{ll})^2 (c_{kk_2'}^{ll_2})^2
+ \sum_{\substack{k_2<k,k_2'<k \\ l_2<l, l_2'<l, (k_2',l_2')<(k_2,l_2)}} 16 (c_{kk_2}^{ll_2})^2 (c_{kk_2'}^{ll_2'})^2
\Bigg], & k<i.
\end{cases}
\end{align*}
Note that each term above consists of at most $O(n^2 L^2)$ terms, each of which are bounded by $\max_{i,k,l_1,l_2} ( c_{ik}^{l_1l_2})^4$.  
We now sum this over $t$; i.e., over $k \leq i$ and $l \leq L$, which has at most $O(n^2 L)$ terms, giving
\begin{align*}
    \sum_{t} \mathbb{E} B_t^4 \lesssim n^4 L^3 \sigma^4 \max_{i,k,l_1,l_2} ( c_{ik}^{l_1l_2})^4.
\end{align*}
We recall that from the definition of the coefficients $c_{ik}^{l_1l_2}$ we have that
\begin{align*}
    \big| c_{ik}^{l_1l_2} \big|  &= \bigg| e_i\t \bm{U} \bm{R}^{(l_1)} \R \R \bm{R}^{(l_2)} \bm{U}\t e_k \bigg| \lesssim \| \U \|_{2,\infty}^2 \frac{\kappa^2 r}{\lambda^2 L^2}.
\end{align*}
We also note that from our previous calculation $\sigma^2 \sum_{t} \mathbb{E} B_t^2 \lesssim s^2$.  We further recall that by the definition of $A_t$, we have that $\sum_{t} A_t^4 \lesssim n^2 L \max_{i,k,l_1,l_2} (c_{ik}^{l_1l_2})^4$.  Thus, combining these observations with \eqref{newyears}, we have that 
\begin{align*}
    \sum_{t} \mathbb{E} Y_t^4 &\lesssim \sigma^8 n^2 L \max_{i,k,l_1,l_2} (c_{ik}^{l_1l_2})^4 + \sigma^6 \max_{i,k,l_1,l_2} (c_{ik}^{l_1l_2})^2 \sum_{t} \mathbb{E} B_t^2 + n^4 L^3 \sigma^8 \max_{i,k,l_1,l_2} (c_{ik}^{l_1l_2})^4 \\
    &\lesssim \sigma^8 n^2 L \| \U\|_{2,\infty}^8 \frac{\kappa^8 r^4}{\lambda^8 L^8} + \sigma^4 \| \U\|_{2,\infty}^4 \frac{\kappa^4 r^2}{\lambda^4 L^4} s^2 + n^4 L^4 \sigma^8 \| \U\|_{2,\infty}^8 \frac{\kappa^8 r^4}{\lambda^8 L^8} \\
    &\lesssim s^2 \| \U\|_{2,\infty}^4 \frac{\sigma^4 \kappa^4 r^2}{\lambda^4 L^4} + \|\U\|_{2,\infty}^8 \frac{\sigma^8 n^4 r^4 \kappa^8 }{\lambda^8 L^4} \\
    &\lesssim s^4 \bigg( \frac{L^2 \lambda^4}{n r \sigma^4} \| \U\|_{2,\infty}^4 \frac{\sigma^4 \kappa^4 r^2}{\lambda^4 L^4} + \frac{L^4 \lambda^8}{n^2 r^2 \sigma^8 }\|\U\|_{2,\infty}^8 \frac{\sigma^8 n^4 r^4 \kappa^8 }{\lambda^8 L^4} \bigg) \\
    &\asymp s^4 \bigg( \|\U\|_{2,\infty}^4 \frac{\kappa^4 r}{n L^4} + \| \U\|_{2,\infty}^8 n^2 r^2 \kappa^8 \bigg).
\end{align*}
Thus, we see that the term in the parentheses is $o(1)$ whenever \eqref{variancelowerbound} holds.
This completes the proof.\end{proof}

\subsubsection{Proof of \cref{lem:extraterm}} \label{sec:extratermproof}
\begin{proof}
First, we note that
\begin{align*}
    \| \U\t \sum_l \nl \sl \U (\R) \|_F \leq \frac{\sqrt{r}}{L\lambda^2} \| \U\t \sum_l \nl \sl \|.
\end{align*}
We now bound the quantity $\U\t \sum_l \nl \sl$.  The proof is markedly similar to the proof of \cref{lem:initialconcentration}.  Modifying the proof, by taking a net over the set of vectors of dimension $r$ we can show that
\begin{align*}
    \| \U\t \sum_l \nl \sl \| \leq C s \sigma \sqrt{L} \lambda_{\max}
\end{align*}
with probability at least $1 - \exp( c r - s^2)$.  Therefore, taking $s \asymp \sqrt{r} + t$ shows that 
\begin{align*}
      \| \U\t \sum_l \nl \sl \| \leq C \sigma (\sqrt{r} + t) \sqrt{L} \lambda_{\max}
\end{align*}
with probability at least $1 - \exp( - c t^2)$. 
\end{proof}

\subsection{Proof of \cref{thm:minimaxoptimalconfidenceinterval}} \label{sec:confidenceintervalproof}

We now provide the following result, which provides consistent estimators of the asymptotic mean and variance.
\begin{lemma}\label{lem:parameterconsistency}
Under the conditions of \cref{thm:normality}, with probability at least $1 - \exp( - c n)$ it holds that
\begin{align*}
 \frac{\frac{ \sigma^2  n}{2}  \bigg| {\sf Tr} \bigg( \R \bigg) - {\sf Tr} \bigg( \rhatinv \bigg) \bigg|}{\sigma^2 \sqrt{n/2} \| \R \|_F} &= o(1); \\
 \bigg| 1 - \frac{\| \rhatinv \|_F}{\|\R \|_F} \bigg| &=o(1).
\end{align*}
\end{lemma}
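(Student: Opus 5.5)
The plan is to reduce both claims to the perturbation bound \eqref{rhatinvbd} of \cref{lem:rlsquared}. Note that $\bm{\mathcal{\hat R}} \bm{\mathcal{\hat R}}\t = \sum_l (\bm{\hat R}^{(l)})^2$, because $\bm{\hat R}^{(l)} = \uhat\t \bm{A}^{(l)} \uhat$ is exactly the quantity appearing in that lemma. Trace and Frobenius norm are both invariant under conjugation by the orthogonal matrix $\mathcal{O}$. Hence, on the event of \cref{lem:rlsquared} (probability at least $1-\exp(-cn)$),
\begin{align*}
\Delta := \big\| \rhatinv - \mathcal{O}\t \R \mathcal{O} \big\|_F \lesssim \frac{1}{\lambda^2 L}\cdot \frac{\sigma \kappa^3 \sqrt{nr}}{\lambda \sqrt{L}},
\end{align*}
and both claims follow once $\Delta$ is compared with the relevant normalizations.

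\textbf{Trace statement.} Write $|{\sf Tr}(\R) - {\sf Tr}(\rhatinv)| = |{\sf Tr}(\mathcal{O}\t\R\mathcal{O} - \rhatinv)| \le \sqrt{r}\,\Delta$, since the difference is $r\times r$. For the denominator, use $\|\R\|_F \ge \sqrt{r}\,\lambda_{\min}(\R) \ge \sqrt{r}/(\lambda_{\max}^2 L) = \sqrt{r}/(\kappa^2\lambda^2 L)$. The ratio is then at most
\begin{align*}
\frac{\sigma^2 n}{2}\cdot\frac{\sqrt{r}\,\Delta}{\sigma^2\sqrt{n/2}\,\|\R\|_F} \lesssim \sqrt{n}\,\kappa^2\lambda^2 L\cdot \frac{\sigma\kappa^3\sqrt{nr}}{\lambda^3 L^{3/2}} = \frac{\sigma \kappa^5 n\sqrt{r}}{\lambda\sqrt{L}},
\end{align*}
which is $o(1)$ under $\lambda/\sigma \gg n r^2\kappa^4/\sqrt{L}$, up to one stray factor of $\kappa$.

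\textbf{Frobenius statement.} By the reverse triangle inequality, $|\|\rhatinv\|_F - \|\R\|_F| \le \Delta$. So it suffices that $\Delta/\|\R\|_F \lesssim \kappa^5\sigma\sqrt{n}/(\lambda\sqrt{L}) = o(1)$, which is a weaker requirement than the one in the trace statement.

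\textbf{Main obstacle.} The one delicate point is the $\kappa$ bookkeeping: the lower bound on $\|\R\|_F$ costs a factor $\kappa^2$, which pushes the trace ratio to $\kappa^5$ rather than $\kappa^4$. I would try to remove this loss by not passing through $\lambda_{\min}(\R)$. Instead, expand $(\bm{\mathcal{RR}}\t + E)^{-1} - (\bm{\mathcal{RR}}\t)^{-1} \approx -(\bm{\mathcal{RR}}\t)^{-1}E(\bm{\mathcal{RR}}\t)^{-1}$ and take the trace, which gives a bound of order $\|\R\|\,\|\R\|_F\,\|E\|_F$. This pairs directly with the denominator $\|\R\|_F$ and avoids the extra $\kappa^2$. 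Failing that, I would note that $\kappa$ is treated as bounded here, so the assumption as stated already suffices up to constants in $\kappa$.
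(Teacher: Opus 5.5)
Your proposal is correct and is essentially the paper's argument. The paper likewise bounds the trace gap by $\sqrt{r}$ times the Frobenius perturbation of the inverse from \cref{lem:rlsquared}: it carries the full list of terms from that proof rather than the collapsed bound \eqref{rhatinvbd}, but the leading term is the same. It then uses the reverse triangle inequality for the second claim and normalizes by a lower bound on $\|\R\|_F$.

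The $\kappa$ loss you flag is genuine. The paper avoids it only by using $\|\R\|_F \gtrsim \sqrt{r}/(L\lambda^2)$, whereas in general one only has $\|\R\|_F \ge \sqrt{r}/(\kappa^2 L\lambda^2)$. Your first-order expansion fixes this and recovers the $\kappa^3$ rate. It bounds $|{\sf Tr}((\bm{\mathcal{RR}}\t)^{-1}E(\bm{\mathcal{RR}}\t)^{-1})| \le \|\R\|\,\|\R\|_F\,\|E\|_F$, with $\|E\|_F$ controlled by \eqref{102825}. The quadratic remainder contributes only $O(\sigma^2\kappa^8 n^{3/2}\sqrt{r}/(\lambda^2 L)) = o(1)$ after normalization.
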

\begin{proof}
    See \cref{sec:parameterconsistencyproof}.  
\end{proof}

We are now prepared to prove \cref{thm:minimaxoptimalconfidenceinterval}.
\begin{proof}[Proof of \cref{thm:minimaxoptimalconfidenceinterval}]
First, we note that \cref{thm:normality} implies that
\begin{align*}
    \frac{\|\sin\Theta(\bm{\hat U},\bm{U})\|_F^2 - \frac{\sigma^2 n}{2} {\sf Tr}\big( \R \big) }{\sigma^2 \sqrt{n/2} \| \R \|_F} \xrightarrow{d} \mathcal{N}(0,1).
\end{align*}
Thus, it holds that
\begin{align*}
    \liminf_{n\to\infty} \mathbb{P}\bigg\{ \|\sin\Theta(\bm{\hat U},\bm{U})\|_F^2 \in \frac{\sigma^2 n}{2} {\sf Tr}\big( \R \big) \pm z_{\alpha/2}\sigma^2 \sqrt{n/2} \| \R \|_F \bigg\} \geq 1 - \alpha.
\end{align*} 
We also have that the interval above is of length of order at most $\frac{\sigma^2\sqrt{nr}}{L \lambda^2} \lesssim \frac{\sigma^2 nr}{L \lambda^2} \times \frac{1}{\sqrt{nr}}$, uniformly over all $\mathcal{P}(\lambda)$ where $\lambda$ satisfies $\lambda \gg \frac{n r^{2} \kappa^4}{\sqrt{L}}$. 
Define 
\begin{align*}
\mu := \frac{\sigma^2 n}{2} {\sf Tr} \big( \R \big); &\qquad 
    \hat \mu = \frac{\sigma^2 n}{2} {\sf Tr} \big( \rhatinv\big); \\
    \nu = \sigma^2 \sqrt{n/2} \|\R\|_F; &\qquad \hat \nu = \sigma^2 \sqrt{n/2} \| \rhatinv \|_F.
\end{align*}
Define
\begin{align*}
    Z_n := \frac{\|\sin\Theta(\bm{\hat U},\U)\|_F^2 - \mu}{\nu}; \qquad 
    a_n = \frac{\hat \mu - \mu}{\nu}; \qquad 
    b_n = \frac{\hat \nu}{\nu}.
\end{align*}
Then we have that
\begin{align*}
    \mathbb{P}\bigg\{ \| \sin\Theta(\bm{\hat U},\U)\|_F^2 \in \hat{{\sf CI}_{\alpha}} \bigg\} = \mathbb{P} \big\{ | Z_n - a_n| \leq z_{\alpha/2} b_n \big\}.
\end{align*}
Slutsky's theorem  and \cref{lem:parameterconsistency} implies that $Z_n - a_n \xrightarrow{d} \mathcal{N}(0,1)$ and $z_{\alpha/2} b_n \xrightarrow{p} z_{\alpha/2}$.  Furthermore, we have that $|a_n|= o(1), |b_n-1| = o(1)$.  Then we have that
\begin{align*}
    \liminf \mathbb{P}\big\{ | Z_n - a_n| \leq z_{\alpha/2} b_n \big\} \geq \liminf \mathbb{P} \big\{ |Z_n| \leq z_{\alpha/2}(1 - o(1)) - o(1) \big\} \to 1 - \alpha.
\end{align*}
Thus we have the desired coverage.  The bound on the length follows immediately from \cref{lem:parameterconsistency}.   
\end{proof}

\subsubsection{Proof of \cref{lem:parameterconsistency}} \label{sec:parameterconsistencyproof}
\begin{proof}[Proof of \cref{lem:parameterconsistency}]
From \cref{lem:rlsquared}, we have that 
\begin{align*}
    \bigg| \frac{\sigma^2 n}{2} {\sf Tr}\bigg( \R \bigg) - \frac{\sigma^2 n}{2} {\sf Tr} \bigg( \rhatinv \bigg)\bigg| &\leq \sqrt{r}  \frac{\sigma^2 n}{2} \bigg\| \bigg( \sum_l \big( \bm{\hat R}^{(l)} \big)^2 \bigg)\inv - \O\t \bigg( \sum_l \big( \rl \big)^2 \bigg)\inv \O \bigg\|_F \\
    &\lesssim \err \frac{\kappa^2 \sigma^2 n\sqrt{r}}{\lambda^2 L} + \err \frac{\sigma^4 n^2 r^{2}}{\lambda^4 L^{3/2}} + \frac{\sigma^4 n r^{2}}{\lambda^4 L^{3/2}}\\
    &\quad + \frac{\sigma^4 n^2 r^{2}}{\lambda^4 L^2} + \frac{\sigma^4 r^{2}n^{3/2}}{\lambda^4 L^{3/2}} + \err \frac{\sigma^3 r^{3/2} n^2 \kappa}{\lambda^3 L^2} + \frac{\sigma^3   n^{3/2} r \kappa}{L^{3/2} \lambda^3} \\
    &\asymp \frac{\sigma^3 \kappa^3 n^{3/2} r}{\lambda^3 L^{3/2}} + \frac{\sigma^5 \kappa n^{5/2} r^{5/2}}{\lambda^5 L^2}+ \frac{\sigma^4 n r^2}{\lambda^4 L^{3/2}} \\
    &\quad + \frac{\sigma^4 n^2 r^2}{\lambda^4 L^2} + \frac{\sigma^4 r^2 n^{3/2}}{\lambda^4 L^{3/2}} + \frac{\sigma^4 \kappa^2 n^{5/2} r^2}{\lambda^4 L^{5/2}} + \frac{\sigma^3 n^{3/2} r \kappa}{L^{3/2} \lambda^3}. \numberthis \label{12926}
\end{align*}
We will show that the right hand side of \eqref{12926} is $o\big( \sigma^2 n \| \R \|_F \big).$ Recall that $  \| \R \|_F^2 \gtrsim \frac{r }{L^2 \lambda^4}$. Thus, it suffices to show that multiplying $\eqref{12926}$ by $\frac{\lambda^2 L}{\sigma^2 \sqrt{nr}}$ yields a term that is $o(1)$.  

We have
\begin{align*}
 \frac{\lambda^2 L}{\sigma^2 \sqrt{nr}} &\bigg\{ \frac{\sigma^3 \kappa^3 n^{3/2} r}{\lambda^3 L^{3/2}} + \frac{\sigma^5 \kappa n^{5/2} r^{5/2}}{\lambda^5 L^2}+ \frac{\sigma^4 n r^2}{\lambda^4 L^{3/2}}  \frac{\sigma^4 n^2 r^2}{\lambda^4 L^2} + \frac{\sigma^4 r^2 n^{3/2}}{\lambda^4 L^{3/2}} + \frac{\sigma^4 \kappa^2 n^{5/2} r^2}{\lambda^4 L^{5/2}} + \frac{\sigma^2 n^{3/2} r \kappa}{L^{3/2} \lambda^3}\bigg\} \\
    &\qquad \asymp \frac{\sigma\kappa^3 n \sqrt{r}}{\lambda \sqrt{L}} + \frac{\sigma^3 \kappa n^2 r^2}{\lambda^3 L} + \frac{\sigma^2 \sqrt{n} r^{3/2}}{\lambda^2 \sqrt{L}} + \frac{\sigma^2 n^{3/2} r^{3/2}}{\lambda^2 L} + \frac{\sigma^2 r^{3/2} n}{\lambda^2 \sqrt{L}} + \frac{\sigma^2 \kappa^2 n^2 r^2}{\lambda^2 L^{3/2}} + \frac{\sigma n \sqrt{r} \kappa}{\sqrt{L} \lambda} = o(1),
\end{align*}
where the final bound is $o(1)$ since $\lambda/\sigma \gg \frac{n r^2 \kappa^4}{\sqrt{L}}$ and $L \lesssim n$.    

Next, by \eqref{rhatinvbd},
\begin{align*}
    \big| \| \R \|_F - \| \rhatinv \|_F \big| &\leq    \bigg\| \bigg( \sum_l \big( \bm{\hat R}^{(l)} \big)^2 \bigg)\inv - \O\t \bigg( \sum_l \big( \rl \big)^2 \bigg)\inv \O \bigg\|_F\\ &\lesssim  \err \frac{\kappa^2}{\lambda^2 L} + \err \frac{\sigma^2 n r^{3/2}}{\lambda^4 L^{3/2}} + \frac{\sigma^2 r^{3/2}}{\lambda^4 L^{3/2}}\\
    &\quad + \frac{\sigma^2 n r^{3/2}}{\lambda^4 L^2} + \frac{\sigma^2 r^{3/2} \sqrt{n}}{\lambda^4 L^{3/2}} + \err \frac{\sigma r \sqrt{n} \kappa}{\lambda^3 L^2} + \frac{\sigma \sqrt{nr} \kappa}{L^{3/2} \lambda^3}.
    \end{align*}
    Thus,
    \begin{align*}
        \bigg|  1 - \frac{\| \rhatinv \|_F}{\| \R \|_F} \bigg| &\lesssim \frac{L \lambda^2}{\sqrt{r}} \bigg( \err \frac{\kappa^2}{\lambda^2 L} + \err \frac{\sigma^2 n r^{3/2}}{\lambda^4 L^{3/2}} + \frac{\sigma^2 r^{3/2}}{\lambda^4 L^{3/2}}\\
    &\quad + \frac{\sigma^2 n r^{3/2}}{\lambda^4 L^2} + \frac{\sigma^2 r^{3/2} \sqrt{n}}{\lambda^4 L^{3/2}} + \err \frac{\sigma r \sqrt{n} \kappa}{\lambda^3 L^2} + \frac{\sigma \sqrt{nr} \kappa}{L^{3/2} \lambda^3}\bigg) \\
    &\asymp \frac{\sigma \kappa^3 \sqrt{n}}{\lambda \sqrt{L}} + \frac{\sigma^3 \kappa n^{3/2} r^{3/2}}{\lambda^3 \sqrt{L}} + \frac{\sigma^2 r}{\lambda^2 \sqrt{L}} + \frac{\sigma^2 n r}{\lambda^2 L} + \frac{\sigma^2 r \sqrt{n}}{\lambda^2 \sqrt{L}} + \frac{\sigma^2 \kappa^2 r n}{\lambda^2 L^{3/2}} + \frac{\sigma \sqrt{n} \kappa}{\sqrt{L} \lambda} \\
    &= o(1),
    \end{align*}
    where the final bound holds since $\lambda/\sigma \gg \frac{n r^2 \kappa^4}{\sqrt{L}}$. 
    This completes the proof.
\end{proof}

\section{Proofs of Lower Bounds}

In this section we prove all of our lower bounds.  First, in \cref{sec:minimaxlowerboundproof} we prove \cref{thm:minimax}.   In \cref{sec:compgapproof} we prove the computational lower bound of \cref{thm:computational}, and finally in \cref{sec:ci_minimaxlowerboundproof} we prove \cref{thm:ci_minimaxlowerbound} and \cref{thm:ci_minimax2} in tandem.

\subsection{Proof of \cref{thm:minimax}}
\label{sec:minimaxlowerboundproof}

\begin{proof}[Proof of \cref{thm:minimax}]
The proof is a standard Fano's inequality argument.  Consider the metric space $\mathcal{B}_{n-r,r} = \{ \bm{U}: \bm{U} \in \mathbb{O}_{n-r,r}\}$ with the metric $\| \bm{U}_1\bm{U}_1\t - \bm{U}_2 \bm{U}_2\t\|_F$. Let $M(a)$ denote the packing number of this metric space. By Lemma 5 of \citet{koltchinskii_optimal_2015}, the packing number $M(\sqrt{r}\eps)$ of this set for $r \leq n - 2r$ satisfies
\begin{align*}
    \bigg( \frac{c}{\eps} \bigg)^{r(n-r)} \leq M( \sqrt{r}\eps) \leq \bigg( \frac{C}{\eps} \bigg)^{r(n-r)}.
\end{align*}
Since $r \leq c \sqrt{n}, 3r \leq n$ for $n$ sufficiently large.  
By taking $\eps = c/2$, we can find a subset $\mathcal{U} \subset \mathcal{B}_{n-r,r}$ with $|\mathcal{U}|\geq 2^{r(n- 2r)}$ such that for any $\bm{U}_i \neq \bm{U}_j$ in $\mathcal{U}$ it holds that
\begin{align*}
    \| \U_i \U_i\t - \U_j \U_j\t \|_F \geq \frac{c}{2} \sqrt{r}.
\end{align*}
For each $\bm{U}_i \in \mathcal{U}$, define $\bm{\tilde U}_i$ via
\begin{align*}
    \bm{\tilde U}_i = \begin{pmatrix}
        \sqrt{1 - \delta}\bm{I}_r \\ \sqrt{\delta} \bm{U}_i 
    \end{pmatrix},
\end{align*}
which is an orthonormal matrix.  We thus have that if $\bm{\tilde U}_i \neq \bm{\tilde U}_j$, then
\begin{align*}
    \| \bm{\tilde U}_i \bm{\tilde U}_i\t - \bm{\tilde U}_j \bm{\tilde U}_j\t \|_F &\geq \sqrt{2 \delta(1 - \delta)} \| \bm{U}_i - \bm{U}_j \|_F \geq \sqrt{2 \delta(1- \delta)} \inf_{\mathcal{O}: \mathcal{OO}\t = \bm{I}_r} \| \| \bm{U}_i - \bm{U}_j \mathcal{O}\|_F \\
    &\geq \sqrt{\delta(1 - \delta)} \| \bm{U}_i \bm{U}_i\t - \bm{U}_j \bm{U}_j\t \|_F \\
    &\geq \frac{c}{2} \sqrt{r \delta(1- \delta)}.
\end{align*}
We now let $\bm{S}^{(l)}_i = \lambda \bm{\tilde U}_i \bm{\tilde U}_i\t$ for all $l$.  Then under ${\sf GOE}$ noise with variance $\sigma^2$ it holds that
\begin{align*}
    KL\bigg( \big\{ \bm{S}^{(l)}_i \big\}, \big\{ \bm{S}^{(l)}_j \big\} \bigg) &= \frac{L}{2 \sigma^2} \| \lambda \bm{\tilde U}_i \bm{\tilde U}_i\t - \lambda \bm{\tilde U}_j \bm{\tilde U}_j\t \|_F^2 \\
    &= \frac{L \lambda^2}{2\sigma^2} \| \bm{\tilde U}_i \bm{\tilde U}_i\t - \bm{\tilde U}_j \bm{\tilde U}_j\t \|_F^2 \\
    &\leq C\frac{ \lambda^2 L}{\sigma^2} \| \bm{\tilde U}_i - \bm{\tilde U}_j \|_F^2 \\
    &\leq C \frac{\lambda^2 L \delta r}{\sigma^2}.
\end{align*}
Thus, by the generalized Fano's lemma, it holds that
\begin{align*}
    \inf_{\bm{\hat U}} \sup_{\bm{U}\in \{\bm{\tilde U}_i\}_{i=1}^{m} } \mathbb{E} \| \bm{\hat U} \bm{\hat U}\t - \bm{UU}\t \|_F^2 \geq c \delta ( 1 - \delta) r \bigg( 1 - \frac{C \frac{\lambda^2}{\sigma^2} L\delta r + \log(2)}{r(n- 2r) \log(2)} \bigg).
\end{align*}
Since $\| \sin\Theta(\bm{U},\bm{V})\|_F^2 = \frac{1}{2} \| \bm{UU}\t - \bm{VV}\t \|_F^2$, by taking $\delta = c_0 \big( \frac{\sigma^2 n}{\lambda^2 L} \wedge 1\big)$ for some sufficiently small constant $c_0$, we obtain the result.  
\end{proof}

\subsection{Proof of \cref{thm:computational}} \label{sec:compgapproof}
\begin{proof}
We follow a similar proof to \citet{lyu_optimal_2023}.  First, by section A.2 of \citet{kunisky_notes_2022}, we can without loss of generality assume that $\nl$ consists of IID $\mathcal{N}(0,\sigma^2)$ noise and  $\sigma^2 = 1$ (by dividing through by $\sigma$).  

Following \citet{lyu_optimal_2023}, we let the prior distribution for $(\bm{M},\bm{\eps})$ be denoted as $\Pi$. 
By Theorem 2.6 of \citet{kunisky_notes_2022}, it holds that
\begin{align*}
    \| L_n^{\leq D} \|^2 = 1 + \mathbb{E}_{\Pi} \sum_{k=1}^{\lfloor D/2 \rfloor } \frac{1}{(2k)!} \langle \bm{\eps}^{(1)}, \bm{\eps}^{(2)} \rangle^{2k} \langle \bm{M}^{(1)}, \bm{M}^{(2)} \rangle^{2k},
\end{align*}
where $(\bm{M}^{(1)},\bm{\eps}^{(1)})$ is independent from $(\bm{M}^{(2)},\bm{\eps}^{(2)})$.  We note that equation (44) of \citet{lyu_optimal_2023} continues to apply in our setting, yielding
\begin{align*}
    \mathbb{E} \langle \bm{\eps}^{(1)}, \bm{\eps}^{(2)} \rangle^{2k} = {L +  k -1 \choose k}.
\end{align*}
We now note a slight difference from their proof: we have that
\begin{align*}
    \langle \bm{M}^{(1)}, \bm{M}^{(2)} \rangle &= \lambda^2 \langle \bm{u}_1 \bm{u}_1\t, \bm{u}_2 \bm{u}_2\t \rangle = \frac{\lambda^2}{n^2} \bigg( \sum_{i=1}^{n} U_i^{(1)} U_i^{(2)} \bigg)^2,
\end{align*}
where $U_i^{(1)}$ and $U_i^{(2)}$ are two independent collections of Rademachers.  However, we note that the product of two Rademacher random variables is again Rademacher, and hence we have that
\begin{align*}
    \mathbb{E} \bigg( \sum_{i} U_i \bigg)^{4k} \leq n^{2k} (4k - 1)!!.
\end{align*}
Thus,
\begin{align*}
    \mathbb{E} \langle \bm{M}^{(1)}, \bm{M}^{(2)} \rangle^{2k} &= \frac{\lambda^{4k}}{n^{4k}} \mathbb{E} \bigg( \sum_{i=1}^{n} U_i \bigg)^{4k} \leq \frac{\lambda^{4k}}{n^{4k}} n^{2k} (4k- 1)!! = \frac{\lambda^{4k}}{n^{2k}}(4k-1)!!.
\end{align*}
Thus, we have that
\begin{align*}
    \| L_n^{\leq D} \|^2 = 1+ \sum_{k=1}^{\lfloor D/2 \rfloor } \frac{(4k-1)!!}{(2k)!} {L +  k -1 \choose k} \frac{\lambda^{4k}}{n^{2k}} \leq  1+ \sum_{k=1}^{\lfloor D/2 \rfloor }  {L +  k -1 \choose k} \frac{4^k \lambda^{4k}}{n^{2k}} =: 1+ \sum_{k=1}^{\lfloor D/2 \rfloor } T_k.
\end{align*}
We have that
\begin{align*}
    \frac{T_{k+1}}{T_k} = \frac{4\lambda^4}{n^2} \frac{{L+1 \choose k+1}}{{L + k - 1\choose k}} = \frac{4 \lambda^4}{n^2} \frac{L+k}{k+1} \lesssim \frac{\lambda^4 L}{n^2} \leq \frac{1}{2},
\end{align*}
provided that $\lambda^2 \lesssim \frac{n}{\sqrt{L}}$, which follows from the assumption that $\lambda = o\big( \frac{\sqrt{n}}{L^{1/4}}\big)$.  We thus have that
\begin{align*}
    \| L_n^{\leq D} \|^2 \leq 1 + O\bigg( \frac{\lambda^4 L}{n^2} \bigg) = 1 + o(1),
\end{align*}
as required.
\end{proof}

\subsection{Proof of \cref{thm:ci_minimaxlowerbound,thm:ci_minimax2}} \label{sec:ci_minimaxlowerboundproof}

\begin{proof}
We will prove both results in tandem. In particular, we will identify a construction of priors $\mathcal{P}_1^*$ and $\mathcal{P}_2^*$ where $\mathcal{P}_1^*$ and $\mathcal{P}_2^*$ lie in $\mathcal{\tilde P}(\lambda_1)$ and $\mathcal{\tilde P}(\lambda_2)$ respectively, and, furthermore, if $\lambda_1 = \lambda_2$, then both lie in $\mathcal{P}(\lambda_1)$.    Therefore, we focus on \cref{thm:ci_minimax2}.

For simplicity, let $\mathcal{P}_1 = \mathcal{P}(\lambda_1)$ and define $\mathcal{P}_2$ similarly.  
To prove \cref{thm:ci_minimax2} we need to prove the two inequalities
\begin{align}
 \mathcal{L}_{\alpha}^*(\mathcal{P}_1,\mathcal{P}_2) \geq\frac{\sigma^2 n r}{L\lambda_1^2} \frac{\sigma \sqrt{nr}}{\sqrt{L}\lambda_2}; 
   \label{inequality1} \\
   \mathcal{L}_{\alpha}^*(\mathcal{P}_1,\mathcal{P}_2) \geq  C \frac{\sigma^2 n r}{L\lambda_1^2} \frac{1}{\sqrt{nr}}. \label{inequality2}
\end{align}
We will prove both statements separately under a common framework.  Given a set $\mathcal{P}^*_1 \subset \mathcal{P}_1$ and an alternative hypothesis space $\mathcal{P}_2^* \subset \mathcal{P}_2 \subset \mathcal{P}_1$, we will show that for all $\U \in \mathcal{P}_2^*$ it holds that
\begin{align*}
    {\sf Tr}\bigg( \per\s \per^{*\top} \bm{U}\bm{U} \bigg) = \mu,
\end{align*}
for an appropriate choice of $\mu$, where $\U\s \in \mathcal{P}_1^*$ satisfies $\bm{U}^{*\top}\U_{\perp}\s= 0$. We further note that
\begin{align*}
    \| \pert \uhat \|_F^2 &= {\sf Tr} \big( \pert \uhat \uhat\t \per \big) = {\sf Tr} \bigg( \per \pert \uhat \uhat\t \bigg).
\end{align*}
Therefore, since $\|\sin\Theta(\U,\uhat)\|_F^2 = \| \pert \uhat \|_F^2$, any confidence interval for $\|\sin\Theta(\U,\uhat)\|_F^2$ is also a confidence interval for ${\sf Tr} \bigg( \per \pert \uhat \uhat\t \bigg)$.  Therefore, we have that 
\begin{align*}
    \mu = \| \sin\Theta(\bm{U}\s, \bm{U}_1) \|_F^2.
\end{align*}
Let $f_{\pi_{\mathcal{P}_1\s}}$ be the density function for the uniform prior over $\mathcal{P}_1\s$, with a similar definition for $\mathcal{P}_2\s$.  
By Lemma 1 of \citet{cai_confidence_2017} it holds that  
\begin{align}
   \mathcal{L}_{{\sf CI}_{\alpha}}(\mathcal{P}^*_1) &\geq \sup_{\theta \in \mathcal{P}^*_2 \cup \mathcal{P}^*_1} \mathbb{E}_{\theta} {\sf L}( {\sf CI}_{\alpha}) \geq | \mu | \bigg( 1 - 2\alpha - {\sf TV}\big( f_{\pi_{\mathcal{P}\s_1}}, f_{\pi_{\mathcal{P}\s_2}} \big) \bigg)_+. \label{caiguo}
\end{align}
Therefore, the result is proven if we can lower bound $\mu$ and select an appropriate $\mathcal{P}_1\s$ such that ${\sf TV}(f_{\pi_{\mathcal{P}_1\s}}, f_{\pi_{\mathcal{P}_2\s}}) \leq c$, where $c$ is a sufficiently small constant.   By the inequality 
\begin{align}
    {\sf TV}( f_1, f_2) \leq \frac{1}{2} \sqrt{\chi^2(f_1,f_2)}, \label{tv}
\end{align}
we can simply upper bound the $\chi^2$ divergence.

First we consider the inequality \eqref{inequality1} and give our construction. Since $r \geq 2$, set $r' := \lfloor r/2 \rfloor$, and set $r_0 = r - r'$.  Let
\begin{align*}
    \bm{W} = \begin{pmatrix}
        \bm{I}_{r_0} \\ \bm{0}_{(n- r_0 )\times r_0}
    \end{pmatrix} \in \mathbb{R}^{n \times r_0}.
\end{align*}
In addition, define
\begin{align*}
    \bm{V}\s := \begin{pmatrix}
        \bm{0}_{r_0 \times r'} \\ \bm{I}_{r'} \\ \bm{0}_{(n - r) \times r'}
    \end{pmatrix} \in \mathbb{R}^{n\times r'}.
\end{align*}
Then set
\begin{align*}
    \bm{U}\s := [\bm{W}, \bm{V}\s] \in \mathbb{R}^{n\times r}.
\end{align*}
Note that $\bm{U}\s$ is orthonormal.  Now, suppose that $\bm{Z} \in \mathbb{R}^{(n- r)\times r'}$ satisfies $\| \bm{Z}\|_F = 1$.  Set, for some parameter $\rho$ to be chosen momentarily, 
\begin{align*}
    \bm{V}_{\bm{Z}} := \begin{pmatrix}
        \bm{0}_{r_0 \times r'} \\ \bm{I}_{r'} \\ \rho \bm{Z}
    \end{pmatrix} ( \bm{I}_{r'} + \rho^2 \bm{Z}\t \bm{Z} )^{-1/2} \in \mathbb{R}^{n\times r'},
\end{align*}
and define $\bm{U}_{\bm{Z}} := [\bm{W}, \bm{V}_{\bm{Z}}]$, which is orthonormal by construction. Under the null $\mathcal{P}^*_1$ we define
\begin{align*}
    \bm{S}_*^{(l)} = \bm{S}_{*} = \lambda' \bm{WW}\t + \lambda_2 \bm{V}\s \bm{V}^{*\top},
\end{align*}
where $\lambda'$ is chosen such that $r_0 (\lambda')^2 + r' \lambda_2^2 = r \lambda_1^2$.  We note that in the case $\lambda_2 = \lambda_1$, then $\lambda' = \lambda_1$, and $\{\bm{S}_*^{(l)}\}_{l=1}^{L}$  all lie in the parameter space $\mathcal{P}(\lambda_1)$. 
Similarly, we set
\begin{align*}
    \bm{S}_{\bm{Z}}^{(l)} \equiv \bm{S}_{\bm{Z}} = \lambda' \bm{WW}\t + \lambda_2 \bm{V}_{\bm{Z}} \bm{V}_{\bm{Z}}\t.
\end{align*}
The matrices $\bm{S}_*$ and $\bm{S}_{\bm{Z}}$ have subspaces $\bm{U}\s$ and $\bm{U}_{\bm{Z}}$ respectively, and both matrices lie in our parameter space.  Moreover, since $\bm{W}$ is shared, it holds that
\begin{align*}
    \| \sin\Theta(\bm{U}_{\bm{Z}}, \bm{U}\s ) \|_F^2 = \| \sin\Theta(\bm{V}_{\bm{Z}}, \bm{V}\s) \|_F^2 = \tr\big( \rho^2 \bm{Z}\t \bm{Z}( \bm{I}_{r'} + \rho^2 \bm{Z}\t \bm{Z})\inv \big).
\end{align*}
Since $\|\bm{Z}\|_F= 1$, it is straightforward to demonstrate that $\frac{\rho^2}{1 + \rho^2} \leq  \| \sin\Theta(\bm{V}_{\bm{Z}},\bm{V}\s)\|_F^2 \leq \rho^2$.  Assuming that $\rho \ll 1$ yields that $\|\sin\Theta(\bm{U}_{\bm{Z}}, \bm{U}\s ) \|_F^2 \gtrsim \rho^2$.  

We let $\pi_{\mathcal{P}^*_2}$ be the uniform prior over matrices $\bm{Z}$ satisfying $\|\bm{Z}\|_F  = 1$.  We claim that
\begin{align*}
    \chi^2(f_{\pi_{\mathcal{P}_1\s}}, f_{\pi_{\mathcal{P}_2\s}}) + 1 = \mathbb{E}_{\bm{Z},\bm{Z}'} \exp\bigg\{ \frac{L \lambda_2^2}{\sigma^2} \langle \bm{V}_{\bm{Z}} \bm{V}_{{\bm{Z}}}\t - \bm{V}\s \bm{V}^{*\top}, \bm{V}_{\bm{Z}'}\bm{V}_{\bm{Z}'}\t - \bm{V}\s \bm{V}^{*\top} \rangle \bigg\}. \numberthis \label{chisquareclaim}
\end{align*}
Assuming this for the moment, we upper bound the right hand side above.  We have that
\begin{align*}
    \bm{V}_{\bm{Z}} \bm{V}_{\bm{Z}}\t &= \begin{pmatrix}
        \bm{0} & \bm{0} & \bm{0} \\
        \bm{0} & ( \bm{I}_{r'} + \rho^2 \bm{Z}\t \bm{Z})\inv & \rho( \bm{I}_{r'} + \rho^2 \bm{Z}\t \bm{Z})\inv  \bm{Z}\t\\
        \bm{0} & \rho \bm{Z}  ( \bm{I}_{r'} + \rho^2 \bm{Z}\t \bm{Z})\inv & \rho^2 \bm{Z} ( \bm{I}_{r'} + \rho^2 \bm{Z}\t \bm{Z})\inv \bm{Z}\t 
    \end{pmatrix}.
\end{align*}
Thus,
\begin{align*}
     \bm{V}_{\bm{Z}}& \bm{V}_{\bm{Z}}\t - \bm{V}\s \bm{V}^{*\top}\\
     &= \begin{pmatrix}
        \bm{0} & \bm{0} & \bm{0} \\
        \bm{0} & ( \bm{I}_{r'} + \rho^2 \bm{Z}\t \bm{Z})\inv - \bm{I}_{r'} & \rho( \bm{I}_{r'} + \rho^2 \bm{Z}\t \bm{Z})\inv  \bm{Z}\t\\
        \bm{0} & \rho \bm{Z}  ( \bm{I}_{r'} + \rho^2 \bm{Z}\t \bm{Z})\inv & \rho^2 \bm{Z} ( \bm{I}_{r'} + \rho^2 \bm{Z}\t \bm{Z})\inv \bm{Z}\t 
    \end{pmatrix} \\
    &= \begin{pmatrix}
        \bm{0} & \bm{0} & \bm{0} \\
        \bm{0} & \bm{0} & \rho( \bm{I}_{r'} + \rho^2 \bm{Z}\t \bm{Z})\inv  \bm{Z}\t\\
        \bm{0} & \rho \bm{Z}  ( \bm{I}_{r'} + \rho^2 \bm{Z}\t \bm{Z})\inv & \bm{0} 
    \end{pmatrix}  + \begin{pmatrix}
        \bm{0} & \bm{0} & \bm{0} \\
        \bm{0} & ( \bm{I}_{r'} + \rho^2 \bm{Z}\t \bm{Z})\inv - \bm{I}_{r'} & \bm{0} \\
        \bm{0} & \bm{0} & \rho^2 \bm{Z} ( \bm{I}_{r'} + \rho^2 \bm{Z}\t \bm{Z})\inv \bm{Z}\t 
    \end{pmatrix} \\
    &:= \bm{X}_{\bm{Z}}^{(1)} + \bm{X}_{\bm{Z}}^{(2)}.
\end{align*}
It therefore holds that
\begin{align*}
   \langle \bm{V}_{\bm{Z}} \bm{V}_{{\bm{Z}}}\t - \bm{V}\s \bm{V}^{*\top}, \bm{V}_{\bm{Z}'}\bm{V}_{\bm{Z}'}\t - \bm{V}\s \bm{V}^{*\top} \rangle &= \langle \bm{X}_{\bm{Z}}^{(1)}, \bm{X}_{\bm{Z}'}^{(1)} \rangle + \langle \bm{X}_{\bm{Z}}^{(2)}, \bm{X}_{\bm{Z}'}^{(2)} \rangle + \langle \bm{X}_{\bm{Z}}^{(1)},\bm{X}_{\bm{Z}'}^{(2)} \rangle  + \langle \bm{X}_{\bm{Z}'}^{(1)},\bm{X}_{\bm{Z}}^{(2)} \rangle. 
\end{align*}
We analyze each term separately.  First, we have that
\begin{align*}
    \langle \bm{X}_{\bm{Z}}^{(1)}, \bm{X}_{\bm{Z}'}^{(1)} \rangle &= 2 \rho^2 \tr\bigg[\bm{Z}\t \big( \bm{I}_{r'} + \rho^2 \bm{Z}\t \bm{Z} \big)\inv  \big( \bm{I}_{r'} + \rho^2 \bm{Z}^{'\top} \bm{Z}' \big)\inv \bm{Z}^{'\top} \bigg].
\end{align*}
Since $\rho \ll 1$ and $\|\bm{Z}\t \bm{Z}\|_F \leq 1$, we may write the Neumann series
\begin{align*}
    \big( \bm{I}_{r'} + \rho^2 \bm{Z}\t \bm{Z} \big)\inv &= \sum_{k=0}^{\infty}(-1)^k \rho^{2k} (\bm{Z}\t \bm{Z})^{k}.
\end{align*}
Plugging this series in yields
\begin{align*}
    \langle \bm{X}_{\bm{Z}}^{(1)}, \bm{X}_{\bm{Z}'}^{(1)} \rangle &= 2 \rho^2 \langle \bm{Z}, \bm{Z}' \rangle + R,
\end{align*}
where $|R| \leq C \rho^4$.   Using a similar argument for $\bm{X}_{\bm{Z}}^{(2)}$ shows that the remaining terms are all bounded by $C \rho^4$.  As a result, \eqref{chisquareclaim} yields
\begin{align*}
    \mathbb{E}_{\bm{Z},\bm{Z}'} \exp\bigg\{ \frac{L \lambda_2^2}{\sigma^2} \langle \bm{V}_{\bm{Z}} \bm{V}_{{\bm{Z}}}\t - \bm{V}\s \bm{V}^{*\top}, \bm{V}_{\bm{Z}'}\bm{V}_{\bm{Z}'}\t - \bm{V}\s \bm{V}^{*\top} \rangle \bigg\} &\leq \exp\bigg\{ \frac{C L \lambda_2^2 \rho^4}{\sigma^2} \bigg\}\mathbb{E}_{\bm{Z},\bm{Z'}} \exp\bigg\{ \frac{2 L \lambda_2^2 \rho^2}{\sigma^2} \langle\bm{Z},\bm{Z}' \rangle \bigg\}. 
\end{align*}
Next, we note that $\bm{Z}$ and $\bm{Z}'$ are matrices drawn uniformly from $\|\bm{Z}\|_F = 1$; thus, the inner product $\langle \bm{Z}, \bm{Z}'\rangle$ is equivalent to the inner product for two independent uniform random variables of dimension $(n - r) r'$.  Since the inner product between two independent uniform random variables is subgaussian with $\psi_2$ norm at most $\frac{C}{\sqrt{(n-r)r'}}$, it holds that if $r' \asymp r$,
\begin{align*}
     \exp\bigg\{ \frac{C L \lambda_2^2 \rho^4}{\sigma^2} \bigg\}\mathbb{E}_{\bm{Z},\bm{Z'}} \exp\bigg\{ \frac{2 L \lambda_2^2 \rho^2}{\sigma^2} \langle\bm{Z},\bm{Z}' \rangle \bigg\} &\leq  \exp\bigg\{ \frac{C L \lambda_2^2 \rho^4}{\sigma^2} \bigg\}   \exp\bigg\{ C \frac{L^2 \lambda_2^4 \rho^4}{\sigma^4 nr} \bigg\}.  
\end{align*}
Now, take
\begin{align*}
    \rho^2 &= c_{\rho} \frac{\sigma^2 nr}{L \lambda_1^2}\frac{\sigma \sqrt{nr}}{\lambda_2 \sqrt{L}}.
\end{align*} 
Plugging this in yields
\begin{align*}
   \chi^2(\mathcal{P}_1\s, \mathcal{P}_2\s) + 1 &\leq 
    \exp\bigg\{ \frac{C c_\rho^2 L \lambda_2^2}{\sigma^2} \bigg( \frac{\sigma^4 n^2r^2 }{L^2 \lambda_1^4}\frac{\sigma^2 nr }{\lambda_2^2 L }\bigg) \bigg\} + C  c_{\rho}^2 \frac{L^2 \lambda_2^4 }{\sigma^4 nr} \bigg( \frac{\sigma^4 n^2r^2 }{L^2 \lambda_1^4}\frac{\sigma^2 nr }{\lambda_2^2 L }\bigg) \bigg\} \\
    &\leq \exp\bigg\{ C c_{\rho}^2 \frac{1}{C_0^4} + C c_{\rho}^2 \frac{1}{C_0^2} \bigg\},
\end{align*}
where the final bound follows since $\lambda_1/\sigma \geq C_0 \frac{nr}{\sqrt{L}}$ and $\lambda_2 \leq \lambda_1$. If $C_0$ is sufficiently large or $c_{\rho}$ is sufficiently small,  the above is bounded by $1 + \eps^2$ for some small $\eps$.

Similarly, if $\lambda_2 \geq \frac{nr}{\sqrt{L}}$, we then take $\rho^2 =c_{\rho} \frac{\sigma^2 \sqrt{nr}}{L\lambda_1^2}$ to yield
\begin{align*}
    \exp\bigg\{ \frac{C c_{\rho}^2 L \lambda_2^2 \sigma^4 nr}{\sigma^2 \lambda_1^4 L^2} \bigg\} \exp\bigg\{ C c_{\rho}^2 \frac{L^2 \lambda_2^4 \sigma^4 nr}{\lambda_1^4 L^2 nr \sigma^4}\bigg\} &= \exp\bigg\{ \frac{C c_{\rho}^2 \lambda_2^2 \sigma^2 nr}{\lambda_1^4 L}\bigg\} \exp\bigg\{ C c_{\rho}^2 \frac{\lambda_2^4}{\lambda_1^4} \bigg\}.
\end{align*}
This remains bounded whenever $\lambda_1 \geq C_0 \sigma \sqrt{nr/L}$, which it is by assumption (indeed, we assume that $\lambda_1/\sigma \gg \frac{nr}{\sqrt{L}}$).  This completes the proof of both inequalities provided we can justify \eqref{chisquareclaim}.

We now justify the $\chi^2$ calculation. Recall that  $f_{\pi_{\mathcal{P}_1\s}}$ denotes the joint law of
$\{\al\}_{l=1}^L$ under the null parameter $\bm{S}_\ast^{(l)}$, and for each $\bm{Z}$ let
$f_{\bm{Z}}$ denote the joint law under the alternative parameter $\bm{S}_{\bm{Z}}^{(l)}$. We have that
\begin{align*}
\bm{S}_{\bm{Z}}^{(l)} - \bm{S}_{*}^{(l)} &= 
=
\lambda_2 \big( \bm{V}_{\bm{Z}} \bm{V}_{\bm{Z}}\t - \bm{V}^* \bm{V}^{*\top} \big).
\end{align*}
Let $\pi$ be the distribution of $Z$.  Then we have that
\[
f_{\pi_{\mathcal{P}_2\s}}:=\int f_{\bm{Z}}\,d\pi(\bm{Z}).
\]
Hence
\[
\chi^2(f_{\pi_{\mathcal{P}_2\s}},f_{\pi_{\mathcal{P}_1\s}})+1
=
\int\int
\mathbb E_{f_{\pi_{\mathcal{P}_1\s}}}
\left[
\frac{ f_{\bm{Z}}}{ f_{\pi_{\mathcal{P}_1\s}}}
\frac{ f_{\bm{Z}'}}{f_{\pi_{\mathcal{P}_1\s}}}
\right]
d\pi(\bm{Z})d\pi(\bm{Z}').
\]
We now compute the inner expectation. For fixed $\bm{Z}$, since the noise is Gaussian,
\begin{align*}
\frac{f_{\bm{Z}}}{f_{\pi_{\mathcal{P}\s}}}
&=
\exp\left\{
\frac{1}{\sigma^2}
\sum_{l=1}^L
\left\langle \nl ,\lambda_2 \big( \bm{V}_{\bm{Z}} \bm{V}_{\bm{Z}}\t - \bm{V}^* \bm{V}^{*\top} \big)\right\rangle
-
\frac{1}{2\sigma^2}
\sum_{l=1}^L
\|\lambda_2 \big( \bm{V}_{\bm{Z}} \bm{V}_{\bm{Z}}\t - \bm{V}^* \bm{V}^{*\top} \big)\|_F^2
\right\} \\
&= 
\exp\left\{\frac{\lambda_2}{\sigma^2}
\sum_{l=1}^L
\left\langle \nl , \bm{V}_{\bm{Z}} \bm{V}_{\bm{Z}}\t - \bm{V}^* \bm{V}^{*\top} \right\rangle
-
\frac{L \lambda_2^2 }{2\sigma^2}
\| \big( \bm{V}_{\bm{Z}} \bm{V}_{\bm{Z}}\t - \bm{V}^* \bm{V}^{*\top} \big)\|_F^2
\right\}.
\end{align*}
Multiplying the two likelihood ratios yields
\begin{align*}
\frac{f_{\bm{Z}}}{f_{\pi_{\mathcal{P}\s}}}
\frac{f_{\bm{Z}'}}{f_{\pi_{\mathcal{P}\s}}}
&=
\exp\left\{
\frac{\lambda_2}{\sigma^2}
\sum_{l=1}^L
\left\langle \nl ,\bm{V}_{\bm{Z}} \bm{V}_{\bm{Z}}\t + \bm{V}_{\bm{Z}'} \bm{V}_{\bm{Z}'}\t - 2\bm{V}\s \bm{V}^{*\top} \right\rangle
\right\} \\
&\quad \times \exp\bigg\{ -
\frac{L\lambda_2^2}{2\sigma^2}
\left(
\|\bm{V}_{\bm{Z}'} \bm{V}_{\bm{Z}'}\t - \bm{V}\s \bm{V}^{*\top}\|_F^2+\|\bm{V}_{\bm{Z}'} \bm{V}_{\bm{Z}'}\t - \bm{V}\s \bm{V}^{*\top}\|_F^2
\right)\bigg\}.
\end{align*}
It is straightforward to demonstrate that for any matrix $\bm{M}$, 
\begin{align*}
    \mathbb{E} \bigg\{ \frac{\lambda_2}{\sigma^2} \langle \nl, \bm{M} \rangle \bigg\} &= \exp\bigg\{ \frac{\lambda_2^2}{2\sigma^2} \| \bm{M} \|_F^2 \bigg\}.
\end{align*}
Thus, by independence over $l$, we obtain 
\begin{align*}
    \mathbb{E}_{f_{\pi_{\mathcal{P}\s}}} \frac{f_{\bm{Z}}}{f_{\pi_{\mathcal{P}\s}}}
\frac{f_{\bm{Z}'}}{f_{\pi_{\mathcal{P}\s}}} &= \exp\bigg\{  \frac{\lambda_2^2 L}{2\sigma^2} \| \bm{V}_{\bm{Z}} \bm{V}_{\bm{Z}}\t + \bm{V}_{\bm{Z}'} \bm{V}_{\bm{Z}'}\t - 2\bm{V}\s \bm{V}^{*\top} \|_F^2 \bigg\} \\
&\quad \times \exp\bigg\{- \frac{L\lambda_2^2}{2\sigma^2} \bigg( \| \bm{V}_{\bm{Z}} \bm{V}_{\bm{Z}}\t - \bm{V}\s \bm{V}^{*\top} \|_F^2 + \| \bm{V}_{\bm{Z}} \bm{V}_{\bm{Z}}\t - \bm{V}\s \bm{V}^{*\top} \|_F^2 \bigg) \bigg\} \\
&= \exp\bigg\{ \frac{\lambda_2^2 L}{2\sigma^2} \langle \bm{V}_{\bm{Z}} \bm{V}_{\bm{Z}}\t - \bm{V}\s \bm{V}^{*\top}, \bm{V}_{\bm{Z}'} \bm{V}_{\bm{Z}'}\t - \bm{V}\s \bm{V}^{*\top} \rangle \bigg\}.
\end{align*}
This completes the proof.
\end{proof}

\bibliography{gosie}

\end{document}